\newtheorem{theorem}{Theorem}[section] % 1st argument is your name for it
\newtheorem{lemma}[theorem]{Lemma}     % 2nd argument is what is printed
\newtheorem{corollary}[theorem]{Corollary}
\newtheorem{proposition}[theorem]{Proposition}
\def\old@comma{,}
    \old@comma\discretionary{}{}{}%
\newcommand{\bs}{\boldsymbol}
\renewcommand{\mod}{\mathrm{mod \,}}
\begin{document}

%\sffamily

\begin{center}
{\Large\textbf{The infinite Fibonacci groups and relative asphericity }}

M.\ EDJVET\\
\textit{School of Mathematical Sciences, The University of Nottingham, University Park, Nottingham NG7 2RD}

A.\ JUH\'{A}SZ\\
\textit{Department of Mathematics, Technion--Israel Institute of Technology, Haifa 32000, Israel}\\

\bigskip

\textit{Dedicated to David L.\ Johnson}
\end{center}

\medskip

\begin{abstract}
We prove that the generalised Fibonacci group $F(r,n)$ is infinite for $(r,n) \in \{ (7+5k,5), (8+5k,5) \colon k \geq 0 \}$.  
This together with previously known results yields a complete classification of the finite $F(r,n)$, a problem that has its origins in a question by J H Conway in 1965.  
The method is to show that a related relative presentation is aspherical from which it can be deduced that the groups are infinite.

\medskip

\textbf{2000 Mathematics Subject Classification}: 20F05, 57M07.%\newline

\textbf{Keywords}: Fibonacci groups; relative asphericity.
\end{abstract}

\section{Introduction}%1
The generalised Fibonacci group $F(r,n)$ is the group defined by the \textit{cyclic presentation}
\[
\langle x_1, \ldots, x_n \mid x_1 x_2 \ldots x_r x_{r+1}^{-1}, x_2 x_3 \ldots x_{r+1} x_{r+2}^{-1}, \ldots , x_{n-1} x_n x_1 \ldots x_{r-2} x_{r-1}^{-1} ,
x_n x_1 x_2 \ldots x_{r-1} x_r^{-1} \rangle ,
\]
where $r > 1$, $n > 1$.
Thus there are $n$ generators and $n$ relators each of length $r+1$ and each relator is obtained from the first relator by cyclically permuting the subscripts and reducing modulo $n$ [10, Section 7.3].
There has been a great deal of interest in the study of these groups since the question in [5] by Conway about the order of $F(2,5)$.  Up to now the order of $F(r,n)$ was known except for the two infinite families $F\{ 7,5 \}$ and $F \{ 8,5 \}$ where $F\{r,n\} := \{ F(r+kn,n) \colon k \geq 0 \}$.  The reader is referred to [15] and the references therein together with [4] and [14] for further details.  In this paper we will show that each group in $F \{ 7,5 \}$ or $F \{ 8,5 \}$ is infinite.  This together with previous results yields the following theorem.

\begin{theorem}%{Theorem 1.1}
\textit{The generalised Fibonacci group $F(r,n)$ is finite if and only if one of the following conditions is satisfied:}
\begin{enumerate}
\item[(i)]
$r=2$ \textit{and} $n \in \{ 2,3,4,5,7 \}$: \textit{indeed $F(2,2)$ is trivial}; $F(2,3) \cong Q_8$, the quaternion group of order 8; $F(2,4) \cong \mathbb{Z}_5$; $F(2,5) \cong \mathbb{Z}_{11}$;
and $F(2,7) \cong \mathbb{Z}_{29}$;
\item[(ii)]
$r=3$ \textit{and} $n \in \{ 2,3,5,6 \}$: \textit{indeed} $F(3,2) \cong Q_8$; $F(3,3) \cong \mathbb{Z}_2$; $F(3,5) \cong \mathbb{Z}_{22}$; \textit{and} $F(3,6)$ \textit{is non-metacyclic, soluble of order} 1512;
\item[(iii)]
$r \geq 4$ \textit{and} $r \equiv 0$ ($\textrm{mod}\, n$), \textit{in which case} $F(r,n) \cong \mathbb{Z}_{r-1}$;
\item[(iv)]
$r \geq 4$ \textit{and} $r \equiv 1$ ($\textrm{mod} \, n$), \textit{in which case} $F(r,n)$ \textit{is metacyclic of order} $r^n -1$;
\item[(v)]
$r \geq 4$, $n=4$ \textit{and} $r \equiv 2$ ($\textrm{mod}\, n$), \textit{in which case} $F(r,n)=F(4k+2,4)$ ($k \geq 1$) \textit{is metacyclic of order} $(4k+1)(2(4)^{2k}+2(-4)^k +1)$.
\end{enumerate}
\end{theorem}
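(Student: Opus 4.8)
The plan is to prove the two implications separately, the ``only if'' being where essentially all the work lies. For the ``if'' direction one must only check that each of the five families in (i)--(v) really does consist of finite groups of the stated type: (i) and (ii) are finitely many explicit groups whose orders are classical and can in any case be confirmed directly or via [15] and the references there; (iii) is elementary, since when $n\mid r$ the $n$ cyclic relators force $x_1=x_2=\dots=x_n$ and the presentation collapses to the single relation $x_1^{\,r-1}=1$, giving $F(r,n)\cong\mathbb{Z}_{r-1}$; and (iv), (v) are the known identifications of those metacyclic quotients. For the ``only if'' direction one must show $F(r,n)$ is infinite for every $(r,n)$ not in (i)--(v); by the results assembled in [4], [14], [15] this is already known for all such pairs except those in $F\{7,5\}=\{F(7+5k,5):k\ge0\}$ and $F\{8,5\}=\{F(8+5k,5):k\ge0\}$, so the task reduces to proving that every group in these two families is infinite.

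To do this I would use the shift automorphism $\theta\colon x_i\mapsto x_{i+1}$, of order $5$, and pass to the split extension $E_r=F(r,5)\rtimes\langle t\mid t^5\rangle$, in which $F(r,5)$ has index $5$; thus $F(r,5)$ is infinite precisely when $E_r$ is. Setting $x_1=u$ and $x_i=t^{i-1}u\,t^{1-i}$, the five cyclic relators become conjugates of a single word by powers of $t$, so $E_r$ is given by the relative presentation
\[
\mathcal{P}_r=\langle\,H,\,u\mid w_r\,\rangle,\qquad H=\langle t\mid t^5\rangle\cong\mathbb{Z}_5,
\]
with one $H$-free generator $u$ and one relator $w_r$. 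Rewriting the Fibonacci relator shows that, up to cyclic permutation and with exponents of $t$ taken modulo $5$, $w_r=(ut)^{5k+7}u^{-1}t^{3}$ when $r=7+5k$ and $w_r=(ut)^{5k+8}u^{-1}t^{2}$ when $r=8+5k$: a long run of identical $ut$-syllables together with a single inverted $u$ and a single exceptional $t$-syllable, so that the combinatorics of $\mathcal{P}_r$ depends on $k$ only through the number of repetitions of a fixed block.

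The crux is to prove $\mathcal{P}_r$ aspherical. I would work with pictures (spherical diagrams) over the relative presentation in the sense of Bogley and Pride, asphericity meaning that no non-trivial reduced spherical picture exists. Supposing one existed, one assigns angles to its corners and applies combinatorial Gauss--Bonnet, under which the total curvature is $4\pi$ for every assignment; it then suffices to find an assignment---constrained by the admissible local configurations, read off from the star graph of $w_r$ and the $\mathbb{Z}_5$-labels of its corners (equivalently, to pass a weight test)---making every region and every disc of the picture non-positively curved, whence the contradiction $4\pi\le0$. The repetitive run in $w_r$ reduces most of this to a bounded, $k$-independent verification; the genuine difficulty, and the step I expect to be the main obstacle, is the behaviour near the exceptional $t$-syllable and the unique $u^{-1}$, where a uniform choice of angles fails, so one must distribute curvature in a more delicate, configuration-dependent way and dispose of the finitely many local patterns that could a priori be positively curved, presumably treating the smallest values of $k$ on their own.

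Granted asphericity, infiniteness follows at once. As $w_r$ is neither a proper power nor conjugate into $H$, asphericity of $\mathcal{P}_r$ implies, by the standard consequences of relative asphericity (cf.\ Bogley and Pride), that $u$ has infinite order in $E_r$, so $E_r$---and with it its index-$5$ subgroup $F(r,5)$---is infinite; equivalently $\chi(F(r,5))=5\,\chi(E_r)=5\,\chi(H)=1$, impossible for a non-trivial finite group, while $F(r,5)$ is non-trivial since its abelianisation is the cokernel of a circulant integer matrix of non-zero determinant (indeed of determinant unbounded in $k$). Together with the cases already in the literature, this completes the classification.
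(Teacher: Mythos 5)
Your overall route is the same as the paper's: reduce, via the results already in the literature, to the two families $F\{7,5\}$ and $F\{8,5\}$; pass to the split extension by the order-$5$ shift automorphism; rewrite that extension as a one-relator relative presentation over $\mathbb{Z}_5$; prove the relative presentation aspherical by a curvature argument on spherical pictures; and deduce infiniteness from the Bogley--Pride consequences of relative asphericity. The genuine gap is that the central step --- asphericity --- is only announced, not proved. You say the repetitive block reduces matters to a bounded, $k$-independent verification, and that the exceptional syllables must be handled ``in a more delicate, configuration-dependent way'', ``presumably'' treating small $k$ separately; but this is exactly the content of Theorem 1.2, which in the paper occupies Sections 3--11 together with the Appendix. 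The paper's proof shows that no single angle/weight assignment works: after dualising the picture and amending the diagram one needs a three-stage positive-curvature distribution scheme (first from regions of degree $4$, then from regions of degree $6$, then a full analysis of regions of degree $\geq 8$, split into types $\mathcal{A}$ and $\mathcal{B}$), with fourteen exceptions to the basic rules organised into six exceptional Configurations A--F, plus a separate treatment of the case $\bs{K}_0 \neq \bs{K}$ where a distinguished region $\Delta_0$ retains all curvature it receives. Nothing in your outline supplies the actual transfer rules or the verification that every region ends with non-positive curvature, so the main theorem remains unestablished in your proposal.

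Two smaller points. The final deduction needs slightly more care than ``$w_r$ is neither a proper power nor conjugate into $H$, hence $u$ has infinite order'': the relevant consequence of asphericity is that finite subgroups of the group defined by the relative presentation are conjugate into $H\cong\mathbb{Z}_5$, so one must also rule out the group being trivial or of order $5$ (the paper notes this explicitly; your abelianisation remark can be made to cover it). And the identity $\chi(F(r,5))=5\chi(E_r)=5\chi(H)=1$ requires more than combinatorial asphericity of the relative presentation --- one needs the associated relative complex to provide a suitable (co)homological model for $E_r$ --- so it should be regarded as a heuristic aside rather than an independent proof of infiniteness.
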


A \textit{relative group presentation} is a presentation of the form $\mathcal{P}=\langle G, \bs{x} | \bs{r} \rangle$ where $G$ is a group, $\bs{x}$ a set disjoint from $G$, and $\bs{r}$ a set of cyclically reduced words in the free product $G \ast \langle \bs{x} \rangle$ where $\langle \bs{x} \rangle$ denotes the free group on $\bs{x}$ [2].  If $G(\mathcal{P})$ denotes the group defined by $\mathcal{P}$ then $G(\mathcal{P})$ is the quotient group
$G \ast \langle \bs{x} \rangle / N$ where $N$ denotes the normal closure in $G \ast \langle \bs{x} \rangle$ of $\bs{r}$.  A relative group presentation is defined in [2] to be \textit{aspherical} if every spherical picture over it contains a dipole, that is, fails to be reduced.  There is interest in when a relative presentation is aspherical, see, for example, [1], [2], [6], [7], [9] and [13].  In this paper we consider the situation when $G= \langle t | t^5 \rangle$, $\bs{x} = \{ u \}$ and $\bs{r} = \{ t^2 utu^{-n} \}$ and prove the following theorem.

\begin{theorem} %1.2
\textit{The relative presentation $\mathcal{P}_n = \langle t,u | t^5, t^2 utu^{-n} \rangle$ is aspherical for $n \geq 7$.}
\end{theorem}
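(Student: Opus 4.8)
The plan is to argue by contradiction using pictures over relative presentations in the sense of [2]. If $\mathcal{P}_n$ is not aspherical then there is a reduced (dipole-free) spherical picture over $\mathcal{P}_n$ on the $2$-sphere; among all such, let $\mathbb{P}$ have the fewest discs, and assume after the usual normalisations that $\mathbb{P}$ is connected and has no floating circle components. Every disc of $\mathbb{P}$ is a $t^{2}utu^{-n}$-disc or its inverse, so reading around a disc one meets $n+1$ arcs (all labelled $u$) and $n+1$ corners with $G$-labels $t^{2},t,1,\dots,1$ or $t^{-2},t^{-1},1,\dots,1$; in particular every disc has degree exactly $n+1\geq 8$, and since the letter $u$ occurs in the relator with exponent $+1$ exactly once, around each disc exactly one corner sits between two arc-ends of opposite reading ``$u$'', while the remaining $n$ trivial corners sit between two arc-ends that read the same way.

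The first substantial task is a structural analysis of the configurations that can occur in a reduced $\mathbb{P}$. Since $G=\langle t\mid t^{5}\rangle\cong\mathbb{Z}_{5}$ the corner-label products around small regions are constrained, and combined with the orientation data this restricts how the $u$-arcs may be routed. Two facts I would prove are: (i) a corner labelled $1$ cannot lie in a bigon, because such a bigon would have its two arcs joining two arc-ends of one disc that read ``$u^{-1}$'' to two arc-ends of another disc that read ``$u$'', forcing both corners of the bigon to be trivial and the two discs to be mutually inverse, i.e.\ a dipole; consequently each disc is incident with at most two bigons and all $n-1$ trivial corners lie in regions of degree $\geq 3$; and (ii) a list of the remaining admissible small regions --- the possible bigons (between like-oriented discs at the two distinguished corners, or between inverse discs with label pairs $\{t,t^{-2}\}$ or $\{t^{2},t^{-1}\}$) and the possible monogons (which necessarily enclose a $t^{\pm 1}$- or $t^{\pm 2}$-corner). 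This classification is where I expect most of the work to lie, since the syllable $u^{-n}$ allows long fans of parallel $u^{-1}$-arcs, and one must control how these fans can meet each other and the distinguished corners without creating a dipole.

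With the classification in hand I would run a combinatorial curvature argument. Regard $\mathbb{P}$ as a cell decomposition of $S^{2}$ with the discs as vertices, the arcs as edges and the complementary regions as faces, and assign to each corner an angle so that the angles at every disc sum to $2\pi$; then each disc carries zero curvature and Euler's formula gives $\sum_{\Delta}c(\Delta)=4\pi$, where $c(\Delta)=(2-d(\Delta))\pi+(\text{sum of the angles of the corners of }\Delta)$. Guided by (i) I would assign angle $0$ to each of the two distinguished corners of a disc and angle $2\pi/(n-1)$ to each of its $n-1$ trivial corners. Then by (i)--(ii) every bigon has curvature $0$, and a region $\Delta$ of degree $d\geq 3$ has at most $d$ trivial corners, so $c(\Delta)\leq \tfrac{2d}{n-1}\pi-(d-2)\pi$, which is $\leq 0$ exactly when $d\geq \tfrac{2(n-1)}{n-3}$; since $\tfrac{2(n-1)}{n-3}\leq 3$ precisely for $n\geq 7$, this holds for every $d\geq 3$. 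The only regions left with positive curvature are monogons, of curvature $\pi$; these I would handle by redistributing their curvature across the bounding arc to neighbouring regions, using that the disc anchoring a monogon has $n$ further corners and $n$ further incident regions, each strictly negatively curved, to absorb the excess. The outcome should be a modified curvature that is nonpositive on every region, whence $4\pi=\sum_{\Delta}c(\Delta)\leq 0$, a contradiction; so no such $\mathbb{P}$ exists and $\mathcal{P}_{n}$ is aspherical.

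The step I expect to be the genuine obstacle is making the last two steps work uniformly for all $n\geq 7$ rather than merely for all large $n$: the governing inequality $\tfrac{2(n-1)}{n-3}\leq 3$ is an equality at $n=7$ (and nearly so at $n=8$), so at those values a triangle all of whose corners are trivial is flat and offers no slack, and one must check by hand both that monogons (and the other exceptional configurations from the second step) are scarce enough and that the $u^{-n}$-fans cannot produce a chain of positively curved regions too long to be compensated. It is exactly this estimate that forces the hypothesis $n\geq 7$ --- for $n\leq 6$ a trivial-cornered triangle already has positive curvature and the scheme breaks down --- which is consistent with $F\{7,5\}$ and $F\{8,5\}$ being the remaining infinite families. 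Once this accounting is carried out, the contradiction establishes Theorem~1.2, and, with the results already in the literature, Theorem~1.1.
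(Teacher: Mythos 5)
There is a genuine gap, and it sits exactly where your scheme needs to be airtight: your structural claim (i) is false. Two discs of opposite sign joined by two arcs that are consecutive at both discs, with the two trivial corners of the resulting bigon, do \emph{not} form a dipole: a dipole requires the whole vertex labels read from a connecting arc to be mutually inverse (so the pair can be cancelled), not merely that two adjacent corners match. Such trivial--trivial bigons are unavoidable in reduced spherical pictures over $\mathcal{P}_n$, because the syllable $u^{-n}$ forces long fans of parallel $u^{-1}$-arcs between neighbouring discs; in the dual diagram of the paper these bigons are precisely the degree-2 vertices with label $\lambda\mu$, which Lemma 3.2(i) explicitly permits (only $(\lambda\mu)^{\pm k}$ with $k\geq 2$ is excluded, and that uses extremal assumptions beyond reducedness). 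Under your angle assignment each such bigon has curvature $\frac{4\pi}{n-1}>0$, and a fan of $k$ parallel arcs produces a chain of $k-1$ of them, so the positive curvature is neither confined to monogons nor bounded, while every region of degree $\geq 3$ contributes at most $0$ (and exactly $0$ for all-trivial triangles when $n=7$), so there is no negative reservoir guaranteed to absorb it. Your proposed redistribution only addresses monogons, and even there the classification is off: the corner-label sum around a region must vanish in $\mathbb{Z}_5$, so admissible bigon label pairs are $\{1,1\},\{t,t^{-1}\},\{t^2,t^{-2}\}$ (not $\{t,t^{-2}\}$ or $\{t^2,t^{-1}\}$), and a monogon's single corner would have to be labelled $t^0$, not $t^{\pm1}$ or $t^{\pm2}$.

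This is not a presentational quibble: the entire difficulty of the theorem lives in the configurations your claim (i) would have ruled out. The paper's route is to control them rather than exclude them: bridge-move/extremality arguments limit degree-2 vertices to label $(\lambda\mu)^{\pm1}$, the fans are then collapsed by deleting degree-2 vertices and all non-$(b,a)$-edges to form an amended diagram $\bs{K}_0$ with corner labels $\tilde a,\tilde b,\tilde\lambda,x,y,z$, and only then is a curvature argument run --- a three-stage distribution of positive curvature from regions of degree 4, then 6, then into regions of degree $\geq 8$ (split into types $\mathcal{A}$ and $\mathcal{B}$), occupying Sections 5--11 with exceptional Configurations A--F. That machinery, or some substitute for it, is what actually uses the hypothesis $n\geq 7$; your proposal correctly senses that the inequality $\frac{2(n-1)}{n-3}\leq 3$ is tight at $n=7$, but without a correct treatment of the trivial-cornered bigons and the flat triangles the contradiction $4\pi\leq 0$ does not follow.
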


Applying, for example, statement (0.4) in the introduction of [2] and the fact that the group defined by $\mathcal{P}_n$ is neither trivial nor cyclic of order 5 we immediately obtain

\begin{corollary} %1.3
\textit{If $G(\mathcal{P}_n )$ is the group defined by $\mathcal{P}_n$ then $G(\mathcal{P}_n)$ is infinite for $n \geq 7$, indeed $u$ has infinite order in $G(\mathcal{P}_n)$ for $n \geq 7$.}
\end{corollary}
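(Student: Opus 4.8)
The plan is to obtain the corollary as a formal consequence of Theorem 1.2 together with the structural results on aspherical relative presentations collected in [2], so that essentially no new work is needed once Theorem 1.2 is granted. By Theorem 1.2, $\mathcal{P}_n=\langle t,u\mid t^5,\,t^2utu^{-n}\rangle$ is aspherical for $n\geq 7$. In the notation of [2] we have here $G=\langle t\mid t^5\rangle\cong\mathbb{Z}_5$, the set $\bs{x}=\{u\}$ of new generators is non-empty, and the single defining word $\rho=t^2utu^{-n}$ is cyclically reduced in $G\ast\langle u\rangle$ and is not a proper power (no cyclic permutation of $\rho$ is periodic, as is visible from its syllable pattern $t^2\mid u\mid t\mid u^{-n}$, in which the two $t$-syllables have different lengths). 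Hence the hypotheses under which statement (0.4) of [2] applies are satisfied, and that statement then yields that every finite subgroup of $G(\mathcal{P}_n)$ is conjugate to a subgroup of $G$; since the only subgroups of $\mathbb{Z}_5$ are the trivial one and $\mathbb{Z}_5$ itself, every finite subgroup of $G(\mathcal{P}_n)$ is either trivial or cyclic of order $5$.

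The second ingredient is a pair of elementary quotients. Abelianising gives $G(\mathcal{P}_n)^{\mathrm{ab}}\cong\mathbb{Z}^2/\langle(5,0),(3,1-n)\rangle$, a finite group of order $5(n-1)$; for $n\geq 7$ this exceeds $5$, so in particular $G(\mathcal{P}_n)$ is neither trivial nor cyclic of order $5$. Imposing in addition the relation $t=1$ collapses $\mathcal{P}_n$ to $\langle u\mid u^{\,n-1}\rangle\cong\mathbb{Z}_{n-1}$, and under the corresponding epimorphism of $G(\mathcal{P}_n)$ the image of $u$ has order exactly $n-1\geq 6$; restricting this epimorphism to $\langle u\rangle$ shows that $\langle u\rangle$ maps onto $\mathbb{Z}_{n-1}$.

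Finally I would assemble the contradiction. If $u$ had finite order in $G(\mathcal{P}_n)$, then $\langle u\rangle$ would be a finite subgroup, hence by the first paragraph of order at most $5$; but $\langle u\rangle$ surjects onto $\mathbb{Z}_{n-1}$ with $n-1\geq 6$, which is impossible. Therefore $u$ has infinite order in $G(\mathcal{P}_n)$, and a fortiori $G(\mathcal{P}_n)$ is infinite, for every $n\geq 7$. I do not expect any genuine obstacle here: the substantive content is Theorem 1.2, which we may assume, and the remaining steps are routine; the one point demanding care is to quote the consequence of relative asphericity from [2] in exactly the form established there — the control it gives over the finite subgroups of $G(\mathcal{P})$ when $\bs{x}\neq\emptyset$ and no defining word is a proper power — after which the conclusion is immediate.
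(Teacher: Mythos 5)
Your proposal is correct and takes essentially the same route as the paper, which deduces the corollary from Theorem 1.2 by invoking statement (0.4) of [2] together with the fact that $G(\mathcal{P}_n)$ is neither trivial nor cyclic of order $5$. Your verification that the relator is not a proper power and your two quotient computations (the abelianisation of order $5(n-1)$ and the quotient $\mathbb{Z}_{n-1}$ obtained by killing $t$, which forces $u$ to have infinite order) simply supply the details that the paper treats as immediate.
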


We shall show in Section 2 that Corollary 1.3 implies that each group in $F\{7,5\},F\{8,5\}$ is infinite.
The remaining Sections 3--11 of the paper will be devoted entirely to proving Theorem 1.2.

\medskip

\textbf{Acknowledgement}. The authors wish to thank Chris Chalk for his many helpful comments concerning this paper.

%%%%%%%%%%%%%%%%%%%%%%%%%%%%%%%%%%%%%%%%%%%%%%%%%%%%%%%%%%%%%%%%%%%%%%%%%%%%%%%%%%%%%%%%%%%%%%%%%%%%%%%%%%%%%%%%%%%%%%%%%%%%%%%

\section{Fibonacci groups}%2

Consider the generalised Fibonacci group $F(r,n)$ of the introduction.  If $r=2$ or $2 \leq n \leq 4$
or $(r,n) \in \{(3,5),(3,6)\}$
or $n$ divides $r$ or $r \equiv 1$ ($\textrm{mod}\,n$) then Theorem 1.1 applies and these cases are discussed fully with relevant references in [15].  Assume then that none of these conditions holds.  In particular $r \geq 3$ and $n \geq 5$.
In [14] it is shown that if $n$ does not divide any of $r \pm 1$, $r+2$, $2r$, $2r+1$ or $3r$ then $F(r,n)$ is infinite.  If $n$ divides $r+1$ then $F(r,n)$ is infinite for $r \geq 3$ [11] so assume otherwise.  We are left therefore to consider the families $F\{ r,r+2 \}$; $F \{ r,2r \}$; $F\{ r,2r+1 \}$ and
$F \{ r,3r \}$.  In [4] it is shown that if $r \geq 4$ then each group in $F \{ r,r+2 \}$ and $F\{ r,2r \}$ is infinite; and if $r \geq 3$ then each group in $F \{ r,2r+1 \}$ is infinite.  This leaves $F \{ 8,5 \}$, $F \{ 9,6 \}$, $F\{7,5 \}$ and $F \{ r,3r \}$.
In [14] it is also shown that if $n$ does not divide any of $r \pm 1$, $r \pm 2$, $r + 3$, $2r$, $2r+1$ then $F(r,n)$ is infinite.  If $n$ divides $3r$ and $r+2$ we obtain the family $F \{ 4,6 \}$ which is $F \{ r,r+2 \}$ for $r=4$; if $n$ divides $3r$ and $r-2$ we obtain $F \{ 8,6 \}$ and each group in this family is infinite [3]; and if $n$ divides $3r$ and $r+3$ we obtain $F \{ 6,9 \}$.  By our assumptions $n$ does not divide $3r$ together with any of $r \pm 1$, $2r$ or $2r+1$.  It is also shown in [4] that each group in $F\{ 9,6 \}$ or $F \{ 6,9 \}$ is infinite, all of which leaves $F \{ 7,5 \}$ and $F \{ 8,5 \}$.  These families are
\[
\{ F(7+5k,5) \colon k \geq 0 \} \quad \text{and} \quad \{ F(8+5k,5) \colon k \geq 0 \} ,
\]
where $F(7+5k,5)$ and $F(8+5k,5)$ are defined respectively by the presentations
\[
\begin{array}{l}
\langle x_1,x_2,x_3,x_4,x_5 \mid (x_1 x_2 x_3 x_4 x_5)^{k+1} x_1 x_2 x_3^{-1}, \ldots , (x_5 x_1 x_2 x_3 x_4)^{k+1} x_5 x_1 x_2^{-1} \rangle ,\\
\langle x_1,x_2,x_3,x_4,x_5 \mid (x_1 x_2 x_3 x_4 x_5)^{k+1} x_1 x_2 x_3 x_4^{-1}, \ldots , (x_5 x_1 x_2 x_3 x_4)^{k+1} x_5 x_1 x_2 x_3^{-1} \rangle .
\end{array}
\]
We show how Corollary 1.3 can be used to prove Theorem 1.1.
Since cyclically permuting the generators induces an automorphism we can form a semi-direct product with the cyclic group of order 5 in the way described, for example, in [10, Section 10.2] and this yields the groups $E(7+5k,5)$ 
and 
$E(8+5k,5)$ defined respectively by the presentations
\[
\begin{array}{l}
\langle x,t \mid t^5, (xt^{-1} ) ^{7+5k} x^{-1} t^2 \rangle ,\\
\langle x,t \mid t^5, (xt^{-1})^{8+5k} x^{-1} t^3 \rangle .
\end{array}
\]

\noindent Now
\begin{align*}
\langle x,t \mid t^5, (xt^{-1})^{7+5k} x^{-1} t^2 \rangle &= \langle x,t,y \mid t^5, (xt^{-1})^{7+5k} x^{-1} t^2, y^{-1} xt^{-1} \rangle\\
&=\langle y,t \mid t^5, y^{7+5k} t^{-1} y^{-1} t^2 \rangle\\
&=\langle y,t \mid t^5, y^{7+5k} ty^{-1} t^3 \rangle \quad (\text{replacing } t \text{ by } t^{-1})\\
&=\langle y,t,s \mid t^5, y^{7+5k} ty^{-1} t^3, st^{-3} \rangle \quad (s=t^3)\\
&=\langle y,s \mid s^5, y^{7+5k} s^2 y^{-1} s \rangle \quad (s^2=t^6=t)\\
&=\langle u,t \mid t^5, t^2 utu^{-(7+5k)} \rangle \quad (s \leftrightarrow t,\,y=u^{-1}) \quad (\text{cyclic conjugate})
\end{align*}
and
\begin{align*}
\langle x,t \mid t^5, (xt^{-1})^{8+5k} x^{-1} t^3 \rangle &=
\langle x,t,y \mid t^5, (xt^{-1})^{8+5k} x^{-1} t^3, y^{-1} xt^{-1} \rangle\\
&= \langle t,y \mid t^5, y^{8+5k} t^{-1} y^{-1} t^3 \rangle \\
&=\langle u,t \mid t^5, t^2 utu^{-(8+5k)} \rangle \quad (\text{inverse},~t^{-3}=t^2,y=u) .
\end{align*}
Therefore Corollary 1.3 implies that each group in $\{ E(7+5k,5) \text{ and } E(8+5k,5) \colon k \geq 0 \}$ is infinite and, given this, Theorem 1.1 now follows.

%%%%%%%%%%%%%%%%%%%%%%%%%%%%%%%%%%%%%%%%%%%%%%%%%%%%%%%%%%%%%%%%%%%%%%%%%%%%%%%%%%%%%%%%%%%%%%%%%%%%%%%%%%%%%%%%%%%%%%

\section{The amended picture and curvature}%3

The reader is referred to [2] and [12] for definitions of many of the basic terms used in this and subsequent sections.

Suppose by way of contradiction that the relative presentation
\[
\mathcal{P}_n = \langle t,u \mid t^5, t^2 utu^{-n} \rangle \quad (n \geq 7)
\]
is not aspherical, that is, there exists a \textit{reduced spherical picture} $\bs{P}$ over $\mathcal{P}_n$.  Then each \textit{arc} of $\bs{P}$ is equipped with a normal orientation and labelled by an element of $\{ u,u^{-1} \}$; each \textit{corner} of $\bs{P}$ is labelled by an element of
$\{ t^i \colon -2 \leq i \leq 2 \}$; reading the labels clockwise on the corners and arcs at a given vertex yields $t^2 utu^{-n}$ (up to cyclic permutation and inversion); and, since $t$ has order 5 in 
$G(\mathcal{P}_n )$, the product of the sequence of corner labels
encountered in an anti-clockwise traversal of any given \textit{region} of $\bs{P}$ yields the identity in $G = \langle t \mid t^5 \rangle$. 

Now let $\bs{D}$ be the dual of the picture $\bs{P}$ with the labelling of
$\bs{D}$ inherited from that of $\bs{P}$.  Then $\bs{D}$ is a (spherical) diagram such that: each corner label of $\bs{D}$ is $t^i$ where $-2 \leq i \leq 2$; and each edge is oriented and labelled $u$ or $u^{-1}$.
For convenience we adopt the notation
\[
buau^{-1} \lambda_{n-1}^{-1} u^{-1} \lambda_{n-2}^{-1} \ldots u^{-1} \lambda_1^{-1} u^{-1}
\]
for $t^2utu^{-n}$.  Thus $a=t^1$; $b=t^2$; and $\lambda_i=t^0$ ($1 \leq i \leq n-1$).
Each (oriented) region $\Delta$ of $\bs{D}$ is given (up to cyclic permutation and inversion) by Figure 1(i); and an example of how the regions are oriented is illustrated by Figure 1(iii).  (In subsequent figures we will not show the orientation of the regions or edges nor the edge labels $u,u^{-1}$.)
Note that the sum of the powers of $t$ read around any given vertex of $\bs{D}$ is congruent to 0 modulo 5.

For ease of presentation and to simplify matters further we will use $\lambda$ to denote $\lambda_i$ and $\mu$ to denote
$\lambda_j^{-1}$ ($1 \leq i,j \leq n-1$) throughout what follows.  This way the
\textit{star graph} $\Gamma$ for $\bs{D}$ is given by Figure 1(ii) with the the understanding that the edges labelled $\lambda$ and $\mu$ in $\Gamma$ are traversed only in the direction indicated.
Thus the edge labelled $\lambda$ represents the $n-1$ edges, labelled $\lambda_i$; and $\mu$ represents the $n-1$ inverse edges.  Recall that the vertex labels in $\bs{D}$ yield closed admissible paths in $\Gamma$.

We can make the following assumptions without any loss of generality:
\begin{enumerate}
\item[\textbf{A1}.]
$\bs{D}$ is minimal with respect to number of regions and so, in particular, is \textit{reduced}.
\item[\textbf{A2}.]
Subject to \textbf{A1}, $\bs{D}$ is maximal with respect to number of vertices of degree 2.
\end{enumerate}

\newpage
\begin{figure}
\begin{center}
\psfig{file=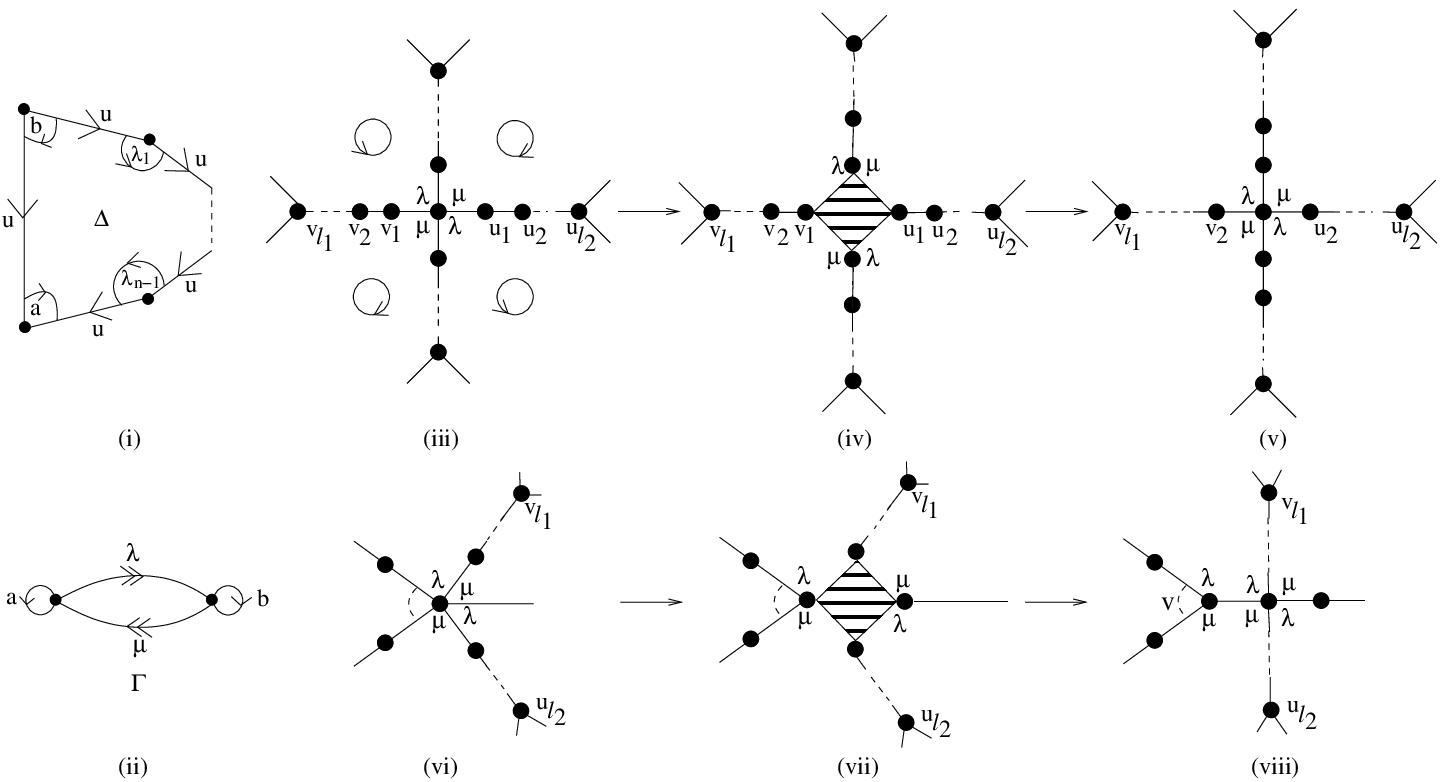}
\end{center}
\caption{ }
\end{figure}

We introduce some further notation.  If $v$ is a vertex of $\bs{D}$ then $l(v)$, the \textit{label} of $v$, is the cyclic word obtained from the corner labels of $v$ in a \textit{clockwise} direction; and $d(v)$ denotes the \textit{degree} of $v$. A $(v_1,v_2)$\textit{-edge} is an edge with
endpoints $v_1$ and $v_2$; and an edge is a $(\theta_1,\theta_2)$\textit{-edge relative to the region} $\Delta$ if its corner labels in
$\Delta$ are, in no particular order, $\theta_1$ and $\theta_2$. (Sometimes we will simply talk of a $(\theta_1,\theta_2)$-edge with the understanding that the corner labels are either $\theta_1$ and $\theta_2$ or
$\theta_1^{-1}$ and $\theta_2^{-1}$.)

\begin{lemma}%3.1
\textit{If $v$ is a vertex of $\bs{D}$ then $l(v) \neq ( \lambda \mu )^{\pm k}$ for $k \geq 2$.}
\end{lemma}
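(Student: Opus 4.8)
The plan is to argue by contradiction using the star graph $\Gamma$ of Figure~1(ii). Recall that the vertex labels of $\bs{D}$ correspond to closed admissible paths in $\Gamma$, where the $\lambda$-edge and $\mu$-edge are each traversed only in the indicated direction. Suppose $v$ is a vertex with $l(v) = (\lambda\mu)^{\pm k}$ for some $k \geq 2$. First I would read off from $\Gamma$ exactly which corner labels $t^i$ (with $-2 \leq i \leq 2$) can flank a $\lambda$-edge and which can flank a $\mu$-edge when the path alternates $\lambda\mu\lambda\mu\cdots$; the constraint ``the sum of the powers of $t$ read around the vertex is $\equiv 0 \pmod 5$'' must then be imposed on the resulting length-$2k$ cyclic word of corner labels. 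The point is that each $\lambda\mu$ (or $\mu\lambda$) ``syllable'' in the path forces the two adjoining corners to come from a very restricted set, and since $d(v) = 2k$ with $k \geq 2$ this already pins down $l(v)$ quite tightly.

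Next I would show that the only candidate vertex label of the form $(\lambda\mu)^{\pm k}$ consistent with the star graph and the mod-$5$ condition is excluded by the length constraint coming from the relator. Since the relator is $buau^{-1}\lambda_{n-1}^{-1}u^{-1}\cdots u^{-1}\lambda_1^{-1}u^{-1}$, the $a = t$ and $b = t^2$ corners occur exactly once per vertex of degree $n+2$; a vertex whose label is purely $(\lambda\mu)^{\pm k}$ has all corner labels in $\{t^{-2},t^{-1},t^0,t^1,t^2\}$ arising from the $\lambda_i$-portions only, and one checks this is incompatible with how the admissible closed paths in $\Gamma$ must traverse the graph (they cannot avoid the $a$- and $b$-edges while returning to the base point). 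Equivalently: a closed admissible path that reads $(\lambda\mu)^{\pm k}$ simply does not close up in $\Gamma$, because traversing $\lambda$ then $\mu$ does not return you to a vertex of $\Gamma$ from which you can repeat, once the orientations on the $\lambda$- and $\mu$-edges are taken into account.

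The main obstacle I expect is the careful bookkeeping with the star graph: one has to be precise about the vertices of $\Gamma$ (which correspond to the ``between-generator'' positions in the relator $t^2utu^{-n}$), about which of these are the endpoints of the single $\lambda$-edge and single $\mu$-edge after the identification ``$\lambda = \lambda_i$, $\mu = \lambda_j^{-1}$'', and about the directionality. Once the structure of $\Gamma$ is laid out explicitly, the impossibility of a closed path spelling $(\lambda\mu)^{\pm k}$ for $k \geq 2$ should be a short finite check — essentially observing that after reading $\lambda\mu$ one is forced back to a position from which the next required letter is not $\lambda$, so the alternating pattern cannot be sustained around a closed loop of length $\geq 4$. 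I would finish by noting this contradiction establishes $l(v) \neq (\lambda\mu)^{\pm k}$ for all $k \geq 2$, as claimed.
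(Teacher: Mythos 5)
There is a genuine gap: your approach cannot work, because the obstruction to a label $(\lambda\mu)^{\pm k}$ with $k\geq 2$ is not a star-graph or exponent-sum obstruction at all. A path spelling $\lambda\mu$ \emph{does} close up in $\Gamma$ and has exponent sum $0+0\equiv 0 \pmod 5$; indeed $(\lambda\mu)^{\pm 1}$ actually occurs as the label of every vertex of degree $2$ (this is Lemma 3.2(i)). Consequently $(\lambda\mu)^{\pm k}$ is obtained by traversing that same admissible closed loop $k$ times, so it is an admissible closed path with zero exponent sum for every $k$, and it violates neither the mod-$5$ condition nor reducedness (the forbidden sublabels are of the form $\lambda_i\lambda_i^{-1}$, not $\lambda_i\lambda_j^{-1}$). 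Your key assertion that ``after reading $\lambda\mu$ one is forced back to a position from which the next required letter is not $\lambda$'' is therefore false, and no amount of bookkeeping with the vertices and orientations of $\Gamma$ will produce the desired contradiction.

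The reason such vertices can be excluded is extremal, not local: the statement is only true for the particular diagram $\bs{D}$ chosen subject to assumptions \textbf{A1} and \textbf{A2}, and any correct proof must use them --- a telltale sign, since your argument never invokes either. The paper argues by induction on $k$: for a vertex labelled $(\lambda\mu)^{\pm 2}$ one applies $m=\min\{l_1,l_2\}$ bridge moves as in Figure 1(iv)--(v); each of the first $m-1$ moves creates and destroys two vertices of degree $2$, while the $m$th creates two and destroys at most one, so the total number of degree-$2$ vertices strictly increases while the number of regions is unchanged, contradicting \textbf{A2}. For $k\geq 3$ the analogous bridge moves (Figure 1(vi)--(viii)) produce a diagram with at least as many degree-$2$ vertices containing a vertex labelled $(\lambda\mu)^{\pm(k-1)}$, and induction yields the same contradiction. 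If you want to salvage your write-up, you would need to replace the star-graph argument entirely by this kind of diagram-modification argument relative to \textbf{A1}/\textbf{A2}.
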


\textit{Proof}.  The proof is by induction on $k$. Consider the vertex of Figure 1(iii) having label $( \lambda \mu )^2$.  Apply $m=\min \{ l_1,l_2 \}$ bridge moves of the type shown in Figure 1(iv)-(v).  Then each of the first $m-1$ bridge moves will create and destroy two vertices of degree 2, leaving the total number unchanged.  The $m$th bridge move however will create two vertices of degree 2
but destroy at most one.  Since bridge moves leave the total number of regions unchanged we obtain a contradiction to assumption \textbf{A2}.  Now consider the vertex of Figure 1(vi) having label $( \lambda \mu )^k$ where $k \geq 3$.  Again apply $m=\min \{ l_1,l_2 \}$ bridge moves of the type shown in Figure 1(vii)-(viii).  The first such bridge move may decrease the total number of vertices of degree 2 by one, each subsequent bridge move creates two and destroys two until the $m$th bridge which increases the number by at least one.  This produces a new diagram with at most the same number of vertices of degree 2 as $\bs{D}$.  But applying an induction argument to the vertex $v$ of Figure 1(viii) where $l(v)=( \lambda \mu )^{k-1}$ will yield a contradiction to \textbf{A2} as before. $\Box$

\begin{lemma}%3.2
\textit{Let $v \in \bs{D}$.  (i) If $d(v)=2$ then $l(v)=(\lambda \mu )^{\pm 1}$ and
(ii) if $d(v) > 2$ then $l(v)$ contains at least three occurrences of} $a^{\pm 1}, b^{\pm 1}$.
\end{lemma}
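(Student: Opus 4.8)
The plan is to read both parts off the structure of the star graph $\Gamma$ of Figure 1(ii) together with the congruence condition on corner labels. Recall that every corner label lies in $\{b^{\pm 1},a^{\pm 1},\lambda,\mu\}$, i.e.\ in $\{t^{\pm 2},t^{\pm 1},t^{0}\}$; that $l(v)$ is a closed admissible path in $\Gamma$; and that the sum of the exponents of $t$ occurring in $l(v)$ is $\equiv 0 \pmod{5}$. From the shape of $\Gamma$ one reads that its only loops are a $b$-loop at the vertex $u$ and an $a$-loop at the vertex $u^{-1}$, while the $n-1$ remaining edges join $u$ to $u^{-1}$, each read $\lambda$ in the direction $u^{-1}\to u$ and $\mu$ in the direction $u\to u^{-1}$. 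Hence any closed admissible path decomposes cyclically into blocks: a (possibly empty) power of the $b$-loop, a single $\mu$, a (possibly empty) power of the $a$-loop, a single $\lambda$, again a power of the $b$-loop, and so on; and because $\bs D$ is reduced (\textbf{A1}) the backtracking subwords $aa^{-1},a^{-1}a,bb^{-1},b^{-1}b$ are forbidden, so each $a$-block and each $b$-block has constant sign.

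For (i), suppose $d(v)=2$. The two exponents each lie in $\{-2,\dots,2\}$, so their sum lies in $\{-4,\dots,4\}$ and the congruence forces it to be $0$. Since exponent $\pm1$ occurs only at $a^{\pm1}$ and exponent $\pm2$ only at $b^{\pm1}$, the label multiset is $\{a,a^{-1}\}$, $\{b,b^{-1}\}$, or a pair chosen from $\{\lambda,\mu\}$. The first two are backtracking pairs, hence give a dipole, contradicting \textbf{A1}; in the remaining case the only closed paths of length $2$ in $\Gamma$ are $\lambda\mu$ and $\mu\lambda$, so $l(v)=(\lambda\mu)^{\pm1}$.

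For (ii), suppose $d(v)>2$ while $l(v)$ has at most two occurrences of $a^{\pm1},b^{\pm1}$. If it has none then $l(v)=(\lambda\mu)^{\pm k}$ with $k=d(v)/2\geq 2$, contradicting Lemma 3.1. If it has exactly one, the block structure forces $l(v)=a^{\pm1}(\lambda\mu)^{s}$ or $l(v)=b^{\pm1}(\mu\lambda)^{s}$ with $s\geq1$, whose exponent sums $\pm1,\pm2$ are not $\equiv 0\pmod{5}$. If it has exactly two, then either they form a single block $a^{\pm2}$ or $b^{\pm2}$ (exponent sums $\pm2,\pm4$) or two separate blocks; running through the sign and loop choices, the congruence leaves only
\[
l(v)=a(\lambda\mu)^{s_{1}}a^{-1}(\lambda\mu)^{s_{2}}\quad\text{or}\quad l(v)=b(\mu\lambda)^{s_{1}}b^{-1}(\mu\lambda)^{s_{2}},
\]
with $s_{1},s_{2}\geq1$ and $d(v)=2+2s_{1}+2s_{2}\geq6$. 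These two residual configurations I would eliminate by a surgery argument in the spirit of Lemma 3.1: bridge moves performed inside one of the trivial strings $(\lambda\mu)^{s_{i}}$ leave the number of regions unchanged but are arranged so that the final move yields a net gain in the number of degree-$2$ vertices, contradicting \textbf{A2} (or else produces a dipole, contradicting \textbf{A1}); when $s_{i}\geq2$ one may instead shift the label to one containing a vertex of type $(\lambda\mu)^{\pm k}$ with $k\geq2$ and invoke Lemma 3.1 after an induction on $d(v)$, and when $s_{1}=s_{2}=1$ the resulting degree-$6$ vertex is checked directly.

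The enumerations in (i) and in the first two sub-cases of (ii) are routine once the shape of $\Gamma$ is available. The substantive step, and the one most likely to require care with the figures, is the elimination of the two residual configurations in (ii): here the bridge-move analysis of Lemma 3.1 must be adapted to the presence of the $a^{\pm1}$ or $b^{\pm1}$ corners that interrupt the trivial strings.
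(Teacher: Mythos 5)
Your part (i) is correct and is exactly the paper's argument: the exponent sum of two corner labels lies in $\{-4,\dots,4\}$, so the congruence forces it to be $0$, and the pairs $\{a,a^{-1}\}$, $\{b,b^{-1}\}$ are adjacent inverse corners, hence dipoles, leaving only $(\lambda\mu)^{\pm1}$. In part (ii) your disposal of the cases with zero or one occurrence of $a^{\pm1},b^{\pm1}$, and of two occurrences that are adjacent or of equal sign, is likewise correct and matches the paper (Lemma 3.1 plus the congruence plus reducedness). Your reduction of the remaining two-occurrence case to the configurations $a(\lambda\mu)^{s_1}a^{-1}(\lambda\mu)^{s_2}$ and $b(\mu\lambda)^{s_1}b^{-1}(\mu\lambda)^{s_2}$ with $s_1,s_2\geq 1$ is also right, and these really are the crux: each is a closed admissible path in $\Gamma$, has $t$-exponent sum $0$, contains no adjacent inverse pair of corner labels, and is not of the form $(\lambda\mu)^{\pm k}$, so none of the three local facts that you (and the paper's one-line proof) invoke excludes them.

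The gap is that you do not eliminate these residual configurations; you only announce a plan. "Bridge moves performed inside one of the trivial strings, arranged so that the final move yields a net gain in degree-$2$ vertices" specifies neither which arcs are bridged, nor why after the moves the two regions of $\bs{D}$ at $v$ carrying the corners $a$ and $a^{-1}$ (equivalently, the $R$- and $R^{-1}$-labelled vertices of $\bs{P}$) become a cancellable pair contradicting \textbf{A1}, nor why the degree-$2$ count strictly increases contradicting \textbf{A2}. Worse, in the base case $s_1=s_2=1$ there is no substring $(\lambda\mu)^{k}$ with $k\geq 2$ for Lemma 3.1 or its bridge-move mechanism to act on, and "the resulting degree-$6$ vertex is checked directly" is not a check: it carries the whole burden of the only case of the lemma that does not follow from the congruence, reducedness and Lemma 3.1 as stated. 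Until that elimination is actually carried out (by an explicit surgery/extremality argument of the Lemma 3.1 type), the proposal does not prove (ii). I note that your case analysis is more explicit than the printed proof, which asserts the lemma from the same three facts; but the missing step is precisely the one place where something more is needed, so it cannot be waved through.
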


\newpage
\begin{figure}
\begin{center}
\psfig{file=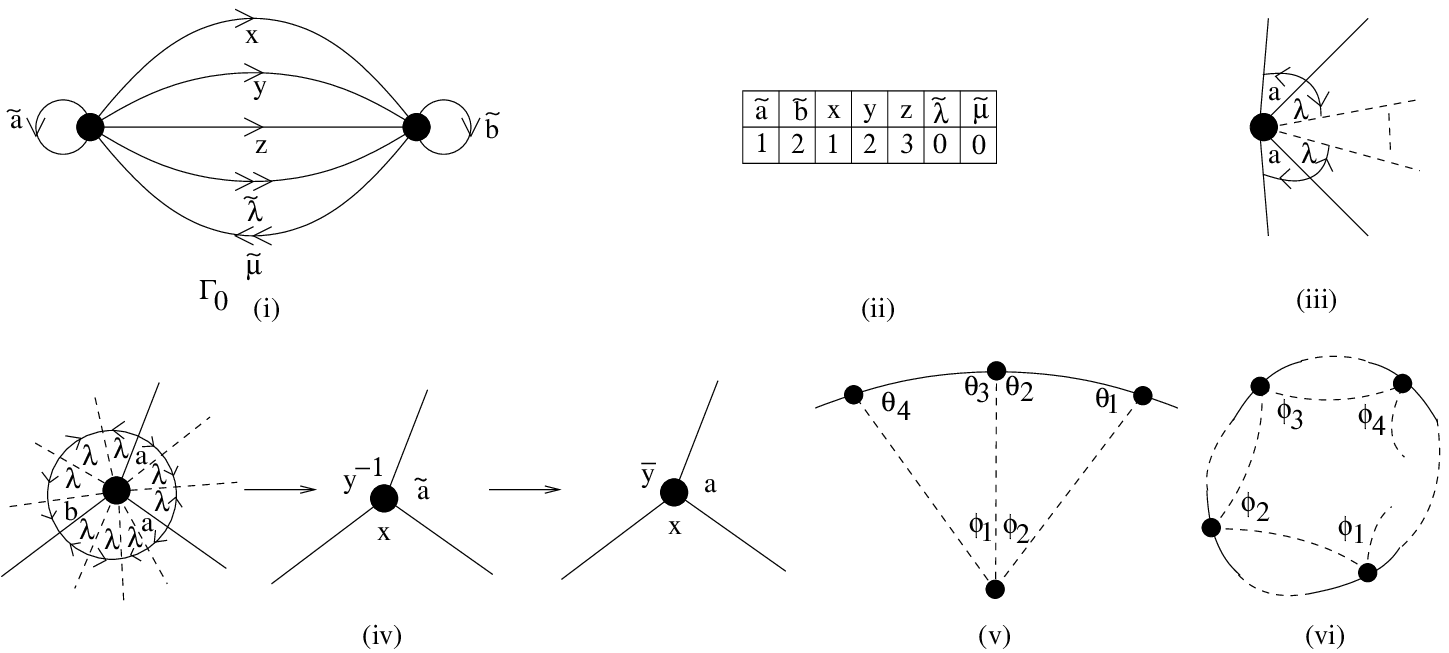}
\end{center}
\caption{}
\end{figure}

\textit{Proof}.  Both statements follow from the fact that the sum of the corner labels is congruent to $0 \, \mod 5$ together with Lemma 3.1 for (ii)
and the fact that, since $\bs{D}$ is reduced, no adjacent corner labels are inverse to each other (that is, no sublabels of the form $aa^{-1},bb^{-1},\lambda_i\lambda_i^{-1}$). $\Box$

\smallskip

We amend $\bs{D}$ as follows.  Delete all vertices of degree 2 and remove all edges that are not $(b,a)$-edges relative to any region, in so doing merging the adjacent regions.  This results in the diagram $\bs{K}$.  

\begin{lemma}%3.3
\textit{If $v \in \bs{K}$ then} $d(v) \geq 3$.
\end{lemma}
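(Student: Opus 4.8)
The plan is to analyse what happens at a vertex $v$ of $\bs{K}$ by tracking where it came from under the amendment move that produces $\bs{K}$ from $\bs{D}$. Recall $\bs{K}$ is obtained by first deleting all degree-$2$ vertices of $\bs{D}$ and then erasing every edge that is not a $(b,a)$-edge relative to some region, merging the regions on either side. So a vertex $v$ of $\bs{K}$ is in particular a vertex of $\bs{D}$ of degree $d_{\bs{D}}(v) > 2$, and by Lemma 3.2(ii) its $\bs{D}$-label $l(v)$ contains at least three occurrences of letters from $\{a^{\pm1},b^{\pm1}\}$. The point is that each surviving edge at $v$ in $\bs{K}$ is a $(b,a)$-edge relative to one of its two regions, and the endpoints of such an edge carry corner labels from $\{a^{\pm1},b^{\pm1}\}$ on the relevant side; so the degree of $v$ in $\bs{K}$ is governed by how many of these $a,b$-corners at $v$ actually lie on $(b,a)$-edges that are retained.

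First I would recall the local structure of a region from Figure 1(i): reading the word $buau^{-1}\lambda_{n-1}^{-1}\cdots u^{-1}\lambda_1^{-1}u^{-1}$ around $\Delta$, the only place two consecutive corners are both in $\{a^{\pm1},b^{\pm1}\}$ is across the edge separating the corner labelled $b$ from the corner labelled $a$ — that single edge per region is precisely the $(b,a)$-edge, and it is the only edge retained in the interior from that region. Every other edge of $\Delta$ has at least one corner label in $\{\lambda_i^{\pm1}\}$, i.e. is a $t^0$-corner on at least one side. Next I would argue that consecutive corner labels of $v$ in $\bs{D}$ cannot both be deleted in a way that drops the local degree below $3$: between two successive $\{a,b\}$-corners at $v$ there must be edges, and by the count from Lemma 3.2(ii) there are at least three $\{a,b\}$-corners, hence at least three ``$(b,a)$-corner positions'' at $v$; I then need to see that enough of the incident edges realise these as genuine retained $(b,a)$-edges. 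The cleanest route is: show that at $v$, between consecutive $\{a,b\}$-corners, the intervening edges (all $t^0$ on the $v$-side, i.e.\ $\lambda/\mu$-corners) get deleted and the two flanking $\{a,b\}$-corners get amalgamated across the deletion into a single retained edge of $\bs{K}$ — so the number of edges at $v$ in $\bs{K}$ equals the number of maximal runs of $\{a,b\}$-corners, which is at least... here one must be slightly careful, because a priori the three $\{a,b\}$-corners could bunch into fewer runs. This is where Lemma 3.1 enters: $l(v)$ cannot be $(\lambda\mu)^{\pm k}$, and combined with the reducedness condition (no $aa^{-1}$, $bb^{-1}$, $\lambda_i\lambda_i^{-1}$) and the mod-$5$ constraint on corner sums, one rules out the configurations where all the $\{a,b\}$-letters sit in one or two runs that would collapse to a vertex of degree $\leq 2$ in $\bs{K}$.

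I expect the main obstacle to be the bookkeeping in that last step: verifying that the amendment cannot produce a degree-$1$ or degree-$2$ vertex. A degree-$0$ vertex is impossible since $\bs{K}$ is a connected spherical diagram with regions (the three-or-more $\{a,b\}$-occurrences force at least one retained edge at $v$). Ruling out degree $1$ amounts to showing $v$ cannot be an endpoint of a single retained $(b,a)$-edge with all other incident edges erased — which would force $l(v)$ to have exactly one $\{a,b\}$-letter or a pair collapsing to one, contradicting Lemma 3.2(ii). Ruling out degree $2$ is the delicate case: one must show that if only two retained $(b,a)$-edges meet at $v$, then $l(v)$ is forced into a shape excluded by Lemma 3.1 (essentially a $(\lambda\mu)$-type word around the surviving corners), or else the mod-$5$ sum of corner labels fails. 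I would handle this by a short case check on the cyclic word $l(v)$ restricted to its $\{a,b\}$-subword: with at least three such letters, the partition into runs separated by $\lambda/\mu$-blocks yields at least three retained edges unless two of the letters are forced adjacent in $l(v)$ with no intervening edge, which the reducedness hypothesis (no $aa^{-1}$ etc.) together with the region structure of Figure 1(i) does not actually permit while keeping the corner-sum $\equiv 0 \pmod 5$. Once these two exclusions are in place, $d_{\bs{K}}(v)\geq 3$ follows, completing the proof.
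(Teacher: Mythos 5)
Your starting point is the same as the paper's (Lemma 3.2(ii) gives at least three occurrences of $a^{\pm 1},b^{\pm 1}$ in $l(v)$), but the counting step you build on it is wrong. You claim that the degree of $v$ in $\bs{K}$ equals the number of \emph{maximal runs} of $\{a^{\pm1},b^{\pm1}\}$-corners in $l(v)$, and then propose to use Lemma 3.1, reducedness and the mod-$5$ condition to rule out the three occurrences "bunching" into fewer than three runs. Neither half of this works. Adjacent $\{a,b\}$-corners at a vertex genuinely occur and cannot be excluded: the labels $\tilde{a}\tilde{a}z\tilde{\mu}$, $\tilde{a}^5$, $\tilde{b}^5$ appear explicitly in Lemma 3.4, and e.g.\ $l(v)=\tilde{a}^5$ has a single run of $a$-corners yet $d(v)=5$ in $\bs{K}_0$ — so the run count is not the degree, and a proof that tried to force "at least three runs" would be proving a false statement in these cases.

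The correct (and much shorter) argument, which is the paper's, counts occurrences rather than runs: each corner of $v$ labelled $a^{\pm1}$ or $b^{\pm1}$ is flanked in its own region by that region's $(b,a)$-edge, which is incident at $v$ and is retained in $\bs{K}$; and distinct occurrences give distinct retained edges. For the injectivity, note that a single edge $e$ at $v$ could serve two occurrences only if $e$ were the $(b,a)$-edge of both regions adjacent to it, and a check of the orientations/labels of Figure 1(i) shows the two corners of $v$ across such an edge would then be mutually inverse ($aa^{-1}$ or $bb^{-1}$), contradicting the fact that $\bs{D}$ is reduced. Hence $d_{\bs{K}}(v)$ is at least the number of occurrences of $a^{\pm1},b^{\pm1}$ in $l(v)$, which is at least $3$ by Lemma 3.2(ii). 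Your proposal also never actually carries out its case check (it is a plan with acknowledged gaps), so as written it does not establish the lemma.
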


\textit{Proof}. We see from Lemma 3.2 that  $l(v)$ contains at least three occurrences of $a^{\pm 1}, b^{\pm 1}$ and each such occurrence contributes uniquely to $d(v)$. $\Box$

\smallskip

We claim that 
$\bs{K}$ contains a subdiagram $\bs{K}_0$ with the following properties: (1) $\bs{K}_0$ has connected interior and is simply connected; and (2) every connected component of $\bs{K}_0 \setminus \bs{K}_0^{(1)}$ is homeomorphic to an 
open disc.  If $\bs{K}$ satisfies these two properties then take $\bs{K}=\bs{K}_0$.  If not then $\bs{K} \backslash \bs{K}^{(1)}$ has a connected component $\bs{L}_1$ satisfying (1) which fails to satisfy (2).  (The merging of 
regions may produce open annuli.)  Consider $\bs{K} \backslash \bs{L}_1$.  It is the disjoint union of subdiagrams at least one of which $\bs{L}_2$, say, satisfies (1).  If $\bs{L}_2$ satisfies (2) then put $\bs{L}_2 = \bs{K}_0$; if not then repeat the argument with $\bs{L}_2 \backslash \bs{L}_2^{(1)}$ instead of $\bs{K} \backslash \bs{K}^{(1)}$ and so on.  This
procedure will terminate in a finite number of steps with a subdiagram $\bs{K}_0$ satisfying conditions (1) and (2).
Observe that if $\Delta$ is a region of $\bs{K}_0$ then it follows from Lemma 3.1 that any 2-segment in $\Delta$ when regarded as a region of $\bs{D}$ will have its endpoints on $\partial \Delta$.  (Recall that a \textit{2-segment}
is a segment where endpoints have degree $>2$ and whose intermediate vertices each have degree 2.)

\newpage
\begin{figure}
\begin{center}
\psfig{file=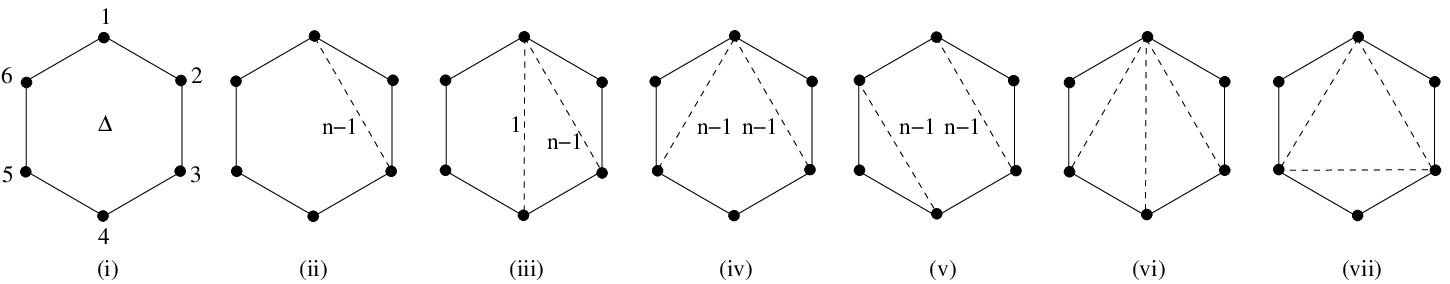}
\end{center}
\caption{}  
\end{figure}

The corner labels of $\bs{K}_0$ are obtained by taking the product of the corner labels of $\bs{D}$ used in forming each corner of $\bs{K}_0$.  (An example is shown in Figure 2(iv).)  
Since each corner of $\bs{K}_0$
is between two $(b,a)$-edges it follows that the corner labels of $\bs{K}_0$ are:
\begin{equation}%3.1
\renewcommand{\arraystretch}{1.25}
\begin{array}{ll}
\tilde{a} = a( \lambda \mu )^{k_1} & \text{(odd length)}\\
\tilde{b} = ( \mu \lambda )^{k_2} b & \text{(odd length)}\\
\tilde{\lambda} = ( \lambda \mu )^{k_3} \lambda & \text{(odd length)}\\
x = \tilde{a} \lambda & \text{(even length)}\\
y = \lambda \tilde{b} & \text{(even length)}\\
z = \tilde{a} \lambda \tilde{b} & \text{(odd length)}
\end{array}
\renewcommand{\arraystretch}{1}
\end{equation}
where $k_i \geq 0$ ($1 \leq i \leq 3$).  The star graph $\Gamma_0$ for $\bs{K}_0$ is given by Figure 2(i), and the table in Figure 2(ii) gives the power of $t$ each corner label represents.  Observe that $( \lambda \mu )^k$ for $k \geq 1$ cannot be a corner label in $\bs{K}_0$
for otherwise $\bs{D}$ would contain a subdiagram of the form shown
in Figure 2(iii) and this contradicts \textbf{A1} since after \textit{bridge moves} and cancellation it would be possible to reduce the number of regions of $\bs{D}$ by at least two.

\begin{lemma}%3.4
\textit{Let $v \in \bs{K}_0$.  If $d(v) < 6$ then $l(v)$ is one of the following:}
\[
\renewcommand{\arraystretch}{1.5}
\begin{array}{rlllll}
(\textrm{i})&d(v)=3:&\tilde{a} xy^{-1}&\tilde{b} \tilde{\mu} z\\
(\textrm{ii})&d(v)=4:&\tilde{a} \tilde{a} z \tilde{\mu}&\tilde{b} \tilde{b} x^{-1} y\\
(\textrm{iii})&d(v)=5:&\tilde{a} \tilde{a} \tilde{a} \tilde{a} \tilde{a}&\tilde{b} \tilde{b} \tilde{b} \tilde{b} \tilde{b}&\tilde{a} zx^{-1} y \tilde{\mu}&\tilde{b} x^{-1} \tilde{\lambda} z^{-1} y
\end{array}
\renewcommand{\arraystretch}{1}
\]
\end{lemma}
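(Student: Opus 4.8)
The plan is a finite enumeration driven by the star graph $\Gamma_0$ of Figure 2(i) together with one numerical side condition. Recall that the label $l(v)$ of a vertex $v$ of $\bs{K}_0$ is, up to cyclic permutation and inversion, a reduced closed admissible path of length $d(v)$ in $\Gamma_0$, its successive edges being corner labels among $\tilde{a},\tilde{b},\tilde{\lambda},x,y,z$ and their inverses (with $\tilde{\mu}=\tilde{\lambda}^{-1}$). This list of possible corner labels already incorporates the bridge-move argument, given above, that excludes $(\lambda\mu)^{\pm k}$ as a corner label of $\bs{K}_0$; and ``reduced'' here means no corner label is immediately followed by its inverse, since otherwise a bridge move and cancellation would reduce the number of regions of $\bs{D}$, contradicting \textbf{A1}. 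The side condition is that, reading off the power of $t$ carried by each corner from the table of Figure 2(ii) --- namely $1,2,0,1,2,3$ for $\tilde{a},\tilde{b},\tilde{\lambda},x,y,z$ respectively, and their negatives modulo $5$ for the inverse corners --- the sum of the $t$-powers occurring in $l(v)$ must be $\equiv 0 \pmod 5$; this congruence holds around every vertex of $\bs{D}$ and is preserved by the merging of regions that produces $\bs{K}$ and $\bs{K}_0$.

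Since every vertex of $\bs{K}_0$ has degree at least $3$ (Lemma 3.3), the statement is established by treating $d(v)=3,4,5$ in turn. In each case one writes down, directly from $\Gamma_0$, all reduced closed admissible paths of that length up to cyclic rotation, discards those whose $t$-power sum is $\not\equiv 0 \pmod 5$, and checks that the survivors are exactly the words displayed in the statement. For $d(v)=3$ this is short and leaves only $\tilde{a}xy^{-1}$ and $\tilde{b}\tilde{\mu}z$; the case $d(v)=4$ is barely longer and produces $\tilde{a}\tilde{a}z\tilde{\mu}$ and $\tilde{b}\tilde{b}x^{-1}y$.

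The case $d(v)=5$ is where essentially all the effort lies, and the main obstacle is to carry out the enumeration without omission. I would run it as a depth-first traversal of $\Gamma_0$: extend a partial path one edge at a time, pruning as soon as the path backtracks or its running $t$-exponent can no longer be completed to $0$ modulo $5$ by an edge still available at the current vertex, and recording every path that closes up after exactly five edges. This leaves precisely $\tilde{a}^5$, $\tilde{b}^5$, $\tilde{a}zx^{-1}y\tilde{\mu}$ and $\tilde{b}x^{-1}\tilde{\lambda}z^{-1}y$, which together with the cases $d(v)=3,4$ proves the lemma. As a check against a missed case, I would recompute the $d(v)=5$ list a second way, splitting into subcases according to the number of $\tilde{\lambda}^{\pm 1}$-edges (the only edges of $t$-weight $0$) appearing in $l(v)$ and treating each subcase separately.
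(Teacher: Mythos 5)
There is a genuine gap: your enumeration criteria are too weak to produce exactly the list in the lemma. You admit as candidates all closed paths in $\Gamma_0$ that avoid immediate inverse pairs $\theta\theta^{-1}$ and have $t$-exponent sum $\equiv 0 \pmod 5$, but $\Gamma_0$ has only the two vertices coming from $u,u^{-1}$, so admissibility in $\Gamma_0$ only records which side of a $(b,a)$-edge each corner label begins and ends on. It does not exclude adjacencies such as $\tilde{a}\tilde{\lambda}$, $\tilde{a}y$, $\tilde{\lambda}x^{-1}$, $yz^{-1}$, etc., which are perfectly good paths of length $2$ in $\Gamma_0$. Consequently your depth-first search would return, for example, the degree-$3$ labels $\tilde{a}\tilde{\lambda}x^{-1}$ (exponent sum $1+0-1=0$) and $\tilde{a}yz^{-1}$ (exponent sum $1+2-3=0$), neither of which is $\tilde{a}xy^{-1}$ or $\tilde{b}\tilde{\mu}z$ up to cyclic permutation and inversion, and similar spurious labels appear in degrees $4$ and $5$. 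Your second ``check'' (recounting by the number of $\tilde{\lambda}^{\pm1}$-edges) uses the same criteria and so would not detect the problem.

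The missing ingredient is precisely the extra step in the paper's proof: using equations (3.1) one shows that fourteen specific length-$2$ words (and their inverses), namely $\tilde{a}\tilde{\lambda}$, $\tilde{a}y$, $\tilde{a}^{-1}x$, $\tilde{a}^{-1}z$, $\tilde{b}y^{-1}$, $\tilde{b}z^{-1}$, $\tilde{b}^{-1}x^{-1}$, $\tilde{b}^{-1}\tilde{\mu}$, $\tilde{\lambda}x^{-1}$, $\tilde{\lambda}\mu$, $\tilde{\mu}y$, $\tilde{\mu}\tilde{\lambda}$, $x^{-1}z$, $yz^{-1}$, cannot occur as sublabels of any $l(v)$ in $\bs{K}_0$. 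The reason is geometric, not star-graph-theoretic: every corner of $\bs{K}_0$ lies between two $(b,a)$-edges, and if, say, $\tilde{a}$ and $\tilde{\lambda}$ were adjacent corner labels then by (3.1) their product $a(\lambda\mu)^{k_1+k_2}\lambda$ is itself a single corner label $x$, so the edge separating them would not be a $(b,a)$-edge, contradicting the construction of $\bs{K}_0$. Your proposal does invoke the bridge-move exclusion of $(\lambda\mu)^{\pm k}$ and the exclusion of $\theta\theta^{-1}$, but neither handles these mixed adjacencies; without this additional sieve the enumeration does not collapse to the six labels of the statement.
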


\textit{Proof}.  This follows from checking all reduced closed paths in $\Gamma _0$ whose exponent sum is 0 modulo 5 together with the fact that equations (3.1) can be used to show that the following paths of length 2 together with their inverses do not occur as sublabels:
$\tilde{a} \tilde{\lambda}$; $\tilde{a} y$; $\tilde{a}^{-1} x$; $\tilde{a}^{-1} z$; $\tilde{b} y^{-1}$; $\tilde{b} z^{-1}$; $\tilde{b}^{-1} x^{-1}$;
$\tilde{b}^{-1} \tilde{\mu}$; $\tilde{\lambda} x^{-1}$; $\tilde{\lambda} \mu$; $\tilde{\mu} y$; $\tilde{\mu} \tilde{\lambda}$; $x^{-1} z$; $yz^{-1}$.
For example $\tilde{a} \tilde{\lambda} = a( \lambda \mu )^{k_1} ( \lambda \mu )^{k_2} \lambda = a(\lambda \mu)^{k_1+k_2} \lambda = \tilde{a} \lambda = x$ after rewriting using equations (3.1).
Indeed if $\tilde{a}$ and $\tilde{\lambda}$ are adjacent corner labels then this would imply the existence of a non($b,a$)-edge in $\bs{K}_0$. $\Box$

\newpage
\begin{figure}
\begin{center}
\psfig{file=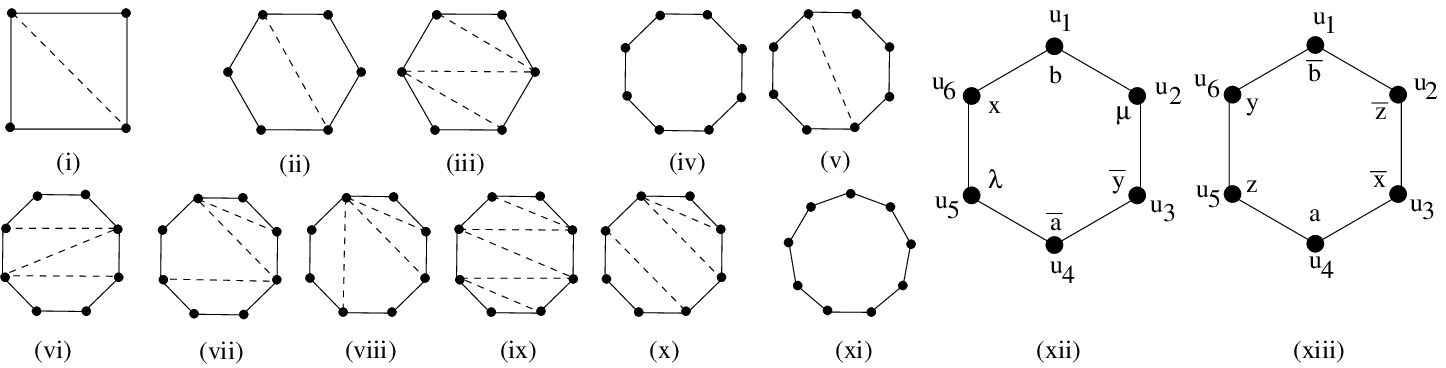}
\end{center}
\caption{}  
\end{figure}

\textbf{Convention}:  We will usually write $a,b,\lambda ,\mu$ for $\tilde{a}, \tilde{b}, \tilde{\lambda}, \tilde{\mu}$ simply for ease of presentation.
For example if $v \in \bs{D}$ has label $l(v)=a \lambda \mu a \lambda \mu \lambda b^{-1} \mu \lambda \mu$ then in $\bs{K}_0$ this transforms uniquely 
to $(a \lambda \mu )(a \lambda \mu \lambda )(b^{-1} \mu \lambda \mu ) = \tilde{a} xy^{-1}$ which we write as $axy^{-1}$ or as $a x \bar{y}$ in the figures. This is illustrated in Figure 2(iv).
Moreover, when drawing diagrams we use $\bar{\theta}$ for $\theta^{-1}$ where $\theta \in \{ a,b,x,y,z \}$.

\smallskip

We turn now to the regions of $\bs{K}_0$.  The edges or 2-segments deleted in forming $\bs{K}$ from $\bs{D}$ will be 
referred to as \textit{shadow edges} and will usually be denoted by dotted edges in our figures.  The number of edges in a 2-segment will be called its \textit{length}.  
Much use will be made of the fact that \textit{the number of edges in a region of $\bs{D}$ is} $n+1$.  
By \textit{length contradiction} we mean either a contradiction to this fact or to the fact that $n \geq 7$.

We will also use the fact that no region of $\bs{K}_0$ can contain the configuration of edges and shadow edges shown in Figure 2(v)-(vi).  To see this observe in Figure 2(v) that $\{ \phi_1, \phi_2 \} \subseteq \{ \lambda , \mu \}$ forcing each $\theta_i \in \{ a^{\pm 1}, b^{\pm 1} \}$ and any attempt at labelling forces $\theta_2 \theta_3 = aa^{-1}$ or $bb^{-1}$, a contradiction to $\bs{D}$ being reduced.  In Figure 2(vi) each
$\phi_i \in \{ \lambda , \mu \}$ and this produces a region in $\bs{D}$ without corner label $a^{\pm 1}$ or $b^{\pm 1}$.  We refer to the existence of each of these situations as a \textit{basic labelling contradiction}.

If $\Delta$ is a region of $\bs{K}_0$ then $d(\Delta)$ denotes the degree of $\Delta$, that is, the number of sides $\Delta$ has.
For example suppose that $\Delta \in \bs{K}_0$ and $d(\Delta)=6$.  If $\Delta$ contains no shadow edges as in Figure 3(i) then we obtain the length contradiction $n+1=6$.
Let $(pq)$ denote the shadow edge with endpoints $p$ and $q$.  If $\Delta$ contains exactly one shadow edge $e$ then, working modulo cyclic permutation and inversion, $e \in \{ (13),(14)\}$.
But $e=(13)$ yields the length contradiction $n+1=n+3$ as shown in Figure 3(ii) since the length of (13) must be $n-1$.
If $\Delta$ contains exactly two shadow edges $e_1$ and $e_2$ then $(e_1,e_2) \in \{( (13),(14) ),( (13),(15) ), ( (13),(46) )\}$.  But $(e_1,e_2)=( (13),(14) )$ yields the length
contradiction $n+1=4$; and $(e_1,e_2)=( (13),(15) )$ or
$( (13),(46) )$ implies $n+1=2n$ (see Figure 3(iii)-(v)).  Finally if $\Delta$ contains three shadow
edges $e_1$, $e_2$ and $e_3$ then $(e_1,e_2,e_3)=((13),(14),(15))$ or $((13),(15),(35))$ yielding a basic labelling contradiction (see Figure 3(vi)-(vii)); or $(e_1,e_2,e_3)=((13),(14),(46))$.

Similar elementary but somewhat lengthy arguments can be used to prove the following.
Full details are given in the Appendix.

\begin{lemma}%\textbf{Lemma 3.5} \quad
\textit{Let $\Delta$ be a region of $\bs{K}_0$.  If $d(\Delta) \leq 9$ then $d(\Delta) \in \{ 4,6,8,9 \}$ and $\Delta$ is given by Figure 4(i)-(xi).}
\end{lemma}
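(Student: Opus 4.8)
The plan is to push the style of case analysis just carried out for $d(\Delta)=6$ uniformly through every value $d:=d(\Delta)$ with $3\le d\le 9$. Fix such a $d$ and a region $\Delta$ of $\bs{K}_0$ of degree $d$. Reinserting into $\Delta$ all the shadow edges that were deleted in passing from $\bs{D}$ to $\bs{K}$ recovers a subdivision of the $d$-gon $\Delta$ into regions of $\bs{D}$, and since every region of $\bs{D}$ has exactly $n+1$ edges and $n\ge 7$, each piece of this subdivision is an $(n+1)$-gon. First I would pin down the combinatorics of the subdivision: each shadow edge is a single $\bs{D}$-edge or a $2$-segment, it has both endpoints on $\partial\Delta$ (Lemma 3.1), the shadow edges are pairwise non-crossing, and --- after discarding, by a basic labelling contradiction of the Figure 2(vi) type, any diagram in which a region of $\bs{D}$ is bounded entirely by shadow edges, and recalling that no corner label of $\bs{K}_0$ equals $(\lambda\mu)^{\pm k}$ --- the number $c$ of shadow edges satisfies $c\le d-3$, so $c\le 6$ when $d=9$. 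Up to the dihedral symmetry of the $d$-gon (``cyclic permutation and inversion''), this leaves only a short explicit list of chord configurations for each $d\le 9$, which I would enumerate.

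For each configuration I would write down the length equations it imposes. A piece of the subdivision bounded by $p$ of the $d$ real sides of $\Delta$ and by shadow edges of lengths $\ell_1,\dots,\ell_q$ (each $\ell_i\ge 1$, a single deleted edge having length $1$) must satisfy $p+\ell_1+\cdots+\ell_q=n+1$; in the extreme case $c=0$ this reads $d=n+1\ge 8$, and in general comparing the equations coming from the different pieces of one configuration pins down, or over-determines, $n$. Using $n\ge 7$ (so every piece has at least eight sides in $\bs{D}$) together with the fact that the sum of the $t$-powers around each region and each vertex is congruent to $0$ modulo $5$, a \textit{length contradiction} eliminates the vast majority of configurations --- exactly as in the $d=6$ analysis, where the cases with $0$ shadow edges, with $1$ shadow edge, and two of the three $2$-shadow configurations were disposed of this way. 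In particular this already kills every configuration with $d\in\{3,5\}$ and most of those with $d=7$.

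The configurations that survive the length count I would then test against the \textit{basic labelling contradictions} of Figure 2(v)--(vi) and against the star graph $\Gamma_0$ of Figure 2(i): the corner labels that $\bs{K}_0$ inherits at the vertices of such a $\Delta$ must be admissible closed paths in $\Gamma_0$ of exponent sum congruent to $0$ modulo $5$, so they may contain none of the length-$2$ sublabels forbidden in the proof of Lemma 3.4, and no vertex may contradict Lemma 3.4 itself. This eliminates the remaining spurious configurations, and what is left is exactly the assertion to be proved: $d(\Delta)\in\{4,6,8,9\}$ and $\Delta$ is one of the eleven diagrams Figure 4(i)--(xi).

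I expect the main obstacle to be bookkeeping rather than any isolated conceptual point. For $d=8$ and especially $d=9$ the number of non-crossing chord systems is sizeable, and it is compounded by the freedom for a shadow ``edge'' to be a $2$-segment of various admissible lengths --- themselves constrained, since by Lemma 3.1 a $2$-segment cannot read $(\lambda\mu)^{\pm k}$ for $k\ge 2$ --- so the linear bookkeeping has to be done without omission; and because of the dihedral identifications one must be careful not to overlook a configuration, nor to throw away a genuine case as a duplicate. These verifications are elementary but long, which is why they are carried out in the Appendix.
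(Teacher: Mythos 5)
Your proposal follows essentially the same route as the paper: for each degree $d\leq 9$ one enumerates, up to cyclic permutation and inversion, the possible non-crossing shadow-edge (chord) configurations in $\Delta$, and eliminates them using the length count $p+\ell_1+\cdots+\ell_q=n+1$ with $n\geq 7$ together with the basic labelling contradictions, which is exactly the elementary-but-lengthy case check the paper relegates to the Appendix (degrees $3,4$ immediate, $6$ in Section 3, and $5,7,8,9$ enumerated there). The only cosmetic difference is your explicit a priori bound $c\leq d-3$ on the number of shadow edges, which the paper does not state but which is harmless and implicit in its exhaustive listing.
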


For example it follows from Lemma 3.5 that if $d(\Delta)=6$ then up to cyclic permutation and inversion $\Delta$ is given by Figure 4(xii)-(xiii).
In particular, if $\Delta$ contains an $(a,b)$-edge or $(x,y)$-edge then $d(\Delta) \geq 8$.

We will use similar curvature arguments to those used in [8].
Briefly, each corner at a vertex of degree $d$ is given the angle $\frac{2 \pi}{d}$ and so the curvature of each vertex is 0.
Thus, if $\Delta$ is an $m$-gon of $\bs{K}_0$ and the degrees of the vertices of $\Delta$ are $d_i$ ($1 \leq i \leq m$), then the \textit{curvature} of $\Delta$ is given by
\begin{equation}%3.2
c(\Delta)=c(d_1,\ldots,d_m)=(2-m) \pi + 2 \pi \sum_{i=1}^m \frac{1}{d_i}.
\end{equation}
(Observe that if $\rho$ is any permutation of $\{ 1, \ldots, m \}$ then $c(\Delta)=c(d_{\rho (1)}, \ldots, d_{\rho (m)} )$.  This fact will be used throughout without explicit reference.)
A list of some of the $c(d_1,\ldots,d_m)$ used in the paper is given in Tables 1 and 2 below for the reader's benefit.

\begin{table}[h]
\[
\renewcommand{\arraystretch}{1.5}
\begin{array}{|l|l|l|}
\hline
c(3,3,3,3)=\frac{2 \pi}{3}&c(3,3,5,5)=\frac{2 \pi}{15}&c(3,4,5,5)=-\frac{\pi}{30}\\
\hline
c(3,3,3,4)=\frac{\pi}{2}&c(3,3,5,6)=\frac{\pi}{15}&c(3,4,5,6)=-\frac{\pi}{10}\\
\hline
c(3,3,3,5)=\frac{2 \pi}{5}&c(3,3,5,7)=\frac{2 \pi}{105}&c(3,4,5,7)=-\frac{31 \pi}{210}\\
\hline
c(3,3,3,6)=\frac{\pi}{3}&c(3,3,6,6)=0&c(3,4,6,6)=-\frac{\pi}{6}\\
\hline
c(3,3,4,4)=\frac{\pi}{3}&c(3,4,4,5)=\frac{\pi}{15}&c(3,5,5,5)=-\frac{2 \pi}{15}\\
\hline
c(3,3,4,5)=\frac{7 \pi}{30}&c(3,4,4,6)=0&c(4,4,4,6)=-\frac{\pi}{6}\\
\hline
c(3,3,4,6)=\frac{\pi}{6}&c(3,4,4,7)=-\frac{\pi}{21}&c(4,4,6,6)=-\frac{\pi}{3}\\
\hline
\end{array}
\renewcommand{\arraystretch}{1}
\]
\caption{}
\end{table}

\begin{table}[h]
\[
\renewcommand{\arraystretch}{1.5}
\begin{array}{|l|l|}
\hline
c(3,3,3,3,3,4)=-\frac{\pi}{6}&c(3,3,3,4,4,4)=-\frac{\pi}{2}\\
\hline
c(3,3,3,3,4,4)=-\frac{\pi}{3}&c(3,3,3,4,4,5)=-\frac{3 \pi}{5}\\
\hline
c(3,3,3,3,4,5)=-\frac{13 \pi}{30}&c(3,3,3,4,4,6)=-\frac{2 \pi}{3}\\
\hline
c(3,3,3,3,4,6)=-\frac{\pi}{2}&c(3,3,4,4,4,4)=-\frac{2 \pi}{3}\\
\hline
\end{array}
\renewcommand{\arraystretch}{1}
\]
\caption{}
\end{table}

%%%%%%%%%%%%%%%%%%%%%%%%%%%%%%%%%%%%%%%%%%%%%%%%%%%%%%%%%%%%%%%%%%%%%%%%%%%%%%%%%%%%%%%%%%%%%%%%%%%%%%%%%%%%%%%%%%%%%%%%%%%%%%%%%%%%%%%%%%%%%%%%%%%%%%%%%%%%%%%%%%%%%%%%

\section{Proof of Theorem 1.2}%4

It was assumed by way of contradiction that there is a reduced spherical picture $\bs{P}$ over $\mathcal{P}_n$.
As described in Section 3, the dual $\bs{D}$ of $\bs{P}$ was amended to produce the spherical diagram $\bs{K}$ and then the subdiagram $\bs{K}_0$.

Suppose first that $\bs{K}_0=\bs{K}$.  By Lemma 3.5, $\bs{K}_0$ has no regions of degree 5 or 7 and, since the curvature of the vertices are 0, the total curvature $c(\bs{K}_0)$ is given by
\[
c(\bs{K}_0)=\sum_{d(\Delta)=4} c(\Delta) +
\sum_{d(\Delta)=6} c(\Delta) +
\sum_{d(\Delta) \geq 8} c(\Delta).
\]
Now suppose that $\bs{K}_0 \neq \bs{K}$.  In this case delete all vertices and edges in $\bs{K} \backslash \bs{K}_0$ to produce a spherical diagram $\bs{K}_1$ consisting of the union of $\bs{K}_0$ and a single region $\Delta_0$ 
(which has essentially been obtained by merging all the regions of $\bs{K}$ not in $\bs{K}_0$).  Note that Lemma 3.5 holds for $\bs{K}_1$ and so
\[
c(\bs{K}_1)=\sum_{\substack{d(\Delta)=4\\\Delta \neq \Delta_0}} c(\Delta) +
\sum_{\substack{d(\Delta)=6\\\Delta \neq \Delta_0}} c(\Delta) +
\sum_{\substack{d(\Delta) \geq 8\\\Delta \neq \Delta_0}} c(\Delta) + c(\Delta_0).
\]
An elementary argument using Euler's formula for the sphere shows that $c(\bs{K}_0) = c(\bs{K}_1) = 4 \pi$ and it is this fact we seek to contradict.

The first step, given in detail in Section 5, is to define a \textit{positive curvature distribution scheme} for regions of degree 4.  That is, regions $\Delta \neq \Delta_0$ are located for which $c(\Delta) >0$, and so $d(\Delta)=4$, and $c(\Delta)$ is distributed to near regions $\hat{\Delta}$ of $\Delta$.  (\textbf{Remark}.  Throughout the paper $\Delta$ or $\Delta_i$ will usually be used to denote regions from which positive curvature is initially transferred, and
$\hat{\Delta}, \hat{\Delta}_j$ regions that receive, and possibly distribute further, positive curvature.)

For the region $\hat{\Delta}$ define $c^{\ast}(\hat{\Delta})$ to equal $c(\hat{\Delta})$ plus all the positive curvature $\hat{\Delta}$ receives minus all the positive curvature $\hat{\Delta}$ distributes as a result of the positive curvature distribution scheme that has been defined.

After completion of the first step, the following is proved in Section 6.

\begin{proposition}%\textbf{Proposition 4.1} \quad
\textit{If $\bs{K}_0=\bs{K}$ then $c(\bs{K}_0) \leq \sum_{d(\hat{\Delta}) \geq 6}
c^{\ast} (\hat{\Delta})$; or if $\bs{K}_0 \neq \bs{K}$ then $c(\bs{K}_1) \leq \sum_{\substack{d(\hat{\Delta}) \geq 6\\
\hat{\Delta} \neq \Delta_0}} c^{\ast} (\hat{\Delta}) + c^{\ast} (\Delta_0)$.}
\end{proposition}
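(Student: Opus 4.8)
The plan is to exploit that the distribution scheme of Section~5 merely redistributes curvature among the regions and so preserves the total: $\sum_{\Delta} c^{\ast}(\Delta) = \sum_{\Delta} c(\Delta) = c(\bs{K}_0) = 4\pi$ in the first case, and $= c(\bs{K}_1) = 4\pi$ in the second, the sums running over all regions. By (3.3) together with Lemma~3.3 (every vertex of $\bs{K}_0$ has degree $\geq 3$), a region $\Delta$ of degree $m \geq 6$ satisfies $c(\Delta) \leq (2-m)\pi + 2\pi m/3 = 2\pi - m\pi/3 \leq 0$; and by Lemma~3.5 no region has degree $3$, $5$ or $7$. Hence the only regions that can carry positive curvature are those of degree $4$, which is exactly why the scheme transfers curvature only out of degree-$4$ regions. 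It therefore suffices to prove the single statement: \emph{after the scheme has been carried out, $c^{\ast}(\Delta) \leq 0$ for every region $\Delta$ of degree $4$ with $\Delta \neq \Delta_0$.} Granting this, one splits $\sum_{\Delta} c^{\ast}(\Delta) = 4\pi$ according to degree (and, in the second case, singles out $\Delta_0$) and discards the non-positive degree-$4$ terms, obtaining exactly the two displayed inequalities.

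To establish that statement I would fix a degree-$4$ region $\Delta \neq \Delta_0$ and write down its local data: the degrees $d_1,\dots,d_4$ of its four vertices, and for each of its four sides the label and degree of the region on the other side. By Lemma~3.5 the combinatorial type of $\Delta$ as a region of $\bs{K}_0$, together with its shadow edges, is forced to be Figure~4(i), and by Lemma~3.4 each vertex of degree $<6$ carries one of the few labels listed there; combined with the label restrictions of~(3.1) and the basic labelling and length contradictions of Section~3, this cuts the possible local pictures around $\Delta$ down to a finite list. For each picture on the list I would read off from the definition of the scheme the exact amount of curvature $\Delta$ sends out and the exact amount it can receive back --- the latter coming only across a side shared with a positively curved degree-$4$ region whose scheme sends curvature that way --- and then verify the numerical inequality $c(\Delta) + (\text{curvature in}) - (\text{curvature out}) \leq 0$ using the entries of Tables~1 and~2. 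When $c(\Delta)>0$ the scheme (to be set up in Section~5) makes $\Delta$ give away all of $c(\Delta)$, so the issue is that the curvature $\Delta$ can simultaneously receive must not exceed this; when $c(\Delta)\leq 0$ the issue is simply that $\Delta$ receives nothing in excess of what it passes on.

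The main obstacle --- and the reason Section~6 is devoted to this --- is getting the case analysis both complete and correct. One must be certain that the list of possible neighbours of a degree-$4$ region is exhaustive, which rests on the shadow-edge and $2$-segment bookkeeping, the constraint that the number of edges in a region of $\bs{D}$ is $n+1$, and the basic labelling contradictions; and one must push through the genuinely tight cases, where $c(\Delta)$ is only just positive --- for instance $\Delta$ of vertex-degree type $(3,3,5,7)$ with $c(\Delta)=2\pi/105$, or type $(3,3,5,6)$, or $(3,4,4,5)$ --- and $\Delta$ borders one or more further positively curved regions, so that the scheme in Section~5 has to be designed finely enough that no degree-$4$ region is left with $c^{\ast}>0$ even after all mutual transfers. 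Once every local type has been verified, the Proposition is immediate from the conservation identity of the first paragraph.
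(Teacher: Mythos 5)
Your opening reduction is correct and is exactly the paper's: the scheme only moves curvature between regions, so $\sum_{\Delta}c^{\ast}(\Delta)=\sum_{\Delta}c(\Delta)=c(\bs{K}_0)$ (resp.\ $c(\bs{K}_1)$), Lemma 3.5 excludes degrees $2,3,5,7$, only $4$-gons can have $c(\Delta)>0$, and the two displayed inequalities follow once every degree-$4$ region other than $\Delta_0$ ends with $c^{\ast}\leq 0$. The gap is that this last claim is the entire content of the proposition (it is what Lemmas 6.1 and 6.2 of the paper establish), and your plan for it would not go through as described. You propose to fix a $4$-gon, record only the degrees of its four vertices and the label/degree of each neighbouring region, and check $c+\mathrm{in}-\mathrm{out}\leq 0$, with the incoming curvature ``coming only across a side shared with a positively curved degree-$4$ region whose scheme sends curvature that way''. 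That premise is false for the scheme of Section 5: in Figures 17--19, 21--23 and 29--30 curvature reaches a $4$-gon along distribution paths of length $2$ or $3$, through intermediate $4$-gons that are themselves not positively curved but merely conduits, so whether and how much arrives across an edge depends on configurations two or three regions away, not on the immediate neighbour. Consequently the enumeration you describe is not the right finite list; the paper instead classifies the $4$-gons that can receive at all (the T24, T13 and T4 regions of Figure 33, plus the two exceptions from Figure 31), shows by vertex-degree and labelling arguments that only a handful of double-receipt coincidences can occur, and for those survivors has to \emph{extend} the scheme with further transfers (Figure 34) before the inequality holds --- i.e.\ the verification is not a pure check of a fixed scheme but forces additional distribution rules, which a ``read off and verify'' pass cannot supply.

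There is also a bookkeeping slip in your description of the two cases. If $c(\Delta)>0$ and the scheme sends out exactly $c(\Delta)$ while $\Delta$ receives $r>0$, then $c^{\ast}(\Delta)=r>0$ no matter how small $r$ is; so ``the curvature $\Delta$ can simultaneously receive must not exceed this'' is not the right condition --- such a region must either receive nothing or pass its receipts on as well. Conversely, when $c(\Delta)\leq 0$ it is not required that $\Delta$ ``receives nothing in excess of what it passes on'': the scheme deliberately parks amounts like $\pi/15$ or $\pi/30$ in negatively curved $4$-gons and lets $|c(\Delta)|$ absorb them (e.g.\ Figures 17(i), 21(iv), 29(vii)). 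The correct target is simply $c(\Delta)+\mathrm{in}-\mathrm{out}\leq 0$ in every configuration, and establishing that, with the case list made exhaustive and the exceptional Configurations A--F and Figure 34 amendments accounted for, is the substance that your proposal defers.
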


The second step of the proof, given in detail in Section 7, is to define a positive curvature distribution scheme for regions $\hat{\Delta}$ of degree 6.  That is, regions $\hat{\Delta} \neq \Delta_0$ of degree 6 are located for which
$c^{\ast} (\hat{\Delta}) > 0$ and $c^{\ast} (\hat{\Delta})$ is distributed to near regions of $\hat{\Delta}$.

After completion of the second step, the following is proved in Section 8.

\begin{proposition}%\textbf{Proposition 4.2} \quad
\textit{If $\bs{K}_0 = \bs{K}$ then $c(\bs{K}_0) \leq \sum_{d(\hat{\Delta}) \geq 8}
c^{\ast} (\hat{\Delta})$; or if $\bs{K}_0 \neq \bs{K}$ then
$c(\bs{K}_1) \leq \sum_{\substack{d(\hat{\Delta}) \geq 8\\\hat{\Delta} \neq \Delta_0}} c^{\ast} (\hat{\Delta}) +
c^{\ast} (\Delta_0)$.}
\end{proposition}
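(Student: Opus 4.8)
The plan is to treat Proposition 4.2 exactly as Proposition 4.1 was handled in Section~6, but one level higher: start from the conclusion of Proposition~4.1, apply the degree-$6$ distribution scheme announced above (and constructed in detail in Section~7), and then show that afterwards no region of degree $6$ — save possibly $\Delta_0$ when $\bs{K}_0 \neq \bs{K}$ — retains positive modified curvature. The second scheme only moves positive curvature between regions, so $\sum_\Delta c^*(\Delta)$ still equals $c(\bs{K}_0)=4\pi$ (resp. $c(\bs{K}_1)=4\pi$) with the updated $c^*$; moreover it transfers curvature out of degree-$6$ regions only, so $c^*(\Delta)$ is unchanged for every region of degree $4$, and hence the inequalities of Proposition~4.1 — namely $c(\bs{K}_0)\leq \sum_{d(\hat\Delta)\geq 6}c^*(\hat\Delta)$ and its $\bs{K}_1$-analogue — survive, their proof having used only that $\sum_{d(\Delta)=4}c^*(\Delta)\leq 0$ together with the absence of regions of degree $3$, $5$ and $7$ (Lemma~3.5).

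The crux is: if $d(\hat\Delta)=6$ and $\hat\Delta\neq\Delta_0$, then $c^*(\hat\Delta)\leq 0$ after the Section-7 scheme. Since a bare hexagon satisfies $c(d_1,\dots,d_6)\leq c(3,3,3,3,3,3)=0$ — with $c(3,3,3,3,3,4)=-\pi/6$, $c(3,3,3,3,4,4)=-\pi/3$, $c(3,3,4,4,4,4)=-2\pi/3$ quantifying the decay — a hexagon can have $c^*(\hat\Delta)>0$ only if it absorbed curvature from near degree-$4$ regions in Step~1. I would argue in three stages. (i) Using Lemma~3.4 and the admissible closed paths of $\Gamma_0$, enumerate the possible labels $l(\hat\Delta)$ of a degree-$6$ region and the configurations in which a near region of $\hat\Delta$ is a positively-curved quadrilateral; here the no-$(\lambda\mu)^k$-corner restriction, the fact that each region of $\bs{D}$ has exactly $n+1$ edges, and $n\geq 7$ (length and basic-labelling contradictions), together with Lemma~3.5, kill all but a short explicit list of configurations. (ii) Bound the curvature $\hat\Delta$ can receive in Step~1 in terms of its vertex degrees $d_1,\dots,d_6$, so that $c(\hat\Delta)$ plus this bound is already $\leq 0$ outside that list. (iii) For each surviving configuration, check that the degree-$6$ scheme moves the surplus out of $\hat\Delta$ and that it is ultimately absorbed by a region of degree $\geq 8$ — never re-deposited on a region of degree $\leq 6$ — so that $c^*(\hat\Delta)\leq 0$ after the transfers; this last point also confirms the scheme never puts curvature back on a quadrilateral, legitimising the remark above.

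Granting the claim, the conclusion is immediate: in $\sum_{d(\hat\Delta)\geq 6}c^*(\hat\Delta)$ (resp. $\sum_{d(\hat\Delta)\geq 6,\ \hat\Delta\neq\Delta_0}c^*(\hat\Delta)+c^*(\Delta_0)$) every term with $d(\hat\Delta)=6$, $\hat\Delta\neq\Delta_0$, is non-positive, so discarding those terms only weakens the bound, giving $c(\bs{K}_0)\leq \sum_{d(\hat\Delta)\geq 8}c^*(\hat\Delta)$ (resp. $c(\bs{K}_1)\leq \sum_{d(\hat\Delta)\geq 8,\ \hat\Delta\neq\Delta_0}c^*(\hat\Delta)+c^*(\Delta_0)$). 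The main obstacle is stages (i)--(iii): the exhaustive combinatorial verification that the degree-$6$ scheme drains every hexagon of positive curvature and always routes it to a region of degree at least $8$, which is where the fine structure of $\bs{K}_0$ — the star graph $\Gamma_0$, Lemma~3.5, the length and basic-labelling constraints — is used repeatedly, and where essentially all of Sections~7 and~8 resides. A secondary difficulty is the bookkeeping needed to ensure the two distribution schemes never move the same unit of curvature twice and that a hexagon which receives in Step~1 and gives in Step~2 is accounted for consistently.
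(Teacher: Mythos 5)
Your overall frame is the same as the paper's: Step 2 conserves the total $\sum_{d(\hat\Delta)\geq 6}c^{\ast}(\hat\Delta)$ because it moves curvature out of degree-$6$ regions only (never onto a $4$-gon), and the Proposition then follows once every degree-$6$ region other than $\Delta_0$ ends with $c^{\ast}\leq 0$, since the non-positive hexagon terms can be discarded. That reduction is exactly how the paper argues (Proposition 4.2 ``follows immediately from Lemma 8.1''). But your stage (iii) contains a genuine gap: you assert that the degree-$6$ scheme routes every surplus to a region of degree $\geq 8$ and is ``never re-deposited on a region of degree $\leq 6$''. This is false for the scheme of Section 7, and the paper is explicit that it cannot be arranged: there are exactly four configurations ($\hat{\Delta}_1$ of Figure 36(i) and (x), and $\hat{\Delta}_2$ of Figures 37(iv) and 38(iv)) in which the receiving region may itself have degree $6$. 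Consequently the statement one can actually prove is Lemma 8.1, whose third alternative allows $c^{\ast}(\hat\Delta)\in\{\pi/30,\pi/15\}$ to be passed to a degree-$6$ region $\Delta'$, with the burden of showing $c^{\ast}(\Delta')\leq 0$ after receipt. That verification is the real content of Section 8: one must show a hexagon receives from at most one hexagon, use Lemma 7.2 (the Configuration A/B hexagons receive nothing across the critical edge $(w_2,u_1)$), and run the vertex-degree/labelling analysis of Figure 39 with the edge bounds of Table 3 and Figure 35 to force $c^{\ast}(\Delta')\leq 0$ in every surviving case. By assuming the stronger routing property, your outline skips precisely the step that makes the proposition true.

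A secondary, smaller inaccuracy: you say Proposition 4.1's proof used only that $\sum_{d(\Delta)=4}c^{\ast}(\Delta)\leq 0$; in fact Lemma 6.2 is a region-by-region (and short-chain) statement, and the same granularity is needed again here, because the conclusion of Proposition 4.2 requires each hexagon's final $c^{\ast}$ to be non-positive, not merely the sum over hexagons. With stage (iii) corrected to the Lemma 8.1 trichotomy and the hexagon-to-hexagon case actually analysed, your argument would coincide with the paper's; as written, the decisive case is assumed away rather than proved.
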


After completion of the first two steps, for the third and final step the following is proved in Sections 10 and 11.

\begin{proposition}%\textbf{Proposition 4.2} \quad
\textit{If $d(\hat{\Delta}) \geq 8$ and $\hat{\Delta} \neq \Delta_0$ then
$c^{\ast}(\hat{\Delta}) \leq 0$.}
\end{proposition}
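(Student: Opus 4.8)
The plan is to bound $c^{\ast}(\hat{\Delta})$ for every region $\hat{\Delta}\neq\Delta_0$ with $d(\hat{\Delta})\ge 8$ by working through the possible configurations of $\hat{\Delta}$, controlled by Lemma 3.5 for the cases $d(\hat{\Delta})\in\{8,9\}$ and by a direct curvature estimate for $d(\hat{\Delta})\ge 10$. First I would dispose of the large-degree case: if $d(\hat{\Delta})=m\ge 10$ then, since every vertex of $\bs{K}_0$ has degree $\ge 3$ by Lemma 3.3, formula (3.2) gives $c(\hat{\Delta})\le (2-m)\pi+2\pi\cdot\frac{m}{3}=(2-\frac{m}{3})\pi\le -\frac{4\pi}{3}$, and one then has to check that the total positive curvature $\hat{\Delta}$ can receive through the first two distribution schemes (from degree-4 and degree-6 neighbours, across its $m$ edges) cannot exceed $\frac{4\pi}{3}$; this is a matter of multiplying the per-edge transfer bounds established in Sections 5 and 7 by $m$ and comparing, using that the transfers per edge are at worst a fixed small fraction of $\pi$ while $c(\hat{\Delta})$ decreases linearly in $m$. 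So the crux is $d(\hat{\Delta})\in\{8,9\}$.

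For $d(\hat{\Delta})=8$ and $d(\hat{\Delta})=9$ I would enumerate, using Lemma 3.5 and Figure 4(i)--(xi), exactly which shadow-edge patterns and hence which boundary label patterns can occur, and for each such $\hat{\Delta}$ list the admissible vertex-degree tuples $(d_1,\ldots,d_m)$ subject to the constraint that the corner labels read around each vertex spell an admissible closed path in $\Gamma_0$ (so in particular Lemma 3.4 pins down the vertices of degree $<6$). For each configuration I would compute $c(\hat{\Delta})$ from (3.2) — most of these values appear already in Tables 1 and 2 — and then add the maximum possible incoming curvature and subtract the guaranteed outgoing curvature prescribed by the two schemes. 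The key structural input is that positive curvature only ever originates at degree-4 regions $\Delta$ of a very restricted shape (by the remark after Lemma 3.5, a degree-4 region has label $\tilde a\tilde a z\tilde\mu$ or $\tilde b\tilde b x^{-1}y$), so the neighbours of $\hat{\Delta}$ that can donate are few and the donation across any one edge is bounded by an explicit constant; one then checks $c^{\ast}(\hat{\Delta})\le 0$ configuration by configuration.

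The main obstacle I expect is the degree-8 analysis: there $c(\hat{\Delta})$ can be as large as, e.g., values like $c(3,3,3,3,3,3,3,3)=\frac{4\pi}{3}$ down through many intermediate tuples, so a degree-8 region with several degree-3 vertices has substantial intrinsic positive curvature and the argument cannot be won by a crude global bound — one genuinely needs that such a region cannot simultaneously have many low-degree vertices \emph{and} receive much curvature, which forces a careful case split on how the $(b,a)$-edges and shadow edges are distributed around $\partial\hat{\Delta}$ and on the labels at each vertex. Bookkeeping the interaction between the two already-performed distribution schemes (a degree-6 region that received curvature in step one may pass some on in step two, so $c^{\ast}$ of a degree-8 neighbour must account for both) is the delicate part. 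I would handle this by treating the degree-6 donors' post-step-one curvature $c^{\ast}$ as already bounded by Proposition 4.2's proof, so that the net transfer into $\hat{\Delta}$ is expressed purely in terms of quantities fixed in Sections 5--7, reducing Proposition 4.3 to a finite check; the routine arithmetic of that check I would relegate to the later sections (10 and 11) as the paper indicates.
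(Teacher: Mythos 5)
There is a genuine gap, and it sits exactly where you wave your hands: the claim that for $d(\hat{\Delta})\ge 10$ one can simply multiply a per-edge transfer bound by $m$ and beat $c(\hat{\Delta})\le(2-\frac{m}{3})\pi$. The maximum amount of curvature crossing a single edge under the schemes of Sections 5--7 is $\frac{\pi}{3}$ (this is precisely the bound the paper uses for $\Delta_0$ in Section 4), and with that constant the estimate reads $c^{\ast}(\hat{\Delta})\le(2-m)\pi+m\cdot\frac{2\pi}{3}+m\cdot\frac{\pi}{3}=2\pi$ for \emph{every} $m$: the linear terms cancel and the bound never becomes non-positive, so the "crude global" argument fails at all degrees, not just small ones. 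To get a usable rate the paper must show the \emph{effective} receipt per edge is at most $\frac{2\pi}{15}$, and this is not a per-edge fact one can read off; it is the surplus/deficit absorption argument of Lemma 10.1, which uses Lemmas 9.1 and 9.2 to show that whenever an edge carries more than $\frac{2\pi}{15}$, an adjacent edge carries $0$ (or a specified small amount) or an adjacent vertex has degree $\ge 4$ and can absorb the excess, and that these absorbers cannot be double-counted. Even then the argument only applies after splitting off the type $\mathcal{B}$ regions (Section 9): a region meeting a $b$-segment can receive $\frac{\pi}{3}$ across each of many consecutive edges (Figures 13(i), 14(i), 31, 32), so no averaging of the above kind works, and the paper needs the whole of Section 11 --- the $b$-segment machinery, Lemma 11.1, the long shadow-edge combinatorics of Lemma 11.2, and Propositions 11.3--11.5 --- none of which is anticipated in your proposal. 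Your plan for degrees 8 and 9 (enumerate via Lemma 3.5, then check cases) is in the spirit of Lemma 10.3 and Proposition 10.4, but it too is incomplete without the type $\mathcal{A}$/$\mathcal{B}$ distinction, since degree-8 regions can be of type $\mathcal{B}$ (Proposition 11.3 treats Figure 44(viii), (x), (xi) separately), and the incoming curvature is not only donated directly by degree-4 neighbours: the schemes route curvature along distribution paths of length 2 and 3 and through degree-6 regions, which is why the paper compiles the transfer tables of Figures 35 and 40--42 before any case check.

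A smaller but telling error: you assert that an 8-gon with all vertices of degree 3 has curvature $\frac{4\pi}{3}$; in fact $c(3,\ldots,3)=(2-8)\pi+\frac{16\pi}{3}=-\frac{2\pi}{3}$, and every region of degree $\ge 7$ has strictly negative curvature. The difficulty in Proposition 4.3 is never intrinsic positive curvature of $\hat{\Delta}$; it is entirely about showing the received curvature cannot exceed the deficit, which is what forces the detailed bookkeeping you have deferred.
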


If $\bs{K}_0 = \bs{K}$ then Theorem 1.2 follows immediately since, combining the above results, we obtain the contradiction
$c(\bs{K}_0) \leq 0$.

If $\bs{K}_0 \neq \bs{K}$ then noting that the positive curvature distribution schemes are exactly the same with the proviso that if at any stage positive curvature is transferred to $\Delta_0$ then it remains with $\Delta_0$, it follows that $c(\bs{K}_1) \leq c^{\ast}(\Delta_0)$.
Let $d(\Delta_0)=k$.  It follows by inspection of steps one and two above (in Sections 5-7) that the maximum amount of curvature any region receives across an edge is $\frac{\pi}{3}$.  Therefore it can be seen from equation (3.2) 
that $c^{\ast}(\Delta_0) \leq (2-k) \pi + k \left( \frac{2
\pi}{3} \right) + k \left( \frac{\pi}{3} \right) = 2 \pi$.  This final contradiction completes the proof of Theorem 1.2.

%%%%%%%%%%%%%%%%%%%%%%%%%%%%%%%%%%%%%%%%%%%%%%%%%%%%%%%%%%%%%%%%%%%%%%%%%%%%%%%%%%%%%%%%%%%%%%%%%%%%%%%%%%%%%%%%%%%%%%%%%%%%%%%%%%%%%%%%%%%%%%%%%%%%%%%%%%%%%%%%%%%%%%%%%%%%

\section{Distribution of positive curvature from 4-gons}%5

In this section we will describe the distribution of positive curvature from regions $\Delta \neq \Delta_0$ of the diagram $\bs{K}_0$ such that $c(\Delta) > 0$.  It follows from Lemma 3.5 that 
$d(\Delta)=4$ and $\Delta$ is given by 
Figure 5 with 
neighbouring regions $\hat{\Delta}_i$ ($1 \leq i \leq 4$) and vertices $v_i$ ($1 \leq i \leq 4$) which we fix for the remainder of this section.  There are 15 cases to consider according to which vertices of $\Delta$ have degree 3.
Our approach will be to consider neighbouring regions of $\Delta$, the valency and labels of their vertices and, if necessary, the neighbours of these also.

There will be exactly fourteen exceptions to the distribution of positive curvature rules given for the 15 cases.  These are contained within six exceptional Configurations A-F and will be fully described later in this section.

\medskip

For the benefit of the reader let us indicate briefly that a general rule for distribution is to try whenever possible to add curvature from $\Delta$ to neighbouring regions of degree greater than 4. Given 
this, we try to keep the number of times $\frac{\pi}{6}$ is exceeded to a minimum; and, given this, to keep the number of times $\frac{\pi}{5}$ is exceeded to a minimum, see, for example, Figure 7(iii). 
When avoidance of neighbouring regions of degree 4 is not possible we usually introduce distribution paths from $\Delta$ to nearby regions of degree greater than 4. For example in Figure 17(iv) there is a 
distribution path of length 2 fom $\Delta$ to $\hat{\Delta}_6$; or in Figure 19(iii) there is a distribution path of length 3 fom $\Delta$ to $\hat{\Delta}_{10}$. This approach turns out to be sufficient 
in almost all cases in terms of compensating positive curvature by negatively curved regions. However, a more complicated distribution scheme was required for some exceptional case and these are 
Configurations A-F mentioned immediately above and treated in detail in what follows.

\medskip

\textbf{Notes}. \textbf{1}.  In the figures the upper bound of the amount of curvature transferred will generally be indicated.

\textbf{2}.  It should be emphasised that whenever we identify regions, we do so modulo cyclic permutation and inversion.  For example in what follows we will identify $\Delta$ of Figure 20(vi) with $\Delta_1$ of Figure 31(i).

\textbf{3}.  Here and in what follows we use Lemma 3.4 to classify possible labelling of vertices.

\textbf{4}.  If $\bs{K}_0 \neq \bs{K}$ then it is assumed that $\Delta \neq \Delta_0$ and that if at any point positive curvature is transferred to $\Delta_0$ then it remains with $\Delta_0$.

\medskip

A complete description of the distribution of positive curvature from regions of degree 4 is given by Figures 6-32.
We give below an explanation for each case.

$\bs{d(v_i)=3}$ $\bs{(1 \leq i \leq 4)}$. \textbf{Figure 6}.
$c(\Delta)=\frac{2 \pi}{3}$ is distributed as shown.

$\bs{d(v_i)=3}$ $\bs{(1 \leq i \leq 3)}$. \textbf{Figure 7}. Either $d(v_4) > 5$ and $c(\Delta) \leq \frac{\pi}{3}$; or
$d(v_4)=5$ and $c(\Delta)=\frac{2\pi}{5}$; or $d(v_4)=4$ and $c(\Delta)=\frac{\pi}{2}$ (and distribute $c(\Delta)$ accordingly,as shown).

$\bs{d(v_1)=d(v_2)=d(v_4)=3}$. \textbf{Figure 8}. Either $d(v_3)>5$ or $d(v_3)=5$ or $d(v_3)=4$.

$\bs{d(v_1)=d(v_3)=d(v_4)=3}$. \textbf{Figure 9}. Either $d(v_2)>4$ or $d(v_2)=4$ and distribution of $c(\Delta)$ is given by Figure 9(i)-(iii).  There are two exceptions to these rules.  
There is an exception to Figure 9(ii) when $\Delta = \Delta_1$ of \textbf{Configuration F} in Figure 32(vi) and which, for convenience, we have reproduced in Figure 9(iv) with a rotation of
$\frac{\pi}{2}$ so that Figures 9(ii) and (iv) match up. Thus the exceptional rule (which is again described later for \textbf{Configuration F}) is that in Figure 9(iv) $\frac{\pi}{5},\frac{2\pi}{15}$
is added from $\Delta$ to $\hat{\Delta}_3,\hat{\Delta}_4$ instead of $\frac{\pi}{6},\frac{\pi}{6}$ (respectively) as in Figure 9(ii), and the dotted lines in Figure 9(iv) indicate the changes made. 
The second exception is to Figure 9(iii) and is when $\Delta = \Delta_1$ of \textbf{Configuration E} in Figure 32(iv). Again, for convenience, this has been reproduced (after inverting and rotating) in 
Figure 9(v). Thus the exceptional rule (which is again described later for \textbf{Configuration E}) is that in Figure 9(v) $\frac{2\pi}{15},\frac{\pi}{5}$ is added from $\Delta$ to 
$\hat{\Delta}_3,\hat{\Delta}_4$ instead of $\frac{\pi}{6},\frac{\pi}{6}$ (respectively) as in Figure 9(iii), and once more the dotted lines in Figure 9(v) indicate the changes made.

$\bs{d(v_i)=3}$ $\bs{(2 \leq i \leq 4)}$. \textbf{Figure 10}. Either $d(v_1) > 4$ or $d(v_1)=4$.

$\bs{d(v_1)=d(v_2)=3}$. \textbf{Figure 11}. If $d(v_3)=4$ and $d(v_4) \geq 6$ or $d(v_3) \geq 5$ and $d(v_4) \geq 5$ or $d(v_3) \geq 6$ and $d(v_4)=4$ then $c(\Delta) \leq c(3,3,4,6) = \frac{\pi}{6}$ is distributed as in Figure 11(i); otherwise the distribution is described by Figure 11(ii)-(viii).

$\bs{d(v_2)=d(v_3)=3}$. \textbf{Figure 12}. If $d(v_1)=4$ and $d(v_4) \geq 6$ or $d(v_1) \geq 5$ and $d(v_4) \geq 5$ or $d(v_1) \geq 6$ and $d(v_4)=4$ then $c(\Delta) \leq \frac{\pi}{6}$ is distributed as in Figure 12(i); otherwise the distribution is described by Figure 12(ii)-(viii).

$\bs{d(v_3)=d(v_4)=3}$. \textbf{Figure 13}. Distribution of $c(\Delta)$ is given by Figure 13(i).  There are five exceptions to this rule. When $\Delta = \Delta_3$ of \textbf{Configuration A} in Figure 
31(ii)-(iv) and, for convenience, $\Delta = \Delta_3$ has been reproduced (after inverting and rotating) in Figure 13(ii); when  $\Delta = \Delta$ of \textbf{Configuration C} in Figure 32(i) and $\Delta$
has been reproduced in Figure 13(iii); and when $\Delta=\Delta_1$ of \textbf{Configuration E} in Figure 32(iii) and $\Delta=\Delta_1$ has been reproduced (after inverting and rotating) in Figure 13(iv).  
Dotted lines in Figure 13(ii)-(iv) indicate the exceptional rules, so, for example, $\Delta$ adds $\frac{\pi}{30},\frac{3\pi}{10}$ to $\hat{\Delta}_1,\hat{\Delta}_3$ (respectively) in Figure 13(ii) 
instead of simply adding $\frac{\pi}{3}$ to $\hat{\Delta}_3$ as in Figure 13(i).

$\bs{d(v_4)=d(v_1)=3}$. \textbf{Figure 14}. Distribution of $c(\Delta)$ is given by Figure 14(i).  There are five exceptions to this rule. When $\Delta = \Delta_3$ of \textbf{Configuration B} in Figure 
31(vi)-(viii) and, for convenience, $\Delta = \Delta_3$ has been reproduced (after inverting and rotating) in Figure 14(ii); when $\Delta = \Delta$ of \textbf{Configuration D} and $\Delta$ has been 
reproduced (after rotating) in Figure 14(iii); and when $\Delta = \Delta_1$ of \textbf{Configuration F} in Figure 32(v) and $\Delta = \Delta_1$ has been reproduced (after rotating) in Figure 14(iv).
Dotted lines in Figure 14(ii)-(iv) indicate the exceptional rules.  so, for example, $\Delta$ adds $\frac{\pi}{30},\frac{3\pi}{10}$ to $\hat{\Delta}_2,\hat{\Delta}_4$ (respectively) in Figure 14(ii)  
instead of simply adding $\frac{\pi}{3}$ to $\hat{\Delta}_4$ as in Figure 13(i).

$\bs{d(v_2)=d(v_4)=4}$. \textbf{Figures 15-19}. There are four subcases.

(1)$\bs{(d(v_1),d(v_3)) \neq (4,4)}$. \textbf{Figure 15}. Either $(d(v_1),d(v_3)) \in \{ ( > 5,4),(5,4),(4,>5),(4,5)\}$ and distribution of $c(\Delta)$ is given by Figure 15(i)-(vi) or $d(v_1) \geq 5$ and $d(v_3) \geq 5$, 
$c(\Delta) \leq \frac{2 \pi}{15}$ and distribution is given by Figure 15(vii)-(x).

Suppose now that $d(v_1)=d(v_3)=4$.

(2)$\bs{(d(\hat{\Delta}_3),d(\hat{\Delta}_4)) \notin \{ (4,6),(4,4),(6,4)\}}$. \textbf{Figure 16}. $c(\Delta)$ is distributed as shown.

(3)$\bs{(d(\hat{\Delta}_3), d(\hat{\Delta}_4)) \in \{ (4,6),(6,4) \}}$. \textbf{Figure 17}. In each case add $\frac{\pi}{10}$ from $c(\Delta)$ to each of $c(\hat{\Delta}_1)$ and $c(\hat{\Delta}_2)$; and add $\frac{\pi}{15}$ from
$c(\Delta)$ to each of $c(\hat{\Delta}_3)$ and $c(\hat{\Delta}_4)$.
Let $d(\hat{\Delta}_3)=4$ and $d(\hat{\Delta}_4)=6$.  This is shown in Figure 17(i) in which $d(u_1) \geq 3$ and
$d(u_2) \geq 4$.  It remains to describe the further transfer (if any) of positive curvature from $c(\hat{\Delta}_3)$.

\begin{figure}
\begin{center}
\psfig{file=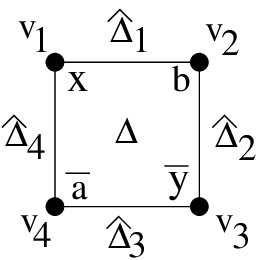}
\end{center}
\caption{}
\end{figure}

\begin{figure}
\begin{center}
\psfig{file=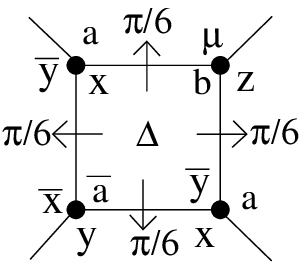}
\end{center}
\caption{$d(v_i)=3~(1 \leq i \leq 4)$}
\end{figure}

\begin{figure}
\begin{center}
\psfig{file=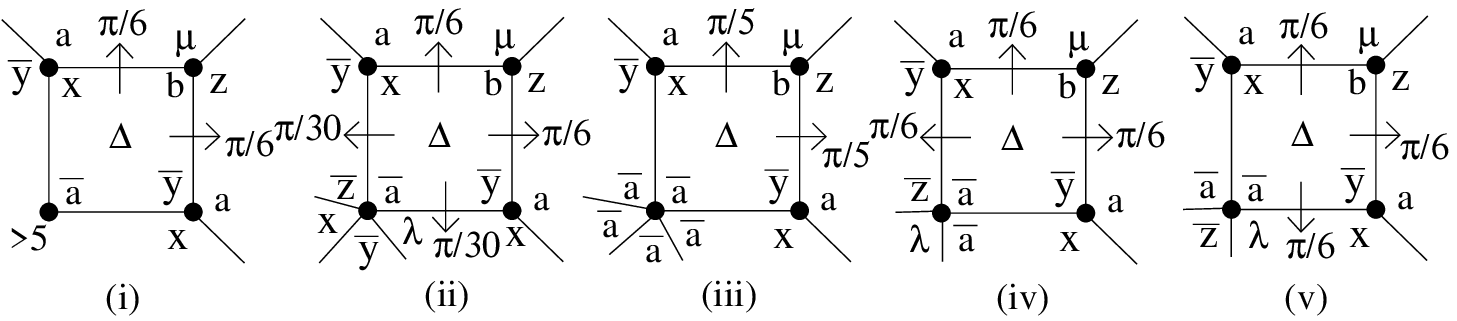}
\end{center}
\caption{$d(v_i)=3~(1 \leq i \leq 3)$}
\end{figure}

\begin{figure}
\begin{center}
\psfig{file=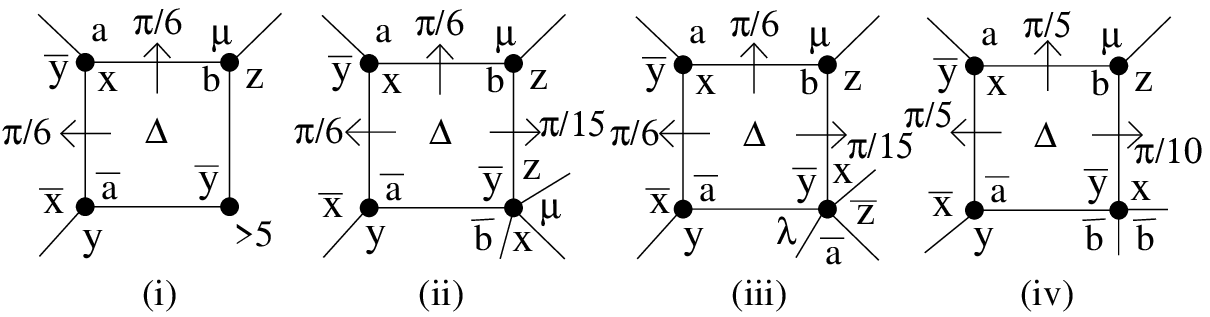}
\end{center}
\caption{$d(v_1)=d(v_2)=d(v_4)=3$}
\end{figure}

\begin{figure}
\begin{center}
\psfig{file=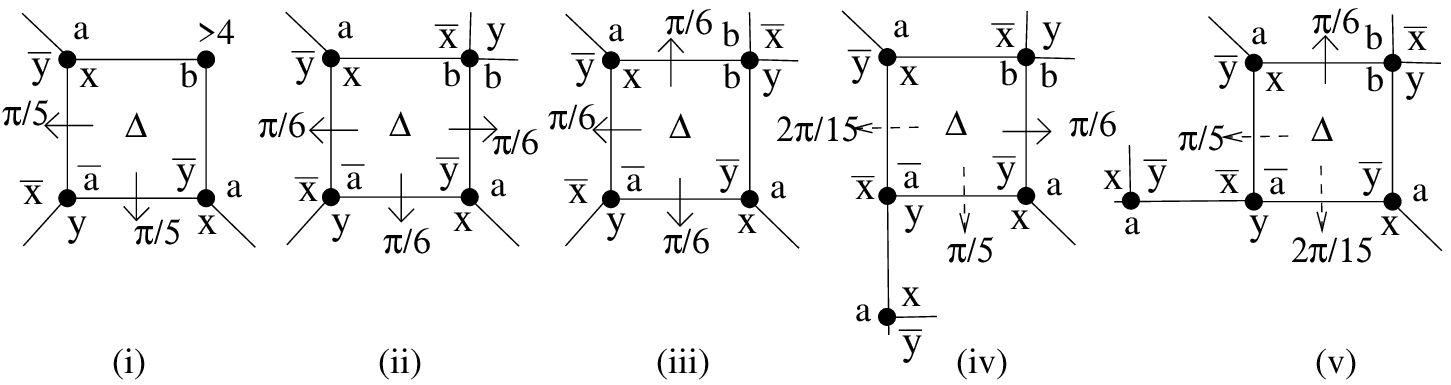}
\end{center}
\caption{$d(v_1)=d(v_3)=d(v_4)=3$}
\end{figure}

\begin{figure}
\begin{center}
\psfig{file=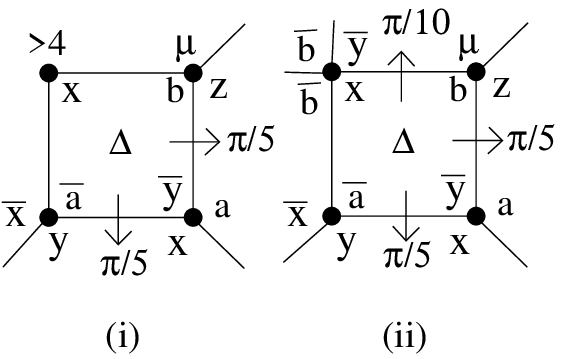}
\end{center}
\caption{$d(v_i)=3~(2 \leq i \leq 4)$}
\end{figure}

\begin{figure}
\begin{center}
\psfig{file=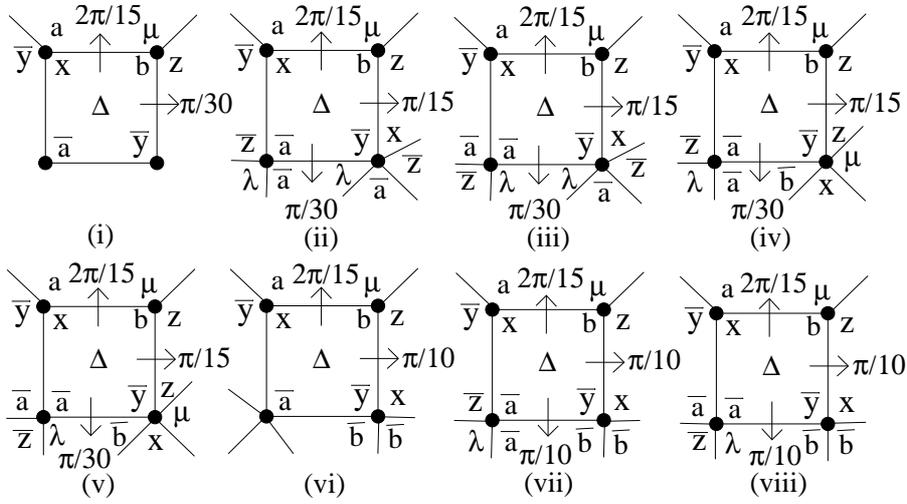}
\end{center}
\caption{$d(v_1)=d(v_2)=3$}
\end{figure}

\begin{figure}
\begin{center}
\psfig{file=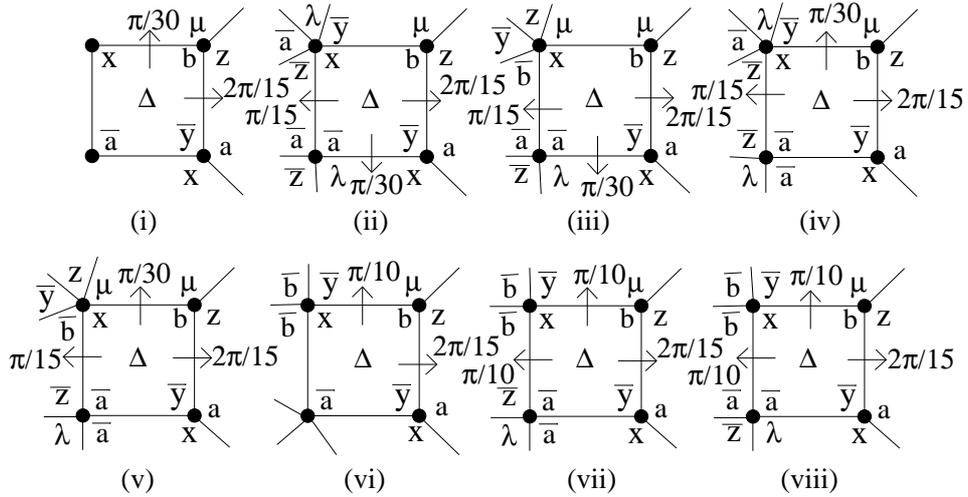}
\end{center}
\caption{$d(v_2)=d(v_3)=3$}
\end{figure}

\begin{figure}
\begin{center}
\psfig{file=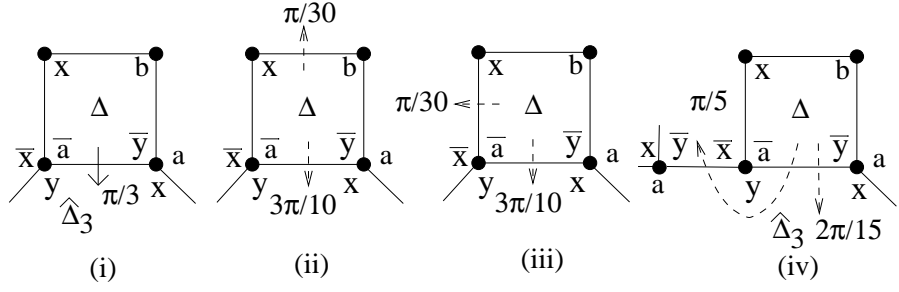}
\end{center}
\caption{$d(v_3)=d(v_4)=3$}
\end{figure}

\begin{figure}
\begin{center}
\psfig{file=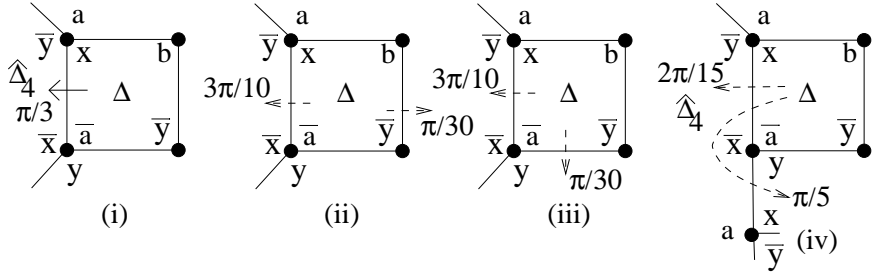}
\end{center}
\caption{$d(v_1)=d(v_4)=3$}
\end{figure}

\begin{figure}
\begin{center}
\psfig{file=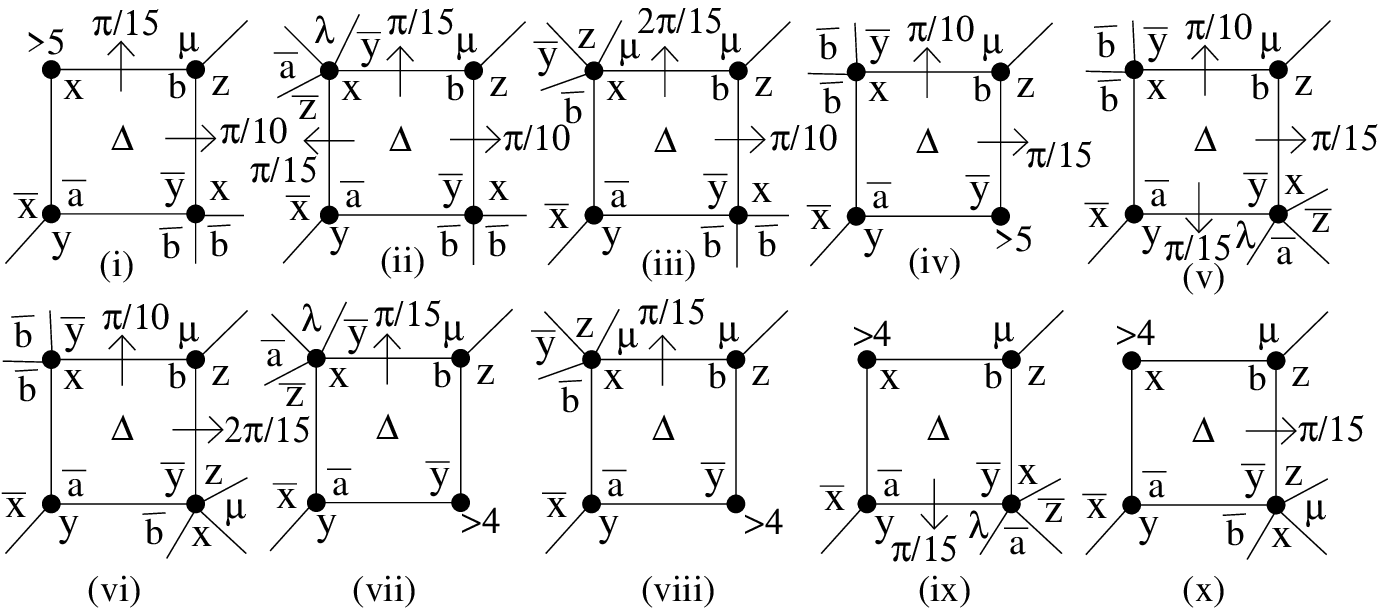}
\end{center}
\caption{$d(v_2)=d(v_4)=3  (subcase 1)$}
\end{figure}

\begin{figure}
\begin{center}
\psfig{file=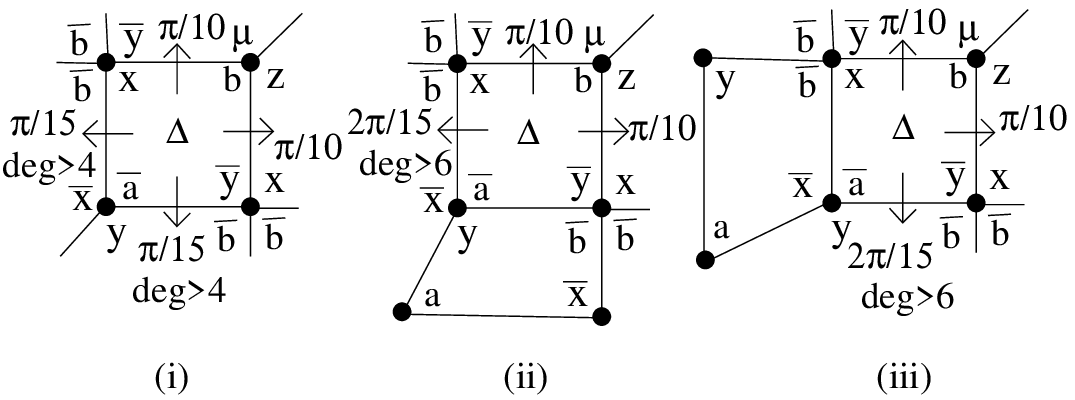}
\end{center}
\caption{$d(v_2)=d(v_4)=3 (subcase 2)$}
\end{figure}

\begin{figure}
\begin{center}
\psfig{file=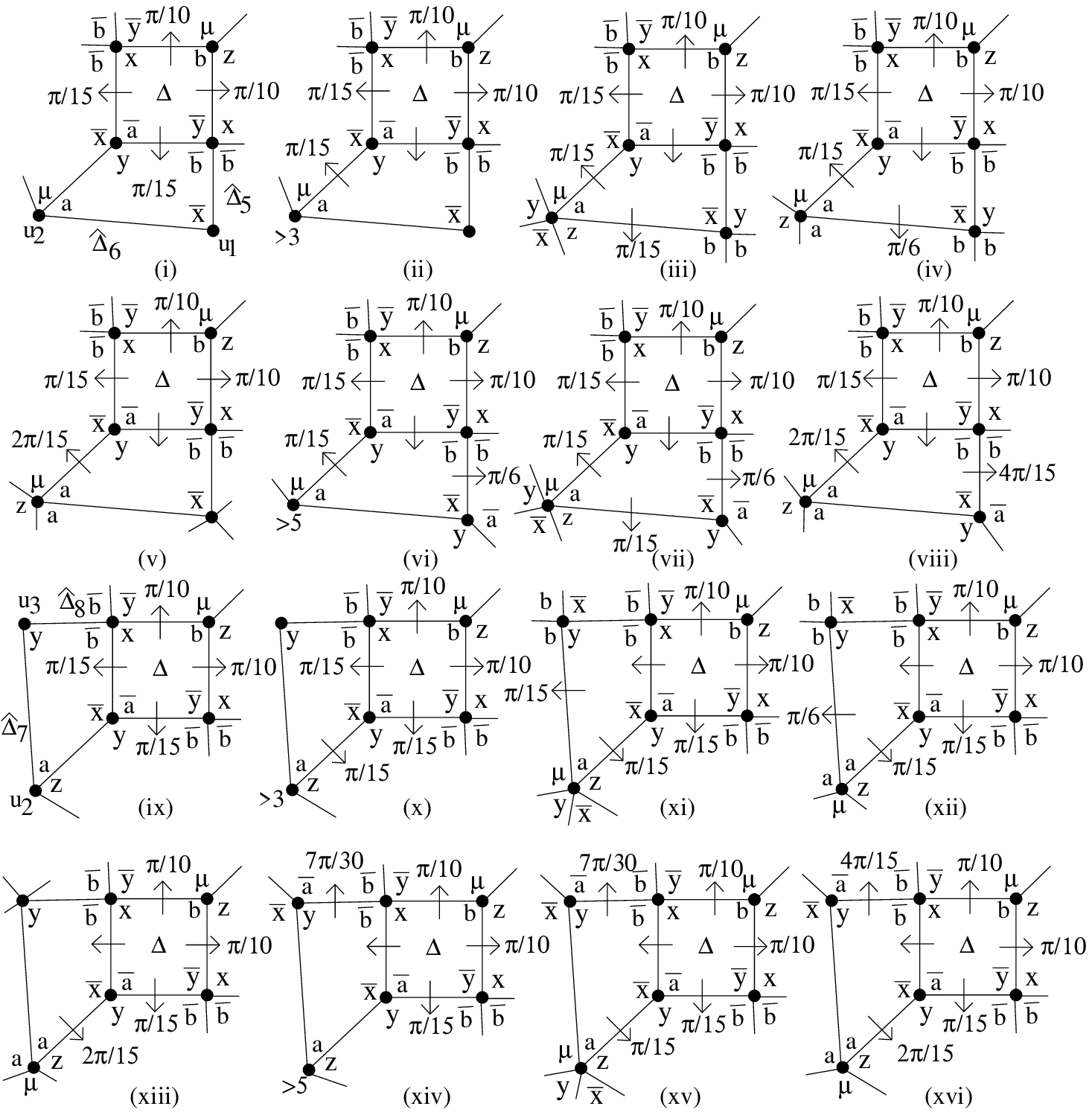}
\end{center}
\caption{$d(v_2)=d(v_4)=3 (subcase 3)$}
\end{figure}

\begin{figure}
\begin{center}
\psfig{file=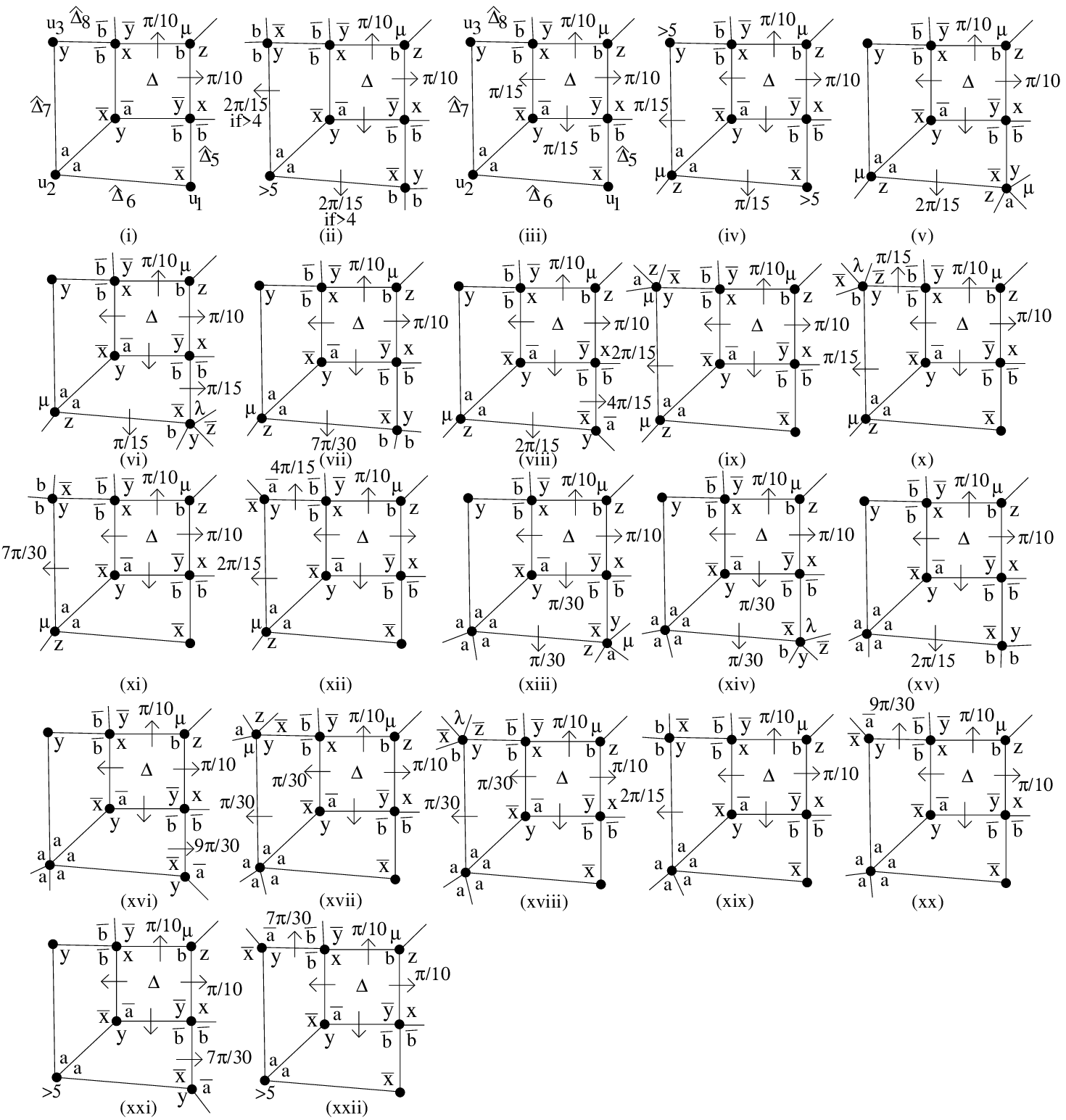}
\end{center}
\caption{$d(v_2)=d(v_4)=3 (subcase 4)$}
\end{figure}

\begin{figure}
\begin{center}
\psfig{file=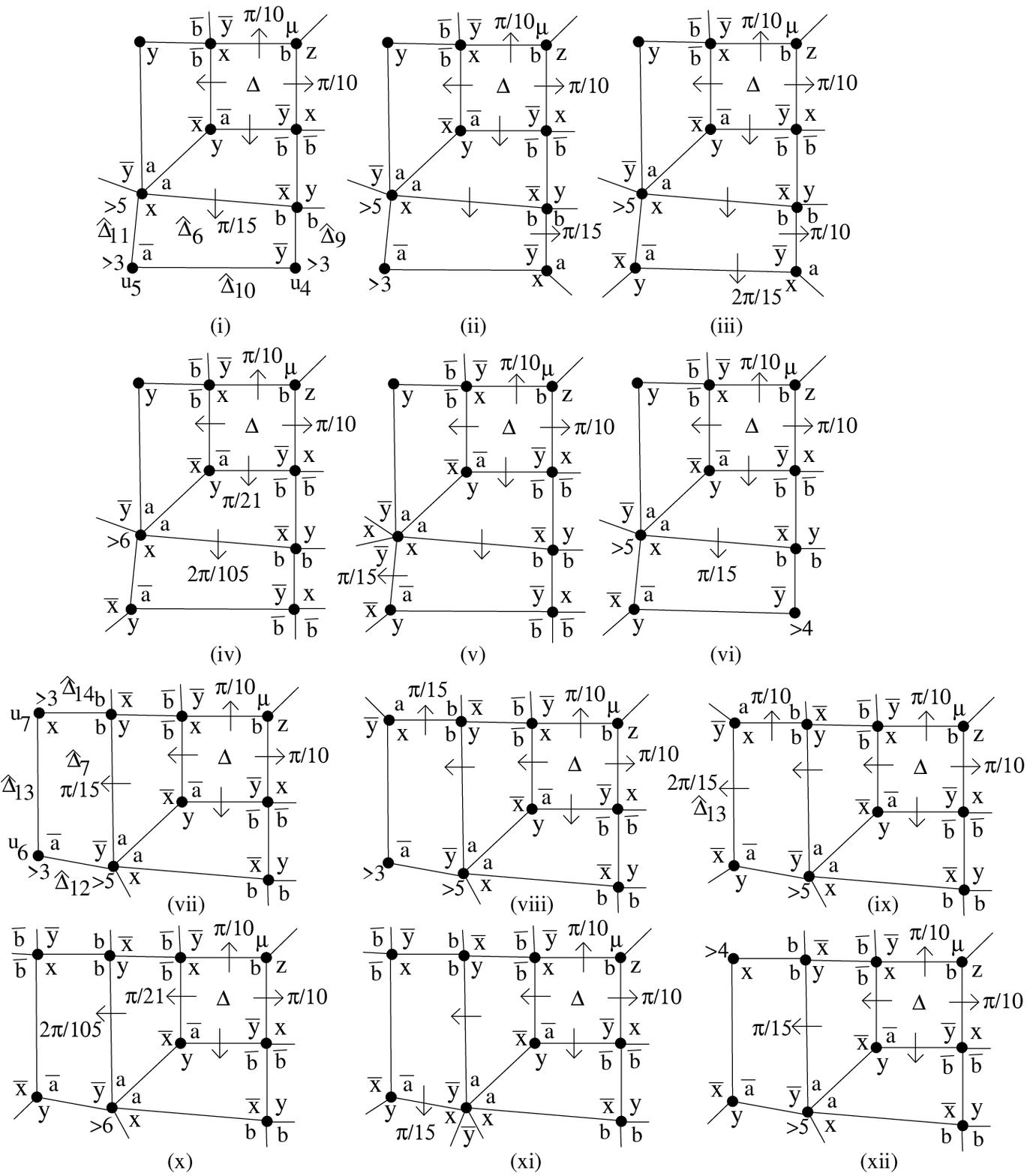}
\end{center}
\caption{$d(v_2)=d(v_4)=3 (subcase 4)$}
\end{figure}

\begin{figure}
\begin{center}
\psfig{file=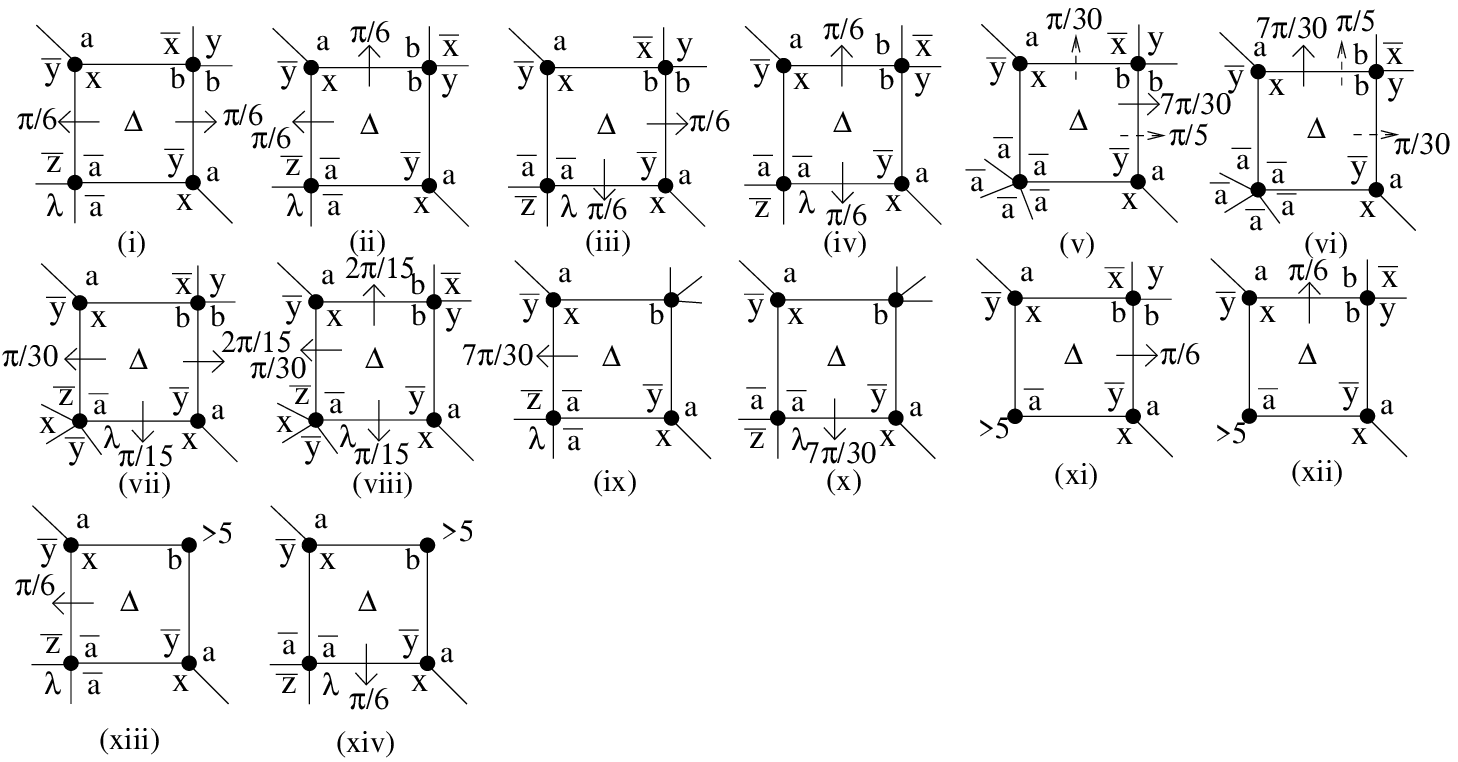}
\end{center}
\caption{$d(v_1)=d(v_3)=3 (subcase 1)$}
\end{figure}

\begin{figure}
\begin{center}
\psfig{file=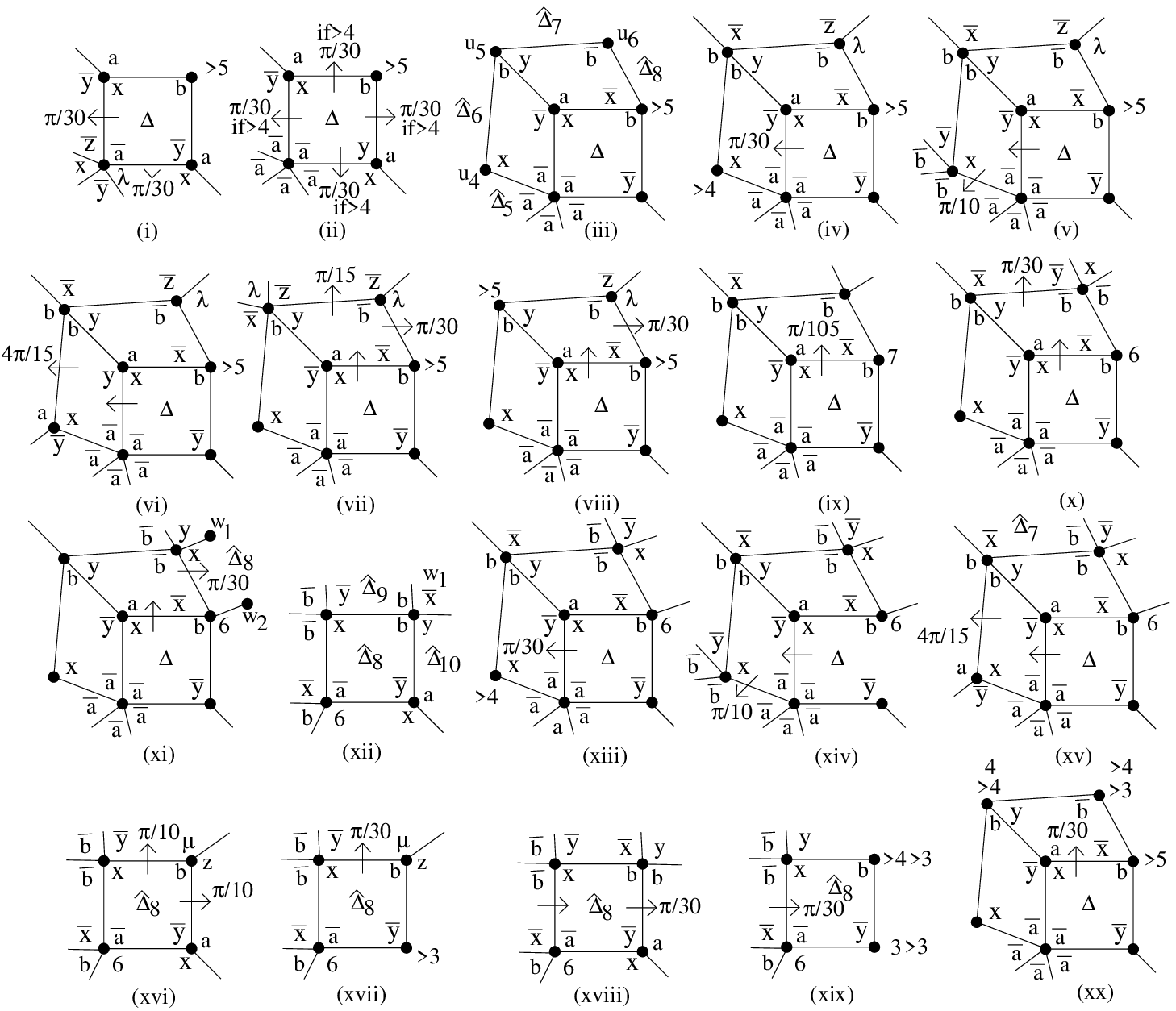}
\end{center}
\caption{$d(v_1)=d(v_3)=3 (subcase 2)$}
\end{figure}

\begin{figure}
\begin{center}
\psfig{file=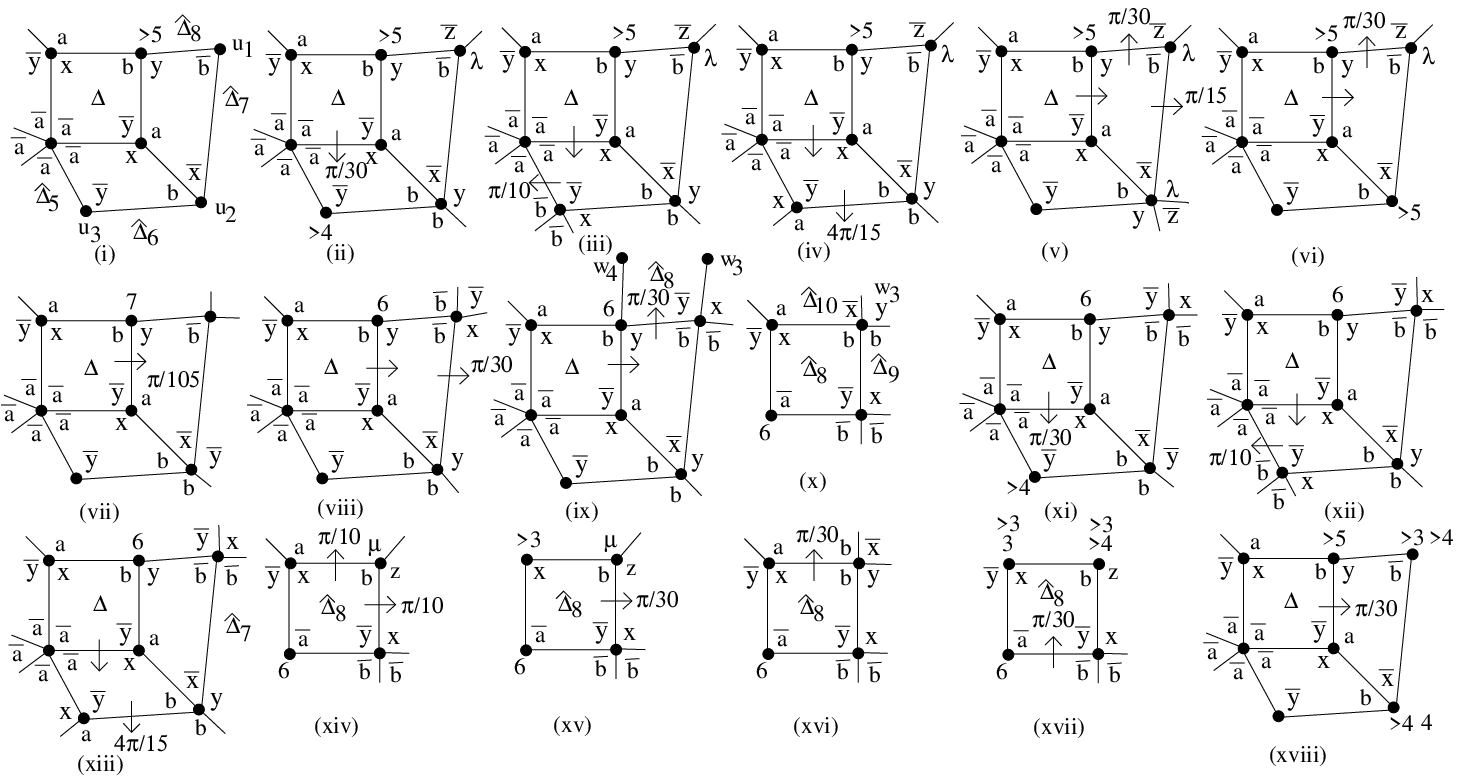}
\end{center}
\caption{$d(v_1)=d(v_3)=3 (subcase 2)$}
\end{figure}

\newpage

\begin{figure}
\begin{center}
\psfig{file=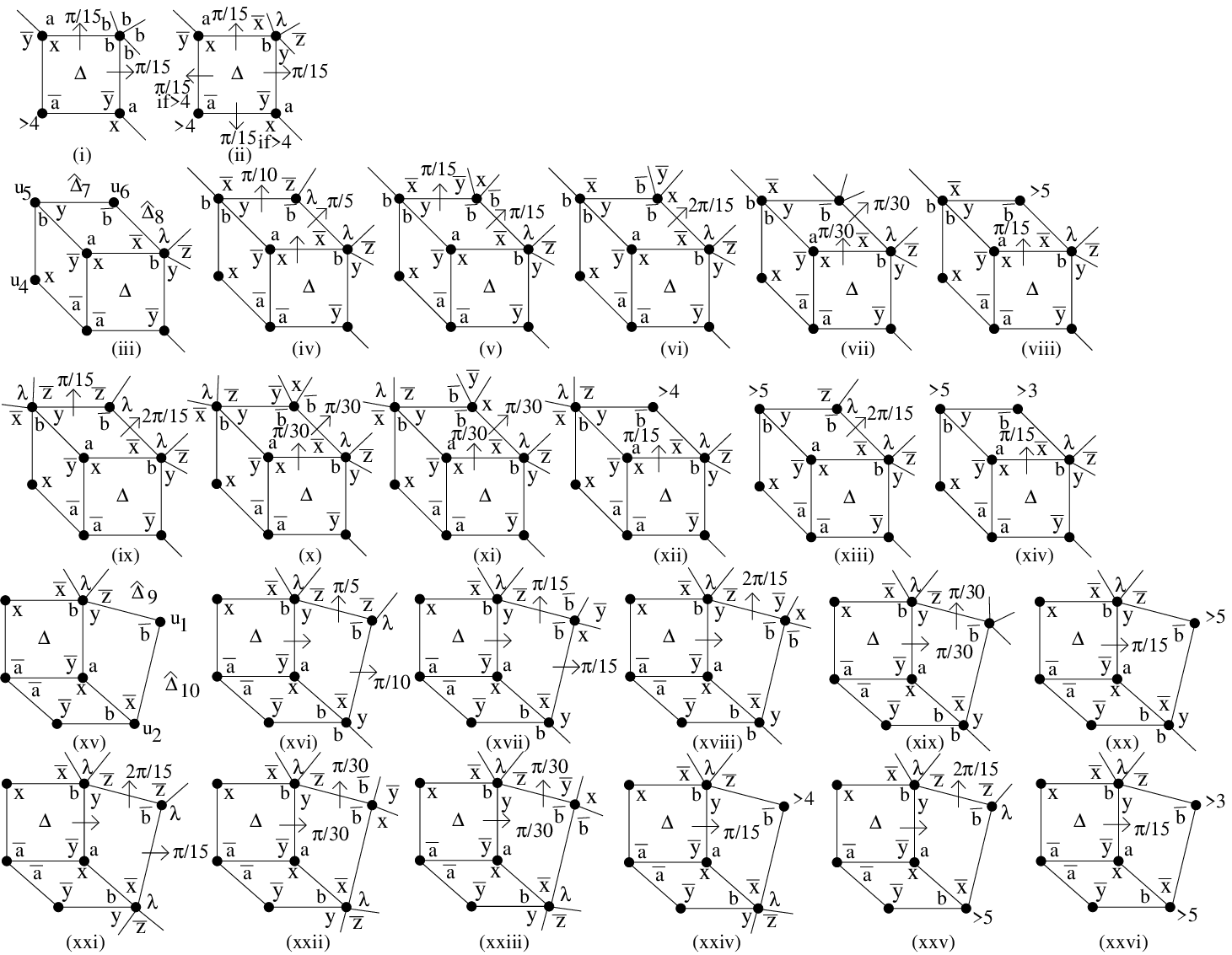}
\end{center}
\caption{$d(v_1)=d(v_3)=3 (subcase 3)$}
\end{figure}

\begin{figure}
\begin{center}
\psfig{file=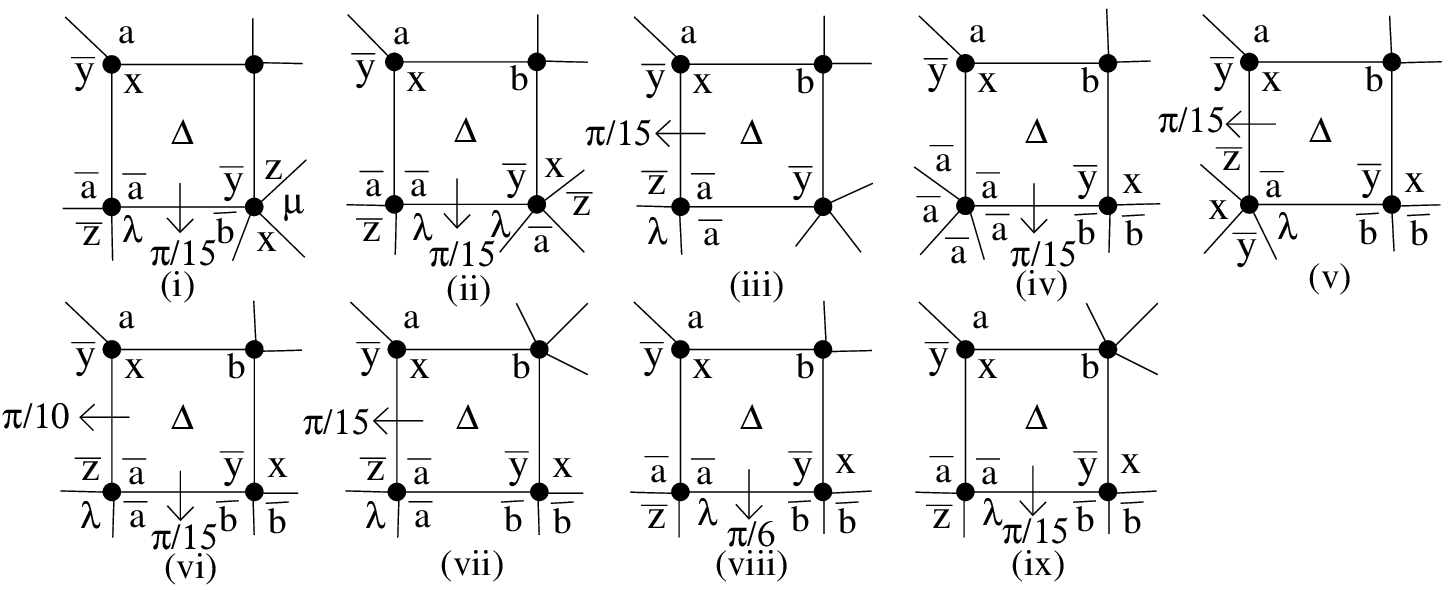}
\end{center}
\caption{$d(v_1)=3 $}
\end{figure}

\begin{figure}
\begin{center}
\psfig{file=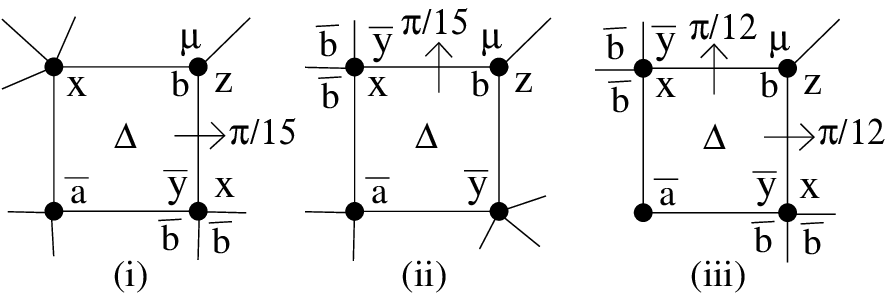}
\end{center}
\caption{$d(v_2)=3 $}
\end{figure}

\begin{figure}
\begin{center}
\psfig{file=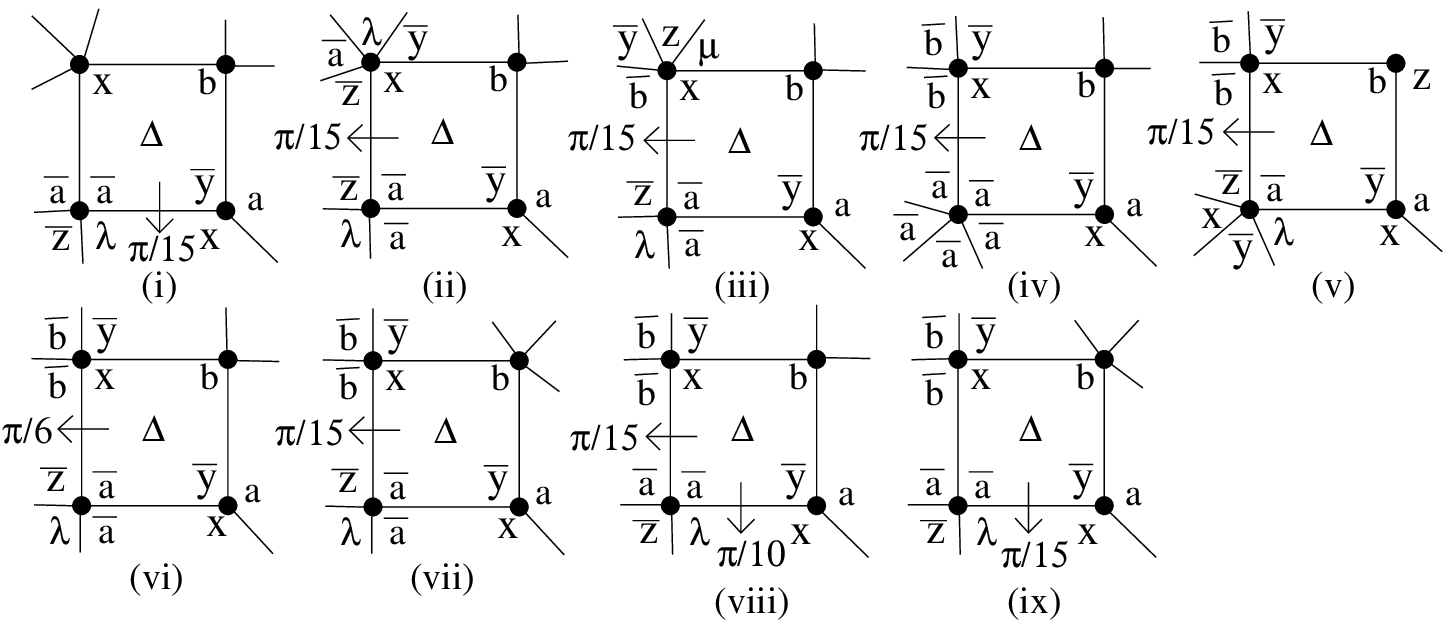}
\end{center}
\caption{$d(v_3)=3 $}
\end{figure}

\begin{figure}
\begin{center}
\psfig{file=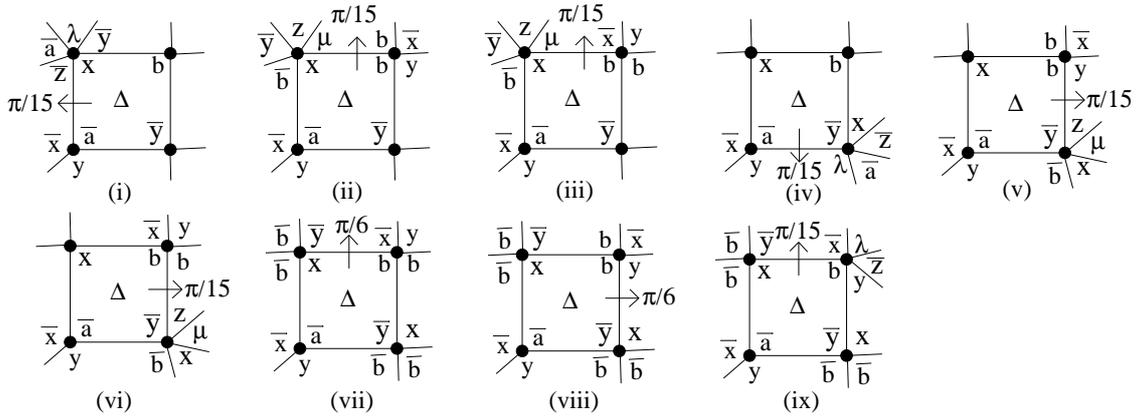}
\end{center}
\caption{$d(v_4)=3 (subcase1)$}
\end{figure}

\begin{figure}
\begin{center}
\psfig{file=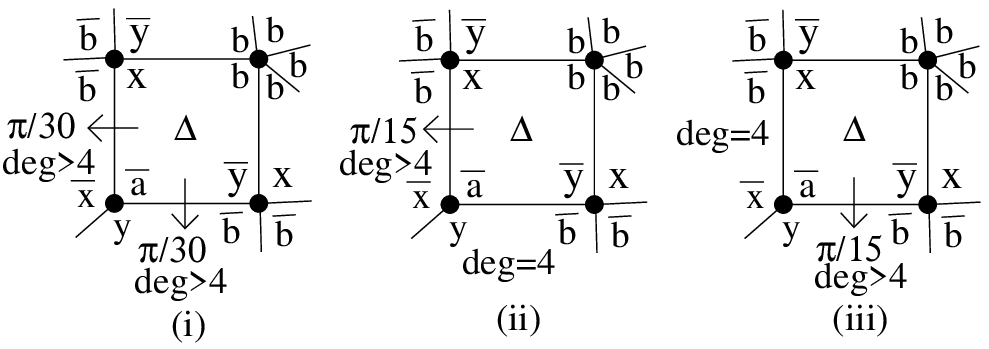}
\end{center}
\caption{$d(v_4)=3 (subcase2)$}
\end{figure}

\begin{figure}
\begin{center}
\psfig{file=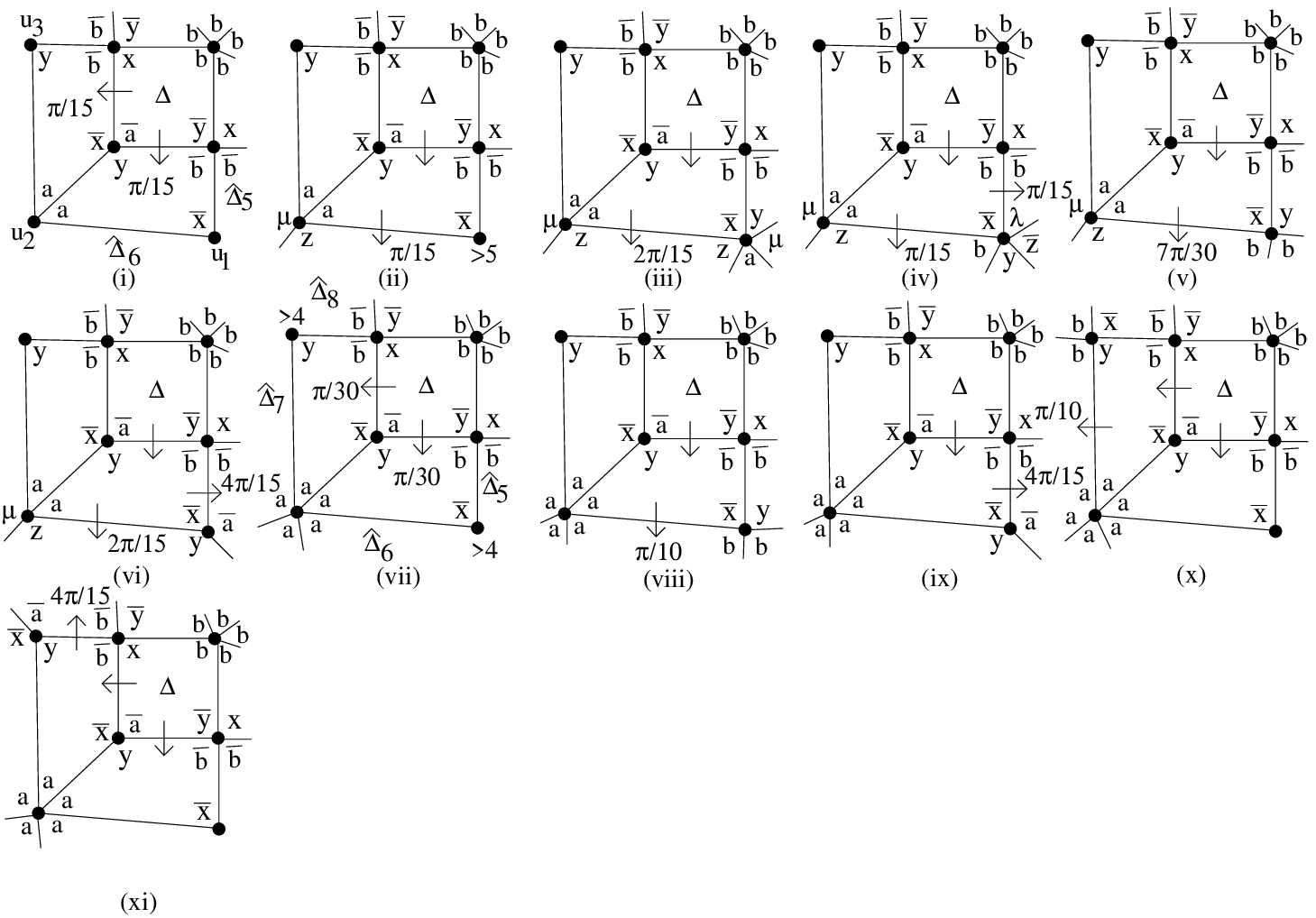}
\end{center}
\caption{$d(v_4)=3 (subcase3)$}
\end{figure}

\begin{figure}
\begin{center}
\psfig{file=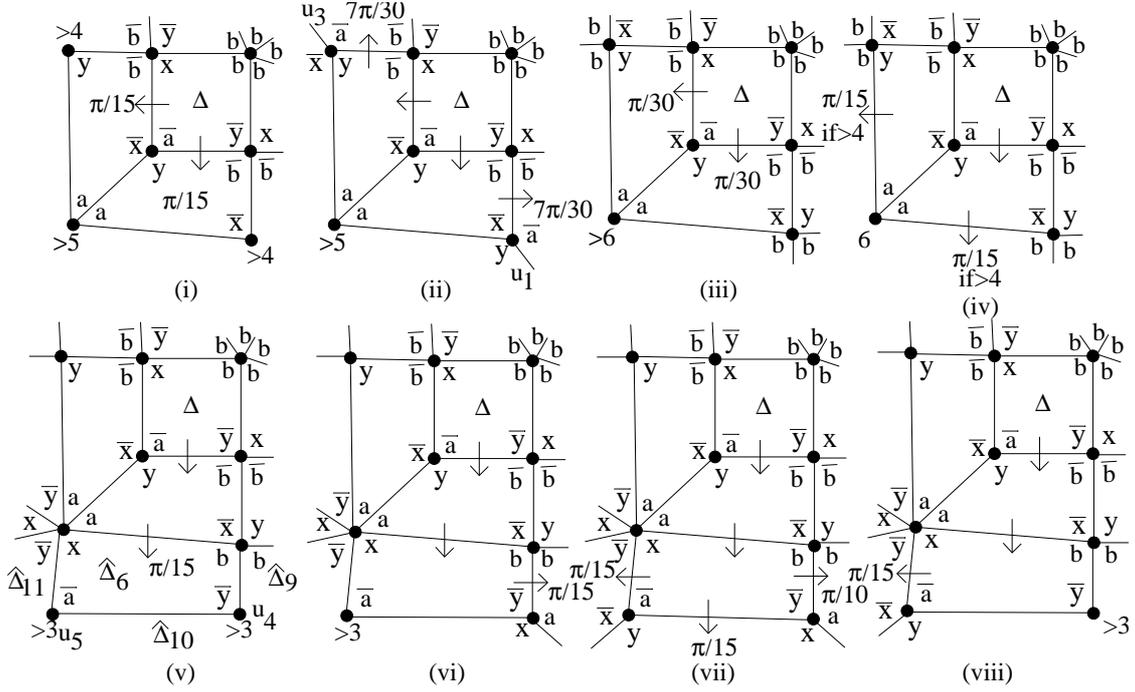}
\end{center}
\caption{$d(v_4)=3 (subcase4)$}
\end{figure}

\begin{figure}
\begin{center}
\psfig{file=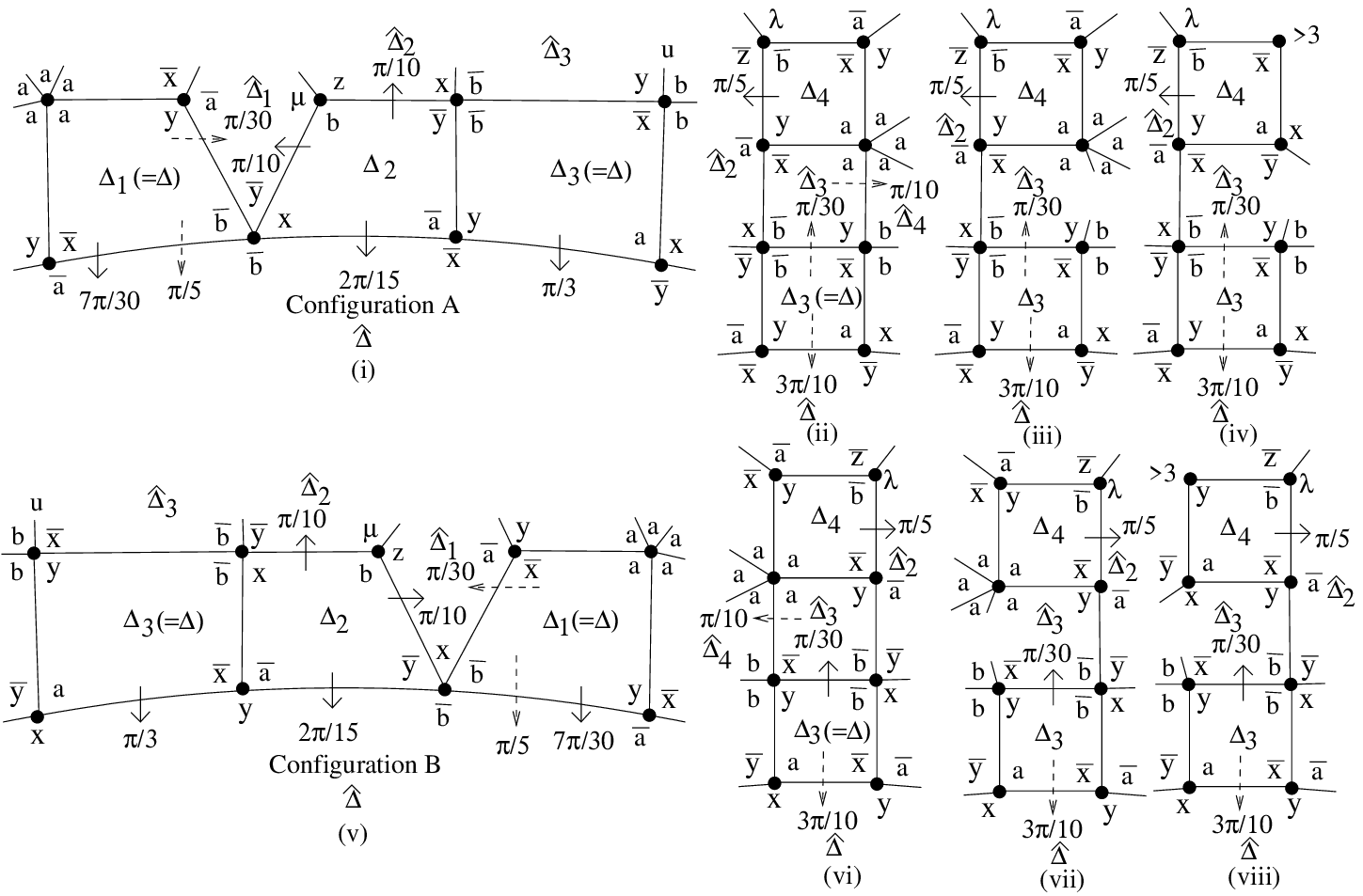}
\end{center}
\caption{ }
\end{figure}

\begin{figure}
\begin{center}
\psfig{file=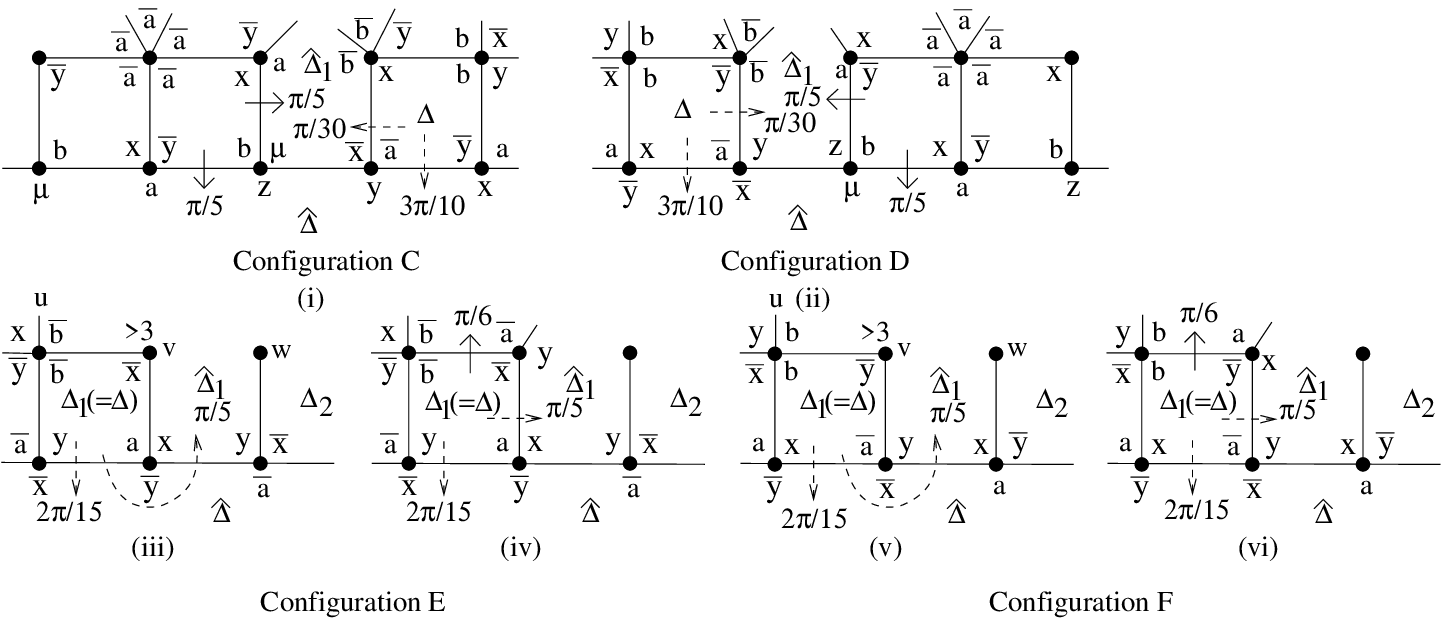}
\end{center}
\caption{}
\end{figure}

If $c(\hat{\Delta}_3) \leq - \frac{\pi}{15}$ then the
$\frac{\pi}{15}$ from $c(\Delta)$ remains with $c(\hat{\Delta}_3)$ as in Figure 17(i); and
if $-\frac{\pi}{15} < c(\hat{\Delta}_3) \leq 0$ then
$\frac{\pi}{15} + c(\hat{\Delta}_3) \leq \frac{\pi}{15}$ is added to $c(\hat{\Delta}_4)$ as in Figure 17(ii).
Assume that $c(\hat{\Delta}_3) > 0$.
We now proceed according to the values of $d(u_1)$ and $d(u_2)$.
If $d(u_1)=4$ and $d(u_2)=5$ then
($c(\hat{\Delta}_3)=c(3,4,4,5)=\frac{\pi}{15}$ and)
$\frac{\pi}{15}+c(\hat{\Delta}_3)=\frac{2 \pi}{15}$ so add
$\frac{\pi}{15}$ to each of $c(\hat{\Delta}_4)$ and $c(\hat{\Delta}_6)$ as in Figure 17(iii);
if $d(u_1)=4=d(u_2)$ then $\frac{\pi}{15}+c(\hat{\Delta}_3)=\frac{7 \pi}{30}$ so add
$\frac{\pi}{15}$ to $c(\hat{\Delta}_4)$ and $\frac{\pi}{6}$ to $c(\hat{\Delta}_6)$ as in (iv);
if $d(u_1)=5$ and $d(u_2)=4$ then add
$\frac{\pi}{15} + c(\hat{\Delta}_3) = \frac{2 \pi}{15}$ to $c(\hat{\Delta}_4)$ as in (v);
if $d(u_1)=3$ and $d(u_2) \geq 6$ then $\frac{\pi}{15} + c(\hat{\Delta}_3) \leq \frac{7 \pi}{30}$ so add
$\frac{\pi}{15}$ to $c(\hat{\Delta}_4)$ and $\frac{\pi}{6}$ to $c(\hat{\Delta}_5)$ as in (vi);
if $d(u_1)=3$ and $d(u_2)=5$ then $\frac{\pi}{15} + c(\hat{\Delta}_3) = \frac{9 \pi}{30}$ so add
$\frac{\pi}{15}$ to $c(\hat{\Delta}_4)$ and $c(\hat{\Delta}_6)$, and add
$\frac{\pi}{6}$ to $c(\hat{\Delta}_5)$ as in (vii); and
if $d(u_1)=3$ and $d(u_2)=4$ then $\frac{\pi}{15} + c(\hat{\Delta}_3)=\frac{6 \pi}{15}$ so add
$\frac{2 \pi}{15}$ to $c(\hat{\Delta}_4)$ and $\frac{4 \pi}{15}$ to $c(\hat{\Delta}_5)$ as in (viii).

Now let $d(\hat{\Delta}_3)=6$ and $d(\hat{\Delta}_4)=4$.  This is shown in Figure 17(ix) where $d(u_3) \geq 3$ and $d(u_2) \geq 4$.
It remains to describe the further transfer (if any) of positive curvature from $c(\hat{\Delta}_4)$.
If $c(\hat{\Delta}_4) \leq -\frac{\pi}{15}$ then the $\frac{\pi}{15}$ from $c(\Delta)$ remains with $c(\hat{\Delta}_4)$ as in Figure 17(ix); and
if $-\frac{\pi}{15} < c(\hat{\Delta}_4) \leq 0$ then $\frac{\pi}{15} + c(\hat{\Delta}_4) \leq \frac{\pi}{15}$ is added to $c(\hat{\Delta}_3)$ as in
Figure 17(x).  Assume that $c(\hat{\Delta}_4) > 0$.  We proceed according to the values of $d(u_2)$ and $d(u_3)$.
If $d(u_3)=4$ and $d(u_2)=5$ then $\frac{\pi}{15}+c(\hat{\Delta}_4)=\frac{2 \pi}{15}$ so add
$\frac{\pi}{15}$ to each of $c(\hat{\Delta}_3)$ and $c(\hat{\Delta}_7)$ as in Figure 17(xi);
if $d(u_3)=4=d(u_2)$ then $\frac{\pi}{15} + c(\hat{\Delta}_4)=\frac{7 \pi}{30}$ so add
$\frac{\pi}{15}$ to $c(\hat{\Delta}_3)$ and $\frac{\pi}{6}$ to $c(\hat{\Delta}_7)$ as in (xii);
if $d(u_3)=5$ and $d(u_2)=4$ then add
$\frac{\pi}{15}+c(\hat{\Delta}_4)=\frac{2\pi}{15}$ to $c(\hat{\Delta}_3)$ as in (xiii);
if $d(u_3)=3$ and $d(u_2) \geq 6$ then add
$\frac{\pi}{15} + c(\hat{\Delta}_4) \leq \frac{7 \pi}{30}$ to $c(\hat{\Delta}_8)$ as in (xiv);
if $d(u_3)=3$ and $d(u_2)=5$ then $\frac{\pi}{15}+c(\hat{\Delta}_4)=\frac{3 \pi}{10}$ so add
$\frac{\pi}{15}$ to $c(\hat{\Delta}_3)$ and $\frac{7 \pi}{30}$ to $c(\hat{\Delta}_8)$ as in (xv); and
if $d(u_3)=3$ and $d(u_2)=4$ then $\frac{\pi}{15}+c(\hat{\Delta}_4)=\frac{6 \pi}{15}$ so add
$\frac{2 \pi}{15}$ to $c(\hat{\Delta}_3)$ and $\frac{4 \pi}{15}$ to $c(\hat{\Delta}_8)$ as in (xvi).

(4)$\bs{d(\hat{\Delta}_3)=d(\hat{\Delta}_4)=4}$. \textbf{Figures 18-19}. This subcase is shown in Figure 18(i) in which $d(u_1) \geq 3$, $d(u_2) \geq 4$ and $d(u_3) \geq 3$ and $\frac{\pi}{10}$ is distributed from $c(\Delta)$ to
each
of $c(\hat{\Delta}_1)$ and $c(\hat{\Delta}_2)$ with $\frac{2 \pi}{15}$ remaining with $c(\Delta)$.  If $d(u_1)=d(u_3)=4$, $d(u_2) \geq 6$ and $d(\hat{\Delta}_6) > 4$ then distribute this remaining $\frac{2 \pi}{15}$ from 
$c(\Delta)$ to $c(\hat{\Delta}_6)$ as shown in Figure 18(ii), noting that $c(\hat{\Delta}_3) \leq 0$; or if
$d(u_1)=d(u_3)=4$, $d(u_2) \geq 6$ and $d(\hat{\Delta}_7) > 4$ then distribute the $\frac{2 \pi}{15}$ from $c(\Delta)$ to $c(\hat{\Delta}_7)$ as shown in Figure 18(ii), noting that $c(\hat{\Delta}_4) \leq 0$.  Assume from now on
that neither of these sets of conditions occur.  Then $\frac{\pi}{15}$ is distributed from $c(\Delta)$ to each of $c(\hat{\Delta}_3)$ and $c(\hat{\Delta}_4)$ as shown ini Figure 18(iii).
If $c(\hat{\Delta}_3) \leq - \frac{\pi}{15}$ then the $\frac{\pi}{15}$ from $c(\Delta)$ remains with $c(\hat{\Delta}_3)$ and similarly for $c(\hat{\Delta}_4)$, so assume from now on that
$c(\hat{\Delta}_3) > - \frac{\pi}{15}$ and $c(\hat{\Delta}_4) > - \frac{\pi}{15}$.  It remains to describe further transfer of positive curvature from $c(\hat{\Delta}_3)$ and $c(\hat{\Delta}_4)
$ (and possibly $c(\hat{\Delta}_6)$ when $d(\hat{\Delta}_6)=4$ and $c(\hat{\Delta}_7)$ when $d(\hat{\Delta}_7)=4$).

Let $d(u_2)=4$.
If $d(u_1) \geq 6$ then add $\frac{\pi}{15} + c(\hat{\Delta}_3) \leq \frac{\pi}{15}$ to $c(\hat{\Delta}_6)$ as in Figure 18(iv);
if $d(u_1)=5$ then add $\frac{\pi}{15} + c(\hat{\Delta}_3) = \frac{2 \pi}{15}$ to $c(\hat{\Delta}_6)$ if $l(u_1)$ is given by (v), or add
$\frac{\pi}{15}$ to each of $c(\hat{\Delta}_5)$ and $c(\hat{\Delta}_6)$ if $l(u_1)$ is given by (vi);
if $d(u_1)=4$ then add $\frac{\pi}{15} + c(\hat{\Delta}_3)=\frac{7 \pi}{30}$ to $c(\hat{\Delta}_6)$ as in (vii);
if $d(u_1)=3$ then $\frac{\pi}{15}+c(\hat{\Delta}_3)=\frac{6 \pi}{15}$ so add
$\frac{4\pi}{15}$ to $c(\hat{\Delta}_5)$ and
$\frac{2 \pi}{15}$ to $c(\hat{\Delta}_6)$ as in (viii);
if $d(u_3) \geq 6$ then add
$\frac{\pi}{15} + c(\hat{\Delta}_4) \leq \frac{\pi}{15}$ to $c(\hat{\Delta}_7)$ as in (iv);
if $d(u_3)=5$ then add
$\frac{\pi}{15} + c(\hat{\Delta}_4)=\frac{2 \pi}{15}$ to $c(\hat{\Delta}_7)$ if $l(u_3)$ is given by (ix), or add
$\frac{\pi}{15}$ to each of $c(\hat{\Delta}_7)$ and $c(\hat{\Delta}_8)$ if $l(u_3)$ is given by (x);
if $d(u_3)=4$ then add
$\frac{\pi}{15} + c(\hat{\Delta}_4)=\frac{7 \pi}{30}$ to $c(\hat{\Delta}_7)$ as in (xi); and
if $d(u_3)=3$ then $\frac{\pi}{15} + c(\hat{\Delta}_4)=\frac{6 \pi}{15}$ so add
$\frac{2 \pi}{15}$ to $c(\hat{\Delta}_7)$ and $\frac{4 \pi}{15}$ to $c(\hat{\Delta}_8)$ as in (xii).

Let $d(u_2)=5$ and so $l(u_2)=a^5$.
If $d(u_1)=5$ then add
$\frac{\pi}{30}$ from $c(\Delta)$ to $c(\hat{\Delta}_3)=c(3,4,5,5)=-\frac{\pi}{30}$ and
$\frac{\pi}{30}$ from $c(\Delta)$ to $c(\hat{\Delta}_6)$ as in Figure 18(xiii) and (xiv);
if $d(u_1)=4$ then add $\frac{\pi}{15} + c(\hat{\Delta}_3)=\frac{2 \pi}{15}$ to $c(\hat{\Delta}_6)$ as in (xv);
if $d(u_1)=3$ then add $\frac{\pi}{15} + c(\hat{\Delta}_3)=\frac{9 \pi}{30}$ to $c(\hat{\Delta}_5)$ as in (xvi);
if $d(u_3)=5$ then add $\frac{\pi}{30}$ from $c(\Delta)$ to $c(\hat{\Delta}_4)$ and $\frac{\pi}{30}$ from $c(\Delta)$ to $c(\hat{\Delta}_7)$ as in
(xvii) and (xviii); if $d(u_3)=4$
then add $\frac{\pi}{15}+c(\hat{\Delta}_4)=\frac{2 \pi}{15}$ to $c(\hat{\Delta}_7)$ as in (xix); and
if $d(u_3)=3$ then add $\frac{\pi}{15} + c( \hat{\Delta}_4)=\frac{3 \pi}{10}$ to $c(\hat{\Delta}_8)$ as in (xx).

Let $d(u_2) \geq 6$ so that by assumption $3 \leq d(u_1)$, $d(u_3) \leq 4$ (since $c(3,4,5,6)=-\frac{\pi}{10}$).
If $d(u_1)=3$ then add $\frac{\pi}{15}+c(\hat{\Delta}_3) \leq \frac{7 \pi}{30}$ to $c(\hat{\Delta}_5)$ as in Figure 18(xxi); and
if $d(u_3)=3$ then add $\frac{\pi}{15}+c(\hat{\Delta}_4) \leq \frac{7 \pi}{30}$ to $c(\hat{\Delta}_8)$ as in (xxii).

This leaves $d(u_1)=d(u_3)=d(\hat{\Delta}_6)=d(\hat{\Delta}_7)=4$.
First consider $\hat{\Delta}_6$ as shown in Figure 19(i).
If $d(u_4) > 3$ and $d(u_5) > 3$ then add
$\frac{\pi}{15} + c(\hat{\Delta}_3) \leq \frac{\pi}{15}$ to $c(\hat{\Delta}_6) \leq c(4,4,4,6)=-\frac{\pi}{6}$ as in 19(i);
if $d(u_4)=3$ and $d(u_5) > 3$ then add $\frac{\pi}{15} + c(\hat{\Delta}_3) + c(\hat{\Delta}_6) \leq \frac{\pi}{15}$ to $c(\hat{\Delta}_9)$ as in
(ii);
if $d(u_4)=3=d(u_5)$ then $\frac{\pi}{15} + c(\hat{\Delta}_3) + c(\hat{\Delta}_6) \leq \frac{7 \pi}{30}$ so add
$\frac{\pi}{10}$ to $c(\hat{\Delta}_9)$ and $\frac{2 \pi}{15}$ to $c(\hat{\Delta}_{10})$ as in (iii);
if $d(u_4)=4$, $d(u_5)=3$ and $d(u_2) > 6$ then $c(\hat{\Delta}_3) \leq - \frac{\pi}{21}$ so add
$\frac{\pi}{21}$ from $c(\Delta)$ to $c(\hat{\Delta}_3)$ and the remaining $\frac{\pi}{15}-\frac{\pi}{21}=\frac{2 \pi}{105}$ to
$c(\hat{\Delta}_6) \leq - \frac{\pi}{21}$ as in (iv);
if $d(u_4)=4$, $d(u_5)=3$ and $d(u_2)=6$ then (checking the star graph $\Gamma_0$ for possible labels shows that) $u_2$ is given by (v) in which
case add $\frac{\pi}{15}+c(\hat{\Delta}_3) + c(\hat{\Delta}_6)=\frac{\pi}{15}$ to $c(\hat{\Delta}_{11})$ as in (v); and
if $d(u_4) > 4$ and $d(u_5)=3$ then add $\frac{\pi}{15}+c(\hat{\Delta}_3) \leq \frac{\pi}{15}$ to $c(\hat{\Delta}_6) \leq - \frac{\pi}{10}$ as in
(vi).

Now consider $\hat{\Delta}_7$ as in Figure 19(vii).
If $d(u_6)>3$ and $d(u_7)>3$ then add
$\frac{\pi}{15}+c(\hat{\Delta}_4) \leq \frac{\pi}{15}$ to $c(\hat{\Delta}_7) \leq -\frac{\pi}{6}$ as in 19(vii);
if $d(u_7)=3$ and $d(u_6) >3$ then add
$\frac{\pi}{15}+c(\hat{\Delta}_6) \leq \frac{\pi}{15}$ to $c(\hat{\Delta}_{14})$ as in (viii);
if $d(u_7)=3=d(u_6)$ then $\frac{\pi}{15}+c(\hat{\Delta}_4)+c(\hat{\Delta}_7)=\frac{7 \pi}{30}$ so add
$\frac{2 \pi}{15}$ to $c(\hat{\Delta}_{13})$ and $\frac{\pi}{10}$ to $c(\hat{\Delta}_{14})$ as in (ix);
if $d(u_7)=4$, $d(u_6)=3$ and $d(u_2) >6$ then $c(\hat{\Delta}_4) \leq -\frac{\pi}{21}$ so add
$\frac{\pi}{21}$ from $c(\Delta)$ to $c(\hat{\Delta}_4)$ and the remaining $\frac{\pi}{15}-\frac{\pi}{21}=\frac{2 \pi}{105}$ to
$c(\hat{\Delta}_7) \leq - \frac{\pi}{21}$ as in (x);
if $d(u_7)=4$, $d(u_6)=3$ and $d(u_2)=6$ then $u_2$ is given by (xi) in which case add
$\frac{\pi}{15}+c(\hat{\Delta}_4)+c(\hat{\Delta}_7)=\frac{\pi}{15}$ to $c(\hat{\Delta}_{12})$ as in (xi); and
if $d(u_7)>4$ and $d(u_6)=3$ then add
$\frac{\pi}{15}+c(\hat{\Delta}_4) \leq \frac{\pi}{15}$ to $c(\hat{\Delta}_7) \leq - \frac{\pi}{10}$ as in (xii).

$\bs{d(v_1)=d(v_3)=3}$.  \textbf{Figures 20-23}.  There are three subcases.

(1)
$\bs{((d(v_2),d(v_4)) \notin \{ (\geq 6,5), (5,\geq 5)\}}$.  \textbf{Figure 20}.  Distribution of $c(\Delta)$ is described in Figure 20 according to possible $d(v_2)$ and $d(v_4)$.  There are two exceptions 
to these rules.  When $\Delta$ is given by Figure 20(v), $\frac{7 \pi}{30}$ is added to $\hat{\Delta}_2$ as indicated, except when  
$\Delta = \Delta_1$ of \textbf{Configuration B} in Figure 31(v,) in which case the dotted lines in Figure 20(v) indicate the new rule, that is, $\frac{\pi}{30},\frac{\pi}{5}$ is added to 
$\hat{\Delta}_1,\hat{\Delta}_2$ respectively
; and when $\Delta$ is given by Figure 20(v), $\frac{7 \pi}{30}$ is added to $\hat{\Delta}_1$ as indicated, except when $\Delta = \Delta_1$ of \textbf{Configuration A} in Figure 31(i), in which case the 
dotted lines in Figure 20(vi) indicate the new rule, that is, 
$\frac{\pi}{5},\frac{\pi}{30}$ is added to $\hat{\Delta}_1,\hat{\Delta}_2$ respectively. 

(2)
$\bs{d(v_2) \geq 6}$ \textbf{and} $\bs{d(v_4)=5}$.  \textbf{Figures 21-22}.  If $\Delta$ is given by Figure 21(i) then $c(\Delta)$ is distributed as shown. Otherwise $l(v_4)=a^5$ and this subcase is now considered using Figures 21(ii)-(xx) and 22.

Let $d(v_2) \geq 6$ and $l(v_4) = a^{5}$.
Then $c(\Delta) \leq \frac{\pi}{15}$, half of which ($\leq \frac{\pi}{30}$) is distributed to $c(\hat{\Delta}_1)$ and $c(\hat{\Delta}_4)$
whilst the other half is distributed to $c(\hat{\Delta}_2)$ and $c(\hat{\Delta}_3)$ (and this will be described in the next paragraph).  The distribution of $\frac{1}{2} c(\Delta)$ to
$c(\hat{\Delta}_1)$ and $c(\hat{\Delta}_4)$ is as follows.
If $d(\hat{\Delta}_1)>4$ then add $\frac{1}{2} c(\Delta) \leq \frac{\pi}{30}$ to $c(\hat{\Delta}_1)$ as in Figure 21(ii), or
if $d(\hat{\Delta}_1)=4$ and $d(\hat{\Delta}_4) >4$ then add $\frac{1}{2} c(\Delta) \leq \frac{\pi}{30}$ to $c(\hat{\Delta}_4)$ again as in (ii).
It can be assumed therefore that $\Delta$, $\hat{\Delta}_1$ and $\hat{\Delta}_j$ ($4 \leq j \leq 8$) are given by Figure 21(iii).
We proceed according to $d(u_4) \geq 3$, $d(u_5) \geq 4$, $d(u_6) \geq 3$ of Figure 21(iii).
If $d(u_6)=3$, $d(u_5)=4$ and $d(u_4) \geq 5$ then add $\frac{1}{2} c(\Delta) \leq \frac{\pi}{30}$ to $c(\hat{\Delta}_4) \leq -\frac{\pi}{30}$ as in
Figure 21(iv);
if $d(u_6)=3$, $d(u_5)=4$ and $d(u_4)=4$ then add $\frac{1}{2} c(\Delta) \leq \frac{\pi}{30}$ to $c(\hat{\Delta}_4)$ and then add
$\frac{\pi}{30}+c(\hat{\Delta}_4) \leq \frac{\pi}{10}$ to $c(\hat{\Delta}_5)$ as in (v);
if $d(u_6)=3$, $d(u_5)=4$ and $d(u_4)=3$ then add $\frac{1}{2} c(\Delta) \leq \frac{\pi}{30}$ to $c(\hat{\Delta}_4)$ and then add
$\frac{\pi}{30}+c(\hat{\Delta}_4) \leq \frac{4 \pi}{15}$ to $c(\hat{\Delta}_6)$ as in (vi);
if $d(u_6)=3$ and $d(u_5)=5$ then add $\frac{1}{2} c(\Delta) \leq \frac{\pi}{30}$ to $c(\hat{\Delta}_1) \leq \frac{\pi}{15}$ and then add
$\frac{\pi}{15}$ to $c(\hat{\Delta}_7)$ and add $\frac{\pi}{30}$ to $c(\hat{\Delta}_8)$ as in (vii);
if $d(u_6)=3$ and $d(u_5) \geq 6$ then add $\frac{1}{2} c(\Delta) \leq \frac{\pi}{30}$ to $c(\hat{\Delta}_1) \leq 0$ and then add
$\frac{\pi}{30}$ to $c(\hat{\Delta}_8)$ as in (viii). This completes $d(u_6)=3$.
If $d(u_6)=4$, $d(u_5) = 4$ and $d(v_2)=7$ (note that $c(3,3,5,8) < 0$) then add $\frac{1}{2} c(\Delta) \leq \frac{\pi}{105}$ to $c(\hat{\Delta}_1) \leq - \frac{\pi}{21}$ as in (ix).
Let $d(u_6)=4$, $d(u_5)=4$ and $d(v_2)=6$ so, in particular, $c(\hat{\Delta}_1)=0$.
If $u_6$ is given by Figure 21(x) then add $\frac{1}{2} c(\Delta)=\frac{\pi}{30}$ to $c(\hat{\Delta}_7)$, so from now on suppose that $u_6$ is given
by
Figure 21(xi).
If $d(\hat{\Delta}_8) > 4$ then add $\frac{1}{2} c(\Delta)=\frac{\pi}{30}$ to $\hat{\Delta}_8$ as shown in Figure 21(xi), so suppose from now on that
$d(\hat{\Delta}_8)=4$.
\textit{Suppose that $\hat{\Delta}_8$ is given by Figure 21(xii).}
If $d(u_4) \geq 5$ then add $\frac{1}{2} c(\Delta) \leq \frac{\pi}{30}$ to $c(\hat{\Delta}_4) \leq - \frac{\pi}{30}$ as in Figure 21(xiii);
if $d(u_4)=4$ then add $\frac{1}{2} c(\Delta) \leq \frac{\pi}{30}$ to $c(\hat{\Delta}_4)$ and then add $\frac{\pi}{30} + c(\hat{\Delta}_4) \leq \frac{\pi}{10}$ to $c(\hat{\Delta}_5)$ as in (xiv); and
if $d(u_4)=3$ then add $\frac{1}{2} c(\Delta) \leq \frac{\pi}{30}$ to $c(\hat{\Delta}_4)$ and then add $\frac{\pi}{30} + c(\hat{\Delta}_4) \leq \frac{4 \pi}{15}$ to $c(\hat{\Delta}_6)$ as in (xv).
\textit{Suppose now that $\hat{\Delta}_8$ is not given by Figure 21(xii).}
Then again add $\frac{1}{2} c(\Delta)=\frac{\pi}{30}$ to $\hat{\Delta}_8$ as in Figure 21(xi).
We proceed according to the degrees of the vertices $w_1$ and $w_2$ of Figure 21(xi).
If $d(w_1)=d(w_2)=3$ then $\frac{1}{2} c(\Delta) + c(\hat{\Delta}_8)=\frac{\pi}{5}$ so add $\frac{\pi}{10}$ to $c(\hat{\Delta}_9)$ and $\frac{\pi}{10}$ to
$c(\hat{\Delta}_{10})$ as shown in Figure 21(xvi);
if $d(w_1)=3$ and $d(w_2) >3$ then $\frac{1}{2}c(\Delta) + c(\hat{\Delta}_8)=\frac{\pi}{30}$ so add $\frac{\pi}{30}$ to $c(\hat{\Delta}_9)$
as shown in (xvii);
if $d(w_1)=4$ and $d(w_2)=3$ then by assumption $\hat{\Delta}_8$ is given by (xviii) and
$c(\hat{\Delta}_8)=0$ so add $\frac{1}{2} c(\Delta) + c(\hat{\Delta}_8) =\frac{\pi}{30}$ to
$c(\hat{\Delta}_{10})$ as shown; and
if either $d(w_1) \geq 5$ and $d(w_2)=3$ or $d(w_1) \geq 4$ and $d(w_2) \geq 4$ then
add $\frac{1}{2} c(\Delta) \leq \frac{\pi}{30}$ to $c(\hat{\Delta}_8) \leq -\frac{\pi}{10}$ as shown in (xix).
This completes $d(u_6)=d(u_5)=4$.
Finally if $d(u_6) \geq 4$ and $d(u_5) \geq 5$ or $d(u_6) \geq 5$ and $d(u_5) = 4$ then add $\frac{1}{2} c(\Delta) = \frac{\pi}{30}$ to
$c(\hat{\Delta}_1) \leq c(3,4,5,6) = -\frac{\pi}{10}$ as shown in Figure 21(xx).

The remaining $\frac{1}{2} c(\Delta) \leq \frac{\pi}{30}$ is distributed to $c(\hat{\Delta}_2)$ and $c(\hat{\Delta}_3)$ as follows.
If $d(\hat{\Delta}_2)>4$ then add $\frac{1}{2} c(\Delta) \leq \frac{\pi}{30}$ to $c(\hat{\Delta}_2)$ as in Figure 21(ii), or
if $d(\hat{\Delta}_2)=4$ and $d(\hat{\Delta}_3) >4$ then add $\frac{1}{2} c(\Delta) \leq \frac{\pi}{30}$ to $c(\hat{\Delta}_3)$ again as in (ii).
It can be assumed therefore that $\Delta$, $\hat{\Delta}_2$, $\hat{\Delta}_3$ and $\hat{\Delta}_j$ ($5 \leq j \leq 8$) are now given by Figure 22(i).
We proceed according to
$d(u_1) \geq 3$, $d(u_2) \geq 4$, $d(u_3) \geq 3$ of Figure 22(i).
If $d(u_1)=3$, $d(u_2)=4$ and $d(u_3) \geq 5$ then add $\frac{1}{2} c(\Delta) \leq \frac{\pi}{30}$ to $c(\hat{\Delta}_3) \leq -\frac{\pi}{30}$ as in
Figure 22(ii);
if $d(u_1)=3$, $d(u_2)=4$ and $d(u_3)=4$ then add $\frac{1}{2} c(\Delta) \leq \frac{\pi}{30}$ to $c(\hat{\Delta}_3)$ and then add
$\frac{\pi}{30}+c(\hat{\Delta}_3) \leq \frac{\pi}{10}$ to $c(\hat{\Delta}_5)$ as in (iii);
if $d(u_1)=3$, $d(u_2)=4$ and $d(u_3)=3$ then add $\frac{1}{2} c(\Delta) \leq \frac{\pi}{30}$ to $c(\hat{\Delta}_3)$ and then add
$\frac{\pi}{30}+c(\hat{\Delta}_3) \leq \frac{4 \pi}{15}$ to $c(\hat{\Delta}_6)$ as in (iv);
if $d(u_1)=3$ and $d(u_2)=5$ then add $\frac{1}{2} c(\Delta) \leq \frac{\pi}{30}$ to $c(\hat{\Delta}_2) \leq \frac{\pi}{15}$ and add
$\frac{\pi}{15}$ to $c(\hat{\Delta}_7)$ and $\frac{\pi}{30}$ to $c(\hat{\Delta}_8)$ as in (v);
if $d(u_1)=3$ and $d(u_2) \geq 6$ then add $\frac{1}{2} c(\Delta) \leq \frac{\pi}{30}$ to $c(\hat{\Delta}_2) \leq 0$ and then add
$\frac{\pi}{30}$ to $c(\hat{\Delta}_8)$ as in (vi). This completes $d(u_1)=3$.
If $d(u_1)=4$, $d(u_2) = 4$ and $d(v_2)=7$ then add $\frac{1}{2} c(\Delta) \leq \frac{\pi}{105}$ to $c(\hat{\Delta}_2) \leq - \frac{\pi}{21}$ as in (vii).
Let $d(u_1)=4$, $d(u_2)=4$ and $d(v_2)=6$ so, in particular, $c(\hat{\Delta}_2)=0$.
If $u_2$ is given by Figure 22(viii) then add $\frac{1}{2} c(\Delta)=\frac{\pi}{30}$ to $c(\hat{\Delta}_7)$, so from now on suppose that $u_2$ is
given by
Figure 22(ix).
If $d(\hat{\Delta}_8) > 4$ then add $\frac{1}{2} c(\Delta)=\frac{\pi}{30}$ to $\hat{\Delta}_8$ as shown in Figure 22(ix), so suppose from now on that
$d(\hat{\Delta}_8)=4$.
\textit{Suppose that $\hat{\Delta}_8$ is given by Figure 22(x).}
If $d(u_3) \geq 5$ then add $\frac{1}{2} c(\Delta) \leq \frac{\pi}{30}$ to $c(\hat{\Delta}_3) \leq - \frac{\pi}{30}$ as in Figure 22(xi);
if $d(u_3)=4$ then add $\frac{1}{2} c(\Delta) \leq \frac{\pi}{30}$ to $c(\hat{\Delta}_3)$ and then add $\frac{\pi}{30}+c(\hat{\Delta}_3) \leq \frac{\pi}{10}$ to $c(\hat{\Delta}_5)$ as in (xii); and
if $d(u_3)=3$ then add $\frac{1}{2} c(\Delta) \leq \frac{\pi}{30}$ to $c(\hat{\Delta}_3)$ and then add $\frac{\pi}{30} + c(\hat{\Delta}_3) \leq \frac{4 \pi}{15}$ to $c(\hat{\Delta}_6)$ as in (xiii).
\textit{Suppose now that $\hat{\Delta}_8$ is not given by Figure 22(x).}
Then again add $\frac{1}{2} c(\Delta) = \frac{\pi}{30}$ to $\hat{\Delta}_8$ as in Figure 22(ix).
We proceed according to the degrees of the vertices $w_3$ and $w_4$ of Figure 22(ix).
If $d(w_3)=d(w_4)=3$ then $\frac{1}{2} c(\Delta) + c(\hat{\Delta}_8)=\frac{\pi}{30} + \frac{\pi}{6} = \frac{\pi}{5}$ so add $\frac{\pi}{10}$ to $c(\hat{\Delta}_9)$ and $\frac{\pi}{10}$ to
$c(\hat{\Delta}_{10})$ as shown in Figure 22(xiv);
if $d(w_3)=3$ and $d(w_4) >3$ then $\frac{1}{2}c(\Delta) + c(\hat{\Delta}_8)=\frac{\pi}{30}$ so add $\frac{\pi}{30}$ to $c(\hat{\Delta}_9)$
as shown in (xv);
if $d(w_3)=4$ and $d(w_4)=3$ then by assumption $\hat{\Delta}_8$ is given by (xvi) and $c(\hat{\Delta}_8)=0$ so add $\frac{1}{2} c(\Delta)=\frac{\pi}{30}$ to
$c(\hat{\Delta}_{10})$ as shown; and
if either $d(w_3) \geq 5$ and $d(w_4)=3$ or $d(w_3) \geq 4$ and $d(w_4) \geq 4$ then
add $\frac{1}{2} c(\Delta) \leq \frac{\pi}{30}$ to $c(\hat{\Delta}_8) \leq -\frac{\pi}{10}$ as shown in (xvii).
This completes $d(u_1)=d(u_2)= 4$.
Finally if $d(u_1) \geq 4$ and $d(u_2) \geq 5$ or $d(u_1) \geq 5$ and $d(u_2) = 4$ then add $\frac{1}{2} c(\Delta) = \frac{\pi}{30}$ to
$c(\hat{\Delta}_2) \leq c(3,4,5,6) = -\frac{\pi}{10}$ as shown in Figure 22(xviii).

(3) $\bs{d(v_2)=5}$ \textbf{and} $\bs{d(v_4) \geq 5}$. \textbf{Figure 23}.
If $\Delta$ is given by Figure 23(i) then add $\frac{1}{2} c(\Delta) \leq \frac{\pi}{15}$ to each of $c(\hat{\Delta}_1)$ and $c(\hat{\Delta}_2)$.
Otherwise $l(v_2)=bx^{-1} \lambda z^{-1} y$ and $\Delta$ is given by Figure 23(ii).  Here add $\frac{1}{2} c (\Delta) \leq \frac{\pi}{15}$ to
$c(\hat{\Delta}_4)$ if $d(\hat{\Delta}_4) > 4$, otherwise add $\frac{1}{2} c(\Delta) \leq \frac{\pi}{15}$ to $c(\hat{\Delta}_1)$; and
add $\frac{1}{2} c(\Delta) \leq \frac{\pi}{15}$ to $c(\hat{\Delta}_3)$ if $d(\hat{\Delta}_3) > 4$, otherwise add $\frac{1}{2} c(\Delta) \leq \frac{\pi}{15}$ to $c(\hat{\Delta}_2)$.
If $\frac{1}{2} c(\Delta) \leq \frac{\pi}{15}$ is added to $c(\hat{\Delta}_1)$ and $d(\hat{\Delta}_1) > 4$ there is no further distribution of curvature from $\hat{\Delta}_1$ and the same statement holds for $\hat{\Delta}_2$.  This leaves the subcases $d(\hat{\Delta}_1)=d(\hat{\Delta}_4)=4$ and
$d(\hat{\Delta}_2) = d(\hat{\Delta}_3)=4$.

Assume first that $d(\hat{\Delta}_1)=d(\hat{\Delta}_4)=4$ in Figure 23(ii).  Then
$\Delta$ is given by Figure 23(iii). We proceed according to  $d(u_5) \geq 4$ and $d(u_6) \geq 3$.
If $d(u_5)=4$ and $d(u_6)=3$ then $\frac{\pi}{15}+c(\hat{\Delta}_1) \leq \frac{3 \pi}{10}$ so add
$\frac{\pi}{10}$ to $c(\hat{\Delta}_7)$ and $\frac{\pi}{5}$ to $c(\hat{\Delta}_8)$ as in Figure 23(iv);
if $d(u_5)=4$ and $d(u_6)=4$ then add $\frac{1}{2} ( \frac{\pi}{15} + c(\hat{\Delta}_1)) \leq \frac{\pi}{15}$ to each of $c(\hat{\Delta}_7)$ and $c(\hat{\Delta}_8)$
if $u_6$ is given by (v), or add $\frac{\pi}{15} + c(\hat{\Delta}_1) \leq \frac{2 \pi}{15}$ to $c(\hat{\Delta}_8)$
if $u_6$ is given by (vi);
if $d(u_5)=4$ and $d(u_6)=5$ then $c(\hat{\Delta}_1)=-\frac{\pi}{30}$ so add
$\frac{\pi}{15}+c(\hat{\Delta}_1) \leq \frac{\pi}{30}$ to $c(\hat{\Delta}_8)$ as in (vii);
if $d(u_5)=4$ and $d(u_6) \geq 6$ then add $\frac{1}{2} c(\Delta) \leq \frac{\pi}{15}$ to $c(\hat{\Delta}_1) \leq -\frac{\pi}{10}$ as in (viii);
if $d(u_5)=5$ and $d(u_6)=3$ then $\frac{\pi}{15}+c(\hat{\Delta}_1) \leq \frac{\pi}{5}$ so add
$\frac{\pi}{15}$ to $c(\hat{\Delta}_7)$ and $\frac{2 \pi}{15}$ to $c(\hat{\Delta}_8)$ as in (ix);
if $d(u_5) = 5$ and $d(u_6)=4$ then $c(\hat{\Delta}_1) \leq - \frac{\pi}{30}$ so add
$\frac{\pi}{15}+c(\hat{\Delta}_1) \leq \frac{\pi}{30}$ to $c(\hat{\Delta}_8)$ as shown in the two possibilities for $u_6$, namely (x) and (xi);
if $d(u_5)=5$ and $d(u_6) \geq 5$ then add
$\frac{1}{2}c(\Delta) \leq \frac{\pi}{15}$ to $c(\hat{\Delta}_1) \leq -\frac{2 \pi}{15}$ as in (xii);
if $d(u_5)>5$ and $d(u_6)=3$ then add
$\frac{\pi}{15} + c(\hat{\Delta}_1) \leq \frac{2 \pi}{15}$ to $c(\hat{\Delta}_8)$ as in (xiii); and
if $d(u_5) >5$ and $d(u_6) >3$ then add
$\frac{1}{2} c(\Delta) \leq \frac{\pi}{15}$ to $c(\hat{\Delta}_1) \leq -\frac{\pi}{10}$ as in (xiv).

Now assume that $d(\hat{\Delta}_2)=d(\hat{\Delta}_3)=4$ in Figure 23(ii).  Then
$\Delta$ is given by Figure 23(xv).  We proceed according to $d(u_1) \geq 3$ and $d(u_2) \geq 4$.
If $d(u_2)=4$ and $d(u_1)=3$ then $\frac{\pi}{15}+c(\hat{\Delta}_2) \leq \frac{3 \pi}{10}$ so add $\frac{\pi}{5}$ to
$c(\hat{\Delta}_9)$ and $\frac{\pi}{10}$ to $c(\hat{\Delta}_{10})$ as in Figure 23(xvi);
if $d(u_2)=4$ and $d(u_1)=4$ then add $\frac{1}{2} ( \frac{\pi}{15} + c(\hat{\Delta})) \leq \frac{\pi}{15}$ to each of $c(\hat{\Delta}_9)$ and $c(\hat{\Delta}_{10})$
if $u_1$ is given by (xvii), or $\frac{\pi}{15}+c(\hat{\Delta}_2) \leq \frac{2 \pi}{15}$ to $c(\hat{\Delta}_9)$
if $u_1$ is given by (xviii);
if $d(u_2)=4$ and $d(u_1)=5$ then
$c(\hat{\Delta}_2)=-\frac{\pi}{30}$ so $\frac{\pi}{15}+c(\hat{\Delta}_2) \leq \frac{\pi}{30}$ is added to $c(\hat{\Delta}_9)$ as shown in (xix);
if $d(u_2)=4$ and $d(u_1) \geq 6$ then add $\frac{1}{2} c(\Delta) \leq \frac{\pi}{15}$ to $c(\hat{\Delta}_2) \leq -\frac{\pi}{10}$ as in (xx);
if $d(u_2) = 5$ and $d(u_1)=3$ then $\frac{\pi}{15} + c(\hat{\Delta}_2) \leq \frac{\pi}{5}$ so add $\frac{2 \pi}{15}$ to $c(\hat{\Delta}_9)$ and
$\frac{\pi}{15}$ to $c(\hat{\Delta}_{10})$ as in (xxi);
if $d(u_2) = 5$ and $d(u_1)=4$ then $c(\hat{\Delta}_2) = -\frac{\pi}{30}$ so add $\frac{\pi}{15} + c(\hat{\Delta}_2) = \frac{\pi}{30}$ to
$c(\hat{\Delta}_9)$ as shown in the two possibilities (xxii) and (xxiii);
if $d(u_2)=5$ and $d(u_1) > 4$ then add $\frac{1}{2} c(\Delta) \leq \frac{\pi}{15}$ to $c(\hat{\Delta}_2) \leq -\frac{2 \pi}{15}$ as in (xxiv);
if $d(u_2)>5$ and $d(u_1)=3$ then add $\frac{\pi}{15} + c(\hat{\Delta}_2) \leq \frac{2 \pi}{15}$
to $c(\hat{\Delta}_9)$ as in (xxv); and
if $d(u_2) > 5$ and $d(u_1) >3$ then add $\frac{1}{2} c(\Delta) \leq \frac{\pi}{15}$ to $c(\hat{\Delta}_2) \leq -\frac{\pi}{10}$ as in (xxvi).

$\bs{d(v_1)=3}$.  \textbf{Figure 24}.  Either $d(v_3)=5$ or $d(v_4)=5$ or $d(v_3)=d(v_4)=4$.

$\bs{d(v_2)=3}$.  \textbf{Figure 25}.  Either $d(v_1)=5$ or $d(v_3)=5$ or $d(v_1)=d(v_3)=4$.

$\bs{d(v_3)=3}$.  \textbf{Figure 26}.  Either $d(v_1)=5$ or $d(v_4)=5$ or $d(v_1)=d(v_4)=4$.

$\bs{d(v_4)=3}$.  \textbf{Figures 27-30}.  There are four subcases.

(1)
$\bs{(d(v_1),d(v_3),l(v_2)) \neq (4,4,b^5)}$.  \textbf{Figure 27}.  Either $d(v_1)=5$ or $d(v_3)=5$ or
$d(v_1)=d(v_3)=4$ but $l(v_2) \neq b^5$ and the distribution of curvature is as shown.

Assume from now on that $d(v_1)=d(v_3)=4$ and $l(v_2)=b^5$.

(2)
$\bs{(d(\hat{\Delta}_3),d(\hat{\Delta}_4)) \neq (4,4)}$.  \textbf{Figure 28}.  $c(\Delta)=\frac{\pi}{15}$ is distributed as shown.

Now assume that $d(\hat{\Delta}_3)=d(\hat{\Delta}_4)=4$ as shown in Figure 29(i).  If
$c(\hat{\Delta}_3) \leq - \frac{\pi}{15}$ then add $c(\Delta)=\frac{\pi}{15}$ to $c(\hat{\Delta}_3)$ 
or if $c(\hat{\Delta}_4) \leq - \frac{\pi}{15}$ then add $c(\Delta)=\frac{\pi}{15}$ to $c(\hat{\Delta}_4)$ as shown in Figure 29(i).  Assume that 
$c(\hat{\Delta}_3) > - \frac{\pi}{15}$ and 
$c(\hat{\Delta}_4) > - \frac{\pi}{15}$.  There are two subcases according to $d(u_2) \geq 4$.

(3)
$\bs{4 \leq d(u_2) \leq 5}$.  \textbf{Figure 29}.

Let $d(u_2)=4$.
If $d(u_1) \geq 6$ then add $c(\Delta)+c(\hat{\Delta}_3) \leq \frac{\pi}{15}$ to $c(\hat{\Delta}_6)$ as in Figure 29(ii);
if $d(u_1)=5$ then add $c(\Delta)+c(\hat{\Delta}_3)=\frac{2 \pi}{15}$ to $c(\hat{\Delta}_6)$
if $\hat{\Delta}_3$ is given by (iii), or add $\frac{1}{2} (c(\Delta)+c(\hat{\Delta}_3))=\frac{\pi}{15}$ to each of $c(\hat{\Delta}_5)$ and $c(\hat{\Delta}_6)$ if $\hat{\Delta}_3$ is given by (iv);
if $d(u_1)=4$ then add $c(\Delta)+c(\hat{\Delta}_3)=\frac{7 \pi}{30}$ to $c(\hat{\Delta}_6)$ as in (v); and
if $d(u_1)=3$ then $c(\Delta)+c(\hat{\Delta}_3)=\frac{6 \pi}{15}$ so add
$\frac{4 \pi}{15}$ to $c(\hat{\Delta}_5)$ and $\frac{2 \pi}{15}$ to $c(\hat{\Delta}_6)$ as in (vi).

Let $d(u_2)=5$ in which case $l(u_2)=a^5$.  In this case add $\frac{1}{2} c(\Delta)=\frac{\pi}{30}$ to each of $c(\hat{\Delta}_3)$ and
$c(\hat{\Delta}_4)$.
If $d(u_1) \geq 5$ then $c(\hat{\Delta}_3) \leq c(3,4,5,5)=-\frac{\pi}{30}$ and the $\frac{\pi}{30}$ from $c(\Delta)$ remains with
$c(\hat{\Delta}_3)$ as shown in Figure 29(vii);
if $d(u_1)=4$ then add $\frac{\pi}{30}+c(\hat{\Delta}_3)=\frac{\pi}{10}$ to $c(\hat{\Delta}_6)$ as in (viii); and
if $d(u_1)=3$ then add $\frac{\pi}{30}+c(\hat{\Delta}_3)=\frac{4 \pi}{15}$ to $c(\hat{\Delta}_5)$ as in (ix).
If $d(u_3) \geq 5$ then $c(\hat{\Delta}_4) \leq -\frac{\pi}{30}$ and the $\frac{\pi}{30}$ from $c(\Delta)$ remains with $c(\hat{\Delta}_4)$
as shown in Figure 29(vii);
if $d(u_3)=4$ then add $\frac{\pi}{30} + c(\hat{\Delta}_4)=\frac{\pi}{10}$ to $c(\hat{\Delta}_7)$ as in (x); and
if $d(u_3)=3$ then add $\frac{\pi}{30}+c(\hat{\Delta}_4)=\frac{4 \pi}{15}$ to $c(\hat{\Delta}_8)$ as in (xi).

(4) $\bs{d(u_2) \geq 6}$. \textbf{Figure 30}.
If $d(u_1) > 4$ then add $c(\Delta)=\frac{\pi}{15}$ to $c(\hat{\Delta}_3) \leq -\frac{\pi}{10}$; or
if $d(u_3) > 4$ then add $c(\Delta)=\frac{\pi}{15}$ to $c(\hat{\Delta}_4) \leq -\frac{\pi}{10}$ as in Figure 30(i).
If $d(u_1)=3$ then add $c(\Delta)+c(\hat{\Delta}_3) \leq \frac{\pi}{15} + \frac{\pi}{6}=\frac{7 \pi}{30}$ to $c(\hat{\Delta}_5)$ as shown in Figure 30(ii); or
if $d(u_3)=3$ then add $c(\Delta)+c(\hat{\Delta}_4) \leq \frac{7 \pi}{30}$ to $c(\hat{\Delta}_8)$ as shown in Figure 30(ii).
This leaves $d(u_1)=d(u_3)=4$.
If $d(u_2) \geq 7$ then add $\frac{1}{2}c(\Delta)=\frac{\pi}{30}$ to each of $c(\hat{\Delta}_3) \leq -\frac{\pi}{21}$ and $c(\hat{\Delta}_4) \leq
-\frac{\pi}{21}$ as in Figure 30(iii),
so assume that $d(u_2)=6$.
If $d(\hat{\Delta}_6) > 4$ then add $c(\Delta)=\frac{\pi}{15}$ to $c(\hat{\Delta}_6)$, or
if $d(\hat{\Delta}_7) > 4$ then add $c(\Delta)=\frac{\pi}{15}$ to $c(\hat{\Delta}_7)$ as in Figure 30(iv).
It can be assumed that $d(\hat{\Delta}_6)=d(\hat{\Delta}_7)=4$ which forces $l(u_2)=aaxy^{-1} xy^{-1}$ as shown in Figure 30(v).
If $d(u_4) > 3$ and $d(u_5) > 3$ in Figure 30(v) then add $c(\Delta)+c(\hat{\Delta}_3)=\frac{\pi}{15}$ to $c(\hat{\Delta}_6) \leq -\frac{\pi}{6}$ as
shown;
if $d(u_4)=3$ and $d(u_5) > 3$ then add $c(\Delta)+c(\hat{\Delta}_3)+c(\hat{\Delta}_6) \leq \frac{\pi}{15}$ to $c(\hat{\Delta}_9)$ as in (vi);
if $d(u_4)=d(u_5)=3$ then $c(\Delta)+c(\hat{\Delta}_3)+c(\hat{\Delta}_6)=\frac{7 \pi}{30}$ and so add $\frac{\pi}{10}$ to $c(\hat{\Delta}_9)$, $\frac{\pi}{15}$ to $c(\hat{\Delta}_{10})$
and $\frac{\pi}{15}$ to $c(\hat{\Delta}_{11})$ as in (vii); and
if $d(u_4)>3$ and $d(u_5)=3$ then add $c(\Delta)+c(\hat{\Delta}_3)+c(\hat{\Delta}_6)=\frac{\pi}{15}$ to $c(\hat{\Delta}_{11})$ as in (viii).

\medskip

This completes the description of distribution of curvature from $\Delta$ when $d(\Delta) =4$
except for six exceptional configurations which we now describe and for which there is an amendment to the rules given above.
(Indeed the amendments, as they relate to Figures 9,13,14 and 20 have already been described. In what follows we detail the amendments as they relate to Figures 31 and 32.)

\medskip

\textbf{Configuration A}.  This is shown in Figure 31(i) in which $c(\Delta_1)=\frac{7 \pi}{30}$ and
$c(\Delta_3)=\frac{\pi}{3}$.  The region $\Delta_1$ in Figure 31(i) corresponds to the region $\Delta$ in Figure 20(vi); and the region $\Delta_3$ in Figure 31(i)-(iv) corresponds to the region $\Delta$ in Figure 13(ii).  The new
rule is: add $\frac{\pi}{5}$ from $c(\Delta_1)$ to $c(\hat{\Delta})$ and add $\frac{\pi}{30}$ from $c(\Delta_1)$ to $c(\hat{\Delta}_1)$ as shown by dotted lines in Figure 31(i) \textit{except} when the neighbouring regions of 
$\Delta_3$ are given by Figure 31(ii)-(iv).  There it is assumed that $\hat{\Delta}_2$ receives $\frac{\pi}{5}$ from $\Delta_4$; and so the region $\Delta_4$ of Figure 31(ii)-(iii) corresponds to the region $\Delta$ of Figure 7(iii), and the region $\Delta_4$ of Figure 31(iv) corresponds to the region $\Delta$ of Figure 10.  When $\Delta_3$ is given by 31(ii)-(iv) add \textit{all} of $c(\Delta_1)=\frac{7 \pi}{30}$ to $c(\hat{\Delta})$ (as shown in Figure 31(i)) as usual and the new rule is: add $\frac{3 \pi}{10}$ from $c(\Delta_3)$ to $c(\hat{\Delta})$ and add $\frac{\pi}{30}$ from $c(\Delta_3)$ to $c(\hat{\Delta}_3)$ as shown by dotted lines.  Moreover, if $d(\hat{\Delta}_3)=4$ then add
$\frac{\pi}{30} + c(\hat{\Delta}_3) = \frac{\pi}{10}$ to $c(\hat{\Delta}_4)$ as shown by dotted line in Figure 31(ii).  Note that it is being assumed that $d(\hat{\Delta}_3) \neq 4$ in Figures 31(iii)-(iv), in which case $\hat{\Delta}_3$ is not given by Figure 4(ii) or (iii) and so $d(\hat{\Delta}_3) \geq 8$.

\textbf{Configuration B}.  This is shown in Figure 31(v) in which $c(\Delta_1) = \frac{7 \pi}{30}$ and
$c(\Delta_3)=\frac{\pi}{3}$.  The region $\Delta_1$ in Figure 31(v) corresponds to the region $\Delta$ in Figure 20(v); and the region $\Delta_3$ in Figure 31(v)-(viii) corresponds to the region $\Delta$ in Figure 14(ii).  The new 
rule is: add $\frac{\pi}{5}$ from $c(\Delta_1)$ to $c(\hat{\Delta})$ and add $\frac{\pi}{30}$ from $c(\Delta_1)$ to $c(\hat{\Delta}_1)$ as shown by dotted lines in Figure 31(v) \textit{except} when the neighbouring regions of $\Delta_3$ are given by Figure 31(vi)-(viii).  There it is assumed that $\hat{\Delta}_2$ receives $\frac{\pi}{5}$ from $\Delta_4$; and so the region $\Delta_4$ in Figure 31(vi)-(vii) corresponds to the region $\Delta$ in Figure 7(iii), and the region $\Delta_4$ in Figure 31(viii) corresponds to the region $\Delta$ in Figure 8(iv).  When $\Delta_3$ is given by Figure 31(vi)-(viii) add \textit{all} of $c(\Delta_1) = \frac{7 \pi}{30}$ to $c(\hat{\Delta})$ (as shown in Figure 31(v)) as usual and the new rule is: add $\frac{3 \pi}{10}$ from $c(\Delta_3)$ to $c(\hat{\Delta})$ and add $\frac{\pi}{30}$ as shown by dotted lines.  Moreover, if $d(\hat{\Delta}_3)=4$ then add
$\frac{\pi}{30} + c(\hat{\Delta}_3) = \frac{\pi}{10}$ to $c(\hat{\Delta}_4)$ as shown by dotted line in Figure 31(vi).  Note that it is being assumed that $d(\hat{\Delta}_3) \neq 4$ in Figure 31(vii)-(viii), in which case $\hat{\Delta}_3$ is not given by Figure 4(ii) or (iii) and so $d(\hat{\Delta}_3) \geq 8$.

\textbf{Configurations C and D}.  These are shown in Figure 32(i), (ii).  The region $\Delta$ of Figure 32(i) corresponds to the region $\Delta$ of Figure 13(iii); and the region $\Delta$ of Figure 32(ii) corresponds to the region $\Delta$ of Figure 14(iii).  In both cases the new rule (given by dotted lines) is: add $\frac{3 \pi}{10}$ from $c(\Delta)$ to $c(\hat{\Delta})$ and add $\frac{\pi}{30}$ from $c(\Delta)$ to $c(\hat{\Delta}_1)$.

\textbf{Configurations E and F}.  This is shown in Figure 32.  There are two cases, namely when $d(v) \geq 4$ and when $d(v)=3$ for the vertex $v$ indicated.  If $d(v) \geq 4$ then the region $\Delta_1$ in Figure 32(iii) 
corresponds to the region $\Delta$ of Figure 13(iv); and the region $\Delta_1$ of Figure 32(v) corresponds to the region $\Delta$ in Figure 14(iv).  If $d(v)=3$ then the region $\Delta_1$ in Figure 32(iv) corresponds to the region $\Delta$ in Figure 9(v); and the region $\Delta_1$ in Figure 32(vi) corresponds to the region $\Delta$ in Figure 9(iv).  For Figure 32(iii) and (v) the new rule is: instead of adding $c(\Delta_1) \leq \frac{\pi}{3}$ to $c(\hat{\Delta})$, as in Figures 13(i) and 14(i), add $\min \{ c(\Delta_1), \frac{\pi}{5} \}$ from $c(\Delta_1)$ to $c(\hat{\Delta}_1)$ via $\hat{\Delta}$ across the edge shown; and add (at most) $\frac{2 \pi}{15}$ from $c(\Delta_1)$ to $c(\hat{\Delta})$.
For Figure 32(iv) and (vi) the new rule is: instead of add $\frac{\pi}{6}$ from $c(\Delta_1)$ to each of $c(\hat{\Delta})$ and $c(\hat{\Delta}_1)$, as in Figure 9(ii), (iii), add $\frac{\pi}{5}$ from 
$c(\Delta_1)$ to 
$c(\hat{\Delta}_1)$ and add $\frac{2 \pi}{15}$ from $c(\Delta_1)$ to $c(\hat{\Delta})$ as shown by the dotted lines.  Note that $d(\hat{\Delta}_1) \geq 8$ in Figure 32(iii)-(vi).

%%%%%%%%%%%%%%%%%%%%%%%%%%%%%%%%%%%%%%%%%%%%%%%%%%%%%%%%%%%%%%%%%%%%%%%%%%%%%%%%%%%%%%%%%%%%%%%%%%%%%%%%%%%%%%

\section{Proof of Proposition 4.1}%6

Let $\hat{\Delta}$ ($\neq \Delta_0$) be a region that receives positive curvature in Figures 6-32.  Then inspection of these figures shows that $d(\hat{\Delta}) \geq 6$ in Figure 6-12, 13(i), (iii), (iv), 14(i), (iii), (iv), 15-16, 20, 24-28 and 32.

\begin{lemma}%\textbf{Lemma 6.1}\quad
Let $\hat{\Delta}$ be a region of degree 4 that receives positive curvature across at least one edge in Figures 6-32.  Then one of the following holds.
\begin{enumerate}
\item[(i)]
$\hat{\Delta}$ occurs in Figure 17, 18 or 19, in which case we say that $\hat{\Delta}$ is a T24 region.
\item[(ii)]
$\hat{\Delta}$ occurs in Figure 21, 22 or 23, in which case we say that $\hat{\Delta}$ is a T13 region.
\item[(iii)]
$\hat{\Delta}$ occurs in Figure 29 or 30, in which case we say that $\hat{\Delta}$ is a T4 region.
\item[(iv)]
$\hat{\Delta} = \hat{\Delta}_3$ of Figure 31(ii) $= \hat{\Delta}$ of Figure 13(ii).
\item[(v)]
$\hat{\Delta}=\hat{\Delta}_3$ of Figure 31(vi) $= \hat{\Delta}$ of Figure 14(ii).
\end{enumerate}
\end{lemma}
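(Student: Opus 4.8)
The plan is to run a finite verification over all the curvature-transfer rules laid out in Figures 6--32. I would start from the observation recorded immediately before the statement: a direct reading of Figures 6--12, 13(i),(iii),(iv), 14(i),(iii),(iv), 15--16, 20, 24--28 and 32 shows that in every one of those the regions that receive positive curvature all have degree at least 6. Consequently a region $\hat{\Delta}\neq\Delta_0$ of degree 4 can acquire positive curvature only through a rule belonging to one of Figures 13(ii), 14(ii), 17, 18, 19, 21, 22, 23, 29, 30, 31, so the whole proof reduces to examining these figures one at a time and checking that the degree-4 recipients occurring in them are exactly those described in (i)--(v).

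For Figures 17--19 I would go through each subcase and use Lemma 3.5 together with the local hypotheses built into the figures (for instance $d(u_2)\geq 4$, and $d(\hat{\Delta}_6)>4$ or $d(\hat{\Delta}_7)>4$ when these occur) to see that the only recipients that can have degree 4 are the regions labelled $\hat{\Delta}_3$ and $\hat{\Delta}_4$; these are precisely the T24 regions of (i). The same kind of check applied to Figures 21--23 shows the degree-4 recipients there are exactly the regions $\hat{\Delta}_1,\dots,\hat{\Delta}_4$, the T13 regions of (ii); and applied to Figures 29--30 it shows they are exactly $\hat{\Delta}_3$ and $\hat{\Delta}_4$, the T4 regions of (iii). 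In each figure every other recipient is forced to have degree $\geq 6$ either directly by Lemma 3.5 or by the constraints imposed when the subcase was set up.

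It then remains to handle Figures 13(ii), 14(ii) and 31, which are governed by the exceptional Configurations A and B. Since the region $\Delta$ of Figure 13(ii) is the region $\Delta_3$ of Configuration A, the only extra transfer into a region that could be of degree 4 is the $\frac{\pi}{30}$ sent from $\Delta_3$ to the region called $\hat{\Delta}_3$ in Figure 31(ii). Reading off the labelled diagram, all the other recipients appearing in Figures 31(i)--(iv) --- the region $\hat{\Delta}$ that receives $\frac{7\pi}{30}$ from $\Delta_1$ and $\frac{3\pi}{10}$ from $\Delta_3$, the region $\hat{\Delta}_1$ that receives $\frac{\pi}{30}$ from $\Delta_1$, the region $\hat{\Delta}_2$ that receives $\frac{\pi}{5}$ from $\Delta_4$, and the region $\hat{\Delta}_4$ when it receives the leftover $\frac{\pi}{10}$ --- all have degree $\geq 6$; and it was already noted in Section 5 that $d(\hat{\Delta}_3)\geq 8$ in Figures 31(iii)--(iv). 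Hence the sole degree-4 recipient here is $\hat{\Delta}_3$ of Figure 31(ii), which is the degree-4 region shown in Figure 13(ii); this gives (iv). The same argument applied to Configuration B (Figures 14(ii) and 31(v)--(viii)) gives (v). As Figures 13(ii), 14(ii), 17--23, 29--31 have now all been accounted for, and no other figure sends positive curvature to a region of degree 4, the list (i)--(v) is complete.

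I expect the main obstacle to be thoroughness rather than depth: one must be sure that no subcase of any rule in Figures 17--23 and 29--31 has been missed --- in particular the occasional secondary transfers of leftover curvature, such as the $\frac{\pi}{10}$ sent to $\hat{\Delta}_4$ in Figure 31(ii) --- and one must confirm, by reading the labelled pictures, both the various claims that a recipient has degree $\geq 6$ in Configurations A and B and the two region identifications asserted in (iv) and (v), namely that $\hat{\Delta}_3$ of Figure 31(ii) is the degree-4 recipient of Figure 13(ii) and $\hat{\Delta}_3$ of Figure 31(vi) is that of Figure 14(ii). No single step is hard, but the case-count is large.
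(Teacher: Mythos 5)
Your proposal is correct and follows essentially the same route as the paper: use the observation preceding the lemma to reduce to Figures 13(ii), 14(ii), 17--19, 21--23, 29--30 and 31, note that any degree-4 recipient in Figures 17--19, 21--23, 29--30 falls under (i)--(iii) by definition, and then check that in Configurations A and B the only degree-4 recipients are $\hat{\Delta}_3$ of Figure 31(ii) and (vi), i.e.\ cases (iv) and (v). One minor caveat: your claim that within Figures 17--19, 21--23 and 29--30 the only possible degree-4 recipients are $\hat{\Delta}_3,\hat{\Delta}_4$ (resp.\ $\hat{\Delta}_1,\dots,\hat{\Delta}_4$) omits the pass-through recipients such as $\hat{\Delta}_6$ of Figure 19(i), $\hat{\Delta}_7$ of Figure 19(vii), $\hat{\Delta}_8$ of Figures 21(xi) and 22(ix), and $\hat{\Delta}_6$ of Figure 30(v); this does not affect the validity of (i)--(iii) as stated, but the fuller catalogue (as in the paper's remark after the lemma) is what is needed later.
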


\begin{proof}
The result for Figures 6-30 and 32 follows immediately from the statement preceding the lemma.  To complete the proof observe that all regions in Figure 31 other than $\hat{\Delta}_3$ of (ii) and (vi) that receive positive 
curvature have degree $>4$.
\end{proof}

\newpage   
\begin{figure}
\begin{center}
\psfig{file=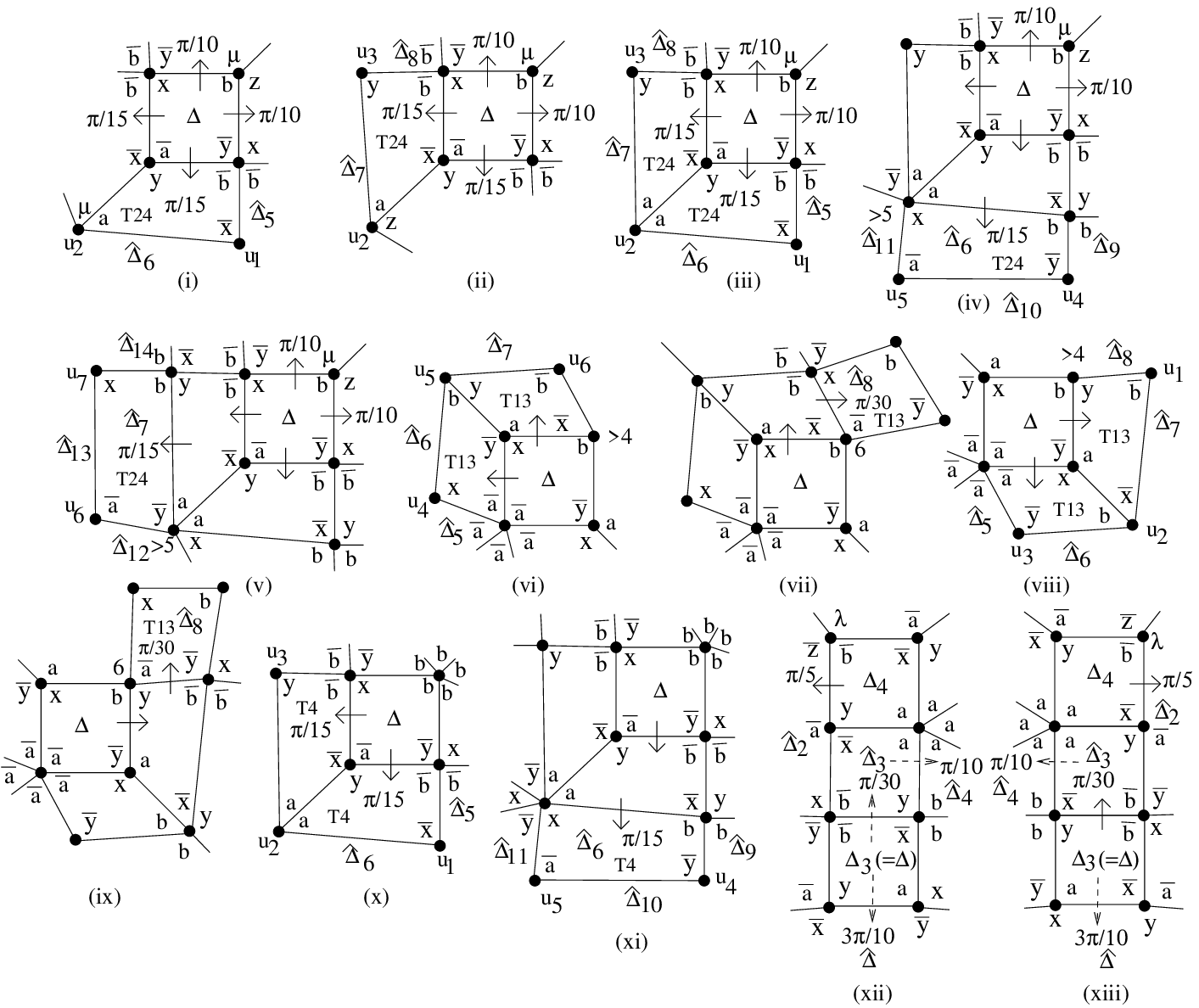}
\end{center}   
\caption{} 
\end{figure}

We remark here that if $\hat{\Delta}$ is a T24 region then an inspection of Figures 17-19 shows that there are essentially six cases for $\hat{\Delta}$, namely $\hat{\Delta}=\hat{\Delta}_3$ of Figure 17(i) and this is again shown 
in Figure 33(i); $\hat{\Delta}=\hat{\Delta}_4$ of Figure 17(ix), see Figure 33(ii); $\hat{\Delta}=\hat{\Delta}_3$ or $\hat{\Delta}_4$ of Figure 18(iii), see Figure 33(iii); $\hat{\Delta}=\hat{\Delta}_6$ of Figure 19(i) for which it is no longer assumed that $d(u_4) > 3$ or that $d(u_5) > 3$, see Figure 33(iv); or $\hat{\Delta}=\hat{\Delta}_7$ of Figure 19(vii) for which it is no longer assumed that $d(u_6) > 3$ or that $d(u_7) > 3$, see Figure 33(v).  If $\hat{\Delta}$ is a T13 region then an inspection of Figures 21-23 shows that there are six cases for $\hat{\Delta}$, namely
$\hat{\Delta}=\hat{\Delta}_1$ of Figure 21(iii) but with $d(v_2) \geq 5$ to take Figure 23(iii)-(xiv) into account, see Figure 33(vi); $\hat{\Delta}=\hat{\Delta}_4$ of Figure 21(iii), see Figure 33(vi); 
$\hat{\Delta}=\hat{\Delta}_8$ of Figure 21(xi) \textit{under the assumption} that $d(\hat{\Delta}_8)=4$ and that $\hat{\Delta}_8$ is \textit{not} given by Figure 21(xii), see Figure 33(vii); $\hat{\Delta}=\hat{\Delta}_2$ of Figure 
22(i) but with $d(v_2) \geq 
5$ to take Figure 23(xv)-(xxvi) into account, see Figure 33(viii); $\hat{\Delta}=\hat{\Delta}_3$ of Figure 22(i), see Figure 33(viii); or $\hat{\Delta}=\hat{\Delta}_8$ of Figure 22(ix) \textit{under the assumption} that $d(\hat{\Delta}_8)=4$ and that
$\hat{\Delta}_8$ is \textit{not} given by Figure 22(x), see Figure 33(ix).  If $\hat{\Delta}$ is a T4 region then inspecting Figures 29-30 shows that there are three cases for $\hat{\Delta}$, 
namely $\hat{\Delta}=\hat{\Delta}_3$ or $\hat{\Delta}_4$ of Figure 29(i), see Figure 33(x); or $\hat{\Delta}=\hat{\Delta}_6$ of Figure 30(v) where it is no longer assumed that $d(u_4) > 3$ or that $d(u_5) > 3$, see 
Figure 33(xi).  The two remaining possibilities for $\hat{\Delta}$, namely $\hat{\Delta}=\hat{\Delta}_3$ of Figure 31(ii), (vi) are given by Figure 33(xii), (xiii).

\begin{lemma}%\textbf{Lemma 6.2}\quad
\textit{Let $\hat{\Delta}$ be a region of degree 4 that receives positive curvature across at least one edge.  Then one of the following occurs.}
\begin{enumerate}
\item[(i)]
$c^{\ast} (\hat{\Delta}) \leq 0$\textit{;}
\item[(ii)]
$c^{\ast} (\hat{\Delta}) > 0$ \textit{is distributed to a region of degree $>4$;}
\item[(iii)]
$c^{\ast} (\hat{\Delta}) > 0$ \textit{is distributed to a region $\Delta '$ of degree 4 and either $c^{\ast} (\Delta ') \leq 0$ or $c^{\ast} (\Delta ') > 0$ is distributed to a region of degree $>4$.}
\end{enumerate}
\end{lemma}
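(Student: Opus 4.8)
The plan is to use the classification in Lemma 6.2 together with its accompanying remark: a degree-4 region $\hat{\Delta}$ that receives positive curvature is a T24 region, a T13 region, a T4 region, or equals $\hat{\Delta}_3$ of Figure 31(ii) or 31(vi), and up to cyclic permutation and inversion it is one of the configurations collected in Figure 33(i)--(xiii). For each of these I would compute $c^{\ast}(\hat{\Delta})$ directly: by definition it is $c(\hat{\Delta})$, evaluated from the vertex degrees shown in Figure 33 via equation (3.2) and Tables 1--2, plus the positive curvature $\hat{\Delta}$ receives under the scheme of Section 5 (the rules bound this by $\frac{\pi}{15}$ for the T24 cases and the T13 cases coming from Figure 23, by $\frac{\pi}{30}$ for the T13 cases from Figures 21--22 and for the Configuration A/B regions, and correspondingly in the T4 cases), minus whatever $\hat{\Delta}$ passes on; the relevant rule in Figures 17--23 and 29--31 already names these quantities precisely, so the verification is substitution and arithmetic.

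First I would dispose of the cases in which the incoming curvature does not exceed $|c(\hat{\Delta})|$; these give alternative (i). This happens, for instance, whenever $c(\hat{\Delta}) \le c(4,4,4,6) = -\frac{\pi}{6}$ or $c(3,4,5,6) = -\frac{\pi}{10}$ while the transfer in is at most $\frac{\pi}{15}$, as in Figures 18(ii), 18(xiii)--(xiv), 19(i), 19(vii), 21(xx), 22(xviii) and 30(iii). In the remaining cases the defining rule does move positive curvature out of $\hat{\Delta}$, and one simply reads off the recipient. If that recipient has degree $> 4$ we are in alternative (ii); this covers the bulk of the excess transfers in Figures 17(iii)--(viii), 17(xi)--(xvi), 18(iv)--(xii), 18(xxi)--(xxii), 21(iv)--(xix), 22(ii)--(xvii), 23 and 29(ii)--(xi), where the recipients $\hat{\Delta}_5,\hat{\Delta}_6,\hat{\Delta}_7,\hat{\Delta}_8,\ldots$ are forced to have degree at least 6 because, by Lemma 3.5 and the remark following it, any region of $\bs{K}_0$ meeting an $(a,b)$-edge or an $(x,y)$-edge has degree at least 8, and the configurations of Figure 33 display such an edge on the side in question (with Lemma 3.4 pinning down the admissible labels throughout).

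Alternative (iii) arises only from the genuinely nested redistributions, namely $\hat{\Delta}_6,\hat{\Delta}_7$ of Figure 19(i),(vii), $\hat{\Delta}_6$ of Figure 30(v), and $\hat{\Delta}_4$ of Figure 31(ii),(vi): here $\hat{\Delta}$ sends its excess to a second degree-4 region $\Delta'$. For each such $\Delta'$ I would check, again from Figure 33 and Tables 1--2, that either $c(\Delta')$ is already negative enough that $c^{\ast}(\Delta') \le 0$ after the transfer (e.g. $c(\Delta') = c(4,4,4,6) = -\frac{\pi}{6}$ or $c(3,4,4,7) = -\frac{\pi}{21}$ absorbing a transfer of at most $\frac{\pi}{15}$), or the single further hop prescribed by the rule --- to $\hat{\Delta}_9,\hat{\Delta}_{10},\hat{\Delta}_{11}$ in Figures 19(ii)--(v) and 30(vi)--(viii), and to $\hat{\Delta}_4$ in Figures 31(ii),(vi) --- lands on a region of degree $> 4$, which is exactly what (iii) asserts.

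The main obstacle is precisely the bookkeeping in this last step: one must confirm that no distribution chain starting at a 4-gon passes through three or more consecutive 4-gons, and that the final recipients named above really do have degree $> 4$. Both reduce to excluding, by length contradictions and basic labelling contradictions together with Lemma 3.5, the configurations in which two successive regions along a distribution path are 4-gons carrying positive $c^{\ast}$; the relevant pictures are exactly those already ruled out when the rules in Section 5 were set up, so the work here is verifying consistency rather than discovering new obstructions. Everything else is routine evaluation of $c(d_1,\ldots,d_m)$.
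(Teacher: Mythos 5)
There is a genuine gap: your argument only treats the situation in which $\hat{\Delta}$ receives positive curvature across a single edge, and that is the easy half of the proof. The configurations of Figure 33 are not mutually exclusive ways for a 4-gon to receive curvature: one and the same region $\hat{\Delta}$ may simultaneously be (say) a T13 region with respect to two different 4-gons, or a T13 region with respect to one and a T24 or T4 region with respect to another, so that the transfers add across two or more of its edges. Your per-figure bounds (at most $\frac{\pi}{15}$ or $\frac{\pi}{30}$ incoming) then fail, and the conclusion cannot be read off from the rules of Section 5 by ``substitution and arithmetic''. The paper's proof spends most of its effort here: it inspects all pairings of the seventeen receiving configurations, rules most out by vertex-degree and labelling contradictions (T24 with T24, T24 with T4, T4 with T4, Figure 33(xii)/(xiii) with anything, and most T13--T13, T13--T4, T13--T24 pairings), and then handles the pairings that genuinely occur --- Figure 34(i) (two T13 roles coinciding) and Figure 34(viii),(ix) (a T24 coinciding with a T13) --- by introducing \emph{new} distribution rules, e.g.\ passing $\frac{\pi}{30}$, $\frac{\pi}{15}$ or $\frac{2\pi}{15}$ on to regions $\hat{\hat{\Delta}}_1,\hat{\hat{\Delta}}_2$ of degree $\geq 6$ according to $d(u),d(w_1),d(w_2)$, and estimates such as $c^{\ast}(\hat{\Delta}) \leq c(3,4,5,7)+\tfrac{1}{2}c(3,3,5,5)+2\left(\tfrac{2\pi}{105}\right)<0$. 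So your closing claim that ``the relevant pictures are exactly those already ruled out when the rules in Section 5 were set up'' is false; new obstructions and new transfers do appear inside this proof.

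The omission also undermines your treatment of alternative (iii): when the excess from a 4-gon is passed to a second 4-gon $\Delta'$ (Figures 19, 21(xi), 22(ix), 30(v)--(viii)), the conclusion $c^{\ast}(\Delta')\leq 0$ or further transfer to a region of degree $>4$ is only immediate under the assumption that $\Delta'$ receives across exactly one edge; the paper explicitly defers the multi-edge possibility for $\Delta'$ to the same coincidence analysis. Without the ``at least two edges'' case your argument therefore does not close even the chains you do consider. The worry you flag --- chains of three or more consecutive 4-gons --- is not the real difficulty; the real difficulty is multiple incoming transfers at one 4-gon, and that analysis is missing.
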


\textit{Proof}.
Let $d(\hat{\Delta})=4$. By Lemma 6.1, $\hat{\Delta}$ is a T24, T13 or T4 region or  $\hat{\Delta} = \hat{\Delta}_3$ of Figure 31(ii),(vi).
We divide the proof of the lemma into two parts.  The first deals with the cases 
when $\hat{\Delta}$ receives positive curvature across exactly one edge and the second part deals with the cases in which $\hat{\Delta}$ receives positive curvature across at least two edges.

If $\hat{\Delta}$ receives positive curvature across exactly one edge then we see by inspection of Figures 17-19, 21-23, 29-30, 31(ii) and
(vi) that in all cases either $c^{\ast} (\hat{\Delta}) \leq 0$
or $c^{\ast} (\hat{\Delta})$ is distributed from $\hat{\Delta}$ to a neighbouring region of degree $> 4$ except when $\hat{\Delta}$ is given by
Figures 19, 21(xi), 
22(ix) or 30(v)-(viii) where $c^{\ast}(\hat{\Delta})$ may initially be distributed further to a region $\Delta '$ of degree 4.
But in each of these cases either $c^{\ast} (\Delta ') \leq 0$ (under the assumption that $\Delta '$ receives positive curvature across exactly one edge -- the case when $\Delta '$ may receive across more 
than one edge is considered below) or $c^{\ast} (\Delta ')$ is again distributed to a region of degree $>4$.

Now suppose that $\hat{\Delta}$ receives positive curvature across at least two edges.
An inspection of the labelling and degrees of the vertices in each of these 17 possibilities for $\hat{\Delta}$ shown in 
Figure 33 immediately rules out the following combinations:
a T24 region with a T24; a T24 with a T4; a T4 with a T4; and either Figure 33(xii) or 33(xiii) with any of the other 16 possibilities.
For example, $\Delta$ of Figure 33(viii) cannot coincide with the inverse of $\Delta_4$ of (xii) as the degrees of the b-corner vertices differ.
This leaves the possibility that at least two T13 regions coincide or a T13 coincides with a T24 or a T13 coincides with a T4.

Suppose that at least two T13 regions coincide.  An inspection of the six T13 regions of Figure 33(vi)-(ix) shows that all
combinations are immediately ruled out by the labelling and 
degree of vertices except for three cases.  
The first case is $\hat{\Delta}$ = $\hat{\Delta}_4$ of Figure 33(vi) = $\hat{\Delta}_3$ of Figure 33(vii). 
This, for example, forces $l(u_5) = ybx^{-1}w$ in Figure 33(vi), in 
particular, $d(u_5) > 4$. But observe that if $\hat{\Delta}_4$ has degree 4 and receives positive curvature from $\Delta$
in Figures 21-23 then $d(u_5) = 4$, a contradiction. 
The second case is  $\hat{\Delta}$ =
$\hat{\Delta}_8$ of Figure 33(vii) = $\hat{\Delta}_8$ of Figure 33(ix).  But this forces $l(v_2)=bx^{-1} a^{-1} ybw$ in Figure 33(vii), and the
fact that
$d(v_2) = 6$ then forces $l(v_2)=bx^{-1} a^{-1} ybb$, a label whose $t$-exponent sum is equal to 6, 

\newpage
\begin{figure}
\begin{center}
\psfig{file=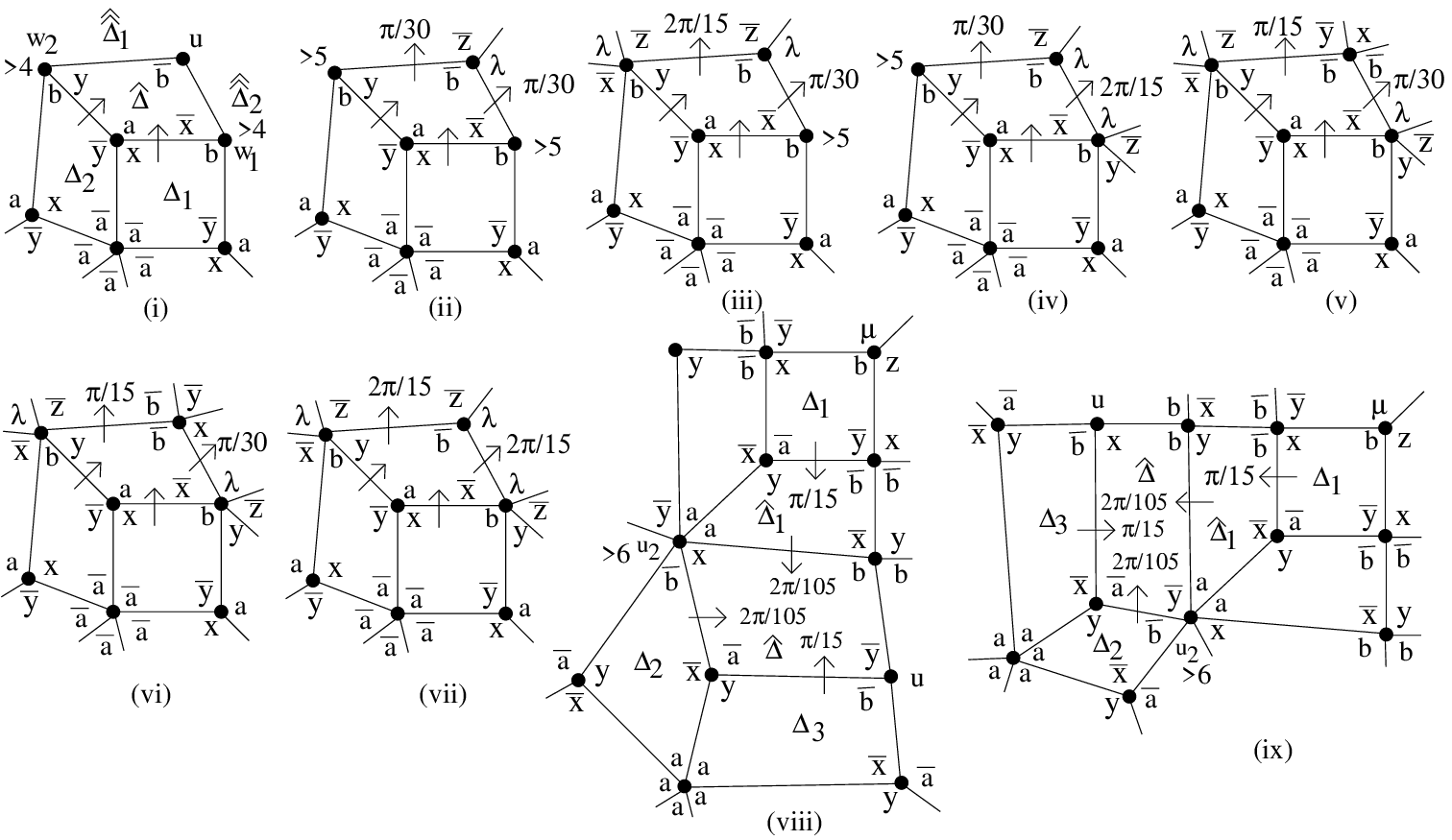}
\end{center}
\caption{}
\end{figure}

\noindent a contradiction.  The third case is 
$\hat{\Delta}$ = $\hat{\Delta}_1$ of Figure 33(vi) = $\hat{\Delta}_2$ of Figure 33(viii).  This case can occur and is shown in Figure 34(i).  It follows that
a combination of more than two T13 regions cannot occur.

Consider Figure 34(i) in which $\hat{\Delta}$ receives positive curvature from the regions $\Delta_1$ and $\Delta_2$ each contributing
at most $\frac{\pi}{15}$ to $c(\hat{\Delta})$.
(Note that we use $\Delta_1,\Delta_2$ and not $\Delta$ as before to denote regions from which positive curvature is distributed.)
Let $d(w_1)>5$ and $d(w_2)>5$.
If $d(u)>3$ then $c^{\ast}(\hat{\Delta}) \leq c(3,4,6,6) + 2 \left( \frac{\pi}{30} \right) < 0$; and
if $d(u)=3$ then $c(\hat{\Delta})+2 \left( \frac{\pi}{30} \right) \leq c(3,3,6,6) + 2 \left( \frac{\pi}{30} \right) = \frac{\pi}{15}$ so add
$\frac{\pi}{30}$ to each of $c(\hat{\hat{\Delta}}_1),c(\Hat{\Hat{\Delta}}_2)$ as shown in Figure 34(ii).
Let $d(w_1) > 5$ and $d(w_2)=5$.
If $d(u) > 3$ then $c^{\ast}(\hat{\Delta}) \leq c(3,4,5,6) + \frac{\pi}{30} + \frac{\pi}{15} = 0$; and
if $d(u)=3$ then $c(\hat{\Delta}) + \frac{\pi}{30} + \frac{\pi}{15} \leq c(3,3,5,6) + \frac{\pi}{30} + \frac{\pi}{15} = \frac{\pi}{6}$ so add
$\frac{2 \pi}{15}$ to $c(\hat{\hat{\Delta}}_1)$ and $\frac{\pi}{30}$ to $c(\hat{\hat{\Delta}}_2)$ as shown in Figure 34(iii).
Let $d(w_1)=5$ and $d(w_2)>5$.
If $d(u)>3$ then $c^{\ast}(\hat{\Delta}) \leq c(3,4,5,6) + \frac{\pi}{15} + \frac{\pi}{30} = 0$; and
if $d(u)=3$ then $c(\hat{\Delta})+\frac{\pi}{15} + \frac{\pi}{30} \leq c(3,3,5,6) + \frac{\pi}{15} + \frac{\pi}{30} = \frac{\pi}{6}$ so add
$\frac{\pi}{30}$ to $c(\hat{\hat{\Delta}}_1)$ and $\frac{2 \pi}{15}$ to $c(\hat{\hat{\Delta}}_2)$ as shown in Figure 34(iv).
This leaves $d(w_1)=d(w_2)=5$.
If $d(u) > 4$ then $c^{\ast}(\hat{\Delta}) \leq c(3,5,5,5) + 2 \left( \frac{\pi}{15} \right) = 0$;
if $d(u)=4$ then $c(\hat{\Delta})+2 \left( \frac{\pi}{15} \right)=c(3,4,5,5) + 2 \left( \frac{\pi}{15} \right) = \frac{\pi}{10}$ so add
$\frac{\pi}{15}$ to $c(\hat{\hat{\Delta}}_1)$ and $\frac{\pi}{30}$ to $c(\hat{\hat{\Delta}}_2)$ as shown in Figures 34(v) and (vi); and
if $d(u)=3$ then $c(\hat{\Delta}) + 2 \left( \frac{\pi}{15} \right) \leq c(3,3,5,5) + \frac{2 \pi}{15} = \frac{4 \pi}{15}$ so add $\frac{2 \pi}{15}$ to
$c(\hat{\hat{\Delta}}_1)$ and $\frac{2 \pi}{15}$ to $c(\hat{\hat{\Delta}}_2)$ as shown in Figure 34(vii).
Observe that $d(\hat{\hat{\Delta}}_1) \geq 6$ and $d(\hat{\hat{\Delta}}_2) \geq 6$ in Figure 34(ii)-(vii).

Now suppose that a T4 region and a T13 region coincide.  Again an inspection of Figure %33 %of the labelling and degrees of
%the vertices involved immediately rules out all combinations 

\newpage
\begin{figure}
\begin{center}
\psfig{file=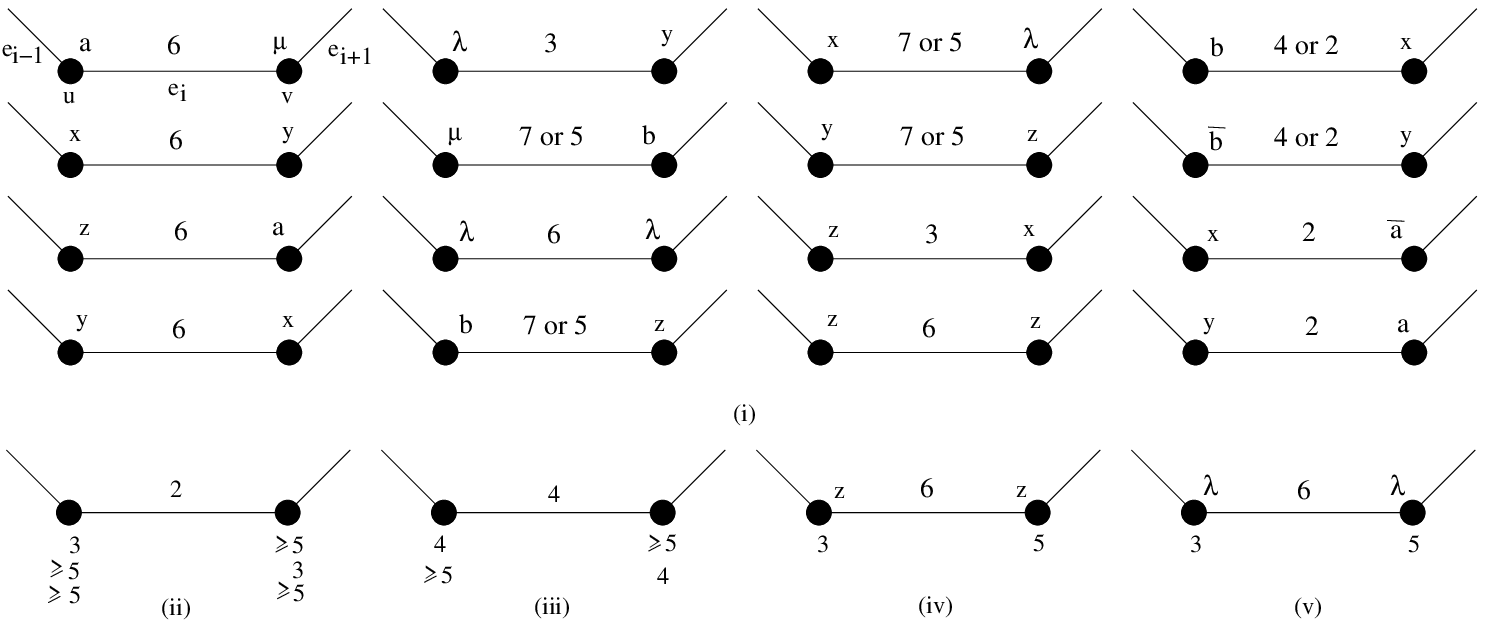} 
\end{center}
\caption{}
\end{figure}

\noindent 33 of the labelling and degrees of the vertices involved immediately rules out all combinations except for three cases.  The first case is $\hat{\Delta}_3$ of Figure 33(x) with
$\hat{\Delta}_8$ of Figure 33(vii), but this forces $\hat{\Delta}_8$ to be given by Figure 21(xii), a contradiction; and the second case is
$\hat{\Delta}_4$ of Figure 33(x) with $\hat{\Delta}_8$ of Figure 33(ix), but this forces $\hat{\Delta}_8$ to be given by Figure 22(x), a
contradiction.  The third case is when
$\hat{\Delta}=\hat{\Delta}_6$ of Figure 33(xi) and $\hat{\Delta}=\hat{\Delta}_8$ of Figure 33(ix).  But then $c^{\ast}(\hat{\Delta}) \leq
c(4,4,6,6)+
\frac{\pi}{15} + \frac{\pi}{30} < 0$. Note that we have also shown that a T4 region coincides with at most one T13 region.

Finally suppose that a T24 region and a T13 region coincide.  An inspection of the 36 possible combinations
immediately rules out all but the following 10 cases.  If $\hat{\Delta}=\hat{\Delta}_3$ of Figure 33(i) or 33(iii) and $\hat{\Delta}=\hat{\Delta}_8$ of
Figure 33(vii) then this forces $\hat{\Delta}_8$ to be given by Figure 21(xii), a contradiction; or if $\hat{\Delta}=\hat{\Delta}_4$ of Figure 33(ii)
or 33(iii) and
$\hat{\Delta}=\hat{\Delta}_8$ of Figure 33(ix) then this forces $\hat{\Delta}_8$ to be given by Figure 22(x), a contradiction.  If
$\hat{\Delta}=\hat{\Delta}_6$ of Figure 33(iv) and $\hat{\Delta}=\hat{\Delta}_8$ of Figure 33(ix), or if $\hat{\Delta}=\hat{\Delta}_7$ of Figure
33(v) and $\hat{\Delta}=\hat{\Delta}_8$ of Figure 33(vii) then $c^{\ast}(\hat{\Delta}) \leq c(4,4,6,6) + \frac{\pi}{15} + \frac{\pi}{30} < 0$.

This leaves $\hat{\Delta}=\hat{\Delta}_6$ of Figure 33(iv) and either $\hat{\Delta}=\hat{\Delta}_1$ of 33(vi) or
or $\hat{\Delta}=\hat{\Delta}_2$ of 33(viii); or $\hat{\Delta}=\hat{\Delta}_7$ of Figure 33(v) and again either
$\hat{\Delta}=\hat{\Delta}_1$ of 33(vi) or $\hat{\Delta}=\hat{\Delta}_2$ of 33(viii).
Observe that if $\hat{\Delta}_6$ of Figure 33(iv) $=\hat{\Delta}_1$ of 33(vi) or if $\hat{\Delta}_6$ of Figure 33(iv)$=\hat{\Delta}_2$ of 33(viii)
then $l(u_2)=y^{-1} a^2 xb^{-1}w$ in Figure 33(iv) and so $d(u_2) \geq 7$, and this is shown in Figure 34(viii). Moreover if $\hat{\Delta}_7$ of Figure 33(v) $=\hat{\Delta}_1$ of 33(vi)
or if $\hat{\Delta}_7$ of Figure 33(v)$=\hat{\Delta}_2$ of 33(viii) then $l(u_2)=b^{-1} y^{-1} a^2 xw $ in Figure 33(v) and again $d(u_2) \geq 7$, and this is shown in Figure 34(ix).
It follows in both Figure 34(viii) and (ix) that $c(\hat{\Delta}_1) \leq
c(3,4,4,7)=-\frac{\pi}{21}$ and $c(\Delta_2) \leq c(3,3,5,7)=\frac{2 \pi}{105}$.  In both configurations $\frac{\pi}{21}$ is added from
$c(\Delta_1)=\frac{\pi}{15}$ to $c(\hat{\Delta}_1)$ and the remaining $\frac{\pi}{15} - \frac{\pi}{21}=\frac{2 \pi}{105}$ to $\hat{\Delta}$ as shown.
If $\hat{\Delta}$ does not receive positive curvature from $\Delta_3$ then $c^{\ast} (\hat{\Delta}) \leq c(3,4,4,7) + 2 \left( \frac{2 \pi}{105} \right) < 0$
so it can be assumed without any loss that $\hat{\Delta}$ receives from $\Delta_1$ (via $\hat{\Delta}_1$), $\Delta_2$ and $\Delta_3$.  But then 
$\hat{\Delta}=\hat{\Delta}_2$ of Figure 33(viii) forces $d(u) \geq 5$ in Figure 34(viii), and $\hat{\Delta}=\hat{\Delta}_1$ of 
Figure 33(vi) forces $d(u) \geq 5$ in Figure 34(ix); therefore in each case
$c^{\ast} (\hat{\Delta} \leq c(3,4,5,7) + \frac{1}{2}c(3,3,5,5) + 2 \left( \frac{2
\pi}{105} \right) < 0$. $\Box$

\medskip

Proposition 4.1 follows immediately from Lemma 6.2 together with the fact that all possibilities for distribution of curvature from a region of degree 4 have been covered by Figures 6-32.

\medskip

We end this section with a summary  that will be helpful in subsequent sections.

\medskip

\textbf{Note}.
In Figure 35(i) the maximum amount of curvature, denoted $c(u,v)$, distributed across an edge $e_i$ with endpoints $u,v$ according to the description
of curvature given in Figures 6-32 and 34  above is shown for each choice of corner labels. The list excludes $(b,a)$-edges and excludes the $(x,y)$-edges
of Figures 13 and 14. In Figure 35(ii) and (iii) $c(u,v)$ is shown when at least one of $d(u),d(v)$ is greater than 4 apart from the two exceptional cases shown in (iv) and (v) (see Figure 23(xvi),(iv)).
The integers shown are multiples of $\frac{\pi}{30}$ with 7 or 5, 4 or 2   meaning that if $c(u,v) < \frac{7 \pi}{30}, \frac{2 \pi}{15}$ then
$c(u,v)=\frac{\pi}{6}, \frac{\pi}{15}$ respectively.
This will be used throughout what follows often without explicit reference.
%%%%%%%%%%%%%%%%%%%%%%%%%%%%%%%%%%%%%%%%%%%%%%%%%%%%%%%%%%%%%%%%%%%%%%%%%%%%%%%%%%%%%%%%%%%%%%%%%%%%%%%%%%%%%%

\section{Distribution of positive curvature from 6-gons}%7

We turn now to step 2 of the proof of Theorem 1.2 as described in Section 4. Let  $d(\hat{\Delta})=6$ and so $\hat{\Delta}$ is given by Figure 4(xii)-(xiii).
In Figure 36 we fix the labelling of the six neighbours $\hat{\Delta}_i$ ($1 \leq i \leq 6$) of $\hat{\Delta}$ as shown.
We consider regions $\hat{\Delta}$ ($ \neq \Delta_0$) of degree 6 that have received positive curvature in step 1 of Sections 5 and 6. 

\medskip

Again for the benefit of the reader let us indicate briefly that a general rule for distribution is to try whenever possible to add the positive curvature from $\hat{\Delta}$ to neighbouring regions
of degree greater than 6. It turns out that this is not possible for exactly four cases, namely, $\hat{\Delta}_1$ of Figure 36(i) and (x); and $\hat{\Delta}_2$ of Figures 37(iv) and 38(iv).
These four exceptions are dealt with in greater detail in the next section. In all other cases in Figures 36-38 the curvature is added to a region of degree at least 8.

\medskip

First assume that $\hat{\Delta}$ is \textit{not} $\hat{\Delta}_1$ of 
Figure 31(i) (Configuration A) or Figure 31((v) (Configuration B).
Then checking the distribution of curvature described in Figures 6-32 and 34 yields Table 3 in which vertex subscripts are modulo 6;

\begin{table}[h]
\[
\renewcommand{\arraystretch}{1.5}
\begin{array}{llcccccc}
d(u_i)&d(u_{i+1})&c(u_1,u_2)&c(u_2,u_3)&c(u_3,u_4)&c(u_4,u_5)&c(u_5,u_6)&c(u_6,u_1)\\
3&3&0&0&0&6&0&0\\
3&4&0&3&0&0&0&2\\
4&3&0&0&0&0&7&0\\
3&5&0&2&2&0&0&0\\
5&3&0&2&2&2&2&0\\
3&6^+&0&2&2&2&0&0\\
6^+&3&0&2&2&2&2&0\\
4&4&7&0&0&0&0&0\\
4&5&2&0&0&2&0&0\\
5&4&2&2&0&0&4&0\\
4&6^+&4&0&1&0&2&0\\
6^+&4&2&0&0&0&1&4\\
5^+&5^+&1&0&0&1&1&0
\end{array}
\renewcommand{\arraystretch}{1}
\]
\caption{}
\end{table}

\noindent the entries under $c(u_i,u_{i+1})$ are multiples of $\frac{\pi}{30}$
and denote the maximum amount of curvature that $\hat{\Delta}$ can receive across the edge with endpoints $u_i,u_{i+1}$ according to Figure 35; and
$5^+$, $6^+$ means $\geq 5$, $\geq 6$.
Moreover Table 3 applies to $\hat{\Delta}$ both of Figure 4(xii) and of Figure 4(xiii).

\newpage
\begin{figure}  
\begin{center}  
\psfig{file=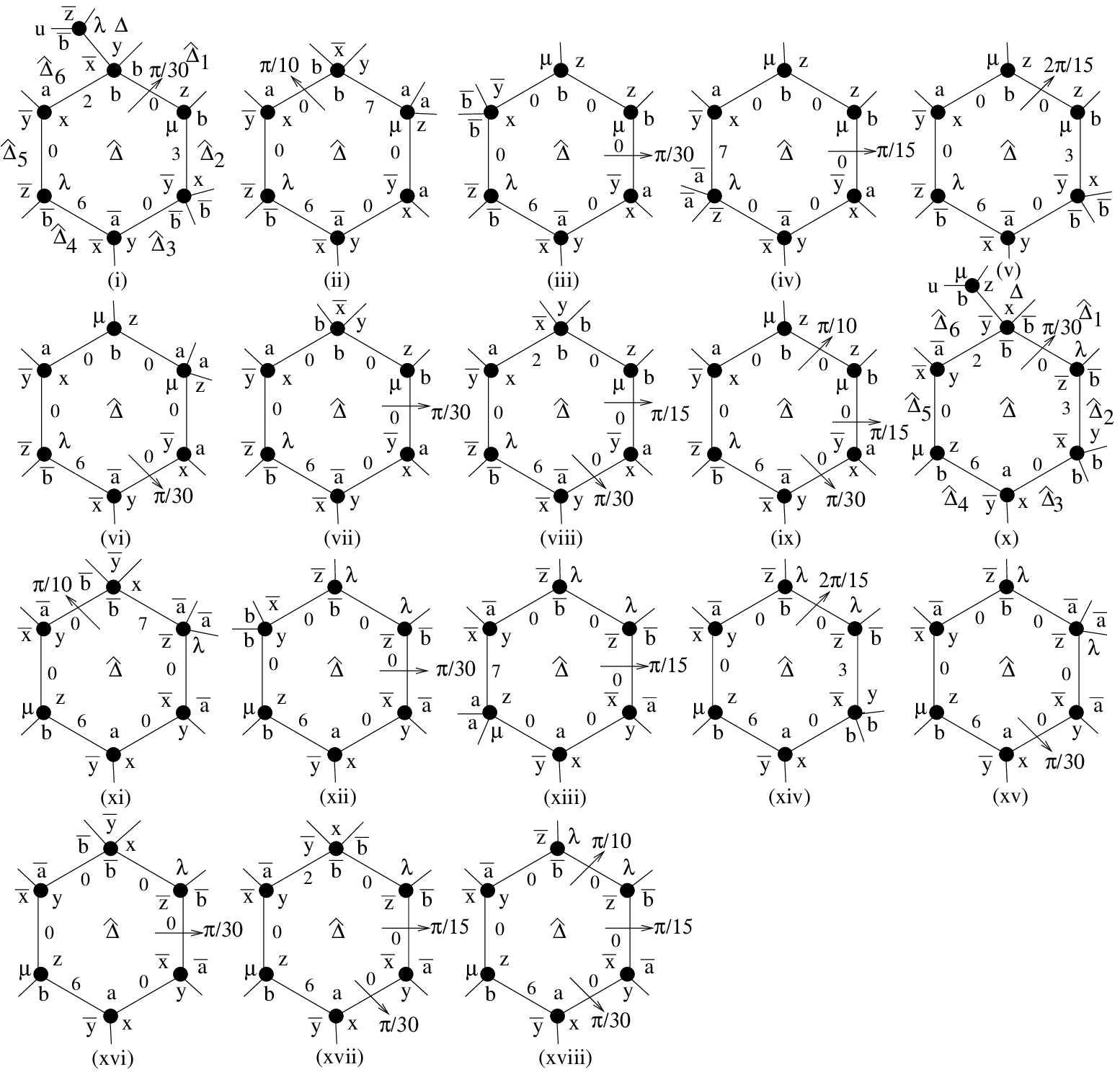}
\end{center}
\caption{}
\end{figure}

\textbf{Notes}.
\begin{enumerate}
\item[1.]
(See Figures 4 and 36.)
$d(u_1)=3 ~ (\Rightarrow d(\hat{\Delta}_1) > 4, \, d(\hat{\Delta}_2) > 4) \Rightarrow c(u_1,u_2)=c(u_6,u_1)=0$;
$d(u_2)=3 \Rightarrow c(u_1,u_2)=0$;
$d(u_2)=4 \Rightarrow c(u_2,u_3)=0$;
$d(u_5)=3 \Rightarrow c(u_5,u_6)=0$;
and
$d(u_5)=4 \Rightarrow c(u_4,u_5)=0$.
\item[2.]
$c(u_1,u_2)>0$ and $c(u_2,u_3)>0 \Rightarrow$ (Table 3) $c(u_1,u_2)+c(u_2,u_3) \leq \frac{2 \pi}{15} + \frac{\pi}{15}$ and since
$c(u_1,u_2) \leq \frac{7 \pi}{30}$, $c(u_2,u_3) \leq \frac{\pi}{10}$ we have
$c(u_1,u_2)+c(u_2,u_3) \leq \frac{7 \pi}{30}$.
\item[3.]
$c(u_4,u_5) > 0$ and
$c(u_5,u_6) > 0 \Rightarrow c(u_4,u_5)+c(u_5,u_6) \leq \frac{\pi}{15} + \frac{2 \pi}{15}$
and since
$c(u_4,u_5) \leq \frac{\pi}{5}$, $c(u_5,u_6) \leq \frac{7 \pi}{30}$
we have
$c(u_4,u_5) + c(u_5,u_6) \leq \frac{7 \pi}{30}$.
\item[4.]
Let $d(u_5)=5$, $d(u_6)=4$.  If $c(u_5,u_6)=\frac{2 \pi}{15}$ then checking $l(u_5),l(u_6)$ shows that $c(u_4,u_5)=\frac{2 \pi}{15}$ (see Figures 23(vi) and
23(xviii));
moreover (see Figure 35(ii),(iii)) if $c(u_5,u_6) \neq \frac{2 \pi}{15}$ then $c(u_5,u_6)=\frac{\pi}{15}$.
\end{enumerate}

\medskip

In what follows much use will be made of Lemma 3.4 when determining the vertex labels and Table 3 when determining $c(u,v)$.

\begin{lemma}%\textbf{Lemma 7.1}\quad
\textit{If $\hat{\Delta}$ is given by Figure 4(xii)-(xiii) (with the assumption that $\hat{\Delta}$ is not $\hat{\Delta}_1$ of Figure 31) and $\hat{\Delta}$ receives positive curvature across at least one edge then
$c^{\ast} (\hat{\Delta}) \leq \frac{2 \pi}{15}$ and if $c^{\ast} (\hat{\Delta}) >0$ then $\hat{\Delta}$ is given by one of the regions of Figure 36.}
\end{lemma}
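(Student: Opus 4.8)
The plan is to reduce the statement to a single bound on the curvature $\hat{\Delta}$ can collect in Step~1. Since $\hat{\Delta}$ is a $6$-gon it distributes nothing in the Step~1 scheme of Sections 5--6, and any later redistribution only lowers $c^{\ast}(\hat{\Delta})$, so it suffices to prove
\[
c(\hat{\Delta})+\sum_{i=1}^{6}c(u_i,u_{i+1})\le \frac{2\pi}{15},
\]
where each $c(u_i,u_{i+1})$ is the maximal inflow across the corresponding edge recorded in Table~3 and Figure~35, and then to identify the configurations for which the left-hand side is positive. By equation~(3.2), $c(\hat{\Delta})=-4\pi+2\pi\sum_{i=1}^{6}d_i^{-1}\le 0$, with equality exactly when $d_1=\dots=d_6=3$, and $c(\hat{\Delta})$ strictly decreases as the vertex degrees grow; so the real content is balancing this against the tabulated inflows.

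I would run a finite case analysis on the degree sequence $(d_1,\dots,d_6)$, conveniently organised by the number of vertices of degree $3$, using Lemma~3.4 at each $u_i$ to list the possible labels $l(u_i)$, which together with the (essentially fixed) corner-label pattern of $\hat{\Delta}$ given by Figure~4(xii)--(xiii) determine the neighbouring regions and hence which entries of Table~3 actually apply. Two features of Table~3 do most of the work. First, a vertex of large degree suppresses the inflow across both of its incident edges (the rows whose degree pair contains a $4^{+}$ have almost all columns zero), so when $\hat{\Delta}$ has few degree-$3$ vertices the now very negative term $c(\hat{\Delta})$ already dominates the little inflow that survives and the left-hand side is negative. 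Second, Notes~1--4 couple neighbouring edges: $c(u_1,u_2)+c(u_2,u_3)\le \frac{7\pi}{30}$ and $c(u_4,u_5)+c(u_5,u_6)\le \frac{7\pi}{30}$ whenever the summands are positive, while $d(u_2)=4$ or $d(u_5)=4$ forces a further vanishing. Walking around the hexagon and applying these block by block bounds the total inflow and lets one compare it with $c(\hat{\Delta})$ in each remaining case.

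In the handful of degree sequences for which this comparison still leaves the left-hand side positive, the labels and degrees will by then have been pinned down tightly enough (by Lemma~3.4 together with the forced Table~3 entries) to read off that $\hat{\Delta}$ agrees, up to cyclic permutation and inversion, with one of the regions drawn in Figure~36, and there $c^{\ast}(\hat{\Delta})\le \frac{2\pi}{15}$ is verified directly. I expect the main obstacle to be purely organisational: the inflow bounds of Table~3 are not independent across the six edges, so the case analysis must be arranged so that the couplings in Notes~1--4 and the label restrictions of Lemma~3.4 are applied consistently and without double-counting — that is where essentially all of the effort goes, the underlying curvature arithmetic being already tabulated in Tables~1--2. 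One small point to isolate at the outset: the hypothesis excludes $\hat{\Delta}_1$ of Figure~31, and this exclusion must be kept in force throughout, since the derivation of Table~3 itself relies on it.
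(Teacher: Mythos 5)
Your proposal is correct and follows essentially the same route as the paper: since the $6$-gon distributes nothing in Step 1, one bounds $c(\hat{\Delta})$ plus the Table 3/Figure 35 inflows, using Notes 1--4 to couple adjacent edges and Lemma 3.4 to restrict labels, and then runs the case analysis by the number of degree-$3$ vertices, matching the surviving positive cases with Figure 36. The paper's proof is exactly this, with the case-by-case verification (which you defer as "organisational") carried out in full.
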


\textit{Proof}.  It follows from Table 3 and Notes 1-4 above that $c^{\ast} (\hat{\Delta}) \leq c(\hat{\Delta}) + (c(u_1,u_2)+c(u_2,u_3))+
c(u_3,u_4)+(c(u_4,u_5)+c(u_5,u_6))+c(u_6,u_1) \leq c(\hat{\Delta})+ \frac{7 \pi}{30} + \frac{\pi}{15} + \frac{7 \pi}{30} + \frac{2 \pi}{15} = c(\hat{\Delta}) + \frac{2 \pi}{3}$.  Therefore if $\hat{\Delta}$ has at most two vertices of degree 3 then $c^{\ast} (\hat{\Delta}) \leq c(3,3,4,4,4,4) + \frac{2 \pi}{3}=0$.

Let $\hat{\Delta}$ have exactly three vertices of degree 3 so that $c(\hat{\Delta}) \leq -\frac{\pi}{2}$.
If $d(u_1)=3$ then $c(u_1,u_2)=c(u_6,u_1)=0$ and
$c^{\ast} (\hat{\Delta}) \leq -\frac{\pi}{2} + \frac{\pi}{10} + \frac{\pi}{15} + \frac{7 \pi}{30} < 0$, so assume $d(u_1) \geq 4$.
If $d(u_2) = 3$ then $c(u_1,u_2)=0$ so if $d(u_6) \geq 6$ then $c^{\ast} (\hat{\Delta}) \leq c(3,3,3,4,4,6) + \frac{\pi}{10} + \frac{\pi}{15} +
\frac{7 \pi}{30} + \frac{2 \pi}{15} < 0$; otherwise $c(u_6,u_1)=\frac{\pi}{15}$ and
$c^{\ast} (\hat{\Delta}) \leq -\frac{\pi}{2} + \frac{\pi}{10} + \frac{\pi}{15} + \frac{7 \pi}{30} + \frac{\pi}{15} < 0$; so assume $d(u_2) \geq 4$.
This leaves four subcases.
First let $d(u_3)=d(u_4)=d(u_5)=3$.  Then $c(u_3,u_4)=c(u_5,u_6)=0$.  Moreover
if $d(u_6) < 6$ then $c(u_6,u_1)=0$ and $c^{\ast} (\hat{\Delta}) \leq -\frac{\pi}{2} + \frac{7 \pi}{30} + \frac{\pi}{5}=0$; and
if $d(u_6) \geq 6$ then $c^{\ast} (\hat{\Delta}) \leq c(3,3,3,4,4,6) + \frac{7 \pi}{30} + \frac{\pi}{5} + \frac{2 \pi}{15} < 0$.
Let $d(u_3)=d(u_4)=d(u_6)=3$.  Then $c(u_2,u_3)=\frac{\pi}{15}$, $c(u_3,u_4)=0$, $c(u_4,u_5)+c(u_5,u_6) \leq \frac{7 \pi}{30}$ and $c(u_6,u_1)=\frac{\pi}{15}$.  If either $d(u_1) > 4$ or $d(u_2) > 4$ then $c(u_1,u_2) \leq \frac{2 \pi}{15}$ and $c^{\ast}(\hat{\Delta}) \leq c(3,3,3,4,4,5) + \frac{\pi}{2} < 0$;
otherwise $d(u_1)=d(u_2)=4$ which
implies $c(u_2,u_3)=0$ and the labelling (of $u_1$, $u_2$ and $u_6$) either forces $c(u_1,u_2)=0$ and $c^{\ast} (\hat{\Delta}) \leq -\frac{\pi}{2}
+ \frac{3 \pi}{10} < 0$
or forces $c(u_6,u_1)=0$ and $c^{\ast}(\hat{\Delta}) \leq -\frac{\pi}{2} + \frac{7 \pi}{30} + \frac{7 \pi}{30} < 0$.
Let $d(u_3)=d(u_5)=d(u_6)=3$.  Then $c(u_4,u_5)=\frac{\pi}{15}$, $c(u_5,u_6)=0$ and $c(u_6,u_1)=\frac{\pi}{15}$.
Therefore $c^{\ast} (\hat{\Delta}) \leq - \frac{\pi}{2} + \frac{7 \pi}{30} + \frac{\pi}{15} + \frac{\pi}{15} + \frac{\pi}{15} < 0$.
Finally let $d(u_4)=d(u_5)=d(u_6)=3$.  Then $c(u_5,u_6)=0$ and $c(u_6,u_1)=\frac{\pi}{15}$.
If $d(u_1) > 4$ or $d(u_2) > 4$ then $c(u_1,u_2) = \frac{2 \pi}{15}$ and $c^{\ast} (\hat{\Delta}) \leq - \frac{3 \pi}{5} + \frac{2 \pi}{15} + \frac{\pi}{15} + \frac{\pi}{15} + \frac{\pi}{5} + \frac{\pi}{15} < 0$; otherwise
$d(u_1) = d(u_2) = 4$ so $c(u_2,u_3)=0$ and the labelling either forces $c(u_1,u_2)=0$ and $c^{\ast} (\hat{\Delta}) \leq -\frac{\pi}{2} + \frac{\pi}{3} < 0$ or forces $c(u_6,u_1)=0$ and $c^{\ast} (\hat{\Delta}) \leq -\frac{\pi}{2} + \frac{7 \pi}{30} + \frac{\pi}{15} + \frac{\pi}{5} =0$.

Now let $\hat{\Delta}$ have exactly four vertices of degree 3 so that $c(\hat{\Delta}) \leq -\frac{\pi}{3}$.  There are fifteen cases to consider.
In fact if $(d(u_1),d(u_2),d(u_3),d(u_4),d(u_5),d(u_6)) \in \{
(3,3,3,3,\ast,\ast),
(3,3,3,\ast,3,\ast),$%\newline
$(3,3,3,\ast,\ast,3),
(3,3,\ast,\ast,3,3),
(3,\ast,3,3,3,\ast),
(3,\ast,3,3,\ast,3),
(3,\ast,3,\ast,3,3),
(3,\ast,\ast,3,3,3),$%\newline
$(\ast,3,3,3,3,\ast),
(\ast,3,3,3,\ast,3),
(\ast,3,3,\ast,3,3)\}$
then a straightforward check using Table 3 and Notes 1-4 shows that $c^{\ast} (\hat{\Delta}) \leq -\frac{\pi}{3} + \frac{\pi}{3}=0$.
Let $d(u_1)=d(u_2)=d(u_4)=d(u_5)=3$.Then $c(u_1,u_2)=c(u_5,u_6)=c(u_6,u_1)=0$.
If $d(u_3)>4$ then
$c^{\ast}(\hat{\Delta}) \leq -\frac{13 \pi}{30} + \frac{11 \pi}{30} < 0$;
otherwise $d(u_3)=4$ forcing $c(u_3,u_4)=0$ and
$c^{\ast} (\hat{\Delta}) \leq -\frac{\pi}{3} + \frac{3 \pi}{10} < 0$.
Let $d(u_1)=d(u_2)=d(u_4)=d(u_6)=3$.  Then $c(u_1,u_2)=c(u_6,u_1)=0$.  If $d(u_3) > 4$ then
$c^{\ast} (\hat{\Delta}) \leq -\frac{13 \pi}{30} + \frac{2 \pi}{5} < 0$;
otherwise $d(u_3)=4$ forcing $c(u_3,u_4)=0$ and
$c^{\ast} (\hat{\Delta}) \leq -\frac{\pi}{3} + \frac{\pi}{3} = 0$.
Let $d(u_2)=d(u_4)=d(u_5)=d(u_6)=3$.
Then $c(u_1,u_2)=c(u_5,u_6)=0$ and $c(u_6,u_1)=\frac{\pi}{15}$.
If $d(u_1) > 4$ or $d(u_3) > 4$ then $c^{\ast} (\hat{\Delta}) \leq -\frac{13 \pi}{30} + \frac{13 \pi}{30} = 0$, so assume $d(u_1)=d(u_3)=4$.
Then $c(u_3,u_4)=0$ and $l(u_1)$ either forces $c(u_6,u_1)=0$ and $c^{\ast} (\hat{\Delta}) \leq -\frac{\pi}{3} + \frac{3 \pi}{10} < 0$ or
$\hat{\Delta}$ is given by Figure 36(i) or (x)) in which the numbers assigned to each edge is the value of $c(u_i,u_{i+1})$ in multiples of $\frac{\pi}{30}$ and so  
$c^{\ast} (\hat{\Delta}) \leq -\frac{\pi}{3} + \frac{\pi}{10} + \frac{\pi}{5} + \frac{\pi}{15} = \frac{\pi}{30}$.
(Note that if $c^{\ast}(\hat{\Delta}) > 0$ then $\hat{\Delta}$ must receive $\frac{\pi}{15}$ from $\hat{\Delta}_6$ and 
, since $d(\hat{\Delta}_5) > 4$, this forces
$\hat{\Delta}_6= \Delta$ where $\Delta$ is given by Figure 16(i) which in turn forces $l(u) = b^{-1}z^{-1}{\lambda}$ in Figure 36(i), and $l(u) = b{\mu}z$ in Figure 36(x); and $\hat{\Delta}$ must receive 
$\frac{\pi}{5}$ from $\hat{\Delta}_4$.)
This leaves the case $d(u_j)=3$ ($3 \leq j \leq 6$).
Then $c(u_3,u_4)=c(u_5,u_6)=0$ and $c(u_6,u_1)=\frac{\pi}{15}$.
If $d(u_1) \geq 5$ and $d(u_2) \geq 5$ then $c^{\ast} (\hat{\Delta}) \leq -\frac{8 \pi}{15} + \frac{\pi}{2} < 0$.
If $d(u_1)=4$ and $d(u_2)=5$ or $d(u_1) \geq 5$ and $d(u_2)=4$ then
$c(u_1,u_2)=\frac{\pi}{15}$ and $c^{\ast} (\hat{\Delta}) \leq c(3,3,3,3,4,5) + \frac{\pi}{15} + \frac{\pi}{10} + \frac{\pi}{5} + \frac{\pi}{15}=0$; and
if $d(u_1)=4$ and $d(u_2) \geq 6$ then $c^{\ast} (\hat{\Delta}) \leq c(3,3,3,3,4,6) + \frac{2 \pi}{15} + \frac{\pi}{10} + \frac{\pi}{5} + \frac{\pi}{15}=0$.
Let $d(u_1)= d(u_2)=4$ so $c(u_2,u_3)=0$.
Then $l(u_1)$ either forces $c(u_1,u_2)=0$ and $c^{\ast}(\hat{\Delta}) \leq -\frac{\pi}{3} + \frac{\pi}{5} + \frac{\pi}{15} < 0$ or $\hat{\Delta}$ is
given by Figure 36(ii) or (xi) where $c^{\ast} (\hat{\Delta}) \leq -\frac{\pi}{3} + \frac{7 \pi}{30} + \frac{\pi}{5} = \frac{\pi}{10}$.

Now suppose that $\hat{\Delta}$ has exactly five vertices of degree 3 so that $c(\hat{\Delta}) \leq -\frac{\pi}{6}$.
If $d(u_6) > 3$ then $c(u_1,u_2)=c(u_2,u_3)=c(u_3,u_4)=c(u_5,u_6)=c(u_6,u_1)=0$,
$c^{\ast}(\hat{\Delta}) \leq -\frac{\pi}{6} + \frac{\pi}{5}=\frac{\pi}{30}$ and $\hat{\Delta}$ is given by Figure 36(iii) or (xii).
If $d(u_5) > 3$ then $c(u_i,u_{i+1})=0$ except for $c(u_4,u_5)$ and 
$c(u_5,u_6)$. If $d(u_5) \geq 5$ then $c^{\ast} (\hat{\Delta}) \leq -\frac{4 \pi}{15} + \frac{7 \pi}{30} < 0$; and if  $d(u_5) = 4$
then $c^{\ast} (\hat{\Delta}) \leq -\frac{\pi}{6} + \frac{7 \pi}{30}=\frac{\pi}{15}$ and $\hat{\Delta}$ is given by Figure 36(iv) or (xiii).
If $d(u_4) > 3$ then $c(u_i,u_{i+1})=0$ except for $c(u_3,u_4)=c(u_4,u_5)=\frac{\pi}{15}$ and $c^{\ast}(\hat{\Delta}) \leq -\frac{\pi}{6} + \frac{2 \pi}{15}<0$.
Let $d(u_3) > 3$.  
Then $c(u_1,u_2) = c(u_5,u_6)=c(u_6,u_1)=0$.
If $d(u_3) \geq 6$ then $c^{\ast} (\hat{\Delta}) \leq -\frac{\pi}{3} + 2 \left( \frac{\pi}{15} \right) + \frac{\pi}{5}=0$;
if $d(u_3)=5$ then $l(u_3)$ forces either $c(u_2,u_3)=0$ or $c(u_3,u_4)=0$ so $c^{\ast} (\hat{\Delta}) \leq -\frac{4 \pi}{15} + \frac{\pi}{15} + \frac{\pi}{5}=0$; and
if $d(u_3)=4$ then $c(u_3,u_4)=0$, $c^{\ast}(\hat{\Delta}) \leq -\frac{\pi}{6} + \frac{\pi}{10} + \frac{\pi}{5}=\frac{2 \pi}{15}$ and $\hat{\Delta}$
is given by Figure 36(v) or (xiv).
If $d(u_2) > 3$ then $c(u_1,u_2)=c(u_3,u_4)=c(u_5,u_6)=c(u_6,u_1)=0$.
If $d(u_2) \geq 5$ then $c^{\ast} (\hat{\Delta}) \leq -\frac{4 \pi}{15} + \frac{\pi}{15} + \frac{\pi}{5}=0$; and
if $d(u_2)=4$ then $c(u_2,u_3)=0$, $c^{\ast}(\hat{\Delta}) \leq -\frac{\pi}{6} + \frac{\pi}{5} = \frac{\pi}{30}$ and $\hat{\Delta}$ is given by
Figure 36(vi) or (xv).
Finally
if $d(u_1) >3$ then $c(u_i,u_{i+1})=0$ except for $c(u_4,u_5)=\frac{\pi}{5}$ and $c(u_6,u_1)=\frac{\pi}{15}$.  So
if $d(u_1) \geq 5$ then $c^{\ast} (\hat{\Delta}) \leq -\frac{4 \pi}{15} + \frac{4 \pi}{15}=0$; and
if $d(u_1)=4$ then either $c(u_6,u_1)=0$ or $c(u_6,u_1)=\frac{\pi}{15}$, so either $c^{\ast}(\hat{\Delta}) \leq -\frac{\pi}{6} + \frac{\pi}{5} = \frac{\pi}{30}$ or
$c^{\ast}(\hat{\Delta}) \leq -\frac{\pi}{6} + \frac{\pi}{5} + \frac{\pi}{15} = \frac{\pi}{10}$ and the two cases for $\hat{\Delta}$ are shown in Figure 36(vii),
(viii) or 36(xvi), (xvii).

This leaves the case $d(u_i)=3$ ($1 \leq i \leq 6$).
Then $c(u_i,u_{i+1})=0$ except for $c(u_4,u_5)=\frac{\pi}{5}$,
$c^{\ast} (\hat{\Delta}) \leq 0 + \frac{\pi}{5} = \frac{\pi}{5}$ and $\hat{\Delta}$ is given by Figure 36(ix) or (xviii). $\Box$

We now describe the distribution of curvature from each of the 18 regions $\hat{\Delta}$ of Figure 36.

\bigskip

\textbf{Figure 36(i) and (x)}: $c^{\ast}(\hat{\Delta}) \leq -\frac{\pi}{3} + \frac{11 \pi}{30}$; distribute $\frac{\pi}{30}$ from $\hat{\Delta}$ to
$\hat{\Delta}_1$ in each case.

\textbf{Figure 36(ii) and (xi)}: $c^{\ast} (\hat{\Delta}) \leq -\frac{\pi}{3} + \frac{13 \pi}{30}$; distribute $\frac{\pi}{10}$ from $\hat{\Delta}$ to
$\hat{\Delta}_6$ in each case.

\textbf{Figure 36(iii) and (xii)}: $c^{\ast} (\hat{\Delta}) \leq -\frac{\pi}{6} + \frac{\pi}{5}$; distribute $\frac{\pi}{30}$ from $\hat{\Delta}$ to
$\hat{\Delta}_2$ in each case.

\textbf{Figure 36(iv) and (xiii)}: $c^{\ast}(\hat{\Delta}) \leq -\frac{\pi}{6} + \frac{7 \pi}{30}$; distribute $\frac{\pi}{15}$ from $\hat{\Delta}$ to
$\hat{\Delta}_2$ in each case.

\textbf{Figure 36(v) and (xiv)}: $c^{\ast}(\hat{\Delta}) \leq -\frac{\pi}{6} + \frac{3 \pi}{10}$; distribute $\frac{2 \pi}{15}$ from $\hat{\Delta}$ to
$\hat{\Delta}_1$ in each case. (5.1)

\textbf{Figure 36(vi) and (xv)}: $c^{\ast}(\hat{\Delta}) \leq -\frac{\pi}{6} + \frac{\pi}{5}$; distribute $\frac{\pi}{30}$ from $\hat{\Delta}$ to
$\hat{\Delta}_3$ in each case.

\newpage
\begin{figure}
\begin{center}
\psfig{file=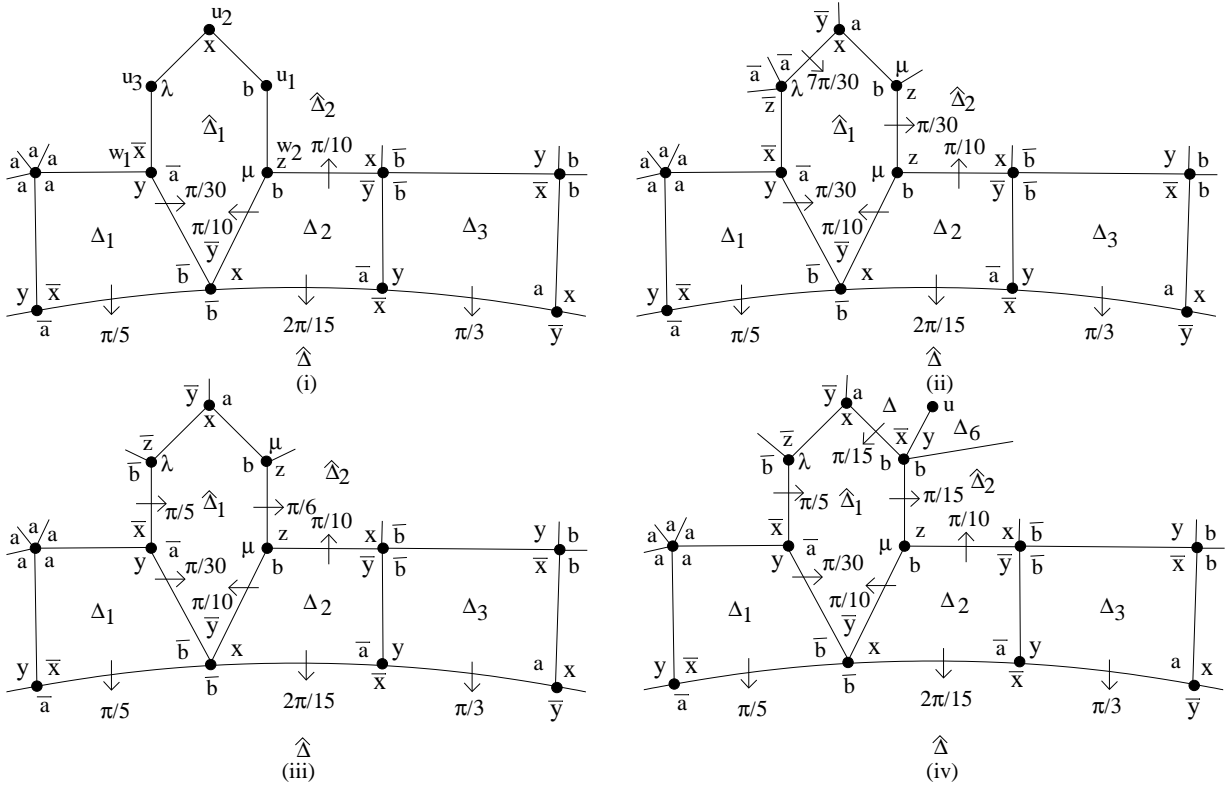}
\end{center}
\caption{Configuration A}
\end{figure}

\textbf{Figure 36(vii) and (xvi)}: $c^{\ast}(\hat{\Delta}) \leq -\frac{\pi}{6} + \frac{\pi}{5}$; distribute $\frac{\pi}{30}$ from $\hat{\Delta}$ to
$\hat{\Delta}_2$ in each case.

\textbf{Figure 36(viii) and (xvii)}: $c^{\ast}(\hat{\Delta}) \leq -\frac{\pi}{6} + \frac{4 \pi}{15}$; distribute $\frac{\pi}{15}$ from $\hat{\Delta}$ to
$\hat{\Delta}_2$ and $\frac{\pi}{30}$ from $\hat{\Delta}$ to $\hat{\Delta}_3$ in each case.

\textbf{Figure 36(ix) and (xviii)}: $c^{\ast}(\hat{\Delta}) \leq 0 + \frac{\pi}{5}$; distribute $\frac{\pi}{10}$ from $\hat{\Delta}$ to $\hat{\Delta}_1$,
$\frac{\pi}{15}$ from $\hat{\Delta}$ to $\hat{\Delta}_2$ and $\frac{\pi}{30}$ from $\hat{\Delta}$ to $\hat{\Delta}_3$ in each case.

\bigskip

\textbf{Note}: in all of the above cases $d(\hat{\Delta}_i) > 6$ for each region $\hat{\Delta}_i$ that receives positive curvature from
$\hat{\Delta}$ except possibly for $\hat{\Delta}_1$ in Figures 36(i) and (x).
\medskip

%%%%%%%%%%%%%%%%%%%%%%%%%%%%%%%%%%%%%%%%%%%%%%%%%%%%%%%%%%%%%%%%%%
Now assume that $\hat{\Delta}$ \textit{is}  $\hat{\Delta}_1$ of Figure 31(i) or (v). Then $\hat{\Delta}_1$ is given by Figure 37(i), 38(i). (Recall that for now we are only considering distribution
of curvature from Sections 5 and 6.)  

First assume that $d(u_3) \geq 5$.
Then $c(w_1,u_3) = \frac{\pi}{15}$ and $c(u_3,u_2)=\frac{2 \pi}{15}$ by Figure 35(ii)-(v).
Since $c(u_1,u_2) = \frac{2 \pi}{15}$ it follows that $c^{\ast} (\hat{\Delta}_1) \leq c(\hat{\Delta}) + \frac{7 \pi}{15}$.
If $d(u_1) > 3$ then $c(\hat{\Delta}_1) \leq c(3,3,3,4,4,5)=-\frac{9 \pi}{15}$; on the other hand if $d(u_1)=3$ then
$c(u_1,u_2)=0$ and
$c^{\ast} (\hat{\Delta}_1) \leq c(3,3,3,3,4,5) + \frac{\pi}{3} < 0$.  Now let $d(u_3)=4$.  Then $c(u_1,u_2)=\frac{2 \pi}{15}$,
$c(u_2,u_3)=\frac{7 \pi}{30}$ and 

\newpage
\begin{figure}
\begin{center}
\psfig{file=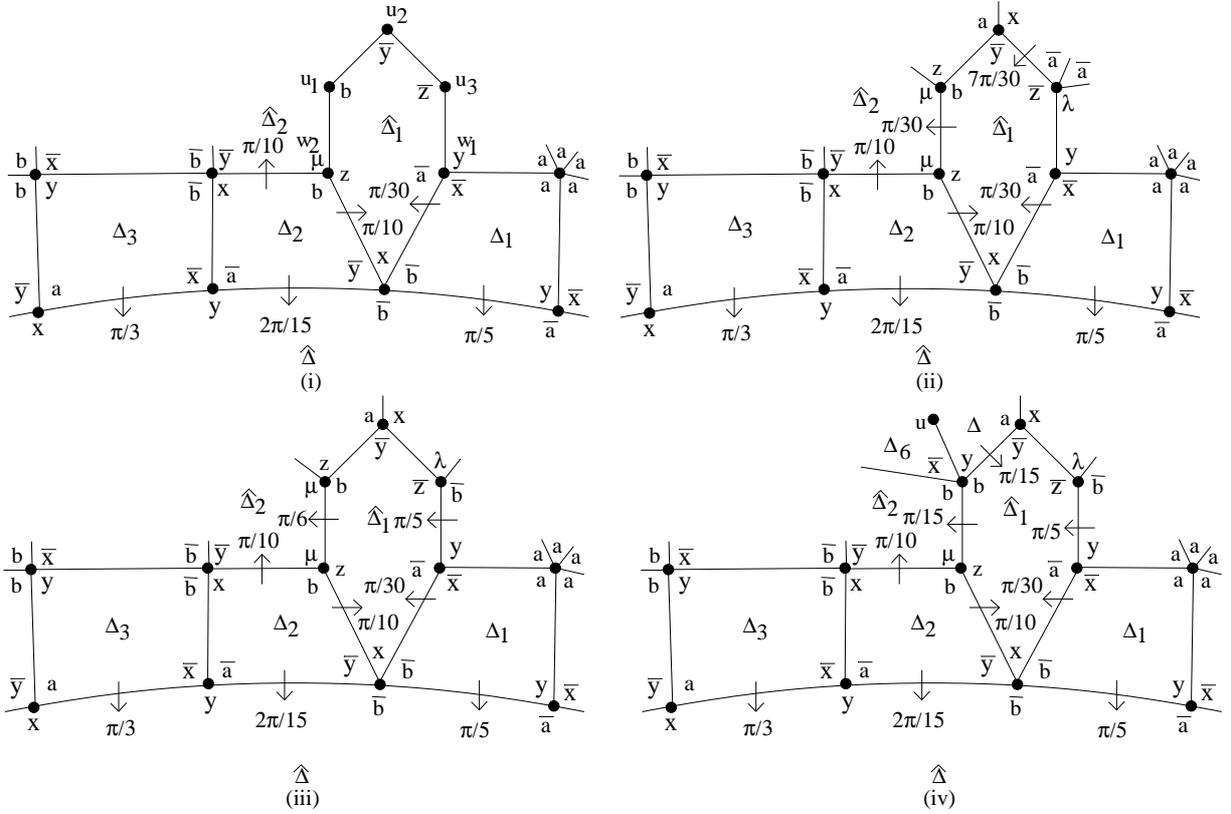}
\end{center}
\caption{Configuration B}
\end{figure}

\noindent $c(u_3,w_1)=0$ so $c^{\ast} (\hat{\Delta}_1) \leq c(\hat{\Delta}_1) + \frac{\pi}{2}$.
If $d(u_1)>3$ or $d(u_2)>3$ then $c(\hat{\Delta}_1) \leq -\frac{\pi}{2}$; on the other hand if $d(u_1)=d(u_2)=3$ then $c^{\ast}(\hat{\Delta}_1) \leq
c(3,3,3,3,4,4) + \frac{11 \pi}{30} = \frac{\pi}{30}$ as shown in Figure 37(ii), 38(ii).
Finally let $d(u_3)=3$.  Then $c(u_3,w_1)=\frac{\pi}{5}$, $c(u_2,u_1) = \frac{2 \pi}{15}$ and $c(u_2,u_3)=0$ so $c^{\ast} (\hat{\Delta}_1) \leq c(\hat{\Delta}_1) + \frac{7 \pi}{15}$.
If $d(u_1)=3$ then $c(u_1,u_2)=0$ and so $d(u_2) \geq 4$ would imply $c^{\ast} (\hat{\Delta}_1) \leq c(3,3,3,3,4,4) + \frac{\pi}{3} = 0$; whereas
if also $d(u_2)=3$ then $c^{\ast} (\hat{\Delta}_1) \leq \frac{\pi}{6}$ as shown in Figure 37(iii), 38(iii).
Let $d(u_1)=4$.
If $d(u_2) \geq 4$ then $c^{\ast} (\hat{\Delta}_1) \leq c(3,3,3,4,4,4) + \frac{7 \pi}{15} < 0$ so assume that $d(u_2)=3$.
Reading clockwise from the $\hat{\Delta}_1$ corner label
if $l(u_1)=bbx^{-1} y$, $bx^{-1} yb$ in Figure 37(i), 38(i) respectively then $c(u_1,u_2)=0$ and $c^{\ast}(\hat{\Delta}_2) \leq -\frac{\pi}{3} +
\frac{\pi}{3}=0$;
otherwise $c(u_1,u_2)=\frac{\pi}{15}$ and $\hat{\Delta}_1$ is given by Figure 37(iv), 38(iv) and $c^{\ast}(\hat{\Delta}_1) \leq -\frac{\pi}{3} +
\frac{6 \pi}{15}=\frac{\pi}{15}$ as shown.
This leaves $d(u_1) \geq 5$ in which case $c(u_1,u_2)=\frac{\pi}{15}$ and $c^{\ast}(\hat{\Delta}_1) \leq c(3,3,3,3,4,5) + \frac{6 \pi}{15} < 0$.

The distribution of curvature in Figures 37 and 38 is as follows.

\bigskip

\textbf{Figure 37(ii) and 38(ii)}: $c^{\ast}(\hat{\Delta}_1) \leq -\frac{\pi}{3} + \frac{11 \pi}{30}$; distribute $\frac{\pi}{30}$ from $\hat{\Delta}_1$ to
$\hat{\Delta}_2$ in each case.

\textbf{Figure 37(iiii) and 38(ii)}: $c^{\ast}(\hat{\Delta}_1) \leq -\frac{\pi}{6} + \frac{\pi}{3}$; distribute $\frac{\pi}{6}$ from $\hat{\Delta}_1$ to
$\hat{\Delta}_2$ in each case.

\textbf{Figure 37(iv) and 38(iv)}: $c^{\ast}(\hat{\Delta}_1) \leq -\frac{\pi}{3} + \frac{6 \pi}{15}$; distribute $\frac{\pi}{15}$ from $\hat{\Delta}_1$ to
$\hat{\Delta}_2$ in each case.

\bigskip

\begin{lemma}%\textbf{Lemma 7.2}\quad
\textit{According to the distribution of curvature so far, that is, in Figures 6-32, 34 and 36-38, $\hat{\Delta}_1$ of Figures 37(i) and 38(i)
does not receive positive curvature from $\hat{\Delta}_2$, that is, $c(w_2,u_1) = 0$.}
\end{lemma}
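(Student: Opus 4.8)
The plan is to show that the neighbour $\hat{\Delta}_2$ of $\hat{\Delta}_1$ across the edge $e$ with endpoints $w_2$ and $u_1$ (in the labelling of Figures 37(i) and 38(i)) transfers no positive curvature into $\hat{\Delta}_1$, i.e.\ $c(w_2,u_1)=0$. Every transfer of curvature in Figures 6-32, 34 and 36-38 leaves either a region of degree $4$ (Sections 5 and 6) or a region of degree $6$ with positive $c^{\ast}$ (Section 7); by Lemma 3.5 either $d(\hat{\Delta}_2)\in\{4,6\}$ or $d(\hat{\Delta}_2)\ge 8$, and in the latter case $\hat{\Delta}_2$ distributes nothing, which settles the lemma. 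So I would assume $d(\hat{\Delta}_2)\in\{4,6\}$. Since $\hat{\Delta}_1$ lies inside Configuration A (Figure 31(i)) or Configuration B (Figure 31(v)), its entire star is determined; in particular, by Lemma 3.4, when $d(u_1)=4$ the label $l(u_1)$ is forced to be $bbx^{-1}y$ in Figure 37(i) and $bx^{-1}yb$ in Figure 38(i), and $u_1$ and $w_2$ are constrained in the remaining cases as well. The first step is then to read off from Figures 37(i), 38(i) the corner-label type of $e$ both in $\hat{\Delta}_1$ and in $\hat{\Delta}_2$.

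If $d(\hat{\Delta}_2)=6$, I would match $\hat{\Delta}_2$ against the eighteen regions of Figure 36 and against Figures 37 and 38: the corner labels that Figures 37(i), 38(i) force along $e$ should be incompatible with $\hat{\Delta}_2$ being a distributor across $e$, giving $c(w_2,u_1)=0$. If $d(\hat{\Delta}_2)=4$ and $\hat{\Delta}_2$ is not a T24, T13 or T4 region and is not $\hat{\Delta}_3$ of Figure 31(ii) or 31(vi), then the summary Figure 35 (equivalently Table 3) gives the maximum curvature transferred across an edge of the corner-label type of $e$ directly; that maximum is $0$ unless $e$ is a $(b,a)$-edge or one of the $(x,y)$-edges of Figures 13 and 14 — the two classes Figure 35 omits — and for those no rule in Figures 6-32, 34 or 36-38 ever puts curvature onto such an edge. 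This disposes of $d(\hat{\Delta}_2)=6$ and of the non-redistributing part of $d(\hat{\Delta}_2)=4$.

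The remaining, and hardest, case is that $\hat{\Delta}_2$ is a degree-$4$ region which receives and passes on curvature: a T24, T13 or T4 region, or $\hat{\Delta}_3$ of Figure 31(ii) or 31(vi). I would take each of the seventeen possibilities for such a region catalogued in Figure 33 in turn as a candidate for $\hat{\Delta}_2$, superimpose it on the fixed picture of $\hat{\Delta}_1$ coming from Figures 37(i), 38(i), and derive a contradiction: the vertex label read around $w_2$ or $u_1$ that this forces will have $t$-exponent sum $\not\equiv 0\pmod{5}$ (contradicting the congruence in Section 3), or will create a $2$-segment of forbidden length (a length contradiction, using $n\ge 7$), or a basic labelling contradiction, or will clash with the prescribed shape of the neighbour of $\hat{\Delta}_1$ beyond $u_1$ imposed by Configuration A or B. I expect the most delicate subcase to be a T13 region (Figure 33(vi)-(ix)): these move up to $\frac{2\pi}{15}$ across edges incident to $b$- and $y$-corners, which are precisely the corners of $u_1$ meeting $e$, so the elimination there will rest on showing that the degree-$4$ region a T13 redistribution rule would demand simply cannot abut $e$ inside Configuration A or B. Once all possibilities for $\hat{\Delta}_2$ are excluded we obtain $c(w_2,u_1)=0$, which is the assertion of the lemma; in particular the value of $c^{\ast}(\hat{\Delta}_1)$ computed in Section 7 is not increased by any transfer across $e$.
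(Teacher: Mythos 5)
Your overall skeleton (classify $\hat{\Delta}_2$ by degree, note that only regions of degree $4$ or $6$ distribute anything under the rules of Figures 6--32, 34 and 36--38, and then check the rules) is sound, and your treatment of the degree-$4$ cases and of $d(\hat{\Delta}_2)\geq 8$ is essentially what an inspection of Figure 35 gives. But the crux of the lemma is the degree-$6$ case, and there your proposal has a genuine gap: you assert that the corner labels forced along the $(w_2,u_1)$-edge by Figures 37(i), 38(i) "should be incompatible" with $\hat{\Delta}_2$ being a distributor across that edge. They are not. There are label-compatible candidates, and identifying and eliminating them is where the paper's whole proof lives: for $\hat{\Delta}_1$ of Figure 37(i) the candidates are (the inverse of) $\hat{\Delta}$ of Figure 36(x) and $\hat{\Delta}_1$ of Figure 38(iv); for Figure 38(i) they are $\hat{\Delta}$ of Figure 36(i) and $\hat{\Delta}_1$ of Figure 37(iv). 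A pure label check along $e$ cannot kill these, so your step (2) as written fails.

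The elimination requires tracing where the candidate hexagon's own positive curvature comes from, not just reading corners of $e$. The hexagon of Figure 36(x) (resp.\ 36(i)) only has $c^{\ast}>0$, and hence only distributes the $\frac{\pi}{30}$ in question, when it receives $\frac{\pi}{5}$ across its $(v_4,v_5)$-edge, which forces its neighbour $\hat{\Delta}_4$ to be $\Delta$ of Figure 7(iii); this in turn forces the configuration around $\hat{\Delta}_2$ to be that of Figure 31(ii)--(iv) (resp.\ 31(vi)--(viii)) rather than Configuration A of Figure 31(i) (resp.\ Configuration B of 31(v)), contradicting the standing assumption that $\hat{\Delta}_1$ lies in Configuration A (resp.\ B). The remaining candidate, the Configuration B hexagon $\hat{\Delta}_1$ of Figure 38(iv) (resp.\ the Configuration A hexagon of 37(iv)), is excluded because the two configurations cannot overlap in that position: the distribution of curvature from the region $\Delta_2$ of Figure 37(i) differs from that of the corresponding $\Delta_2$ of Figure 38(iv). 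Without these two arguments your proposal does not establish $c(w_2,u_1)=0$; conversely, the long case analysis over the seventeen degree-$4$ possibilities of Figure 33 that you anticipate as the "hardest" part is not needed here, since those transfers are ruled out directly by the corner labels (Figure 35), as the paper's proof implicitly uses.
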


\textit{Proof}.  Consider $\hat{\Delta}_1$ of Figure 37(i). If $c(w_2,u_1) > 0$ then $\hat{\Delta}_2$ of Figure 37(i) is (the inverse of) $\hat{\Delta}$ of Figure 36(x)
or $\hat{\Delta}_2$ is $\hat{\Delta}_1$ of Figure 38(iv). Suppose $\hat{\Delta}_2$ is $\hat{\Delta}$ of Figure 36(x)
Then since $\hat{\Delta}$ of Figure 36(x) must receive $\frac{\pi}{5}$ across its $(v_4,v_5)$-edge, the region $\hat{\Delta}_4$ of Figure 36(x) is given by $\Delta$
of Figure 7(iii); and this in turn forces $\hat{\Delta}_2$ of
Figure 37(i) to be given by Figure 31(ii)-(iv) and not Configuration A of Figure 31(i), a contradiction.
Moreover the region $\hat{\Delta}_2$ of Figure 37(i) cannot coincide with the region $\hat{\Delta}_1$ of Figure 38(iv) since, for example, the distribution
of curvature from the region $\Delta_2$ of Figure 37(i) is not the same as the distribution of curvature from the corresponding region $\Delta_2$ of Figure 38(iv).

Consider $\hat{\Delta}_1$ of Figure 38(i). If $c(w_2,u_1) > 0$ then $\hat{\Delta}_2$ of Figure 38(i) is $\hat{\Delta}$ of Figure 36(i) or $\hat{\Delta}_2$ is $\hat{\Delta}_1$ of Figure 37(iv).
If $\hat{\Delta}_2$ is $\hat{\Delta}$ of Figure 36(i) then a similar argument to the one above using Figures 36(i), 7(iii) and 31(vi)-(viii) applies to yield a contradiction; and
$\hat{\Delta}_2$ of Figure 38(i) cannot coincide with the region $\hat{\Delta}_1$ of Figure 37(iv) since as above the distribution of curvature from  the corresponding $\Delta_2$ differs. 
$\Box$

\medskip

\textbf{Note }. The upper bounds $c(u,v)$ of Figure 35 remain unchanged as a result of the distribution of curvature described in this section.

%%%%%%%%%%%%%%%%%%%%%%%%%%%%%%%%%%%%%%%%%%%%%%%%%%%%%%%%%%%%%%%%%%%%%%%%%%%%%%%%%%%%%%%%%%%%%%%%%%%%%%%%%%%%%%%%%%%%%%%%%%%%%%%%%%%%%%%%%%%%%%%%%%%%%%%%%%%%%%%%%%%%%%%%%%%%%%%%%%%%%%%5
\section{Proof of Proposition 4.2}%8

An inspection of all distribution of curvature described so far yields the following.
If positive curvature is distributed across an $(x,a^{-1})$-edge $e$ into a region of degree $>4$ then $e$ is given by: Figure 21(ii) (two cases);
Figure 23(ii) (two cases); Figure 21(xi); and Figure 31(v).  In particular if the $x$-corner vertex has degree 4 and the $a^{-1}$-corner vertex has
degree 3 then $e$ is given by Figure 31(v) (Configuration B).
If positive curvature is distributed across an $(a^{-1},y^{-1})$-edge $e$ into a region of degree $>4$ then $e$ is given by:
Figure 21(ii) (two cases); Figure 23(ii) (two cases); Figure 22(ix); and Figure 31(i).
In particular if the $a^{-1}$-corner has degree 3 and the $y^{-1}$-corner has degree 4 then $e$ is given by Figure 31(i) (Configuration A).

\begin{lemma}%\textbf{Lemma 8.1}\quad
\textit{Let $\hat{\Delta}$ be a region of degree 6 that receives positive curvature across at least one edge.  Then one of the following occurs.}

\begin{enumerate}
\item[(i)]
$c^{\ast}(\hat{\Delta}) \leq 0$\textit{;}
\item[(ii)]
$c^{\ast}(\hat{\Delta})>0$ \textit{is distributed to a region of degree $>6$;}
\item[(iii)]
$c^{\ast}(\hat{\Delta}) \in \left\{ \frac{\pi}{30} , \frac{\pi}{15} \right\}$ \textit{is distributed to a region $\Delta '$ of degree 6 and
$c^{\ast}(\Delta ') \leq 0$.}
\end{enumerate}
\end{lemma}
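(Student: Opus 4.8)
The plan is to reduce, via Lemma 7.1 and the treatment of Configurations A and B, to a short explicit list of regions, and then to read off the distribution scheme of Section 7. If $c^{\ast}(\hat{\Delta}) \le 0$ then conclusion (i) holds and there is nothing to prove; we may also assume $\hat{\Delta} \neq \Delta_0$. So suppose $c^{\ast}(\hat{\Delta}) > 0$. Then by Lemma 7.1, together with the analysis of $\hat{\Delta}_1$ of Figure 31 carried out around Figures 37 and 38, $\hat{\Delta}$ is one of the eighteen regions of Figure 36 or one of the regions of Figures 37(ii)--(iv) and 38(ii)--(iv), and for each of these the displayed list of distribution rules following Figure 36 and the rules following Figures 37--38 name exactly the neighbour to which $c^{\ast}(\hat{\Delta})$ is transferred.

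The second step disposes of all but four cases. As recorded in the opening discussion of Section 7, the only instances in which the transferred curvature is not added to a region of degree $> 6$ are $\hat{\Delta}_1$ of Figures 36(i) and 36(x) and $\hat{\Delta}_2$ of Figures 37(iv) and 38(iv); in every other case conclusion (ii) holds at once. In these four cases the amount transferred is $\frac{\pi}{30}$ (Figures 36(i), 36(x)) or $\frac{\pi}{15}$ (Figures 37(iv), 38(iv)), which is precisely the range permitted by (iii), so if the recipient $\Delta'$ still happens to have degree $> 6$ we are again in case (ii). Otherwise, since Lemma 3.5 forbids regions of degree $5$ and the corner labels around the transferring edge --- pinned down by the identities extracted inside the proof of Lemma 7.1, e.g.\ $l(u) = b^{-1}z^{-1}\lambda$ in Figure 36(i) --- force $d(\Delta') \neq 4$, we must have $d(\Delta') = 6$.

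It then remains, in each of these four cases, to prove $c^{\ast}(\Delta') \le 0$. For each recipient $\Delta'$ I would use the constraints on its local configuration (the forced corner labels and vertex degrees just referred to, and Lemma 7.2, which in the relevant subcases guarantees that no positive curvature reaches $\Delta'$ across the $u_1$-edge of Figures 37(i) and 38(i)) to determine precisely which edges of $\Delta'$ can feed positive curvature into it under the schemes of Sections 5--7, bounding each such contribution by the entries of Figure 35 and Table 3. Writing $c^{\ast}(\Delta') \le c(\Delta') + (\text{total curvature received})$, the bound is immediate when $\Delta'$ has at most three vertices of degree $3$, since then $c(\Delta') \le -\frac{\pi}{2}$; when $\Delta'$ has four vertices of degree $3$ one has $c(\Delta') = -\frac{\pi}{3}$ and the label restrictions keep the incoming total at $\le \frac{\pi}{3}$; the genuinely delicate subcases are those with five or six vertices of degree $3$, where $-c(\Delta')$ is only $\frac{\pi}{6}$ or $0$ and one must check that nearly every edge of $\Delta'$ carries no curvature at all.

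The main obstacle is exactly this last verification: for the four exceptional degree-$6$ recipients one must account for \emph{every} possible source of positive curvature --- from both the step-1 scheme of Sections 5--6 and the step-2 scheme of Section 7, allowing for the possibility that $\Delta'$ receives from more than one step-2 region --- and confirm that the total never exceeds $-c(\Delta')$, so that $\Delta'$ genuinely acts as a sink and does not itself redistribute. In particular one must rule out $\Delta'$ coinciding with one of the regions of Figure 36 (which would reopen the possibility $c^{\ast}(\Delta') > 0$); this is done by matching vertex labels and degrees in the style of the proof of Lemma 6.2. Everything else is routine bookkeeping once the figures of Sections 5--7 are in hand.
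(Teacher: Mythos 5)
Your opening reduction is the same as the paper's: inspection of the Section 7 rules shows that the only transfers from a positively curved 6-gon that do not land in a region of degree $>6$ are the $\frac{\pi}{30}$ sent to $\hat{\Delta}_1$ in Figures 36(i),(x) and the $\frac{\pi}{15}$ sent from $\hat{\Delta}_1$ to $\hat{\Delta}_2$ in Figures 37(iv),38(iv), and these amounts are exactly those allowed in (iii). Up to that point you and the paper agree, and your observation that the recipient must then have degree $6$ (degree $5$ and $7$ being excluded, degree $4$ excluded by the forced labels) is also in the spirit of the text.

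The genuine gap is that the actual content of the lemma --- the verification that $c^{\ast}(\Delta')\leq 0$ for these recipients --- is only announced as ``routine bookkeeping,'' whereas it is the whole of Section 8's argument and is not routine. Concretely: (a) you leave unresolved whether $\Delta'$ can receive from more than one degree-6 donor, while the paper needs and proves the fact that a degree-6 region receives positive curvature from at most one region of degree 6; (b) your guiding estimates do not match the actual configurations --- because the donor's shared vertices and the forced vertex $w_4$ (resp.\ the analogous vertices in Figure 39(iii),(iv)) have degree $>3$, the recipient has at most four vertices of degree 3, and the delicate case is precisely the four-vertex case $c(\Delta')=-\frac{\pi}{3}$, not the five- or six-vertex cases you single out; and (c) closing that delicate case is not a matter of label bookkeeping alone: for Figure 39(i),(ii) the paper must invoke the classification (opening of Section 8) of which $(x,a^{-1})$- and $(a^{-1},y^{-1})$-edges can carry curvature, conclude that $c(w_3,w_4)>0$ would force $\Delta'$ to be $\hat{\Delta}_1$ of Configuration A/B, and then derive a contradiction from Lemma 7.2 because $\Delta'$ receives $\frac{\pi}{30}$ from the donor; for Figure 39(iii),(iv) one must additionally use that the donor received only $\frac{\pi}{30}$ (so, by Configurations A,B, $c(u_4,u_5)\neq\frac{\pi}{5}$), rule out or quantify coincidences with $\hat{\Delta}_8$ of Figures 21(xi),22(ix) and with Figure 16(i), and only then does the worst case come out to exactly $c(3,3,3,3,4,4)+\frac{2\pi}{3}=0$. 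None of these steps is supplied or even anticipated in your sketch, so as it stands the proposal is an outline of the paper's strategy rather than a proof.
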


\newpage
\begin{figure}
\begin{center}  
\psfig{file=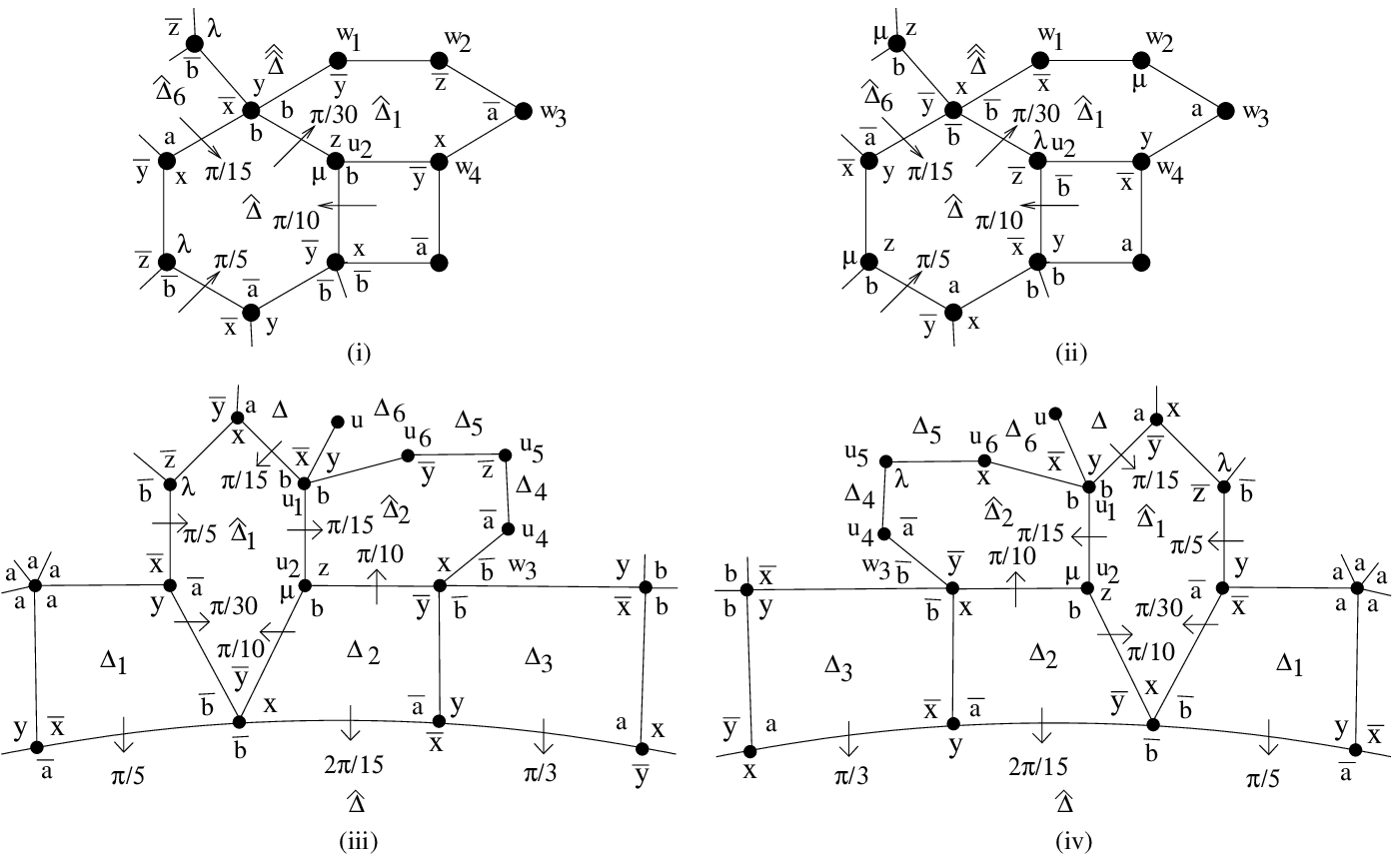}
\end{center}
\caption{}
\end{figure}  

\textit{Proof}.  It is clear from Figures 36-38 that if (i) and (ii) do not hold then
$c^{\ast}(\hat{\Delta}) \in \left\{ \frac{\pi}{30}, \frac{\pi}{15} \right)$ is distributed to $\hat{\Delta}_1$ of Figure 36(i),(x) or
$\hat{\Delta}_2$ of Figure 37(iv),38(iv).
It follows that a region of degree 6 receives positive curvature from at most one region of degree 6.  We treat each pair of cases in turn.

Consider $\hat{\Delta}_1$ of Figure 36(i), (x).  Then $\hat{\Delta}_1$ is given by Figure 39(i), (ii) in which  $d(w_4) > 3$.
Observe that  $d(\hat{\hat{\Delta}}) > 4$  and it follows that  $\hat{\Delta}_1$ does not receive any positive curvature from $\hat{\hat{\Delta}}$ in Figure 39(i), (ii).
Note also from Figure 35 that $c(w_3,w_4)=\frac{2\pi}{15}$ or $\frac{\pi}{15}$;
$c(u_2,w_4)=\frac{\pi}{10}$; and, from Note 3 following Table 3 at the start of Section 7, $c(w_1,w_2)+c(w_2,w_3)=\frac{7 \pi}{30}$.
Therefore $c^{\ast}(\hat{\Delta}_1) \leq c(\hat{\Delta}_1) + \frac{\pi}{2}$.
If however $c(w_3,w_4)=\frac{2\pi}{15}$ then from Figure 35(iii) it follows that $c(\hat{\Delta}_1) \leq c(3,3,3,4,4,5) = -\frac{3 \pi}{5}$
and so $c^{\ast}(\hat{\Delta}_1) \leq 0$; so assume that $c(w_3,w_4)=\frac{\pi}{15}$, $c^{\ast}(\hat{\Delta}_1) \leq c(\hat{\Delta}_1) + \frac{13\pi}{30}$.
If $\hat{\Delta}_1$ has at least three vertices of degree $>3$ then $c(\hat{\Delta}_1) \leq -\frac{\pi}{2}$; and
if $d(w_4) \geq 5$ then $c(\hat{\Delta}_1) \leq c(3,3,3,3,4,5) = -\frac{13 \pi}{30}$;
this leaves $d(w_i)=3$ ($1 \leq i \leq 3$) and $d(w_4)=4$ in which case $c(w_1,w_2)=0$ and $c(w_2,w_3)=\frac{\pi}{5}$.
If $c(w_3,w_4)=0$ then $c^{\ast}(\hat{\Delta}_1) \leq -\frac{\pi}{3} + \frac{\pi}{3}=0$.
On the other hand if $c(w_3,w_4) > 0$ then it follows from the remark preceding the statement of the lemma  that
$\hat{\Delta}_1$ of Figure 39(i),(ii) must coincide with $\hat{\Delta}_1$ of Figure 38(i) (Configuration B), Figure 37(i) (Configuration A). But 
the fact that $\hat{\Delta}_1$ receives $\frac{\pi}{30}$ from $\hat{\Delta}$ in Figure 39(i),(ii) contradicts Lemma 7.2

Now consider $\hat{\Delta}_2$ of Figure 37(iv), 38(iv) and assume that $d(\hat{\Delta}_2)=6$.
Then $\hat{\Delta}_2$ is given by Figure 39(iii), (iv) in which (see Figure 35) the following hold: $c(u_2,w_3)=\frac{\pi}{10}$; $c(u_5,u_6)=\frac{7 \pi}{30}$ if
$d(u_6)<6$; and $c(u_5,u_6)=\frac{2 \pi}{15}$ if $d(u_6) \geq 6$. 
Note that if $\hat{\Delta}_1$ of Figure 39(iii), (iv) does not receive $\frac{\pi}{30}$ from $\Delta_1$ then we are back in the previous case, so assume otherwise.
In particular, according to Configurations A,B of Figure 31, this implies $c(u_4,u_5)\neq \frac{\pi}{5}$ and so $c(u_4,u_5)= \frac{\pi}{6}$; and note that if $d(u_4)=6$ then $c(u_4,u_5)=\frac{2 \pi}{15}$. 
Applying the statement at the beginning of this section, it follows by inspection of Figures 21(ii),(xi), 22(ix), 23(ii) and 31(i),(v) that if $c(w_3,u_4) > 0$ then
$\hat{\Delta}_2$ of Figure 39(iii), (iv) coincides with region $\hat{\Delta}_8$ of Figure 21(xi), 22(xi) in which case $c(w_3,u_4)=\frac{\pi}{30}$ and $d(u_4)=6$.
Finally if $c(u_1,u_2)=\frac{\pi}{15}$ then $\hat{\Delta}_1$ must receive $\frac{\pi}{15}$ from $\Delta$ which implies $d(u)=3$ and $d(\Delta_6) > 4$ and so $c(u_1,u_6) = 0$.
On the other hand if $c(u_1,u_2)=\frac{\pi}{30}$ then (see Figure 35) either $c(u_1,u_6)=\frac{2 \pi}{15}$ in which case $\hat{\Delta}_2$ is given by $\hat{\Delta}_3$ or $\hat{\Delta}_4$
of Figure 18(ii), in particular $d(u_6) \geq 6$; or $d(u_6) < 6$ and $c(u_1,u_6)=\frac{\pi}{15}$

It follows that if $d(u_6) < 6$ then $c^{\ast} (\hat{\Delta}_2) = 
c(\hat{\Delta}_2) + c(u_2,w_3) + c(w_3,u_4) + c(u_4,u_5) + c(u_5,u_6) +(c(u_1,u_2) + c(u_1,u_6)) \leq c(\hat{\Delta}_2) + \frac{\pi}{10} + \frac{\pi}{30} + \frac{\pi}{6} + \frac{7 \pi}{30} + \frac{\pi}{10} =
c(\hat{\Delta}_2) + \frac{19 \pi}{30}$; 
or if $d(u_6) \geq 6$ then $c^{\ast} (\hat{\Delta}_2) \leq c(\hat{\Delta}_2) + \frac{\pi}{10} + \frac{\pi}{30} + \frac{\pi}{6} + \frac{2 \pi}{15} + \frac{\pi}{6} = c(\hat{\Delta}_2) + \frac{18 \pi}{30}$.

Let $d(u_4) \geq 4$.  If $d(u_6) \geq 4$ or $d(u_5) \geq 4$ then $c^{\ast} (\hat{\Delta}_2) \leq - \frac{2 \pi}{3} + \frac{19 \pi}{30} < 0$; 
on the other hand if $d(u_6) = d(u_5) = 3$ then $c(u_5,u_6) = 0$ and $c^{\ast} (\hat{\Delta}_2) \leq c(3,3,3,4,4,4) + \left( \frac{19 \pi}{30} - \frac{7 \pi}{30} \right) < 0$.

Let $d(u_4) = 3$ so, in particular, $c(w_3,u_4)=0$.
If $d(u_6) \geq 4$ and $d(u_5) \geq 4$ or if $d(u_6) = 3$ and $d(u_5) \geq 5$ or if $d(u_6) \geq 5$ and $d(u_5) =3$ then $c(\hat{\Delta}_2) \leq -\frac{3 \pi}{5}$ and it follows that $c^{\ast}(\hat{\Delta}_2) \leq 0$.
If $d(u_6) = 4$ and $d(u_5)=3$ then $d(\Delta_5) > 4$, $c(u_5,u_6)=0$ and $c^{\ast} (\hat{\Delta}_2) \leq -\frac{\pi}{2} + \frac{\pi}{10} + 0 + \frac{\pi}{6} + 0 + \frac{\pi}{10} < 0$; and
if $d(u_6)=3$ and $d(u_5)=4$ then $d(\Delta_4) > 4$, $c(u_4,u_5)=0$ and $c^{\ast} (\hat{\Delta}) \leq - \frac{\pi}{2} + \frac{\pi}{10} + 0 + 0 + \frac{7 \pi}{30} + \frac{\pi}{10} < 0$.
This leaves $d(u_5)=d(u_6)=3$ in which case $c(u_5,u_6)=0$.  Moreover $d(\Delta_5) > 4$ also means that if $c(u_1,u_6)=\frac{\pi}{15}$ then
$\Delta_6$ is given by $\Delta$ of Figure 16(i) forcing the region $\Delta$ of Figure 39(iii), (iv) to have degree $>4$, a contradiction, so
$c(u_1,u_6)=\frac{\pi}{30}$.  Since, as noted above, $c(u_1,u_2)=\frac{\pi}{15}$ implies $c(u_1,u_6)=0$ it follows that $c(u_1,u_2) + c(u_1,u_6) = \frac{\pi}{15}$ and
$c^{\ast} (\hat{\Delta}) \leq c(3,3,3,3,4,4) + \frac{\pi}{10} + 0 + \frac{\pi}{6} + 0 + \frac{\pi}{15} = 0$.
$\Box$

\medskip

Proposition 4.2 follows immediately from Lemma 8.1. 

\section{Two lemmas}%9

The first two steps of the proof have now been completed. Given this, only step three remains, that is, it remains to consider regions $\hat{\Delta}$ of degree $\geq 8$.  To do this we partition such $\hat{\Delta} \neq 
\Delta_0$ into regions of 
type $\mathcal{A}$ or type $\mathcal{B}$.

\medskip

We say that $\hat{\Delta}$ is a \textit{region of type} $\mathcal{B}$ if $\hat{\Delta}$ receives positive curvature from a region $\Delta$ of degree 4 shown in Figure 5 such that $\Delta$ has not received any positive curvature
from any other region of degree 4 and such that either
$d(v_3) = d(v_4) = 3$ only or $d(v_4) = d(v_1)$ = 3 only. Thus $\hat{\Delta}$ is given by $\hat{\Delta}_3$ of Figure 13(i) or $\hat{\Delta}_4$ of Figure 14(i) or $\hat{\Delta}$ of Figure 31 or $\hat{\Delta}$ of Figure 32(i), (ii), 
(iii) or (v). 
Otherwise we will say that $\hat{\Delta}$ is a \textit{region of type} $\mathcal{A}$. 

\medskip

There will be no further distribution of curvature in what follows and so we collect together in this section results that will be useful in Sections 10 and 11.  
The statements in the following lemma can be verified
by inspecting Figures 6--39.  Further details will appear in the proof of Lemma 10.1.

\begin{lemma}%\textbf{Lemma 9.1}\quad
\textit{Let $e_i$ be an edge with endpoint $u,v$ such that $e_i$ is neither a $(b,a)$-edge nor is the edge of a region $\Delta$ across which positive cuvature is transferred to a type $\mathcal{B}$ region.}
\begin{enumerate}
\item[(i)]
\textit{If $c(e_i) := c(u,v) > \frac{2 \pi}{15}$ then $c(e_i) \in \{ \frac{\pi}{6}, \frac{\pi}{5}, \frac{7 \pi}{30} \}$.}
\item[(ii)]
\textit{If $c(e_i) \in \{ \frac{\pi}{6}, \frac{\pi}{5}, \frac{7 \pi}{30} \}$ then $e_i$ is given by Figure 40 (in which possible $c(e_i)$ is given by multiples of $\frac{\pi}{30}$).}

%\newpage
\begin{figure}
\begin{center}
\psfig{file=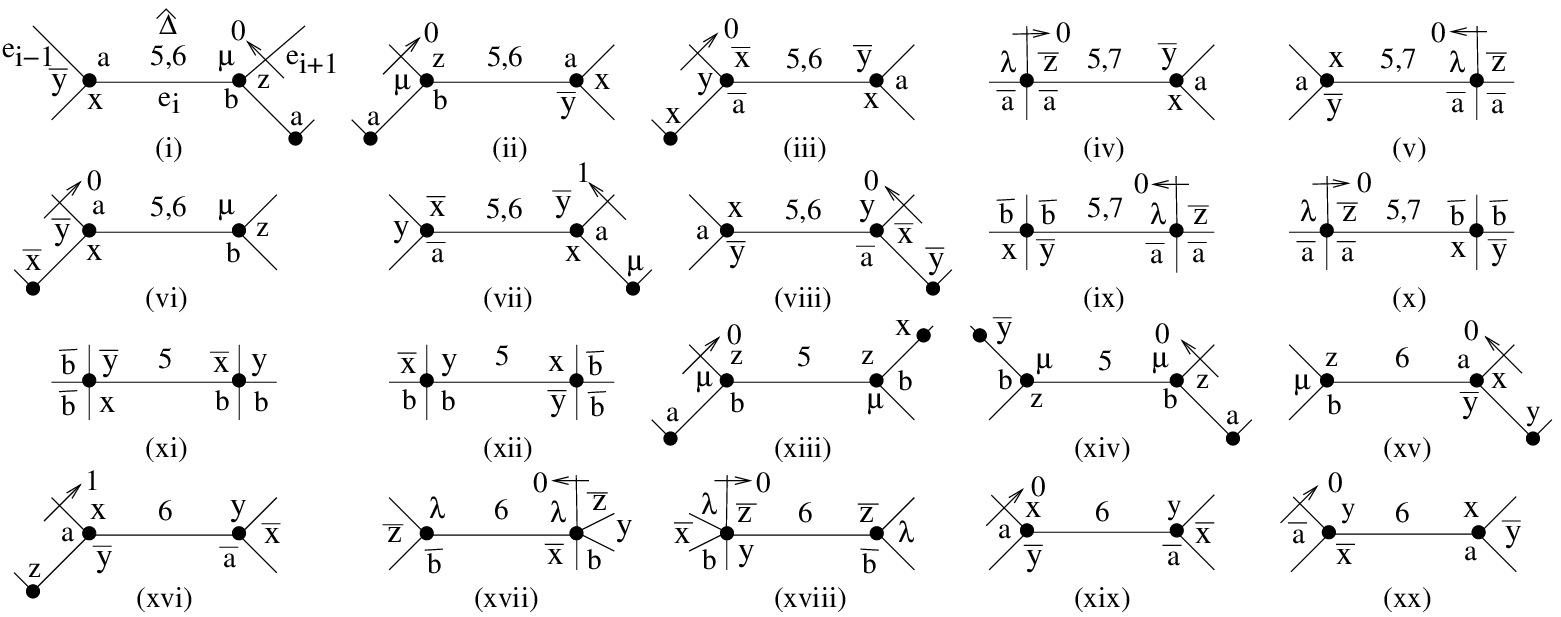}
\end{center}
\caption{}
\end{figure}

\begin{figure}
\begin{center}
\psfig{file=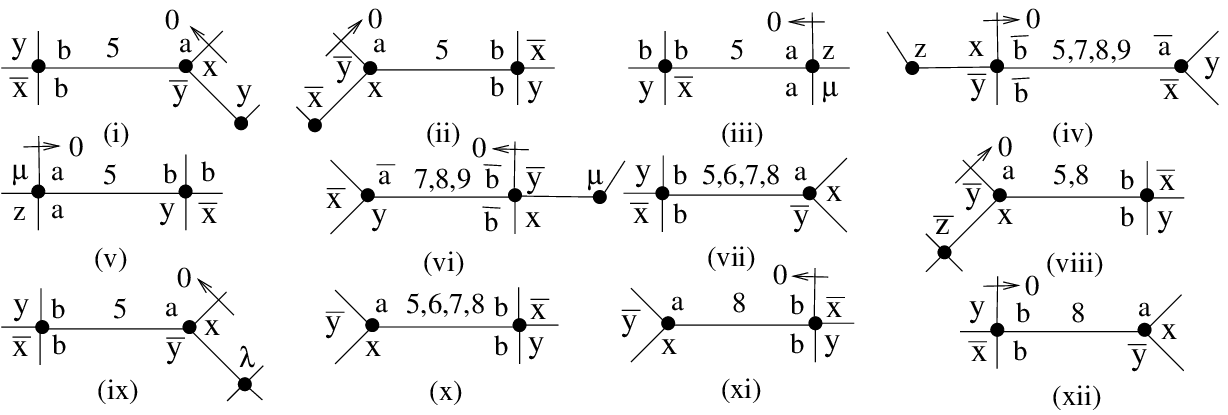}
\end{center}
\caption{}
\end{figure}

\item[(iii)]
\textit{If $c(e_i) > \frac{2 \pi}{15}$ then either $c(e_{i-1})=0$ or $c(e_{i+1})=0$ except for $e_i$ of Figure 40(vii), (xi), (xii) and (xvi).}
\end{enumerate}
\textit{Now assume that $e_i$ be a $(b,a)$-edge} and that transfer of curvature to a type $\mathcal{B}$ region is allowed.
\begin{enumerate}
\item[(iv)]
\textit{If $c(e_i) > \frac{2 \pi}{15}$ then $c(e_i) \in \{ \frac{\pi}{6},\frac{\pi}{5},\frac{7 \pi}{30},\frac{4 \pi}{15}, \frac{3 \pi}{10} \}$.}
\item[(v)]
\textit{If $c(e_i) \in \{ \frac{\pi}{6},\frac{\pi}{5},\frac{7 \pi}{30},\frac{4 \pi}{15},\frac{3 \pi}{10} \}$ then $c_i$ is given by Figure 41.}
\item[(vi)]
\textit{If $c(e_i) > \frac{2 \pi}{15}$ then either $c(e_{i-1})=0$ or $c(e_{i+1})=0$ except for $e_i$ of Figure 41(vii) and (x).}
\end{enumerate}
\end{lemma}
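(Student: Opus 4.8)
The plan is to prove Lemma 9.1 by a completely explicit inspection of every curvature‑transfer rule introduced in Sections 5--8: the rules for $4$‑gons in Figures 6--32, the secondary (``further'') transfers from those $4$‑gons that are themselves recipients of curvature in Figure 34, and the rules for $6$‑gons in Figures 36--38. For each edge $e$ across which a positive amount is moved, one records three data: the amount $c(e)$ transferred; the unordered pair of corner labels of $e$, which is pinned down by Lemma 3.4 (it lists the admissible vertex labels); and whatever lower bounds on the two endpoint degrees are forced by the rule. Every amount occurring in these rules is a multiple of $\frac{\pi}{30}$, so the content of (i) and (iv) is that, after discarding the two exceptional categories — $(b,a)$‑edges, and edges of a $4$‑gon feeding a type $\mathcal{B}$ region (these carry the larger amounts, such as the $\frac{\pi}{3}$ of Figures 13(i) and 14(i), and the $\frac{4\pi}{15},\frac{3\pi}{10}$ appearing in the further‑transfer steps and in Configurations A--F) — the only multiples of $\frac{\pi}{30}$ strictly exceeding $\frac{2\pi}{15}=\frac{4\pi}{30}$ that survive are $\frac{5\pi}{30},\frac{6\pi}{30},\frac{7\pi}{30}$, i.e. $\frac{\pi}{6},\frac{\pi}{5},\frac{7\pi}{30}$; and in (iv) the values $\frac{4\pi}{15},\frac{3\pi}{10}$ reappear precisely because $(b,a)$‑edges are readmitted.

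For (ii) and (v) one collects, for each rule producing a transfer in $\{\frac{\pi}{6},\frac{\pi}{5},\frac{7\pi}{30}\}$ (respectively the longer list of (iv)), the corner‑label and endpoint‑degree picture described above. The work here is to discard the labellings that never arise, using Lemma 3.4 together with equations (3.1): for example, a $b$‑corner of degree $5$ forces $l(v)$ to be one of the two strings $\tilde b\tilde b\tilde b\tilde b\tilde b$ or $\tilde b x^{-1}\tilde\lambda z^{-1} y$, which in turn determines which neighbouring edge of $v$ can be a transfer edge. One also checks, via equation (3.2) and Tables 1--2, that each listed amount is compatible with the endpoint degrees (for instance that $\frac{7\pi}{30}=c(3,3,4,5)$ can only be absorbed where $d(\hat\Delta)$ is large enough). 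The finitely many surviving pictures are exactly those drawn in Figure 40, respectively Figure 41.

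For (iii) and (vi), for each edge $e_i$ of Figures 40--41 carrying more than $\frac{2\pi}{15}$ one inspects the two edges $e_{i-1},e_{i+1}$ consecutive with $e_i$ around a common region $\hat\Delta$ and asks whether $\hat\Delta$ can receive positive curvature across $e_i$ and across one of its neighbours simultaneously. The distribution rules themselves, and in the $6$‑gon case Notes 1--4 following Table 3 (which state exactly when two adjacent $c(u_i,u_{i+1})$ can both be positive), force one of $c(e_{i-1}),c(e_{i+1})$ to vanish in every configuration except the ones explicitly listed, namely Figures 40(vii),(xi),(xii),(xvi) and 41(vii),(x). In those cases the larger transfers genuinely occur back‑to‑back, and the lemma simply records this so it can be used in Sections 10 and 11.

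The main obstacle is not conceptual but combinatorial: the distribution of curvature from $4$‑gons alone fills Figures 6--32, each with many subcases, and one must follow the entire chain of transfers — primary transfers, then the further transfers from recipients of degree $4$ in Figure 34, then those from recipients of degree $6$ in Figures 36--38 — while carefully separating the transfers feeding type $\mathcal{B}$ regions from the rest so that statements (i)--(iii) and (iv)--(vi) are correctly delineated. No individual verification is deep; the difficulty lies entirely in completeness, and the residual edge‑by‑edge bookkeeping that this lemma leaves implicit is carried out in the proof of Lemma 10.1.
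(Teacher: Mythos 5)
Your proposal is correct and follows essentially the same route as the paper: the paper itself offers no argument beyond "the statements can be verified by inspecting Figures 6--39" (with a few clarifying remarks, and the residual edge-by-edge bookkeeping deferred to the proof of Lemma 10.1), which is exactly the exhaustive inspection of the transfer rules, organised by corner labels, amounts and endpoint degrees, that you describe.
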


\textbf{Remarks}.
\begin{enumerate}
\item[1.]
In verifying statement (iii) note that $\hat{\Delta}$ of Figure 40(xix), (xx) corresponds to $\hat{\Delta}_1$ of Figure 32(iii),(v) respectively.
\item[2.]
In Figure 40(vii)
if $c(u,v)=\frac{\pi}{6}$ then $\hat{\Delta}=\hat{\Delta}_4$ of Figure 8(i)-(iii);
if $c(u,v)=\frac{\pi}{5}$ then $\hat{\Delta}=\hat{\Delta}_4$ of Figure 8(iv);
moreover the $\frac{\pi}{30}$ distributed across the $e_{i+1}$ edge is given by Figure 36(viii) and (ix).
In Figure 40(xi), $\hat{\Delta}=\hat{\Delta}_1$ of Figure 27(vii).
In Figure 40(xii), $\hat{\Delta}=\hat{\Delta}_2$ of Figure 27(viii).
In Figure 40(xiii), $\hat{\Delta}=\hat{\Delta}_2$ of Figure 37(iii).
In Figure 40(xiv), $\hat{\Delta}=\hat{\Delta}_2$ of Figure 38(iii).
In Figure 40(xvi), $\hat{\Delta}=\hat{\Delta}_3$ of Figure 10(i),(ii);
moreover the $\frac{\pi}{30}$ distributed across the $e_{i-1}$ edge is given by Figure 36(xvii) and (xviii).

%\newpage
\begin{figure}
\begin{center}
\psfig{file=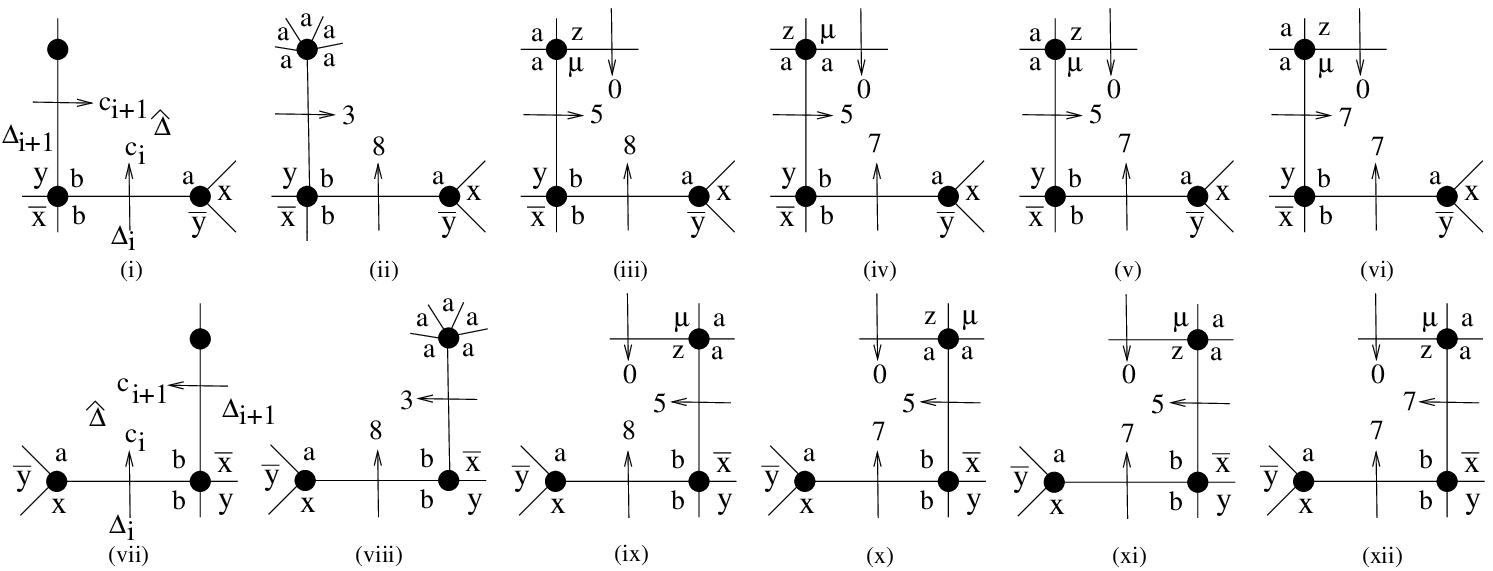}
\end{center}
\caption{}
\end{figure}

\item[3.]
In Figure 41(vii) if $c(u,v)=\frac{\pi}{5}$ then $\hat{\Delta}$ is given by Figure 31(v);
and in Figure 41(x) if $c(u,v)=\frac{\pi}{5}$ then $\hat{\Delta}$ is given by Figure 31(i), in particular, $\hat{\Delta}$ in both cases is a type
$\mathcal{B}$ region.
\end{enumerate}

\begin{lemma}%\textbf{Lemma 9.2}\quad
\textit{Let the regions $\hat{\Delta}$, $\Delta_i$ and $\Delta_{i+1}$ be given by Figure 42(i) or 42(vii).}
\begin{enumerate}
\item[(i)]
\textit{If $c_i = \frac{9 \pi}{30}$ then $c_{i+1}=0$.}
\item[(ii)]
\textit{If $c_i=\frac{8 \pi}{30}$ then $c_{i+1} \leq \frac{5 \pi}{30}$.}
\item[(iii)]
\textit{If $c_i = \frac{8 \pi}{30}$ and $c_{i+1}=\frac{3 \pi}{30}$ then $\hat{\Delta}$ of Figure 42(i) is given by Figure 42(ii) in which
$\Delta_{i+1} = \hat{\Delta}_3$ of Figure 22(xii) or $\Delta_{i+1} = \hat{\Delta}_4$ of Figure 29(x); and
$\hat{\Delta}$ of Figure 42(vii) is given by $\hat{\Delta}$ of Figure 42(viii) in which $\Delta_{i+1}=\hat{\Delta}_4$ of Figure 21(xiv) or $\Delta_{i+1}=\hat{\Delta}_3$ of Figure 29(viii).}
\item[(iv)]
\textit{If $c_i=\frac{8 \pi}{30}$ then $c_{i+1} \neq \frac{4 \pi}{30}$.}
\item[(v)]
\textit{If $c_i = \frac{8 \pi}{30}$ and $c_{i+1}=\frac{5 \pi}{30}$ then $\hat{\Delta}$ of Figure 42(i) is given by Figure 42(iii) in which
$\Delta_{i+1} = \Delta$ of Figure 24(viii); and
$\hat{\Delta}$ of Figure 42(vii) is given by Figure 42(ix) in which $\Delta_{i+1} = \Delta$ of Figure 26(vi).}
\item[(vi)]
\textit{If $c_i = \frac{7 \pi}{30}$ then $c_{i+1} \leq \frac{7 \pi}{30}$.}
\item[(vii)]
\textit{If $c_i = \frac{7 \pi}{30}$ and $c_{i+1}=\frac{5 \pi}{30}$ then $\hat{\Delta}$ of Figure 42(i) is given by Figures 42(iv) and (v) in which
$\Delta_{i+1}=\hat{\Delta}_4$ of Figure 17(xii) and $\Delta_{i+1}=\Delta$ of Figure 24(viii), respectively; and $\hat{\Delta}$ of Figure 42(vii)
is
given by Figures 42(x) and (xi) in which $\Delta_{i+1}=\hat{\Delta}_2$ of Figure 17(iv) and $\Delta_{i+1}=\Delta$ of Figure 26(vi), respectively.}
\item[(viii)]
\textit{If $c_i = \frac{7 \pi}{30}$ then $c_{i+1} \neq \frac{6 \pi}{30}$.}
\item[(ix)]
\textit{If $c_i = c_{i+1} = \frac{7 \pi}{30}$ then $\hat{\Delta}$ of Figure 42(i) is given by Figure 42(vi) in which $\Delta_{i+1}=\hat{\Delta}_4$
of Figure 18(xi); and $\hat{\Delta}$ of Figure 42(vii) is given by Figure 42(xii) in which $\Delta_{i+1}=\hat{\Delta}_2$ of Figure 18(vii) or of
Figure 29(v).}
\end{enumerate}
\end{lemma}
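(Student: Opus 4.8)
The plan is to reduce everything to the classification already obtained in Lemma 9.1, combined with the admissible vertex labels of Lemma 3.4. The common feature of all nine statements is that the hypothesis forces $c_i$ to exceed $\frac{2\pi}{15}$, so by Lemma 9.1(i),(ii) (or (iv),(v) when $e_i$ is a $(b,a)$-edge) the edge $e_i$, together with the degrees and corner labels at both of its endpoints, is one of the finitely many configurations listed in Figures 40 and 41. In particular the vertex $w$ shared by $e_i$ and $e_{i+1}$ has its degree $d(w)$ and its corner labels in $\hat{\Delta}$, $\Delta_i$ and $\Delta_{i+1}$ determined up to that short list. Since $\hat{\Delta}$, $\Delta_i$, $\Delta_{i+1}$ are assumed to be arranged as in Figure 42(i) or 42(vii), the $\hat{\Delta}$-corner at $w$ lying between $e_i$ and $e_{i+1}$ is prescribed; reading the remaining corner labels at $w$ as an admissible closed path in $\Gamma_0$ with $t$-exponent sum $\equiv 0 \pmod 5$ (Lemma 3.4 and the remark following it) then leaves only finitely many possibilities for the $\Delta_{i+1}$-corner at $w$, hence for the edge $e_{i+1}$ and for the amount $c_{i+1}$ it can carry.

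First I would dispose of the extreme case $c_i = \frac{9\pi}{30}$: inspecting Figures 40 and 41 shows this value occurs only in a couple of configurations, and in each the $\hat{\Delta}$-corner at $w$ forces the $\Delta_{i+1}$-corner there to be a $b$- or $a$-corner, so that $e_{i+1}$ is a $(b,a)$-edge and carries no curvature, which is statement (i). Next take $c_i = \frac{8\pi}{30}$: again $e_i$ is one of the few edges of Figure 41 realising this value, and running through each and completing the vertex $w$ as above, one finds the only admissible edges $e_{i+1}$ carry at most $\frac{5\pi}{30}$ and never exactly $\frac{4\pi}{30}$, giving (ii) and (iv). When the completion does allow $c_{i+1}=\frac{3\pi}{30}$, respectively $c_{i+1}=\frac{5\pi}{30}$, the region $\Delta_{i+1}$ is a degree-$4$ region transferring exactly that amount across $e_{i+1}$, and cross-referencing the distribution rules of Sections 5--7 shows $\Delta_{i+1}$ must be one of the two regions named in (iii), respectively the single region named in (v); this also fixes the shape of $\hat{\Delta}$ among the figures quoted in those statements. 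The case $c_i = \frac{7\pi}{30}$ (statements (vi)--(ix)) is handled the same way, this time starting from Figures 40(vii),(xi),(xii),(xvi) and 41(vii),(x); Lemma 9.1(iii),(vi) already tells us that $c_i$ and $c_{i+1}$ can be simultaneously larger than $\frac{2\pi}{15}$ only in those exceptional configurations, so (vi) is the bound itself, (viii) is the verification that $c_{i+1}=\frac{6\pi}{30}$ never arises, and (vii), (ix) amount to identifying, for each such configuration, the one or two regions $\Delta_{i+1}$ from the schemes of Sections 5--7 that produce the stated $c_{i+1}$.

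The routine but lengthy part is the case-by-case completion of the vertex $w$: there is nothing subtle in any single case, but there are many. The genuinely delicate point occurs in statements (iii), (v), (vii) and (ix), where $c_i$ and $c_{i+1}$ are both large: here one must check that the local picture forced by the $e_i$-side of $w$ is \emph{consistent} with the local picture forced by the $e_{i+1}$-side --- the same vertex label, the same degree of $w$, and no basic labelling or length contradiction in the sense of Section 3 (using the facts that a region of $\bs{D}$ has exactly $n+1$ edges and $n \geq 7$). It is exactly this consistency requirement that eliminates all but the one or two candidates for $\Delta_{i+1}$ named in each statement and pins $\hat{\Delta}$ down to the figures quoted. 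I expect this simultaneous-consistency check to be the main obstacle, with everything else being bookkeeping against Figures 40--42 and the distribution rules already established.
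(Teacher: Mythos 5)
Your overall strategy is the one the paper itself follows: confine $e_i$ via Lemma 9.1 to the finitely many configurations of Figures 40--41, use the distribution schemes of Sections 5--7 to determine which regions $\Delta_i$, $\Delta_{i+1}$ can actually deliver the stated amounts into the configuration of Figure 42, and then eliminate incompatible combinations by comparing the degree and labelling of the shared vertex as seen from the two sides. However, as written there is a genuine gap: the proposal is a plan rather than a proof. Every substantive conclusion --- for instance that when $c_i=\frac{8\pi}{30}$ in Figure 42(i) the source region $\Delta_i$ must be $\hat{\Delta}_2$ of Figure 22(iv) or (xiii) or $\hat{\Delta}_4$ of Figure 29(xi), and that the would-be exception $\Delta_{i+1}=\hat{\Delta}_2$ of Figure 18(xi) is excluded because the vertex corresponding to $v_1$ (or, in the Figure 22(xiii) case, the vertex $w_3$ of Figure 43(i)) has the wrong degree --- is exactly the content of statements (ii)--(v), (vii) and (ix), and you assert it (\emph{running through each \ldots one finds}) without carrying out any of these identifications or comparisons; the candidate lists for $\Delta_{i+1}$ when $c_{i+1}=\frac{3\pi}{30},\frac{4\pi}{30},\frac{5\pi}{30}$ are taken from the statement rather than derived from Figures 6--39.

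In addition, your justification of (i) is unsound as stated. You argue that $c_i=\frac{9\pi}{30}$ forces $e_{i+1}$ to be a $(b,a)$-edge \emph{and hence} to carry no curvature; but in this paper $(b,a)$-edges do carry curvature --- up to $\frac{3\pi}{10}$ by Lemma 9.1(iv) and Figure 41, and indeed the value $c_i=\frac{9\pi}{30}$ is itself only realised across a $(b,a)$-edge. The paper obtains (i) (and likewise (vi), (viii)) by inspecting which configurations of Figures 40--41 realise $\frac{9\pi}{30}$ (respectively $\frac{7\pi}{30}$) and reading off, in the orientation of Figure 42, that the adjacent edge then receives nothing, using that the exceptional edges 41(vii),(x) of Lemma 9.1(vi) carry at most $\frac{8\pi}{30}$. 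So the method matches, but the inference ``adjacent corner is an $a$- or $b$-corner, hence $c_{i+1}=0$'' must be replaced by that inspection, and the finite vertex-consistency analysis must actually be performed, since it constitutes the lemma.
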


\textit{Proof}.
Statements (i), (vi) and (viii) follow from an inspection of Figures 40 and 41.  

Moreover if $\hat{\Delta}$ is given by Figure 42(i) and
$c_i = \frac{8 \pi}{30}$ then it can be assumed without any loss that either $\Delta_i = \hat{\Delta}_2$ of Figure 22(iv) or (xiii) or $\Delta_i =
\hat{\Delta}_4$ of Figure 29(xi); and if $\hat{\Delta}$ is given by Figure 42(vii) and $c_i=\frac{8 \pi}{30}$ then it can be assumed without any
loss that either $\Delta_i = \hat{\Delta}_4$ of Figure 21(vi) or (xv) or $\Delta_i = \hat{\Delta}_2$ of Figure 29(ix).

(ii) Let $\hat{\Delta}$ be given by Figure 42(i).  If $c_{i+1} > \frac{5 \pi}{30}$ then the only possibility is given by Figure 
40(ix) in which case
$c_{i+1}=\frac{7 \pi}{30}$ and $\Delta_{i+1}=\hat{\Delta}_2$ of Figure 18(xi) where we note that (in $\Delta$) 

%\newpage
\begin{figure} 
\begin{center}
\psfig{file=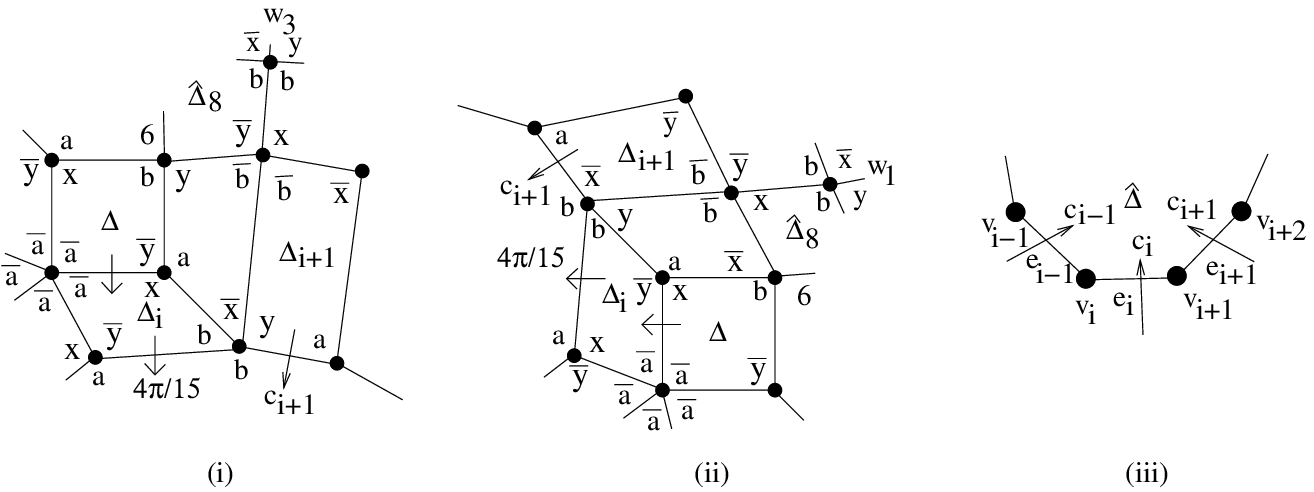}
\end{center}
\caption{}
\end{figure}

\noindent $d(v_1)=4$ and $d(v_2)=3$.  However if
$\Delta_i=\hat{\Delta}_2$ of Figure 22(iv) then the vertex corresponding to $v_1$ is $u_1$ (see Figure 22(i)) which has degree 3; or if $\Delta_i=\hat{\Delta}_4$ of
Figure 29(xi) then the vertex corresponding to $v_1$ is $v_2$ of $\Delta$
which has degree 5, in each case a contradiction.  This leaves $\Delta_i =
\hat{\Delta}_2$ of Figure 22(xiii), where $\Delta_{i+1}=\hat{\Delta}_7$ and this is shown in Figure
43(i) (recall that $\hat{\Delta}_8$ of Figure 22(xiii) is given by $\hat{\Delta}_8$ of Figure 22(x), hence $w_3$ of Figure 43(i)).  But observe that $w_3$ is the vertex of Figure 43(i)
that corresponds to $v_2$ of Figure 18(xi) and since  $w_3$ has degree 4 again there is a contradiction.

Now let $\hat{\Delta}$ be given by Figure 42(vii).
If $c_{i+1} > \frac{5 \pi}{30}$ then the only possibility is given by Figure 40(x) in which case $c_{i+1}=\frac{7 \pi}{30}$ and either
$\Delta_{i+1} = \hat{\Delta}_2$ of Figure 18(vii) where in $\Delta$ $d(v_2)=3$ and $d(v_3)=4$ or $\Delta_{i+1} = \hat{\Delta}_2$ of Figure 29(v) where in $\Delta$
$d(v_2)=5$ and $d(v_3)=4$.  However if $\Delta_i = \hat{\Delta}_4$ of Figure 21(vi) the vertex corresponding to $v_3$ (both cases) is $u_6$ (see Figure 21(iii)) which has
degree 3; or if $\Delta_i = \hat{\Delta}_2$ of Figure 29(ix) the vertex corresponding to $v_3$ (both cases) is $v_2$ which has degree 5, in all
cases
a contradiction. This leaves $\Delta_i = \hat{\Delta}_4$ of Figure 21(xv) where $\Delta_{i+1} = \hat{\Delta}_7$ and this is shown in Figure
43(ii) (and here recall that $\hat{\Delta}_8$ of Figure 21(xv) is given by $\hat{\Delta}_8$ of Figure 21(xii), hence $w_1$ of Figure 43(ii)).
But observe that the vertex of Figure 43(ii) corresponding to $v_2$ of Figures 18(vii), 29(v) is $w_1$ which has degree 4, again a contradiction.

(iii) Checking Figures 6--39 shows that if $c_{i+1} = \frac{3 \pi}{30}$ in Figure 42(i) then $\Delta_{i+1}$ must be one of Figures
11(vii), 11(viii), 22(iii), 22(xii), 29(x) or 31(ii).  Given that $\Delta_i = \hat{\Delta}_2$ of Figure 22(iv) or (xiii) or
$\Delta_i = \hat{\Delta}_4$ of Figure 29(xi) there is a vertex (degree or labelling) contradiction in each possible combination except when $\Delta_{i+1}$ is given
by Figure 22(xii) or Figure 29(x) and these each yield Figure 42(ii).
If $c_{i+1} = \frac{3 \pi}{30}$ in Figure 43(ii) then $\Delta_{i+1}$ must be one of Figure 12(vii), 12(viii), 21(v), 21(xiv), 29(viii) or
31(vi).
Given that $\Delta_i = \hat{\Delta}_4$ of Figure 21(vi) or (xv) or $\Delta_i = \hat{\Delta}_2$ of Figure 29(ix) again there is a vertex 
contradiction in each case except when $\Delta_{i+1}$ is given by Figure 21(xiv) or Figure 29(viii) and these yield Figure 42(viii).

(iv) If $c_{i+1} = \frac{4 \pi}{30}$ in Figure 42(i) then $\Delta_{i+1}$ must be one of the Figures 16(iii), 18(ii), 18(xix) or 31(v), but in
each case there is a vertex contradiction when compared with Figure 22(iv), 22(xiii) or 29(xi). (When comparing Figures 18(xix) and 22(xiii) the
we use Figure 43(i) for 22(xiii) as in case (ii) above.) 
If $c_{i+1} = \frac{4 \pi}{30}$ in Figure 42(vii) then $\Delta_{i+1}$ must be one of Figures 16(ii), 18(ii), 18(xv) or 31(i), and again in
each case there is a vertex contradiction when compared Figure 21(vi), 21(xv) or 29(ix). (When comparing Figures 18(xv) and 21(xv) 
we use Figure 43(ii) for 21(xv) again as in case (ii) above.)

(v) The possibilities for $\Delta_{i+1}$ of Figure 42(i) are (see Figures 40(ix) and 41(v)) $\hat{\Delta}_4$ of Figure 17(xii) which yields a vertex
contradiction when compared with Figure 22(iv), 22(xiii) or 29(xi) and $\Delta$ of Figure 24(viii) which is given by Figure 42(iii); and for $\Delta_{i+1}$ of
Figure 42(vii) are (see Figures 40(x) and 41(iii)) $\hat{\Delta}_2$ of Figure 17(iv) which yields a vertex contradiction when compared Figure 21(vi), 21(xv) or 29(ix) and $\Delta$ of
Figure 26(vi) which is given by Figure 42(ix).

Finally statement (vii) appears in  the proof of (v); and statement (ix) appears in the proof of
(ii). $\Box$

%%%%%%%%%%%%%%%%%%%%%%%%%%%%%%%%%%%%%%%%%%%%%%%%%%%%%%%%%%%%%%%%%%%%%%%%%%%%%
%%%%%%%%%%%%%%%%%%%%%%%%%%%%%%%%%%%%%%%%%%%%%%%%%%%%%%%%%%%%%%%%%%%%%%%%%%%%%

\section{Type $\mathcal{A}$ regions}%10

Throughout this section many assertions will be based on previous lemmas.  Moreover \textit{checking} means checking Figures 6--34 and Figures
36--39.
The reader is also referred to Figures 35, 40, 41 and 42.

The \textit{surplus} $s_i$ of an edge $e_i$ is defined by $s_i = c_i - \frac{2 \pi}{15}$ ($1 \leq i \leq k$) where $c_i$ is the maximum amount of curvature that is transferred across $e_i$.
If we add $s_i$ to $c_{i+1}, c_{i-1}$ we will say that $e_{i+1}, e_{i-1}$ (respectively) \textit{absorbs} $s_i$ from $c_i$.
Checking Figures 40 and 41 shows, for example, that
if $d(u_i)=d(u_{i+1})=3$ in Figure 43(iii) then $s_i \leq \frac{\pi}{15}$.
The \textit{deficit} $\delta _i$ of a vertex $u_i$ of degree $d_i$ is defined by $\delta_i = 2 \pi ( \frac{1}{d_i} - \frac{1}{3} )$ and so
if $d_i \geq 4$ then $\delta_i \leq - \frac{\pi}{6}$.
If we add $s_{i-1},s_i$ (respectively) to $\delta_i$ we will say that $u_i$ \textit{absorbs} $s_{i-1},s_i$ from $e_{i-1},e_i$ (respectively).

\begin{lemma}%\textbf{Lemma 10.1}\quad
\textit{Let $\hat{\Delta}$ be a type $\mathcal{A}$ region of degree $k$.  Then the following statement holds.
$c^{\ast} (\hat{\Delta}) \leq (2-k) + k. \frac{2 \pi}{3} + k. \frac{2 \pi}{15}$.}
\end{lemma}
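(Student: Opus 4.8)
The plan is to work straight from the definition of $c^{\ast}$. Since a region of degree $\geq 8$ distributes no positive curvature in the scheme (Section~4, and no further distribution is made after Section~9), for such $\hat{\Delta}\neq\Delta_0$ we have $c^{\ast}(\hat{\Delta}) \leq c(\hat{\Delta}) + \sum_{i=1}^{k} c_i$, where $e_1,\dots,e_k$ are the edges of $\hat{\Delta}$ in cyclic order, $u_i,u_{i+1}$ the endpoints of $e_i$, and $c_i$ the maximum curvature transferred across $e_i$. Using equation~(3.2) and writing $d_i = d(u_i)$,
\[
c(\hat{\Delta}) = (2-k)\pi + 2\pi\sum_{i=1}^{k}\frac{1}{d_i} = (2-k)\pi + k\cdot\frac{2\pi}{3} + \sum_{i=1}^{k}\delta_i ,
\]
where $\delta_i = 2\pi\left(\frac{1}{d_i}-\frac{1}{3}\right)\leq 0$ since $d_i\geq 3$ (Lemma~3.3); and since $c_i = \frac{2\pi}{15}+s_i$ by definition of the surplus,
\[
c^{\ast}(\hat{\Delta}) \leq (2-k)\pi + k\cdot\frac{2\pi}{3} + k\cdot\frac{2\pi}{15} + \left(\sum_{i=1}^{k}\delta_i + \sum_{i=1}^{k}s_i\right).
\]
So it suffices to prove $\sum_{i=1}^{k}\delta_i + \sum_{i=1}^{k}s_i \leq 0$: the positive surpluses must be cancelled against the vertex deficits and the negative surpluses.

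By Lemma~9.1(i),(ii),(iv),(v) the edges $e_i$ with $s_i>0$ are precisely those displayed in Figures~40 and~41, and inspecting those figures (as recorded at the start of this section through Figure~43(iii)) shows that such an edge satisfies $s_i\leq\frac{\pi}{15}$ whenever both of $u_i,u_{i+1}$ have degree~$3$, and $s_i\leq\frac{\pi}{6}$ in all cases (indeed $s_i\leq\frac{\pi}{10}$ off the $(b,a)$-edges). For each surplus edge $e_i$ I would argue as follows. If an endpoint $u$ has $d(u)\geq 4$, absorb $s_i$ into $\delta_u\leq-\frac{\pi}{6}$, so that $u$ contributes $\delta_u+s_i\leq 0$. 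Otherwise both endpoints have degree~$3$, hence $s_i\leq\frac{\pi}{15}$, and by Lemma~9.1(iii),(vi) one of the neighbouring edges $e_{i-1},e_{i+1}$ carries no curvature; absorbing $s_i$ there gives a pair contributing $-\frac{2\pi}{15}+s_i<0$. Carried out consistently over all $k$ edges this gives $\sum_i\delta_i+\sum_i s_i\leq 0$.

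The main obstacle is the consistency of this assignment: a degree-$4$ vertex, or a single curvature-free edge, must not be charged more surplus than it can absorb, and the surplus edges of Figure~40(vii),(xi),(xii),(xvi) and Figure~41(vii),(x) fall outside the dichotomy above because there \emph{both} neighbours carry curvature. These cases are handled with Lemma~9.2 --- which lists exactly the configurations in which two consecutive edges can both carry more than $\frac{2\pi}{15}$ --- together with the vertex-degree data in Figures~40--42 and Remark~2 following Lemma~9.1. In each such configuration one checks that either the two competing surpluses sum to at most $\frac{2\pi}{15}$ (so the shared curvature-free edge absorbs both), or one of them instead admits absorption at an endpoint of degree $\geq 5$, or the offending neighbour carries only a small amount (at most $\frac{\pi}{30}$, typically the curvature transferred in from a $6$-gon in Section~7), which is itself well below $\frac{2\pi}{15}$. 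Finally I would invoke the hypothesis that $\hat{\Delta}$ is of type $\mathcal{A}$: by definition it never receives the full $\frac{\pi}{3}$ transferred across the $(b,a)$- or $(x,y)$-edges of Figures~13(i), 14(i), 31 or 32, so every $c_i$ lies in the range covered by Lemma~9.1 --- this is precisely what distinguishes the present computation from the treatment of type~$\mathcal{B}$ regions in Section~11. Verifying the above across all the distribution configurations of Figures~6--39 is lengthy but mechanical, and it is the part that demands care.
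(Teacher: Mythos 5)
Your proposal follows essentially the same route as the paper's own proof: the paper's argument is exactly this surplus/deficit bookkeeping, with $s_i=c_i-\frac{2\pi}{15}$ absorbed either by an adjacent edge carrying $0$ or $\frac{\pi}{30}$ across a degree-$3$ vertex or by the deficit of an endpoint of degree $\geq 4$, using Lemma 9.1 to limit which edges have surplus and Lemma 9.2 together with the degree/labelling data of Figures 40--42 for the exceptional cases 40(vii),(xi),(xii),(xvi) and 41(vii),(x). The consistency checks you defer are precisely what the paper writes out -- that an absorbing edge is never charged beyond $\frac{2\pi}{15}$ (the only threatening configuration, a $\frac{\pi}{30}$-edge flanked by two $\frac{\pi}{5}$-transfers, is excluded by a labelling argument), that two vertices each absorbing more than $\frac{\pi}{30}$ cannot coincide so no vertex is charged beyond $\frac{\pi}{6}$, and a separate treatment of the edges coming from Figure 32(iii),(v) (i.e.\ Figure 40(xix),(xx)) -- so your outline is correct and matches the paper's proof.
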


\textit{Proof}.
Denote the vertices of $\hat{\Delta}$ by $v_i$ ($1 \leq i \leq k$), the edges by $e_i$ ($1 \leq i \leq k$)
and the degrees of the $v_i$ by $d_i$ ($1 \leq i \leq k$).
Let $c_i$ denote the amount of curvature $\hat{\Delta}$ receives across the edge $e_i$ ($1 \leq i \leq k$).
Consider the edge $e_i$ of $\hat{\Delta}$ as shown in Figure 43(iii).
If $c_i \leq \frac{2 \pi}{15}$ there is nothing to consider, so let $c_i > \frac{2 \pi}{15}$.
Then by Lemma 9.1, $c_i \in \{ \frac{\pi}{6}, \frac{\pi}{5}, \frac{7 \pi}{30}, \frac{4 \pi}{15}, \frac{3 \pi}{10} \}$ and $\hat{\Delta}$ is given by
Figures 40 and 41.
First assume that $e_i$ is not given by Figure 32(iii) or (v).

Let $\hat{\Delta}$ be given by Figure 40.  If $\hat{\Delta}$ is given by Figure 40(i), (vii), (viii), (xiv) or (xv) then the edge $e_{i+1}$ absorbs
$s_i \leq \frac{\pi}{15}$ (from $c_i$). Note that in these cases $(d_{i+1}, c_{i+1}) \in \{ (3,0),(3,\frac{\pi}{30}) \}$.
If $\hat{\Delta}$ is given by Figure 40(ii), (iii), (vi), (xiii), (xvi), (xix) or (xx) then $e_{i-1}$ absorbs $s_i \leq \frac{\pi}{15}$.  Note that
$(d_i, c_{i-1}) \in \{ (3,0), (3,\frac{\pi}{30})\}$.
If $\hat{\Delta}$ is given by Figure 40(iv), (x) or (xviii) then the vertex $v_i$ absorbs $s_i \leq \frac{\pi}{10}$.
Note that $(d_i,c_{i-1}) \in \{ (4,0), (5,0) \}$.  If $\hat{\Delta}$ is given by Figure 40(v), (ix) or (xvii) then $v_{i+1}$ absorbs $s_i \leq
\frac{\pi}{10}$.  Note that $(d_{i+1}, c_{i+1}) \in \{ (4,0),(5,0) \}$.  This leaves Figure 40(xi) and (xii) to be considered.  If $\hat{\Delta}$ is
given by Figure 40(xi) or (xii) then $v_i$ absorbs $s_i = \frac{\pi}{30}$.  Note that $d_i=4$.

Now let $\hat{\Delta}$ be given by Figure 41.  If $\hat{\Delta}$ is given by Figure 41(i) or (ix) then the edge $e_{i+1}$ absorbs
$s_i=\frac{\pi}{30}$.
Note that $(d_{i+1},c_{i+1})=(3,0)$.  If $\hat{\Delta}$ is given by Figure 41(ii) or (viii) restricted to the case $c_i = \frac{5 \pi}{30}$ then
$e_{i-1}$ absorbs $s_i = \frac{\pi}{30}$.
Note that $(d_i,c_{i-1})=(3,0)$.  If $\hat{\Delta}$ is given by Figure 41(iii), (vi) or (xi) then $v_{i+1}$ absorbs $s_i \leq \frac{\pi}{6}$.
Note that $(d_{i+1},c_{i+1})=(4,0)$.  If $\hat{\Delta}$ is given by Figure 41(iv), (v) or (xii) then $v_i$ absorbs $s_i \leq \frac{\pi}{6}$.
Note that $(d_i,c_{i-1})=(4,0)$.  This leaves the cases Figure 41(vii), (viii) with $c_i = \frac{8 \pi}{30}$ and (x).
If $\hat{\Delta}$ is given by Figure 41(vii) then $v_i$ absorbs $s_i \leq \frac{2 \pi}{15}$.
Note that $d_i = 4$.  If $\hat{\Delta}$ is given by Figure 41(viii) or (x) then $v_{i+1}$ absorbs $s_i \leq \frac{2 \pi}{15}$.  Note that
$d_{i+1}=4$.

This completes absorption by edges or vertices when $e_i$ is not given by Figure 32(iii) or (v) (and these correspond to cases of Figure 40(xix),
(xx)).
Observe that if an edge $e_j$ absorbs positive curvature $a_j$, say, then $a_j \leq \frac{\pi}{15}$ and either $c_j=0$ or $c_j=\frac{\pi}{30}$; moreover $e_j$ \textit{always absorbs across a vertex of degree 3}.  If $c_j=0$ then $c_j + a_j \leq \frac{2 \pi}{15}$ so let $c_j = \frac{\pi}{30}$.  We claim that in this case we also have $c_j + a_j \leq \frac{2 \pi}{15}$.  The only possible way this fails is if $s_{j-1}=s_{j+1}=\frac{\pi}{15}$, that is,
$c_{j-1}=c_{j+1}=\frac{\pi}{5}$.  Thus $e_j=e_{i+1}$ of Figure 40(vii) and $\hat{\Delta}=\hat{\Delta}_4$ of Figure 8(iv); and also $e_j=e_{i-1}$ of
Figure 40(xvi) and $\hat{\Delta}=\hat{\Delta}_3$ of Figure 10(i), (ii).  But any attempt at labelling shows that this is impossible and so our
claim follows.
Observe further that any pair of vertices each absorbing more than $\frac{\pi}{30}$ cannot coincide.  This follows immediately from the fact that
either $c_{i-1}=0$ or $c_{i+1}=0$ or the vertex is given by $v_i$ of Figure 41(vii) or $v_{i+1}$ of Figure 41(x) and clearly these cannot coincide.
Also observe that if a vertex $v_i$ say absorbs more than $\frac{2 \pi}{15}$ from $e_i$ or $e_{i-1}$ (respectively) then it absorbs 0 from $e_{i-1}$ or $e_i$ (respectively).
Therefore any given vertex can absorb at most $\frac{\pi}{6}+0=\frac{\pi}{6}$ as in Figure 41(iv) and (vi), or at most
$\frac{2 \pi}{15} + \frac{\pi}{30} = \frac{\pi}{6}$.  But since any vertex that absorbs curvature \textit{has degree at least 4} and so a deficit of at most
$-\frac{\pi}{6}$, the statement of the lemma holds for these cases.

Finally let $e_i$ be given by Figure 32(iii) or (v).  Since $d(v) \geq 4$ in both figures it follows that $e_{i-1}$ does not
absorb
any surplus from $e_{i-2}$. If $s_{i+1} > \frac{\pi}{15}$ then according to the above it must be absorbed by $v_{i+2} = w$ (of Figures 32(iii) and (v)) and in this case $e_{i-1}$ absorbs $s_i  \leq \frac{ \pi}{15}$; or if 
$s_{i+1} \leq \frac{\pi}{15}$ then let $e_{i-1}$ absorb $s_i + s_{i+1} \leq \frac{2 \pi}{15}$. Again the statement follows. $\Box$

\begin{proposition}%\textbf{Proposition 10.2}\quad
\textit{If $\hat{\Delta}$ is a type $\mathcal{A}$ region of degree $k$ and $k \geq 10$ then $c^{\ast} (\hat{\Delta}) \leq 0$.}
\end{proposition}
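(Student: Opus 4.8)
The plan is to deduce the proposition directly from Lemma 10.1 by a one-line computation; the substantive work — the surplus–absorption bookkeeping that controls how much net curvature can accumulate across the edges of a type $\mathcal{A}$ region — has already been done in that lemma, so there is essentially nothing left but arithmetic.

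First I would recall the bound supplied by Lemma 10.1: a type $\mathcal{A}$ region $\hat{\Delta}$ of degree $k$ satisfies
\[
c^{\ast}(\hat{\Delta}) \leq (2-k)\pi + k\cdot\frac{2\pi}{3} + k\cdot\frac{2\pi}{15}.
\]
Here the contribution $(2-k)\pi + k\cdot\frac{2\pi}{3}$ is just the crude upper estimate for $c(\hat{\Delta}) = (2-k)\pi + 2\pi\sum_{i=1}^{k}\frac{1}{d_i}$ obtained from $d_i \geq 3$ (equation (3.2) together with Lemma 3.3), while the term $k\cdot\frac{2\pi}{15}$ records that, after the absorption of surpluses into vertices and edges carried out in the proof of Lemma 10.1, the net amount of positive curvature crossing each of the $k$ edges of $\hat{\Delta}$ is at most $\frac{2\pi}{15}$.

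Next I would simplify the right-hand side. Since $\frac{2\pi}{3} + \frac{2\pi}{15} = \frac{4\pi}{5}$, the bound becomes
\[
c^{\ast}(\hat{\Delta}) \leq (2-k)\pi + \frac{4k\pi}{5} = 2\pi - \frac{k\pi}{5}.
\]
Consequently $c^{\ast}(\hat{\Delta}) \leq 0$ whenever $\frac{k\pi}{5} \geq 2\pi$, that is, whenever $k \geq 10$, which is exactly the hypothesis of the proposition. This completes the argument.

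I do not expect any genuine obstacle here: the only nontrivial input, Lemma 10.1, is already available, and the proposition itself is pure arithmetic. What the bound $2\pi - \tfrac{k\pi}{5}$ does \emph{not} settle — and what therefore remains the real work — is the cases $k = 8$ and $k = 9$ (where $2\pi - \tfrac{k\pi}{5} > 0$) and, separately, all type $\mathcal{B}$ regions, where the larger per-edge curvature transfers force a much finer case analysis; that is the content of the remainder of Sections 10 and 11.
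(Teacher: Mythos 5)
Your proposal is correct and follows exactly the paper's own argument: Proposition 10.2 is deduced from Lemma 10.1 by the same arithmetic simplification, since $(2-k)\pi + k\cdot\frac{2\pi}{3} + k\cdot\frac{2\pi}{15} = 2\pi - \frac{k\pi}{5} \leq 0$ precisely when $k \geq 10$. Your closing remark about the residual cases $k=8,9$ and the type $\mathcal{B}$ regions also matches how the paper organises the remainder of Sections 10 and 11.
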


\textit{Proof}.
This follows from Lemma 10.1 and the fact that $(2-k)+k.\frac{2 \pi}{3} + k.\frac{2 \pi}{15} \leq 0$ if and only if $k \geq 10$. $\Box$

\medskip

It follows from Proposition 10.2 that we need only consider type $\mathcal{A}$ regions of degree at most 9. The following lemma applies to all regions $\hat{\Delta}$.

\begin{lemma}%\textbf{Lemma 10.3}\quad
\textit{If $7 \leq d(\hat{\Delta}) \leq 9$ then
(up to cycle-permutation and corner labelling) either $d(\hat{\Delta})=8$ and $\hat{\Delta}$ is given by Figure 44(i)-(xi) or
$d(\hat{\Delta})=9$ and $\hat{\Delta}$ is given by Figure 44(xii).}
\end{lemma}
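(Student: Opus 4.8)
The plan is to treat this as a one-step refinement of Lemma 3.5: where that lemma records only the pattern of shadow edges in a region of degree $\leq 9$, here we must in addition pin down the corner labels for the degree-$8$ and degree-$9$ cases. First I would invoke Lemma 3.5: since $7 \leq d(\hat{\Delta}) \leq 9$, the conclusion $d(\hat{\Delta}) \in \{4,6,8,9\}$ forces $d(\hat{\Delta}) \in \{8,9\}$, and up to cyclic permutation and inversion $\hat{\Delta}$ is one of the regions of Figure 4(i)--(xi). Discarding those of degree $4$ or $6$ leaves a fixed finite list of shadow-edge configurations, and it remains only to determine, for each, the compatible corner labellings.

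So fix one such configuration; its shadow edges are prescribed. Each corner of $\hat{\Delta}$ lies between two $(b,a)$-edges, hence by the discussion following equation (3.1) is labelled by one of $\tilde{a},\tilde{b},\tilde{\lambda},x,y,z$ or an inverse, and which of these can occur at a given corner is restricted by the shadow edges incident to the two vertices bounding that corner (for instance a corner meeting no shadow edge must be $a^{\pm 1}$ or $b^{\pm 1}$, i.e.\ $\tilde{a}$ or $\tilde{b}$ with the relevant $k_i=0$). I would then cut the finitely many candidate labellings down by applying in turn: (a) no adjacent pair of corner labels around $\partial\hat{\Delta}$ may form one of the fourteen length-$2$ sublabels excluded in the proof of Lemma 3.4, nor an inverse pair such as $aa^{-1}$, $bb^{-1}$ (since $\bs{D}$ is reduced); (b) the sum of the $t$-powers of the corner labels, read off the table of Figure 2(ii), is $\equiv 0 \pmod{5}$; (c) no corner of $\bs{K}_0$ equals $(\lambda\mu)^k$ with $k \geq 1$, by minimality \textbf{A1}, as noted after (3.1); (d) no basic labelling contradiction of the type of Figure 2(v)--(vi) occurs in any recovered region of $\bs{D}$.

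The remaining, and decisive, input is length. Re-inserting into $\hat{\Delta}$ the edges and $2$-segments of $\bs{D}$ deleted in forming $\bs{K}_0$ recovers a union of regions of $\bs{D}$, each of which has exactly $n+1$ edges; expressing the shadow-edge lengths and the exponents $k_i$ appearing in the corner labels in terms of $n$ and invoking $n \geq 7$ either pins the $k_i$ down (and hence the labelling exactly) or yields a length contradiction eliminating the configuration, exactly as in the degree-$6$ analysis carried out with Figure 3. Performing this check for each degree-$8$ and degree-$9$ configuration from Figure 4, and each surviving labelling, leaves precisely the regions of Figure 44(i)--(xi) in the degree-$8$ case and the single region of Figure 44(xii) in the degree-$9$ case.

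I expect the main obstacle to be one of sheer exhaustiveness rather than of idea: there are several shadow-edge configurations and, for each, a number of candidate corner labellings, and one must track the interplay of shadow-edge lengths, the $k_i$, and the bound $n \geq 7$ without omission. No machinery beyond that already used for Lemma 3.5 (detailed in the Appendix) is required; the present lemma is that argument pushed one level finer so as to retain the corner labels, and the full case-by-case verification would again be routine but lengthy.
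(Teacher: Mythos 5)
Your proposal is correct and follows essentially the same route as the paper: reduce via Lemma 3.5 to the degree-8 and degree-9 shadow-edge configurations of Figure 4, then enumerate the corner labellings compatible with the star graph, exponent-sum, reducedness and length ($n+1$ edges per region of $\bs{D}$, $n\geq 7$) constraints, and delete repeats to obtain the twelve regions of Figure 44. The paper states this verification only summarily (counting one, four, six or two labellings per configuration and removing coincidences), and your outline makes explicit exactly the checks that underlie that count.
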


\textit{Proof}.
If $7 \leq d(\hat{\Delta}) \leq 9$ then $\hat{\Delta}$ is given by Figure 4(iv)--(xi).
It turns out that there is (up to cyclic permutation and inversion) exactly one way to label $\hat{\Delta}$ of Figure 4(iv), (v), (ix) and (xi);
four ways to label $\hat{\Delta}$ of (vi);
six ways to label $\hat{\Delta}$ of (vii);
and two ways to label $\hat{\Delta}$ 

\newpage
\begin{figure}
\begin{center}
\psfig{file=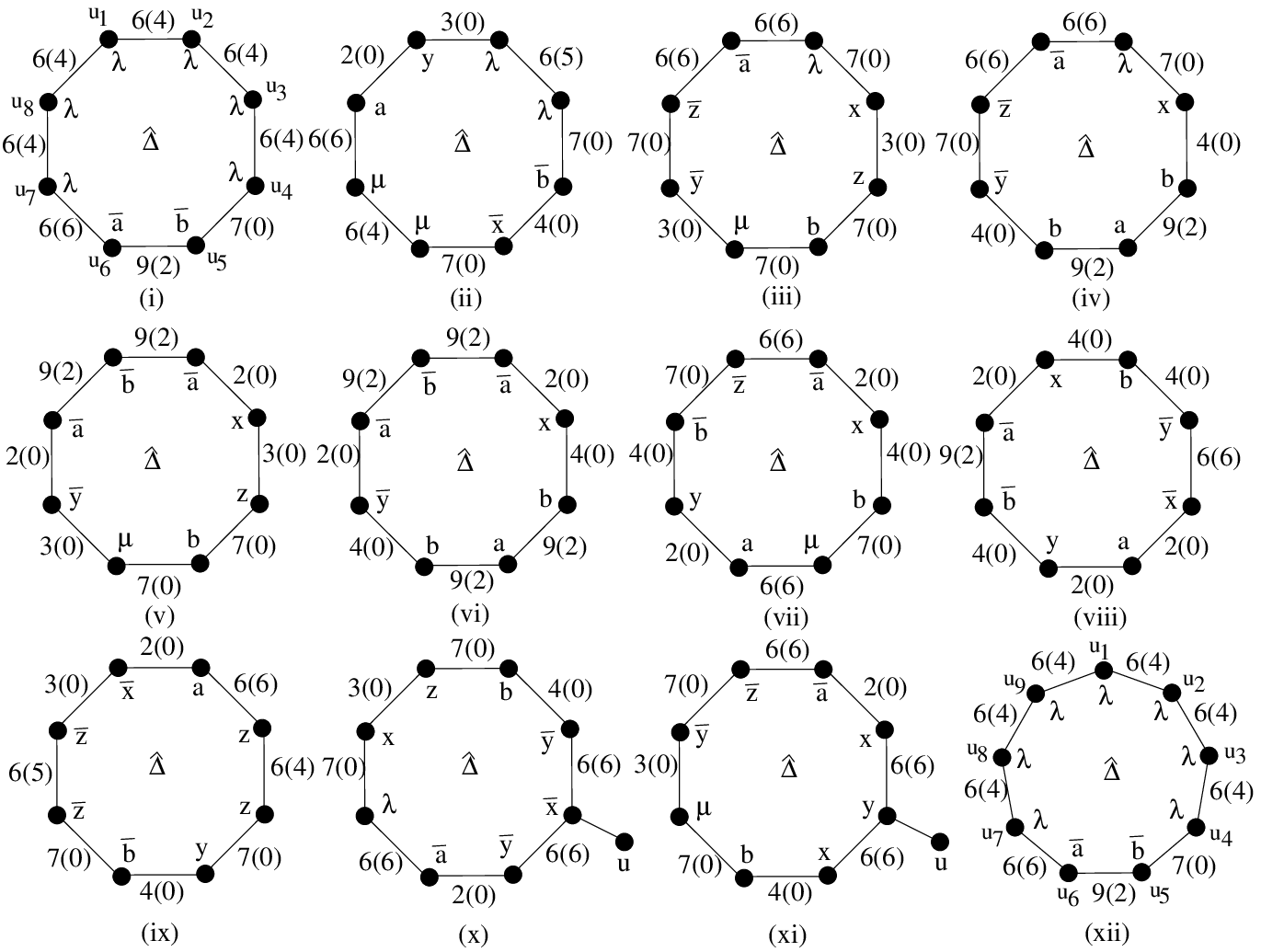}
\end{center}
\caption{}
\end{figure}

\noindent of (viii) and (x).
The resulting set of seventeen labelled regions contains some repeats with respect to corner labelling and deleting these leaves the twelve
$\hat{\Delta}$ of Figure 44(i)-(xii). $\Box$

\medskip

\textbf{Notation}\quad Let $d(\hat{\Delta})=k$ and suppose that the vertices of $\hat{\Delta}$ are $u_i$ ($1 \leq i \leq k$).  We write
$cv(\hat{\Delta}) = (a_1, \ldots, a_k)$, where each $a_i$ is a non-negative integer, to denote the fact that the total amount of curvature $\hat{\Delta}$ receives is bounded above by $(a_1 + \ldots + a_k) \frac{\pi}{30}$ with the understanding that $a_i \frac{\pi}{30}$ is transferred to $\hat{\Delta}$ across the $(u_i,u_{i+1})$-edge (subscripts mod $k$).

\medskip

\textbf{Notation}\quad In the proof of Proposition 10.4 we will use non-negative integers $a_1,a_2$, $b_1,b_2$,
$c_1,c_2$, $d_1,d_2$, $e_1,e_2$, $h_1,h_2$ where: 
\[
a_1+a_2=7; b_1+b_2=8; c_1+c_2=9; d_1+d_2=10; e_1+e_2=11; h_1+h_2=14.
\]

Let $c(\Delta)=c(d_1,\ldots,d_m)$. Suppose that $m=m_1+m_2+m_3=8+k$ where $k \geq 0$ and suppose further that $\Delta$ contains
$m_1,m_2,m_3$ vertices of degree 3, 4, 5 (respectively).  Then we will use the following formula (here and in the next section)
\[
c(\Delta)=c(3,\ldots,3,4,\ldots,4,5,\ldots,5)=-\frac{(20+10k+5m_2+8m_3)\pi}{30}.
\]

\textbf{Remark}\quad Much use will be made here and in Section 11 of the fact that the region $\hat{\Delta}_1$ of Figure 36(i),(x) receives no curvature from the region $\Delta$ shown. If $\hat{\Delta}_2$ of Figure 37(iv), 38(iv) 
receives $\frac{ \pi}{15}$ from $\hat{\Delta}_1$ then $\hat{\Delta}_2$ receives no curvature from the region $\Delta_6$ shown; however if $\hat{\Delta}_2$ receives $\frac{ \pi}{30}$ from $\hat{\Delta}_1$ then $\hat{\Delta}_2$
may receive curvature from $\Delta_6$ although note that $\hat{\Delta}_2$ has at least two vertices of degree $>3$.  

\begin{proposition}%\textbf{Proposition 10.4}\quad
\textit{If $\hat{\Delta}$ is a type $\mathcal{A}$ region and $7 \leq d(\hat{\Delta}) \leq 9$ then $c^{\ast}(\hat{\Delta}) \leq 0$.}
\end{proposition}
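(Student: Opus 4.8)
The plan is to cut down to finitely many configurations and then run a per-edge curvature count on each. By Proposition 10.2 we may assume $7 \le d(\hat\Delta) \le 9$, so Lemma 10.3 leaves exactly twelve labelled regions: the eleven degree-$8$ regions of Figure 44(i)--(xi) and the single degree-$9$ region of Figure 44(xii). I will treat these in turn. In each case the corner labels of $\hat\Delta$ are prescribed, so Lemma 3.4 restricts the admissible degrees $d_i = d(u_i)$ of the boundary vertices, and Lemma 9.1 (the bounds appropriate to a type $\mathcal{A}$ region) together with Figures 40 and 41 pins down, for each edge $e_i$, the finite list of values $c_i$ of curvature that $\hat\Delta$ can receive across $e_i$ --- recorded as the vector $cv(\hat\Delta) = (a_1,\dots,a_k)$ --- and, whenever $c_i$ is near-maximal, the forced labels and degrees of the endpoints of $e_i$ and often of their further neighbours.

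The elementary estimate is $c^{\ast}(\hat\Delta) = c(\hat\Delta) + \sum_{i=1}^{k} c_i$, with $c(\hat\Delta)$ given by equation (3.2). Writing $m_2, m_3$ for the numbers of vertices of $\hat\Delta$ of degree $4$ and $5$ and using the displayed formula $c(\hat\Delta) = -\frac{(20 + 10(k-8) + 5m_2 + 8m_3)\pi}{30}$ (vertices of degree $\ge 6$ only make this more negative), the target inequality becomes $\sum_i a_i \le 20 + 10(k-8) + 5m_2 + 8m_3$ in units of $\frac{\pi}{30}$. For several of the twelve configurations the per-edge bounds recorded in $cv(\hat\Delta)$ already push $\sum_i a_i$ below this threshold; for the rest they do not, and the refinements I would bring to bear are: (a) Lemma 9.1(iii),(vi), forcing $c_{i-1} = 0$ or $c_{i+1} = 0$ whenever $c_i > \frac{2\pi}{15}$ apart from the listed exceptions; (b) Lemma 9.2(i)--(ix), which bounds each consecutive pair $(c_i, c_{i+1})$ and names the neighbour realising each borderline pair; (c) the surplus/deficit absorption bookkeeping of Lemma 10.1, which relocates each surplus $s_i = c_i - \frac{2\pi}{15}$ onto an adjacent edge carrying curvature $0$ or onto a vertex of degree $\ge 4$ (deficit $\le -\frac{\pi}{6}$); and (d) the Remark preceding the proposition (that $\hat\Delta_1$ of Figure 36(i),(x) receives nothing from the displayed $\Delta$, and the restricted behaviour of $\hat\Delta_2$ of Figures 37(iv), 38(iv)), together with the fact that an $(a,b)$- or $(x,y)$-edge forces a neighbouring region of degree $\ge 8$.

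Concretely: for the degree-$9$ region of Figure 44(xii) I expect the prescribed label to force enough edges to be of a type carrying $c_i = 0$ (equivalently, enough boundary vertices to have degree $> 3$) that $\sum_i a_i \le 30$ holds with room to spare. Among the degree-$8$ regions, the routine ones are those whose boundary label contains none of $(a,b)$, $(x,y)$, or a run of $(x, a^{-1})$/$(a^{-1}, y^{-1})$-edges, so that $cv(\hat\Delta)$ is simply too small to reach $20$. The delicate ones are precisely those admitting an edge that can carry $\frac{7\pi}{30}$ or more, where the crude count would give $c^{\ast}(\hat\Delta) > 0$; for each of these I would pass to the second neighbours, use Lemma 9.2 to determine which $4$-, $6$- and $8$-gons can sit across the edges of $\hat\Delta$, and then invoke the labelling and degree contradictions of Figures 40--43 --- exactly in the style of the proof of Lemma 9.2 --- to rule out the near-maximal patterns of received curvature; whatever survives is closed off either by a degree-$\ge 4$ vertex absorbing the residual surplus or by a direct second-neighbour contradiction.

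The hard part will be this last step: in the handful of degree-$8$ configurations where the elementary count fails, one must simultaneously juggle the pairwise edge constraints of Lemma 9.2, the absorption scheme of Lemma 10.1, and the compatibility conditions dictating which neighbouring $4$-, $6$- and $8$-gons may actually be configured around $\hat\Delta$, and then verify that no contradiction-free assignment of the $c_i$ pushes $\sum_i c_i$ past $-c(\hat\Delta)$. This is a long but strictly finite bookkeeping exercise, and it is what the argument below carries out configuration by configuration.
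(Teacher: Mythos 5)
Your reduction is the paper's: Lemma 10.3 cuts the problem to the twelve labelled regions of Figure 44, and the intended estimate $c^{\ast}(\hat{\Delta})=c(\hat{\Delta})+\sum_i c_i$, with the per-edge bounds of Figures 35 and 40--42, the pairwise constraints of Lemma 9.2, and the vertex-degree count via equation (3.2), is exactly how the paper's proof proceeds (it splits each of the twelve cases by the number of vertices of degree $>3$ and records the admissible vector $cv(\hat{\Delta})$ in each subcase). So the approach is right; the gap is that you carry out none of the twelve verifications, and in this proposition the verification \emph{is} the proof. The numbers are tight, not slack: in Figure 44(i), for instance, with exactly one vertex of degree $>3$ the paper lands on $c^{\ast}(\hat{\Delta})\leq -\frac{5\pi}{6}+\frac{5\pi}{6}=0$, and certifying the bound $\frac{5\pi}{6}$ on the received curvature requires the forced vanishings read off from the labels (e.g.\ $c(u_4,u_5)=0$ and $c(u_5,u_6)=\frac{\pi}{15}$ when $d(u_5)=3$, via Figure 36, and that $c(u_1,u_2)>0$ with $d(u_1)=d(u_2)=3$ forces $c(u_2,u_3)=0$). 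Several other subcases also close with equality, so your assertion that ``whatever survives is closed off either by a degree-$\geq 4$ vertex absorbing the residual surplus or by a direct second-neighbour contradiction'' is precisely the statement to be proved, configuration by configuration, and nothing in the write-up establishes it.

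One concrete warning about the tool you lean on: the surplus/deficit absorption of Lemma 10.1 cannot by itself rescue the delicate cases, because its output is exactly the bound $c^{\ast}(\hat{\Delta})\leq (2-k)\pi+k\cdot\frac{2\pi}{3}+k\cdot\frac{2\pi}{15}$, which is strictly positive for $k=8$ and $k=9$; that is why the paper proves Proposition 10.2 only for $k\geq 10$ and then handles $7\leq k\leq 9$ by the direct, label-driven case analysis. In that analysis the paper does not invoke the absorption scheme at all: it uses Lemma 3.4 to pin down vertex labels, the Remark preceding the proposition (Figures 36(i),(x) and 37(iv), 38(iv)), and Lemma 9.2 to cap consecutive pairs such as $c(u_4,u_5)+c(u_5,u_6)$, and only then does the count fall to or below $-c(\hat{\Delta})$ in every subcase. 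So your plan is viable only if the configuration-by-configuration counts are actually performed; as written, the proposal is an outline of the paper's method rather than a proof.
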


\textit{Proof}.  It follows from Lemma 10.3 that we need only consider $\hat{\Delta}$ of Figure 44 in which the label $\alpha ( \beta )$ at the edge
with endpoints $u,v$ indicates $c(u,v)=\frac{\alpha \pi}{30}$ or $c(u,v)=\frac{\beta \pi}{30}$ when $d(u)=d(v)=3$.  We treat each of the twelve
cases of Figure 44 in turn.
(We will make extensive use of \textit{checking} and Figures 35,40,41 and 42 often without explicit reference although for the reader's benefit full details will be given in Cases 1 and 4.)

\textbf{Case 1}\quad
Let $\hat{\Delta}$ be given by Figure 44(i). Note that $\hat{\Delta}$ cannot be $\hat{\Delta}_2$ of Figure 37 or 38.
If $c(u_1,u_2) > \frac{2 \pi}{15}$ then, noting that Figure 40(xiv) does not apply to $\hat{\Delta}$, $d(u_1)>3$ and $c(u_8,u_1)=0$ (see Figure 40(xvii)); and if $\frac{\pi}{15} < c(u_1,u_2) < \frac{\pi}{5}$ then
$c(u_1,u_2) \in \left\{ \frac{2 \pi}{15}, \frac{\pi}{10} \right\}$ and either $c(u_8,u_1)=0$ or $c(u_2,u_3)=0$
(see Figures 15(iii), 18(ix), 23(ix), (xiii), 34(iv), (vii) and 36(xiv),(xviii)).
Similar statements hold for each of $(u_2,u_3)$, $(u_3,u_4)$, $(u_7,u_8)$ and $(u_8,u_1)$.
In particular it follows that $c(u_7,u_8) + c(u_8,u_1) + c(u_1,u_2) + c(u_2,u_3) + c(u_3,u_4) \leq \frac{2 \pi}{3}$. Indeed the maximum is given by $(6 + 4 + 0 + 6 + 4)\frac{\pi}{30}$.
If $c(u_4,u_5) > \frac{2 \pi}{15}$ then (see Figure 40(ix)) $d(u_4)=d(u_5)=4$ and $c(u_3,u_4)=0$; if $c(u_6,u_7) > \frac{2 \pi}{15}$ then (see Figure 40(i), (vi)) $d(u_6)=d(u_7)=3$ and either
$c(u_5,u_6)=0$ or $c(u_7,u_8)=0$; and by Lemma 9.2 (see Figure 42(vi)), $c(u_4,u_5) + c(u_5,u_6) \leq \frac{7 \pi}{15}$.  Therefore
if $c(u_4,u_5) > \frac{2 \pi}{15}$ then $cv(\hat{\Delta}) = (0,6,0,h_1,h_2,6,0,6)$; and
if $c(u_4,u_5) \leq \frac{2 \pi}{15}$ then $c(u_4,u_5) + c(u_5,u_6) \leq \frac{11 \pi}{30}$ (see Figure 42) and $cv(\hat{\Delta}) = (4,0,6,e_1,e_2,6,0,6)$ .  So if $\hat{\Delta}$ has at least three
vertices of degree $> 3$ then $c^{\ast}(\hat{\Delta}) \leq - \frac{35 \pi}{30} + \frac{33 \pi}{30} < 0$.
If $d(u_1)=d(u_2)=3$ and $c(u_1,u_2) > 0$ then  $\hat{\Delta}$ is given by $\hat{\Delta}_1$ of Figure 36(xiv) or (xviii)
therefore $c(u_1,u_2)=\frac{2 \pi}{15}$ and $c(u_2,u_3)=0$.  
Again similar statements hold for $(u_2,u_3)$, $(u_3,u_4)$, $(u_7,u_8)$ and $(u_8,u_1)$.
Suppose that $\hat{\Delta}$ has no vertices of degree $> 3$. In particular $l(u_4)=\lambda b^{-1}z^{-1}$ and $l(u_5)=b^{-1}z^{-1} \lambda$. 
Then $c(u_4,u_5)=0$ and $c(u_5,u_6)=\frac{\pi}{15}$ (see Figure 36(i)-(ix)) so it follows that $cv(\hat{\Delta})=(4,0,4,0,2,4,4,0)$ and
$c^{\ast}(\hat{\Delta}) \leq - \frac{2 \pi}{3} + \frac{3 \pi}{5} < 0$.
Suppose that $\hat{\Delta}$ has exactly one vertex of degree $>3$.
If $d(u_5)=3$ then $c(u_4,u_5)=0$ and $c(u_5,u_6)=\frac{\pi}{15}$ (see Figure 36) and it follows that $cv(\hat{\Delta})=(0,6,4,0,2,6,0,4)$; and
if $d(u_5) > 3$ then $d(u_4) = 3$ and $c(u_4,u_5) = \frac{\pi}{30}$ (see Figure 36(x)-(xviii)) so $cv(\hat{\Delta})=(4,0,4,c_1,c_2,4,4,0)$.
Therefore $c^{\ast}(\hat{\Delta}) \leq - \frac{5 \pi}{6} + \frac{5 \pi}{6} =0$.
Finally suppose that $\hat{\Delta}$ has exactly two vertices $u_i,u_j$ of degree $>3$.
If $d(u_5)=3$ then $c^{\ast}(\hat{\Delta}) < 0$ so it can be assumed without any loss that $i=5$.
If $j=1$ then (since $l(u_8)=\lambda b^{-1}z^{-1}$) $c(u_8,u_1)=0$ and if $c(u_4,u_5) > 0$ then $c(u_4,u_5)=\frac{\pi}{30}$ and $c(u_5,u_6)=0$ (see Figure 36(x)) so $cv(\hat{\Delta}) = (6,0,4,c_1,c_2,4,4,0)$;
if $j=2$ then $c(u_1,u_2)=0$ and $cv(\hat{\Delta})=(0,6,4,c_1,c_2,6,0,4)$;
if $j=3$ then $c(u_2,u_3)=0$ and $cv(\hat{\Delta})=(4,0,6,c_1,c_2,4,4,0)$;
if $j=4$ then $c(u_3,u_4)=0$ and $cv(\hat{\Delta})=(0,4,0,h_1,h_2,6,0,4)$;
if $j=6$ then $c(u_4,u_5)=0$, $c(u_5,u_6)=\frac{\pi}{6}$ and $cv(\hat{\Delta})=(4,0,4,0,5,4,4,0)$;
if $j=7$ then $cv(\hat{\Delta})=(4,0,4,c_1,c_2,4,4,0)$; and
if $j=8$ then $c(u_7,u_8)=0$ and $cv(\hat{\Delta})=(4,0,4,c_1,c_2,6,0,6)$.
It follows that $c^{\ast}(\hat{\Delta}) \leq - \pi + \frac{29 \pi}{30} < 0$.

\textbf{Case 2}\quad
Let $\hat{\Delta}$ be given by Figure 44(ii).
If $c(u_3,u_4) > \frac{2 \pi}{15}$ then, see Figure 40(ix),  $(d(u_3),d(u_4))=(4,4)$ and $c(u_2,u_3)=0$ ; and
if $c(u_5,u_6) > \frac{2 \pi}{15}$ then, see Figure 40 (v),  $(d(u_5),d(u_6))=(3,4)$ and $c(u_6,u_7)=0$.  It follows that if at least three of $u_i$ have degree $\geq 4$ then
$c^{\ast}(\hat{\Delta}) \leq - \frac{7 \pi}{6} + \frac{7 \pi}{6}=0$, so assume otherwise. Note that if  $d(u_2) = d(u_3)=3$ and $c(u_2,u_3) >  \frac{2 \pi}{15}$
then $\hat{\Delta}$ is given by $\hat{\Delta}_2$ of Figure 38(iii); in particular, $c(u_2,u_3)= \frac{ \pi}{6}$ and $d(u_1) = 4$ .
If $\hat{\Delta}$ has no vertices of degree $>3$ then we see (from Figure 44(ii) and Figure 38(iii)) that $cv(\hat{\Delta})=(0,4,0,0,0,4,6,0)$ and
$c^{\ast}(\hat{\Delta}) \leq - \frac{2 \pi}{3} + \frac{7 \pi}{15} < 0$.
Let $\hat{\Delta}$ have exactly one vertex $u_i$ of degree $>3$. Then the following holds.
If $i=1$ then
$cv(\hat{\Delta})=(3,5,0,0,0,4,6,2)$;
if $i=2$ then $cv(\hat{\Delta})=(3,6,0,0,0,4,6,0)$;
if $i=3$ then $cv(\hat{\Delta})=(0,6,4,0,0,4,6,0)$;
if $i=4$ then $cv(\hat{\Delta})=(0,4,4,4,0,4,6,0)$;
if $i=5$ then $cv(\hat{\Delta})=(0,4,0,4,4,4,6,0)$;
if $i=6$ then $cv(\hat{\Delta})=(0,4,0,0,d_1,d_2,6,0)$;
if $i=7$ then $cv(\hat{\Delta})=(0,4,0,0,0,6,6,0)$; and
if $i=8$ then $cv(\hat{\Delta})=(0,4,0,0,0,4,6,2)$.
It\linebreak
follows that $c^{\ast}(\hat{\Delta}) \leq - \frac{5 \pi}{6} + \frac{11 \pi}{15} < 0$.
Let $\hat{\Delta}$ have exactly two vertices $u_i,u_j$ of degree $>3$.
If $d(u_3)=d(u_4)=d(u_5)=3$ or $d(u_4)=d(u_5)=d(u_6)=3$ then $c^{\ast}(\hat{\Delta}) \leq - \pi + \pi = 0$.
This leaves 14 out of 28 cases to be considered. If $(i,j)=(1,4)$ then $cv(\hat{\Delta}) = (3,5,4,4,0,4,6,2)$;
if $(1,5)$ then $(3,5,0,4,4,4,6,2)$;
if $(2,4)$ then $(3,6,4,4,0,4,6,0)$;
if $(2,5)$ then $(3,6,0,4,4,4,6,0)$;
if $(3,4)$ then $(0,d_1,d_2,4,0,4,6,0)$;
if $(3,5)$ then $(0,6,4,4,4,4,6,0)$;
if $(3,6)$ then $(0,6,4,0,d_1,d_2,6,0)$;
if $(4,5)$ then $(0,4,4,4,4,4,6,0)$;
if $(4,6)$ then %\newline
$(0,4,4,4,d_1,d_2,6,0)$;
if $(4,7)$ then $(0,4,4,4,0,6,6,0)$;
if $(4,8)$ then $(0,4,4,4,0,4,6,2)$;
if $(5,6)$ then $(0,4,0,4,4,6,6,0)$;
if $(5,7)$ then $(0,4,0,4,4,6,6,0)$; and
if $(5,8)$ then %\newline $(0,4,0,4,4,4,6,2)$.
It follows that $c^{\ast}(\hat{\Delta}) \leq - \pi + \frac{14 \pi}{15} < 0$.

\textbf{Case 3}\quad
Let $\hat{\Delta}$ be given by Figure 44(iii).
If $c(u_2,u_3) > \frac{2 \pi}{15}$ then $(d(u_2),d(u_3))=(4,3)$ and $c(u_1,u_2)=0$;
if $c(u_4,u_5) > \frac{2 \pi}{15}$ then, see Figure 40(x), $(d(u_4),d(u_5))=(4,4)$ and $c(u_3,u_4)=0$;
if $c(u_5,u_6) > \frac{2 \pi}{15}$ then $(d(u_5),d(u_6))=(4,4)$ and $c(u_6,u_7)=0$;
if $c(u_7,u_8) > \frac{2 \pi}{15}$ then, see Figure 40(iv), $(d(u_7),d(u_8))=(3,4)$ and $c(u_8,u_1)=0$. Moreover
if $c(u_1,u_2) > \frac{2 \pi}{15}$ then $d(u_2)=3$ and $c(u_2,u_3)=0$; and
if $c(u_8,u_1) > \frac{2 \pi}{15}$ then $d(u_8)=3$ and $c(u_7,u_8)=0$.
It follows that
$c(u_1,u_2)+c(u_2,u_3) \leq \frac{4 \pi}{15}$;
$c(u_3,u_4)+c(u_4,u_5) \leq \frac{7 \pi}{30}$;
$c(u_5,u_6)+c(u_6,u_7) \leq \frac{7 \pi}{30}$; and
$c(u_7,u_8)+c(u_8,u_1) \leq \frac{4 \pi}{15}$.
Therefore if $\hat{\Delta}$ has at least two vertices of degree $>3$ then
$c^{\ast}(\hat{\Delta}) \leq - \pi + \pi =0$.
Suppose that $\hat{\Delta}$ contains no vertices of degree $>3$.
Then we see (from Figure 44(iii)) that $cv(\hat{\Delta})=(6,0,0,0,0,0,0,6)$ and
$c^{\ast} (\hat{\Delta}) \leq - \frac{2 \pi}{3} + \frac{2 \pi}{5} < 0$.
Let $\hat{\Delta}$ have exactly one vertex $u_i$ of degree $>3$.  Then the following holds:
if $i=1$ then $cv(\hat{\Delta}) = (6,0,0,0,0,0,0,6)$;
if $i=2$ then $cv(\hat{\Delta})=(b_1,b_2,0,0,0,0,0,6)$;
if $i=3$ then $cv(\hat{\Delta})=(6,0,3,0,0,0,0,6)$;
if $i=4$ then $cv(\hat{\Delta})=(6,0,3,4,0,0,0,6)$;
if $i=5$ then $cv(\hat{\Delta})=(6,0,0,4,4,0,0,6)$;
if $i=6$ then $cv(\hat{\Delta})=(6,0,0,0,4,3,0,6)$;
if $i=7$ then $cv(\hat{\Delta})=(6,0,0,0,0,3,0,6)$; and
if $i=8$ then $cv(\hat{\Delta})=(6,0,0,0,0,0,b_1,b_2)$.
Therefore $c^{\ast}(\hat{\Delta}) \leq - \frac{5 \pi}{6} + \frac{2 \pi}{3} < 0$.

\textbf{Case 4}\quad
Let $\hat{\Delta}$ be given by Figure 44(iv).
If $c(u_1,u_2) > \frac{2 \pi}{15}$ then $c(u_2,u_3)=0$;
if $c(u_2,u_3) > \frac{2 \pi}{15}$ then $c(u_1,u_2)=0$;
if $c(u_8,u_1) > \frac{2 \pi}{15}$ then $c(u_7,u_8)=0$;
if $c(u_7,u_8) > \frac{2 \pi}{15}$ then $c(u_8,u_1)=0$;
if $c(u_4,u_5) = \frac{3 \pi}{10}$ then, see Figure 41(iv), $c(u_3,u_4)=0$;
if $c(u_4,u_5) = \frac{4 \pi}{15}$ then $c(u_3,u_4)=\frac{\pi}{15}$ (see Figure 42);
if $c(u_5,u_6) = \frac{3 \pi}{10}$ then, see Figure 41(vi), $c(u_6,u_7)=0$; and
if $c(u_5,u_6) = \frac{4 \pi}{15}$ then $c(u_6,u_7)=\frac{\pi}{15}$ (see Figure 42).
It follows that $c(u_3,u_4)+c(u_4,u_5)+c(u_5,u_6)+c(u_6,u_7)=\frac{11 \pi}{15}$.
Therefore $c^{\ast}(\hat{\Delta}) \leq c(\Delta) + \frac{19 \pi}{15}$ so
if $\hat{\Delta}$ has at least four vertices of degree $>3$ then $c^{\ast}(\hat{\Delta}) < 0$.
Let $\hat{\Delta}$ have no vertices of degree $>3$.  Then $cv(\hat{\Delta})=(6,0,0,2,2,0,0,6)$ and
$c^{\ast}(\hat{\Delta}) \leq -\frac{2 \pi}{3} + \frac{8 \pi}{15} < 0$.  Let $\hat{\Delta}$ have exactly one vertex $u_i$ of degree $>3$.
If $d(u_4)=3$ then $c(u_3,u_4)=0$ and $c(u_4,u_5)=\frac{\pi}{15}$; and
if $d(u_6)=3$ then $c(u_5,u_6)=\frac{\pi}{15}$ and $c(u_6,u_7)=0$.  Thus
if $d(u_4)=d(u_6)=3$ then $c^{\ast}(\hat{\Delta}) \leq -\frac{5 \pi}{6} + \frac{2 \pi}{3} < 0$;
if $d(u_4) > 3$ then $cv(\hat{\Delta})=(6,0,e_1,e_2,2,0,0,6)$; and
if $d(u_6) > 3$ then $cv(\hat{\Delta})=(6,0,0,2,e_1,e_2,0,6)$.
Therefore $c^{\ast}(\hat{\Delta}) \leq -\frac{5 \pi}{6} + \frac{5 \pi}{6}=0$.
Let $\hat{\Delta}$ have exactly two vertices of degree $>3$.
If $d(u_4)=3$ or $d(u_6)=3$ then $c^{\ast}(\hat{\Delta}) \leq - \pi + \frac{29 \pi}{30} < 0$ so it can be assumed that $d(u_4) > 3$ and $d(u_6)>3$.
Then $d(u_2)=3$ implies $d(\Delta_2)>4$ and $c(u_2,u_3)=0$; and $d(u_8)=3$ implies $d(\Delta_7) >4$ and $c(u_7,u_8)=0$.
This then prevents $c(u_3,u_4)=\frac{2 \pi}{15}$ or $c(u_6,u_7)=\frac{2 \pi}{15}$ (see Figure 16(ii) and (iii)) so
$c(u_3,u_4)=c(u_7,u_8)=\frac{\pi}{15}$.
Since $c(u_1,u_2)=c(u_8,u_1)=\frac{\pi}{5}$ it follows that
if $c(u_4,u_5) \neq \frac{4 \pi}{15}$ and $c(u_5,u_6) \neq \frac{4 \pi}{15}$ then
$cv(\hat{\Delta})=(6,0,c_1,c_2,c_1,c_2,0,6)$ and
$c^{\ast}(\hat{\Delta}) \leq - \pi + \pi = 0$.
Suppose that $c(u_4,u_5) = \frac{4 \pi}{15}$, say.
If $c(u_3,u_4)>0$ then $\hat{\Delta}= \hat{\Delta}_5$ of Figure 29(vi) or (ix) where  $l(v_2)=b^5$ and so $c(u_3,u_4) > 0$ implies that $\Delta_3 = \Delta$ of Figure 28.
But $d(\Delta_2) > 4$ forces $\Delta_3 = \Delta$ of Figure 28(i) and $c(u_3,u_4)=\frac{\pi}{30}$.
Similarly if $c(u_5,u_6)=\frac{4 \pi}{15}$ and $c(u_6,u_7)>0$ then we see from Figure 29(xi) and Figure 28(i), (iii) that $c(u_6,u_7)=\frac{\pi}{30}$.
It follows that
if $c(u_4,u_5)=\frac{4 \pi}{15}$ or $c(u_5,u_6)=\frac{4 \pi}{15}$ then $c^{\ast}(\hat{\Delta}) \leq - \pi + \pi = 0$.
Finally let $\hat{\Delta}$ have exactly three vertices $u_i,u_j,j_k$ of degree $>3$.  Then $c(\hat{\Delta}) \leq - \frac{7 \pi}{6}$.
If $d(u_2)=d(u_8)=3$ then $c(u_2,u_3)=c(u_7,u_8)=0$ and $cv(\hat{\Delta})=(6,0,e_1,e_2,e_1,e_2,0,6)$;
if $d(u_4)=3$ then $cv(\hat{\Delta})=(b_1,b_2,0,2,e_1,e_2,b_1,b_2)$; and
if $d(u_6)=3$ then $cv(\hat{\Delta})=(b_1,b_2,e_1,e_2,2,0,b_1,b_2)$.  So it can be assumed that
$(i,j,k)=(2,4,6)$ or $(4,6,8)$ and in both cases $c^{\ast}(\hat{\Delta}) \leq c(\hat{\Delta}) + \frac{36 \pi}{30}$.
If $d(u_i)$ or $d(u_j)$ or $d(u_k)$ is greater than 4 then $c^{\ast}(\hat{\Delta}) < 0$ so assume otherwise.
But now $d(u_2)=4$ implies $l(u_2)=\lambda z^{-1}a^{-2}$ and $c(u_1,u_2)=0$; and $d(u_8)=4$ forces $c(u_8,u_1)=0$.
It follows that $cv(\hat{\Delta})=(0,7,e_1,e_2,e_1,e_2,0,6)$ or $(6,0,e_1,e_2,e_1,e_2,7,0)$ so $c^{\ast}(\hat{\Delta}) \leq - \frac{7 \pi}{6}+\frac{7 \pi}{6}=0$.

\textbf{Case 5}\quad
Let $\hat{\Delta}$ be given by Figure 44(v).
If $c(u_1,u_2)=\frac{9 \pi}{30}$ then $c(u_8,u_1)=0$;
if $c(u_1,u_2)=\frac{4\pi}{15}$ then $c(u_8,u_1)=\frac{\pi}{10}$ (see Figure 42);
if $c(u_8,u_1)=\frac{3 \pi}{10}$ then $c(u_1,u_2)=0$;
if $c(u_8,u_1)=\frac{4\pi}{15}$ then $c(u_1,u_2)=\frac{\pi}{10}$;
if $c(u_4,u_5) > \frac{2 \pi}{15}$ then $c(u_3,u_4)=0$; and
if $c(u_5,u_6) > \frac{2 \pi}{15}$ then $c(u_6,u_7)=0$.  It follows that $c(u_8,u_1)+c(u_1,u_2)=\frac{7 \pi}{15}$;
$c(u_3,u_4)+c(u_4,u_5)=\frac{7 \pi}{30}$; and $c(u_5,u_6)+c(u_6,u_7)=\frac{7 \pi}{30}$ so $c^{\ast}(\hat{\Delta}) \leq c(\hat{\Delta}) + \frac{16 \pi}{15}$.
Therefore
if $\hat{\Delta}$ has at least three vertices of degree $>3$ then $c^{\ast}(\hat{\Delta}) < 0$.
If $\hat{\Delta}$ has no vertices of degree $>3$ then we see (from Figure 44(v)) that
$c^{\ast} (\hat{\Delta}) \leq \frac{2 \pi}{3} + \frac{2 \pi}{15} < 0$.  Observe that
if $d(u_1)=3$ then $c(u_8,u_1) + c(u_1,u_2) = \frac{2 \pi}{15}$; and
if $d(u_5)=3$ then $c(u_4,u_5)=c(u_5,u_6)=0$.  It follows that
if $d(u_1)=3$ or $d(u_5)=3$ then $c^{\ast} (\hat{\Delta}) \leq c(\hat{\Delta}) + \frac{11 \pi}{15} < 0$ and so
if $\hat{\Delta}$ has exactly one vertex of degree $>3$ then $c^{\ast}(\hat{\Delta}) < 0$.
If $\hat{\Delta}$ has exactly two vertices $u_i,u_j$ of degree $>3$ it can be assumed that $(i,j)=(1,5)$ in which case
$cv(\hat{\Delta})=(h_1,0,0,7,7,0,0,h_2)$.  Therefore $c^{\ast}(\hat{\Delta}) \leq - \pi + \frac{14 \pi}{15} < 0$.

\textbf{Case 6}\quad
Let $\hat{\Delta}$ be given by Figure 44(vi).
If $c(u_4,u_5)=\frac{3 \pi}{10}$ then $c(u_3,u_4)=0$;
if $c(u_4,u_5)=\frac{4 \pi}{15}$ then $c(u_3,u_4)=\frac{\pi}{15}$ (see Figure 42);
if $c(u_5,u_6)=\frac{3 \pi}{10}$ then $c(u_6,u_7)=0$;
if $c(u_5,u_6)=\frac{4 \pi}{15}$ then $c(u_6,u_7)=\frac{\pi}{15}$;
and as in Case 5, $c(u_8,u_1)+c(u_1,u_2)=\frac{7 \pi}{15}$.
It follows that $c^{\ast}(\hat{\Delta}) \leq c(\hat{\Delta})+\frac{4 \pi}{3}$ so
if $\hat{\Delta}$ has at least four vertices of degree $>3$ then $c^{\ast}(\hat{\Delta}) \leq 0$.
Let $\hat{\Delta}$ have no vertices of degree $>3$.  Then $cv(\hat{\Delta})=(2,0,0,2,2,0,0,2)$ and
$c^{\ast}(\hat{\Delta}) \leq - \frac{2 \pi}{3} + \frac{4 \pi}{15} < 0$.
Let $\hat{\Delta}$ have exactly one vertex $u_i$ of degree $>3$. 
Note that if $d(u_1)=3$ then $c(u_8,u_1)=c(u_1,u_2)=\frac{\pi}{15}$; if $d(u_4)=3$ then $c(u_3,u_4)=0$ and $c(u_4,u_5)=\frac{\pi}{15}$;
and if $d(u_6)=3$ then $c(u_5,u_6)=\frac{\pi}{15}$ and $c(u_6,u_7)=0$.
If $i=1$ then $cv(\hat{\Delta}) = (h_1,0,0,2,2,0,0,h_2)$;
if $i=2$ then $cv(\hat{\Delta})=(2,2,0,2,2,0,0,2)$;
if $i=3$ then $cv(\hat{\Delta})=(2,2,4,2,2,0,0,2)$;
if $i=4$ then $cv(\hat{\Delta})=(2,0,e_1,e_2,2,0,0,2)$;
if $i=5$ then $cv(\hat{\Delta})=(2,0,0,9,9,0,0,2)$;
if $i=6$ then $cv(\hat{\Delta})=(2,0,0,2,e_1,e_2,0,2)$;
if $i=7$ then $cv(\hat{\Delta})=(2,0,0,2,2,4,2,2)$; and
if $i=8$ then $cv(\hat{\Delta})=(2,0,0,2,2,0,2,2)$.
Therefore $c^{\ast}(\hat{\Delta}) \leq - \frac{5 \pi}{6} + \frac{11 \pi}{15} < 0$.
Let $\hat{\Delta}$ have exactly two vertices $u_i,u_j$ of degree $>3$.
Then $c(\hat{\Delta}) \leq - \pi$.
If $d(u_1)=3$ then $cv(\hat{\Delta})=(2,2,e_1,e_2,e_1,e_2,2,2)$ and $c^{\ast}(\hat{\Delta}) \leq 0$
so it can be assumed that $i = 1$.
If $j=2$ then $cv(\hat{\Delta})=(h_1,2,0,2,2,0,0,h_2)$;
if $j=3$ then $cv(\hat{\Delta})=(h_1,2,4,2,2,0,0,h_2)$;
if $j=4$ then $cv(\hat{\Delta})=(h_1,0,e_1,e_2,2,0,0,h_2)$;
if $j=5$ then $cv(\hat{\Delta})=(h_1,0,0,2,2,0,0,h_2)$;
if $j=6$ then $cv(\hat{\Delta})=(h_1,0,0,2,e_1,e_2,0,h_2)$;
if $j=7$ then $cv(\hat{\Delta})=(h_1,0,0,2,2,4,2,h_2)$; and
if $j=8$ then $cv(\hat{\Delta})=(h_1,0,0,2,2,0,2,h_2)$.
Therefore $c^{\ast}(\hat{\Delta}) \leq - \pi + \frac{9 \pi}{10} < 0$.
Let $\hat{\Delta}$ have exactly three vertices of degree $>3$ so that $c(\hat{\Delta}) \leq - \frac{7 \pi}{6}$.
If $d(u_1)=3$ then $c^{\ast}(\hat{\Delta}) \leq - \frac{7 \pi}{6} + \pi$;
if $d(u_4)=3$ then $c^{\ast}(\hat{\Delta}) \leq - \frac{7 \pi}{6} + \frac{31 \pi}{30}$; and
if $d(u_6)=3$ then $c^{\ast}(\hat{\Delta}) \leq \frac{7 \pi}{6} + \frac{31 \pi}{30}$.
So it can be assumed that $d(u_1) > 3$, $d(u_4) > 3$ and
$d(u_6) > 3$ in which case $cv(\hat{\Delta})=(h_1,0,e_1,e_2,e_1,e_2,0,h_2)$.
If $d(u_1) > 4$ then $c^{\ast}(\hat{\Delta}) \leq - \frac{19}{15} \pi + \frac{18}{15} \pi < 0$;
whereas if $d(u_1) = 4$ then the fact that
$d(u_2)=d(u_8)=3$ means that $l(u_1)=bb x^{-1} y$ forces either $c(u_1,u_2)=0$ or $c(u_8,u_1)=0$ and
$c^{\ast}(\hat{\Delta}) \leq - \frac{7 \pi}{6} + \frac{31 \pi}{30} < 0$.

\textbf{Case 7}\quad
Let $\hat{\Delta}$ be given by Figure 44(vii) and note that  $\hat{\Delta}$ cannot be  $\hat{\Delta}_2$ of Figure 37(iv) or 38(iv).
If $c(u_1,u_2) > \frac{2 \pi}{15}$ then $d(u_1)=3$ and $c(u_8,u_1)=\frac{\pi}{30}$;
if $c(u_4,u_5) > \frac{2 \pi}{15}$ then $c(u_5,u_6)=0$;
if $c(u_5,u_6) > \frac{2 \pi}{15}$ then $d(u_5)=3$ and $c(u_4,u_5)=\frac{\pi}{30}$; and
if $c(u_8,u_1) > \frac{2 \pi}{15}$ then $c(u_1,u_2)=0$.  It follows that
$c(u_8,u_1) + c(u_1,u_2) \leq \frac{4 \pi}{15}$ and
$c(u_4,u_5) + c(u_5,u_6) \leq \frac{4 \pi}{15}$.
If $\hat{\Delta}$ has at least two vertices of degree $> 3$ then $c^{\ast} (\hat{\Delta}) \leq - \pi + \frac{14 \pi}{15} < 0$.
If $\hat{\Delta}$ has no vertices of degree $>3$ then we see (from Figure 44(vii)) that
$c^{\ast} (\hat{\Delta}) \leq - \frac{2 \pi}{3} + \frac{8 \pi}{15} < 0$.
Let $\hat{\Delta}$ have exactly one vertex of degree $>3$.
If $d(u_3)=d(u_4)=3$ then $c(u_3,u_4)=0$ and $c^{\ast} (\hat{\Delta}) \leq - \frac{5 \pi}{6} + \frac{4 \pi}{5} < 0$;
if $d(u_3) > 3$ then $cv(\hat{\Delta})=(6,2,4,0,6,0,0,0)$;
if $d(u_4) > 3$ then $cv(\hat{\Delta})=(6,0,4,b_1,b_2,0,0,0)$;
and it follows that $c^{\ast}(\hat{\Delta}) \leq -\frac{5 \pi}{6} + \frac{18 \pi}{30} < 0$.

\textbf{Case 8}\quad
Let $\hat{\Delta}$ be given by Figure 44(viii).  Then $cv(\hat{\Delta})=(4,4,6,2,2,4,9,2)$ so
if $\hat{\Delta}$ has at least three vertices of degree 2 then $c^{\ast}(\hat{\Delta}) \leq -\frac{7 \pi}{6} + \frac{11 \pi}{10} < 0$.
Note that
if $d(u_2)=3$ then $c(u_1,u_2)=c(u_2,u_3)=0$ and
if $d(u_7)=3$ then $c(u_6,u_7)=0$.
If $\hat{\Delta}$ has no vertices of degree $>3$ then $cv(\hat{\Delta})=(0,0,6,0,0,0,2,0)$ and
$c^{\ast}(\hat{\Delta}) \leq -\frac{2 \pi}{3} + \frac{4 \pi}{15} < 0$.
Let $\hat{\Delta}$ have exactly one vertex $u_i$ of degree $>3$.
If $i=1$ then $cv(\hat{\Delta})=(0,0,6,0,0,0,2,2)$;
if $i=2$ then $cv(\hat{\Delta})=(4,4,6,0,0,0,2,0)$;
if $i=3$ then $cv(\hat{\Delta})=(0,0,6,0,0,0,2,0)$;
if $i=4$ then $cv(\hat{\Delta})=(0,0,6,2,0,0,2,0)$;
if $i=5$ then $cv(\hat{\Delta})=(0,0,6,2,2,0,2,0)$;
if $i=6$ then $cv(\hat{\Delta})=(0,0,6,0,2,0,2,0)$;
if $i=7$ then $cv(\hat{\Delta})=(0,0,0,6,0,0,4,9,0)$; and
if $i=8$ then $cv(\hat{\Delta})=(0,0,6,0,0,0,9,2)$.  Therefore $c^{\ast}(\hat{\Delta}) \leq - \frac{5 \pi}{6} + \frac{19 \pi}{30} < 0$.
Let $\hat{\Delta}$ have exactly two vertices of degree $>3$.
If $d(u_2)=3$ or $d(u_7) = 3$ then $c^{\ast}(\hat{\Delta}) \leq - \pi + \frac{29 \pi}{30} < 0$ so assume that $d(u_2) > 3$ and $d(u_7) > 3$.
Then $cv(\hat{\Delta})=(4,4,6,0,0,4,9,0)$ and $c^{\ast}(\hat{\Delta}) \leq - \pi + \frac{9 \pi}{10} < 0$.

\textbf{Case 9}\quad
Let $\hat{\Delta}$ be given by Figure 44(ix).
If $c(u_4,u_5) > \frac{2 \pi}{15}$ then $d(u_4)=4$ and $c(u_3,u_4)=0$; and
if $c(u_6,u_7) > \frac{2 \pi}{15}$ then $(d(u_6),d(u_7))=(4,4)$ and $c(u_7,u_8)=0$.
It follows that if at least three of the $u_i$ have degree $\geq 4$ then $c^{\ast} (\hat{\Delta}) \leq - \frac{7 \pi}{6} + \frac{7 \pi}{6}=0$, so assume otherwise.
If $\hat{\Delta}$ has no vertices of degree $>3$ then we see (from Figure 44(ix) and the fact that $\hat{\Delta}$ cannot then be $\hat{\Delta}_2$ of Figure 37(iii)) that $cv(\hat{\Delta})=(0,6,4,0,0,0,4,0)$ and
$c^{\ast}(\hat{\Delta}) \leq - \frac{2 \pi}{3} + \frac{7 \pi}{30} < 0$.
Let $\hat{\Delta}$ have exactly one vertex $u_i$ of degree $>3$.
Then the following holds.
If $i = 1$ then $cv(\hat{\Delta})=(2,6,4,0,0,0,5,3)$;
if $i=2$ then $cv(\hat{\Delta})=(2,6,4,0,0,0,4,0)$;
if $i=3$ then $cv(\hat{\Delta})=(0,6,6,0,0,0,4,0)$;
if $i=4$ then $cv(\hat{\Delta})=(0,6,d_1,d_2,0,0,4,0)$;
if $i=5$ then $cv(\hat{\Delta})=(0,6,4,4,4,0,4,0)$;
if $i=6$ then $cv(\hat{\Delta})=(0,6,4,0,4,4,4,0)$;
if $i=7$ then $cv(\hat{\Delta})=(0,6,4,0,0,4,6,0)$; and
if $i=8$ then $cv(\hat{\Delta})=(0,6,4,0,0,0,6,3)$.
It follows that $c^{\ast}(\hat{\Delta}) \leq -\frac{5 \pi}{6}+\frac{11 \pi}{15} < 0$.
Let $\hat{\Delta}$ have exactly two vertices $u_i,u_j$ of degree $>3$.
If $d(u_4)=d(u_5)=d(u_6)=3$ or $d(u_5)=d(u_6)=d(u_7)=3$ then $c^{\ast}(\hat{\Delta}) \leq - \pi + \pi = 0$.
This leaves 14 out of 28 cases to be considered.
If $(i,j)=(1,5)$ then $cv(\hat{\Delta}) = (2,6,4,4,4,0,5,3)$;
if $(i,j)=(1,6)$ then $cv(\hat{\Delta})=(2,6,4,0,4,4,5,3)$;
if $(i,j)=(2,5)$ then $cv(\hat{\Delta})=(2,6,4,4,4,0,4,0)$;
if $(i,j)=(2,6)$ then $cv(\hat{\Delta})=(2,6,4,0,4,4,4,0)$;
if $(i,j)=(3,5)$ then $cv(\hat{\Delta})=(0,6,6,4,4,0,4,0)$;
if $(i,j)=(3,6)$ then $cv(\hat{\Delta})=(0,6,6,0,4,4,4,0)$;
if $(i,j)=(4,5)$ then $cv(\hat{\Delta})=(0,6,d_1,d_2,4,0,4,0)$;
if $(i,j)=(4,6)$ then $cv(\hat{\Delta})=(0,6,d_1,d_2,4,4,4,0)$;
if $(i,j)=(4,7)$ then $cv(\hat{\Delta})=(0,6,d_1,d_2,0,4,4,0)$;
if $(i,j)=(5,6)$ then $cv(\hat{\Delta})=(0,6,4,4,4,4,4,0)$;
if $(i,j)=(5,7)$ then $cv(\hat{\Delta})=(0,6,4,4,4,4,4,0)$;
if $(i,j)=(5,8)$ then $cv(\hat{\Delta})=(0,6,4,4,4,0,4,3)$;
if $(i,j)=(6,7)$ then $cv(\hat{\Delta})=(0,6,4,0,4,d_1,d_2,0)$; and
if $(i,j)=(6,8)$ then $cv(\hat{\Delta})=(0,6,4,0,4,4,4,3)$.
It follows that $c^{\ast}(\hat{\Delta}) \leq = - \pi + \frac{14 \pi}{15} < 0$.

\textbf{Case 10}\quad
Let $\hat{\Delta}$ be given by Figure 44(x).
If $c(u_1,u_2) > \frac{2 \pi}{15}$ then $(d(u_1),d(u_2))=(4,4)$ and $c(u_8,u_1)=0$;
if $c(u_7,u_8) > \frac{2 \pi}{15}$ then $(d(u_7),d(u_8))=(4,3)$ and $c(u_6,u_7)=0$; and
if $c(u_6,u_7) > \frac{2 \pi}{15}$ then $d(u_7)=3$ forcing $c(u_7,u_8)=0$.
It follows that $c(u_8,u_1) + c(u_1,u_2) \leq \frac{7 \pi}{30}$; and $c(u_6,u_7)+c(u_7,u_8) \leq \frac{4 \pi}{15}$.
If $\hat{\Delta}$ has at least three vertices of degree $>3$ then $c^{\ast}(\hat{\Delta}) \leq - \frac{7 \pi}{6} + \frac{11 \pi}{10} < 0$.
If $\hat{\Delta}$ has no vertices of degree $>3$ then we see (from Figure 44(x)) 
that $cv(\hat{\Delta})=(0,0,6,6,0,6,0,0)$ and
$c^{\ast}(\hat{\Delta}) \leq -\frac{2 \pi}{3} + \frac{18 \pi}{30} < 0$.
Let $\hat{\Delta}$ have exactly one vertex $u_i$ of degree $>3$. 
Then the following holds.
If $i=1$ then $d(u_2)=3$ and so $cv(\hat{\Delta})=(0,0,6,6,0,6,0,3)$;
if $i=2$ then $d(u_1)=3$ and so $cv(\hat{\Delta})=(x_1,y_1,6,6,0,6,0,0)$ where, by the remark preceeding this lemma, $x_1+y_1=4$;
if $i=3$ then $cv(\hat{\Delta})=(0,4,6,6,0,6,0,0)$;
if $i=4$ then $cv(\hat{\Delta})=(0,0,6,6,0,6,0,0)$;
if $i=5$ then $cv(\hat{\Delta})=(0,0,6,6,2,6,0,0)$;
if $i=6$ then $cv(\hat{\Delta})=(0,0,6,6,2,6,0,0)$;
if $i=7$ then $cv(\hat{\Delta})=(0,0,6,6,0,b_1,b_2,0)$; and
if $i=8$ then $d(u_7)=3$ so $cv(\hat{\Delta})=(0,0,6,6,0,6,0,3)$.  Therefore
$c^{\ast}(\hat{\Delta}) \leq - \frac{5 \pi}{6} + \frac{11 \pi}{15} < 0$.
Let $\hat{\Delta}$ have exactly two vertices of degree $>3$.
If $d(u_1)=3$ or $d(u_2)=3$ then $cv(\hat{\Delta})=(x_2,y_2,6,6,2,b_1,b_2,3)$ where, again by the above remark, $x_2+y_2=5$ and 
$c^{\ast}(\hat{\Delta}) \leq - \pi + \pi = 0$.  On the other hand
if $d(u_1) > 3$ and $d(u_2) > 3$ then $cv(\hat{\Delta})=(a_1,4,6,6,0,6,0,a_2)$
and $c^{\ast}(\hat{\Delta}) \leq - \pi + \frac{29 \pi}{30} < 0$.

\textbf{Case 11}\quad
Let $\hat{\Delta}$ be given by Figure 44(xi).
If $c(u_1,u_2) > \frac{2 \pi}{15}$ then $d(u_1)=3$ and $c(u_8,u_1)=0$;
if $c(u_8,u_1) > \frac{2 \pi}{15}$ then $c(u_1,u_2)=0$; and
if $c(u_6,u_7) > \frac{2 \pi}{15}$ then $c(u_7,u_8)=0$. Therefore $cv(\hat{\Delta})=(b_1,2,6,6,4,a_1,a_2,b_2)$.
It follows that $c^{\ast}(\hat{\Delta}) \leq c(\Delta) + \frac{11 \pi}{10}$ and so
if $\hat{\Delta}$ has at least three vertices of degree $>3$ then $c^{\ast}(\hat{\Delta}) < 0$.
If $\hat{\Delta}$ has no vertices of degree $>3$ then we see (from Figure 44(xi)) that
$c^{\ast}(\hat{\Delta}) \leq - \frac{2 \pi}{3} + \frac{3 \pi}{5} < 0$.  Let $\hat{\Delta}$ have exactly one vertex $u_i$ of degree $>3$.
If $d(u_6)=d(u_7)=d(u_8)=3$ then $c(u_5,u_6)=c(u_6,u_7)=c(u_7,u_8)=0$;
if $i=6$ then $cv(\hat{\Delta})=(6,0,6,6,4,1,0,0)$;
if $i=7$ then $cv(\hat{\Delta})=(6,0,6,6,0,0,3,0)$; and
if $i=8$ then $cv(\hat{\Delta})=(6,0,6,6,0,0,3,0)$.  It follows that $c^{\ast}(\hat{\Delta}) \leq -\frac{5 \pi}{6}+\frac{11 \pi}{15} < 0$.
Let $\hat{\Delta}$ have exactly two vertices of degree $>3$.
If $d(u_6)=3$ then $c(u_5,u_6)=0$ and $cv(\hat{\Delta})=(b_1,2,6,6,0,a_1,a_2,b_2)$;
if $d(u_7)=3$ then $cv(\hat{\Delta})=(b_1,2,6,6,x_2,y_2,3,b_2)$ where again $x_2+y_2=5$;
if $d(u_6)>3$ and $d(u_7)>3$ then $cv(\hat{\Delta})=(6,0,6,6,4,a_1,a_2,0)$.
It follows that $c^{\ast}(\hat{\Delta}) \leq -\pi + \pi = 0$.

\textbf{Case 12}\quad
Finally let $\hat{\Delta}$ be the region of Figure 44(xii). Suppose that $\hat{\Delta}$ has at least one vertex of degree $>4$.
Using a similar analysis as done for Case 1, it follows that
$c^{\ast} (\hat{\Delta}) \leq - \frac{38 \pi}{30} + \frac{36 \pi}{30} < 0$.  Indeed the maximum $\frac{36 \pi}{30}$ can only be obtained when
$cv(\hat{\Delta})=(0,6,0,h_1,h_2,6,0,6,4)$.  
Suppose that $\hat{\Delta}$ has no vertices of degree $>4$ and at least one vertex of degree 4.
Then noting again that $\hat{\Delta}$ is not given by Figure 40(xvii), we see from Figure 40(xiv) that  $c(u_i,u_j) = \frac{2 \pi}{15}$ for $(i,j) \in \{ (7,8),(8,9),(9,1),(1,2),(2,3),(3,4) \}$.
It follows that $c^{\ast} (\hat{\Delta}) \leq - \frac{35 \pi}{30} + \frac{32 \pi}{30} < 0$, the maximum 
$\frac{32 \pi}{30}$ being obtained when
$cv(\hat{\Delta}) = (0,4,0,h_1,h_2,6,0,4,4)$.
But if $\hat{\Delta}$ has no vertices of degree $>3$ then $c(u_4,u_5)=0$, $c(u_5,u_6)=\frac{\pi}{15}$ and,as in Case 1, for example either $c(u_1,u_2)=0$ or $c(u_1,u_2)=\frac{2 \pi}{15}$ and $c(u_2,u_3)=0$.
It follows that $cv(\hat{\Delta})=(4,0,4,0,2,6,0,4,0)$ and $c^{\ast}(\hat{\Delta}) \leq - \pi + \frac{2 \pi}{3} < 0$. This completes the proof. $\Box$

%%%%%%%%%%%%%%%%%%%%%%%%%%%%%%%%%%%%%%%%%%%%%%%%%%%%%%%%%%%%%%%%%%%%%%%%%%%%%

\section{Regions of Type $\boldsymbol{\mathcal{B}}$}%11

Let $\hat{\Delta}$ be a type $\mathcal{B}$ region as defined at the start of Section 9.  Therefore $\hat{\Delta}$ is given by Figure 13(i),14(i),31 or 32(i),(ii),(iii) or (v); in particular $d(\hat{\Delta})
\geq 8$.
A \textit{$b$-segment} of $\hat{\Delta}$ of length $k$ is a sequence of edges $e_1,\ldots,e_k$ of $\hat{\Delta}$ maximal with
respect to each vertex having degree 3 with vertex label $a(a \lambda )(b^{-1} \mu)=axy^{-1}$ and which (up to inversion) contribute one of
four possible alternating sequences to the corner labelling of $\hat{\Delta}$, namely:
$x^{-1},y^{-1}, \ldots, x^{-1}, y^{-1}$;
$x^{-1},y^{-1},\ldots,y^{-1},x^{-1}$;
$y^{-1},x^{-1}$;
$y^{-1},x^{-1},\ldots,y^{-1},x^{-1}$;
$y^{-1},x^{-1},\ldots,x^{-1},y^{-1}$.
An example showing the first sequence is given in Figure 45(i) and so maximal in this case means that either $d(u_0)>3$ or $d(u_0)=3$ but does not
extend the sequence to $\bar{y},\bar{x},\bar{y},\ldots,\bar{x},\bar{y}$; and that either $d(u_{k+2}) > 3$ or $d(u_{k+2})=3$ but does not extend the sequence to
$\bar{x},\bar{y},\ldots,\bar{x},\bar{y},\bar{x}$.  Since $\hat{\Delta}$ is of type $\mathcal{B}$, it must contain at least one $b$-segment in which
at least one of the regions $\Delta_i$ ($1 \leq i \leq k$) is given by the region $\Delta$ in Figure 13(i),14(i) and we will from now on call
such a region $\Delta_i$ a \textit{$b$-region}. Therefore a $b$-region contributes at most $\frac{\pi}{3}$ to $\hat{\Delta}$.
(If $\Delta_i$ is not a $b$-region then, as shown in Figure 35, it contributes at most $\frac{\pi}{5}$ to $\hat{\Delta}$.)

\newpage
\begin{figure}
\begin{center}
\psfig{file=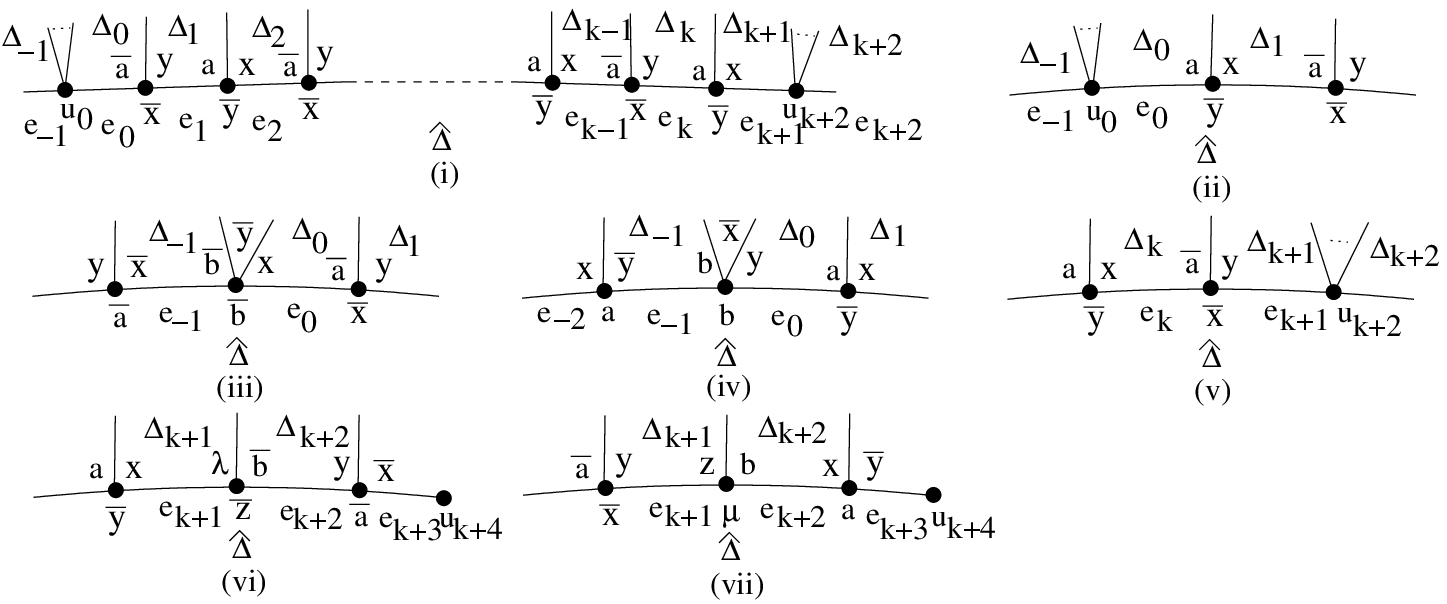}
\end{center}
\caption{}
\end{figure}

\noindent \textit{The absorption rules for edges and vertices described in Section 10 apply also to $\hat{\Delta}$.} In Figure 31 $\hat{\Delta}$ receives $\frac{\pi}{5},\frac{2 \pi}{15}$ from 
$\Delta_1,\Delta_2$
so the
vertex of degree 4 with label $b^{-1} b^{-1} y^{-1} x$ is used to absorb $\frac{\pi}{15}$; and in Figure 32(i)-(ii) $\hat{\Delta}$ receives
$\frac{\pi}{5}$
across an edge, $e$ say, but checking Figures 36-38 shows that $\hat{\Delta}$ receives no curvature from $\hat{\Delta}_1$ across the
neighbouring
edge which is used to absorb $\frac{\pi}{15}$ noting from Figure 32(i)-(ii) that this is all the curvature that this edge will absorb (relative to
curvature transferred to $\hat{\Delta}$).

It follows from the above paragraph and as in the proof of Lemma 10.1 that if the $b$-segments containing at least one $b$-region of $\hat{\Delta}$ contribute a total of $n_1$ edges to $\hat{\Delta}$ then putting $n=n_1+n_2$,
\[
c^{\ast}(\hat{\Delta}) \leq (2-(n_1+n_2)) \pi + 2(n_1 + n_2) \frac{\pi}{3} + n_1 \frac{\pi}{3} + n_2 \frac{2 \pi}{15} = \pi \left( 2 - \frac{n_2}{5} \right).\qquad (\dag)
\]
Therefore if $n_2 \geq 10$ then $c^{\ast}(\hat{\Delta}) \leq 0$.  The next result improves this bound slightly.

\begin{lemma}%\textbf{Lemma 11.1}\quad
\textit{If $n_2 \geq 9$ and $\hat{\Delta}$ is not given by Figure 46 (in which the $b$-segment contains at least one $b$-region) then}
$c^{\ast}(\hat{\Delta}) \leq 0$.
\end{lemma}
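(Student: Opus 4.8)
The plan is to sharpen the estimate $(\dag)$ by keeping track of two further quantities: the number $g$ of actual $b$-regions occurring in those $b$-segments of $\hat{\Delta}$ that contain at least one $b$-region, and the degree-$\geq 4$ vertices of $\hat{\Delta}$ that play no role in the edge/vertex absorption scheme of Section 10. First I would redo the count that produced $(\dag)$ without the crude estimates. A $b$-segment containing a $b$-region and comprising $\ell$ edges contributes at most $g_0\cdot\frac{\pi}{3}+(\ell-g_0)\frac{\pi}{5}$ to $c^{\ast}(\hat{\Delta})$, where $g_0$ is the number of $b$-regions in it (Figure 35 gives the $\frac{\pi}{5}$ bound for the non-$b$-regions, and the text records the $\frac{\pi}{3}$ bound for $b$-regions); and, exactly as in the proof of Lemma 10.1, after absorption each of the $n_2$ remaining edges contributes an effective $\leq\frac{2\pi}{15}$, every absorbing vertex has its deficit cancelled, and every degree-$\geq 4$ vertex of $\hat{\Delta}$ that absorbs nothing contributes a genuine deficit $\leq-\frac{\pi}{6}$ (with $\leq-\frac{4\pi}{15}$ if its degree is $\geq 5$). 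Substituting $n=n_1+n_2$, $n_2=9$ and $n_1\geq g\geq 1$ into $c^{\ast}(\hat{\Delta})\leq c(\hat{\Delta})+(\text{received curvature})$ and collecting terms I expect to obtain
\[
c^{\ast}(\hat{\Delta}) \leq \frac{\pi}{5} - (n_1-g)\frac{2\pi}{15} - D,
\]
where $D\geq 0$ is the sum of the (un-cancelled) deficits of the idle degree-$\geq 4$ vertices.

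Given this inequality, suppose $c^{\ast}(\hat{\Delta})>0$. Then $(n_1-g)\frac{2\pi}{15}+D<\frac{\pi}{5}$, which forces $n_1-g\in\{0,1\}$ (so every edge of every relevant $b$-segment lies on a $b$-region, with at most one exceptional edge) and forces $D<\frac{\pi}{5}$. Since one idle vertex of degree $4$ already contributes $\frac{\pi}{6}$ and one of degree $\geq 5$ contributes $\geq\frac{4\pi}{15}>\frac{\pi}{5}$, the only surviving possibilities are $D=0$, or $n_1=g$ with exactly one idle vertex, of degree $4$ (whence $D=\frac{\pi}{6}$ and $c^{\ast}(\hat{\Delta})\leq\frac{\pi}{30}$). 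In every surviving case the estimate must in addition be essentially tight edge by edge: each $b$-region has both of its non-$b$ vertices of degree exactly $4$ (so as to contribute the full $\frac{\pi}{3}$), each of the $n_2$ edges transfers within $\frac{\pi}{30}$ of $\frac{2\pi}{15}$, each absorbing vertex is of degree $4$ absorbing its full $\frac{\pi}{6}$, and no further slack occurs anywhere on $\partial\hat{\Delta}$.

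With these rigidity constraints in hand the remaining argument is a finite check. I would start from one $b$-region $\Delta_i$ of $\hat{\Delta}$, given by Figure 13(i) or 14(i) with both of its non-$b$ vertices of degree $4$, read off the forced labels and degrees of the neighbouring vertices and regions using Lemma 3.4 and Figures 35, 40, 41, 42, and propagate these constraints around $\partial\hat{\Delta}$; the near-tightness of the $n_2$ edges pins each $d(u_i)$ to $3$ or $4$ and pins the corner labels, while $n_1-g\leq 1$ together with $n_2=9$ closes up the boundary in only finitely many ways. All but the configuration(s) of Figure 46 are then eliminated, either because they carry a labelling or degree contradiction or because a re-inspection shows some edge or vertex in fact carries strictly less than the extremal value used above, pushing $c^{\ast}(\hat{\Delta})$ back to $\leq 0$. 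The main obstacle is exactly this last step: the combinatorial enumeration of how a chain of $b$-regions together with the nine light edges can be assembled into one closed region subject to all the degree and label restrictions, and the verification that Figure 46 is the unique survivor; the curvature arithmetic itself is routine once each local picture has been fixed.
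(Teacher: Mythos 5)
Your refined inequality $c^{\ast}(\hat{\Delta}) \leq \frac{\pi}{5} - (n_1-g)\frac{2\pi}{15} - D$ is a legitimate sharpening of $(\dag)$ and is in the same spirit as the paper, which also proves the lemma by locating extra ``slack'' that $(\dag)$ ignores. But the rigidity step you build on it is not a valid deduction. From $c^{\ast}(\hat{\Delta})>0$ you may only conclude that the \emph{total} shortfall below the extremal values used in $(\dag)$ is less than $\frac{\pi}{5}$; you cannot conclude that each of the $n_2$ edges transfers within $\frac{\pi}{30}$ of $\frac{2\pi}{15}$, that every $b$-region attains the full $\frac{\pi}{3}$, or that the vertex degrees are pinned to $3$ or $4$. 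The shortfall can be concentrated: a single edge carrying nothing already accounts for slack $\frac{2\pi}{15}<\frac{\pi}{5}$ while everything else is extremal, so such configurations still satisfy $c^{\ast}(\hat{\Delta})>0$ but violate your ``tight edge by edge'' constraints. Consequently the finite check you describe, which is predicated on this pointwise near-tightness to ``pin the corner labels'' and close up the boundary, does not range over all configurations with $c^{\ast}(\hat{\Delta})>0$, and the conclusion that only Figure 46 survives is not established.

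There is a second, related gap: the decisive content of the lemma --- that every configuration other than Figure 46 exhibits total slack at least $\frac{\pi}{5}$ --- is exactly what your unexecuted enumeration is supposed to deliver, and your bookkeeping (only $n_1-g$ and the idle-vertex deficit $D$) does not even record the sources of slack that actually isolate Figure 46. The paper's proof works locally at the two end vertices $u_0$ and $u_{k+2}$ of a $b$-segment containing a $b$-region (Figure 45): if the end vertex has degree $\geq 5$ its leftover deficit absorbs the incoming $\frac{\pi}{15}$; if it has degree $4$ the labels of Figure 45(iii)--(iv) cap the two adjacent transfers at $\frac{7\pi}{30}+\frac{2\pi}{15}$; if it has degree $3$ its label ($b\mu z$, or the labels forced by maximality of the segment) makes the adjacent regions have degree $\geq 6$, so the end edges carry $0$ resp.\ at most $\frac{2\pi}{15}$ --- and Configuration E/F even pushes $\frac{\pi}{5}$ out of $\hat{\Delta}$. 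Each end thus yields a reduction of at least $\frac{\pi}{15}$, and at least $\frac{2\pi}{15}$ at the $\Delta_{k+1}$ end unless the local picture is that of Figure 46, giving $\frac{\pi}{15}+\frac{2\pi}{15}=\frac{\pi}{5}$ in all non-exceptional cases. Much of this slack is \emph{edge} slack at degree-$3$ end vertices, which your two tracked quantities cannot see; you would have to add edge-by-edge and $b$-region shortfalls to the inequality and then carry out a genuine case analysis at the segment ends, at which point you would essentially be reproducing the paper's argument rather than shortcutting it.
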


\textit{Proof}. We will show that the existence of a $b$-segment in which at least one $\Delta_i$ ($1 \leq i \leq k$) is a $b$-region allows us to decrease the upper bound $(\dag)$ for $c^{\ast}(\hat{\Delta})$ given above.
First consider the region $\Delta_0$ of Figure 45(i) or (ii).
In each case if $\Delta_1$ is not a $b$-region then $\hat{\Delta}$ receives at most $\frac{\pi}{5}$ from $\Delta_1$ and the upper bound for
$c^{\ast}(\hat{\Delta})$ is reduced by at least $\frac{\pi}{3}-\frac{\pi}{5}=\frac{2 \pi}{15}$, so assume the $\Delta_1$ is a $b$-region.
In particular,according to the rules in Section 10 and at the start of this section, $e_0$ absorbs no positive curvature from $\Delta_1$.
Let $d(u_0) \geq 5$ and so $u_0$ can absorb at least $\frac{2 \pi}{3} - \frac{2 \pi}{5} = \frac{4 \pi}{15}$.  Since $\hat{\Delta}$ then receives at
most $\frac{\pi}{15}$ from $\Delta_0$ (see Figure 35(ii)) and since the maximum amount any vertex absorbs is $\frac{\pi}{6}$, in particular $u_0$ from $\Delta_{-1}$,
$u_0$ can absorb the $\frac{\pi}{15}$ crossing $e_0$ and so $n_2$ in $(\dag)$ is reduced by 1, that is, $c^{\ast}(\hat{\Delta})$ is reduced  

\newpage
\begin{figure}
\begin{center}
\psfig{file=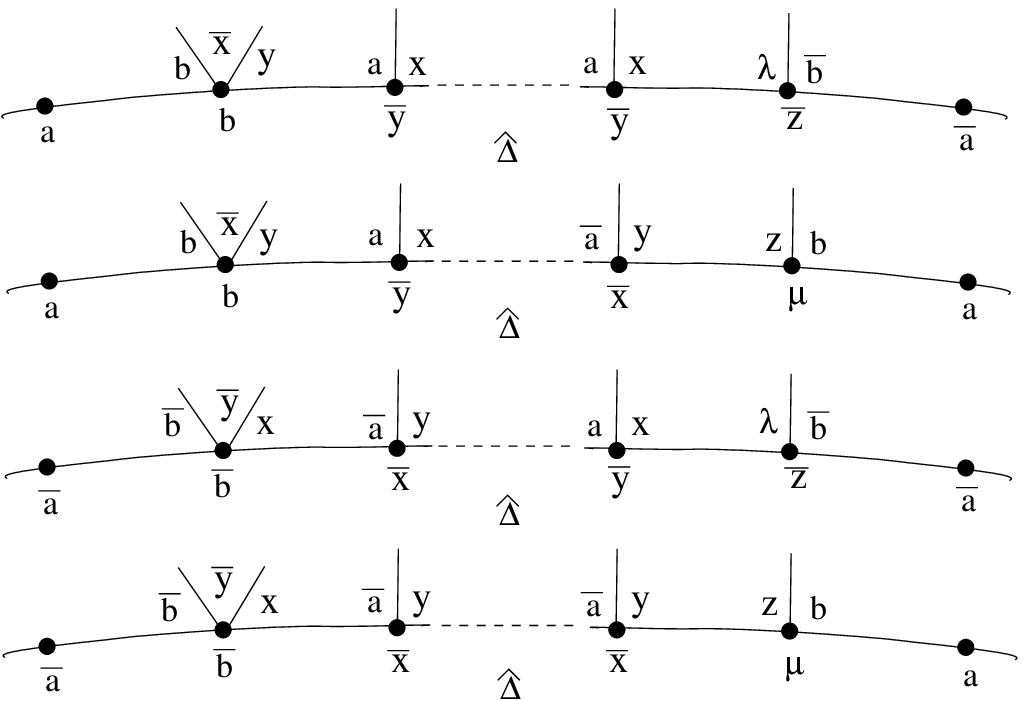}
\end{center}
\caption{}
\end{figure}   

\noindent by at least $\frac{2 \pi}{15}$.  Let $d(u_0)=4$ and so $u_0$ can absorb
$\frac{2 \pi}{3} - \frac{\pi}{2} = \frac{\pi}{6}$.  If the total curvature $\hat{\Delta}$ receives across $e_0$ and $e_{-1}$ is at most
$\frac{3 \pi}{10}$ then $c^{\ast}(\hat{\Delta})$ is reduced by at least $\frac{2 \pi}{15}$, so assume otherwise.
In particular $\hat{\Delta}$ must receive curvature from $\Delta_0$ which forces $l(u_0)$ to be as shown in Figure 45(iii) and (iv) and so (see
Figure 35(i)) $\hat{\Delta}$ receives at most $\frac{2 \pi}{15}$ from $\Delta_0$.
To exceed a total of $\frac{3 \pi}{10}$, therefore, it follows that $\hat{\Delta}$ must receive at least $\frac{\pi}{5}$ across $e_{-1}$ and so (see
Figure 40) $l(u_{-1})$ must be as shown in Figures 45(iii) and (iv) and in these figures the maximum combination $\hat{\Delta}$ can receive across
$e_{-1}, e_0$
is $\frac{7 \pi}{30}$, $\frac{2 \pi}{15}$ (see Figure 42), therefore $c^{\ast} (\hat{\Delta})$ is reduced by at least $\frac{\pi}{15}$.
Let $d(u_0)=3$.  Note that we use the fact that $l(u_0) \neq axy^{-1}$ in Figure 45(i) or (ii) for otherwise the $b$-segment would be extended, a contradiction.
Given this, $l(u_0) = b \mu z$ forces $d(\Delta_0) \geq 6$ and $d(\Delta_{-1}) \geq 
6$ and checking Figures 36-38 shows that
$\hat{\Delta}$ does
not receive curvature across $e_0$ and at most $\frac{2 \pi}{15}$ across $e_{-1}$ so $c^{\ast}(\hat{\Delta})$ is reduced by at least $\frac{2 \pi}{15}$.

Now consider the region $\Delta_{k+1}$ of Figure 45(i) and (v).
Again if $\Delta_k$ is not a $b$-region then $c^{\ast}(\hat{\Delta})$ is reduced by $\frac{2 \pi}{15}$ so assume otherwise.
In particular $e_{k+1}$ absorbs no positive curvature from $\Delta_k$.
Moreover, if $\Delta_k$ is given by $\Delta_1$ of Figure 32(iii) or (v) (Configuration E, F) then $c^{\ast}(\hat{\Delta})$ is again reduced by
$\frac{\pi}{5}$, so assume otherwise, in particular $u_{k+2}$ is not given by the corresponding vertex of $\Delta_2$ of Figure 32(iii) or (v).
Let $d(u_{k+2}) \geq 5$ and so $u_{k+2}$ can absorb $\frac{4 \pi}{15}$.
Since $\hat{\Delta}$ then receives at most $\frac{\pi}{15}$ from $\Delta_{k+1}$ (see Figure 35(ii)) and since the maximum amount $u_{k+1}$ absorbs from $\Delta_{k+2}$ is
$\frac{\pi}{6}$, $u_{k+2}$ can absorb the $\frac{\pi}{15}$ crossing $e_{k+1}$ and so $c^{\ast}(\hat{\Delta})$ is reduced by at least $\frac{2 \pi}{15}$.
Let $d(u_{k+2})=4$ and so $u_{k+2}$ can absorb $\frac{\pi}{6}$.  If $\hat{\Delta}$ does not receive curvature from $\Delta_{k+1}$ then
$c^{\ast}(\hat{\Delta})$ is reduced by $\frac{2 \pi}{15}$; otherwise checking possible vertex labels for $u_{k+2}$ shows that $l(u_{k+2})= a a z \mu$ and $\hat{\Delta}$ receives at most $\frac{7 \pi}{30}$  across $e_{k+1}$ and 0 across $e_{k+2}$, so $c^{\ast}(\hat{\Delta})$ is reduced by $\frac{2 \pi}{15}$. Let $d(u_{k+2})=3$
and so using the maximality of the $b$-segment and the fact that $u_{k+2}$ is not given by Figure 32(iii) or (v) it follows that $l(u_{k+2})$ must
be
as shown in Figures 45(vi) and (vii).  Then $d(\Delta_{k+1}) \geq 6$ and checking Figures 36-38 shows that $\hat{\Delta}$ does not receive
curvature
from $\Delta_{k+1}$.  It follows that $c^{\ast}(\hat{\Delta})$ is reduced by $\frac{2 \pi}{15}$ except possibly when $d(u_{k+3})=3$ and
$\hat{\Delta}$ receives $\frac{\pi}{6}$ or $\frac{\pi}{5}$ from $\Delta_{k+2}$ (see Figure 40).
There are four cases.
Two (see Figures 40(i), (ii), (vi) and (xv))
are given by Figure 45(vi) and (vii) where $\hat{\Delta}$ can receive $\frac{\pi}{5}$ from $\Delta_{k+2}$ and $c^{\ast}(\hat{\Delta})$ is reduced by
$\frac{\pi}{15}$; and two (see Figures 40(xiii) and (xiv)) are given by Figures 37(iii) and 38(iii) in which the region $\hat{\Delta}_2$,
$\hat{\Delta}_1$, $\Delta_2$ (respectively) plays the role of the region $\hat{\Delta}$, $\Delta_{k+2}$, $\Delta_{k+3}$ (respectively) which
implies $l(u_{k+4})=bbx^{-1}y$ so,in particular, $e_{k+3}$ does not absorb curvature from $e_{k+4}$ (relative to $\hat{\Delta}$).  In each of these last two cases $\hat{\Delta}$ receives 
$\frac{\pi}{6}$
from $\Delta_{k+2}$ and $\frac{\pi}{10}$ from $\Delta_{k+3}$, and since $\hat{\Delta}$ does not receive curvature from $\Delta_{k+1}$
it follows that
$c^{\ast}(\hat{\Delta})$ is reduced by $\frac{2 \pi}{15}$.

It follows from the above that if the $b$-segment of Figure 45(i) is not given by Figure 46 then there is a reduction of at least $\frac{\pi}{15} + \frac{2
\pi}{15} = \frac{3 \pi}{15}$ to
$c^{\ast}(\hat{\Delta})$ (if $e_{k+2}=e_0$ the reduction is also $\frac{3 \pi}{35}$)
therefore $c^{\ast}(\hat{\Delta}) \leq \pi \left( 2 - \frac{n_2}{5} \right) - \frac{3 \pi}{15}$ and so $n_2 \geq 9 \Rightarrow c^{\ast}(\hat{\Delta}) \leq 0$. $\Box$

\begin{figure}
\begin{center}
\psfig{file=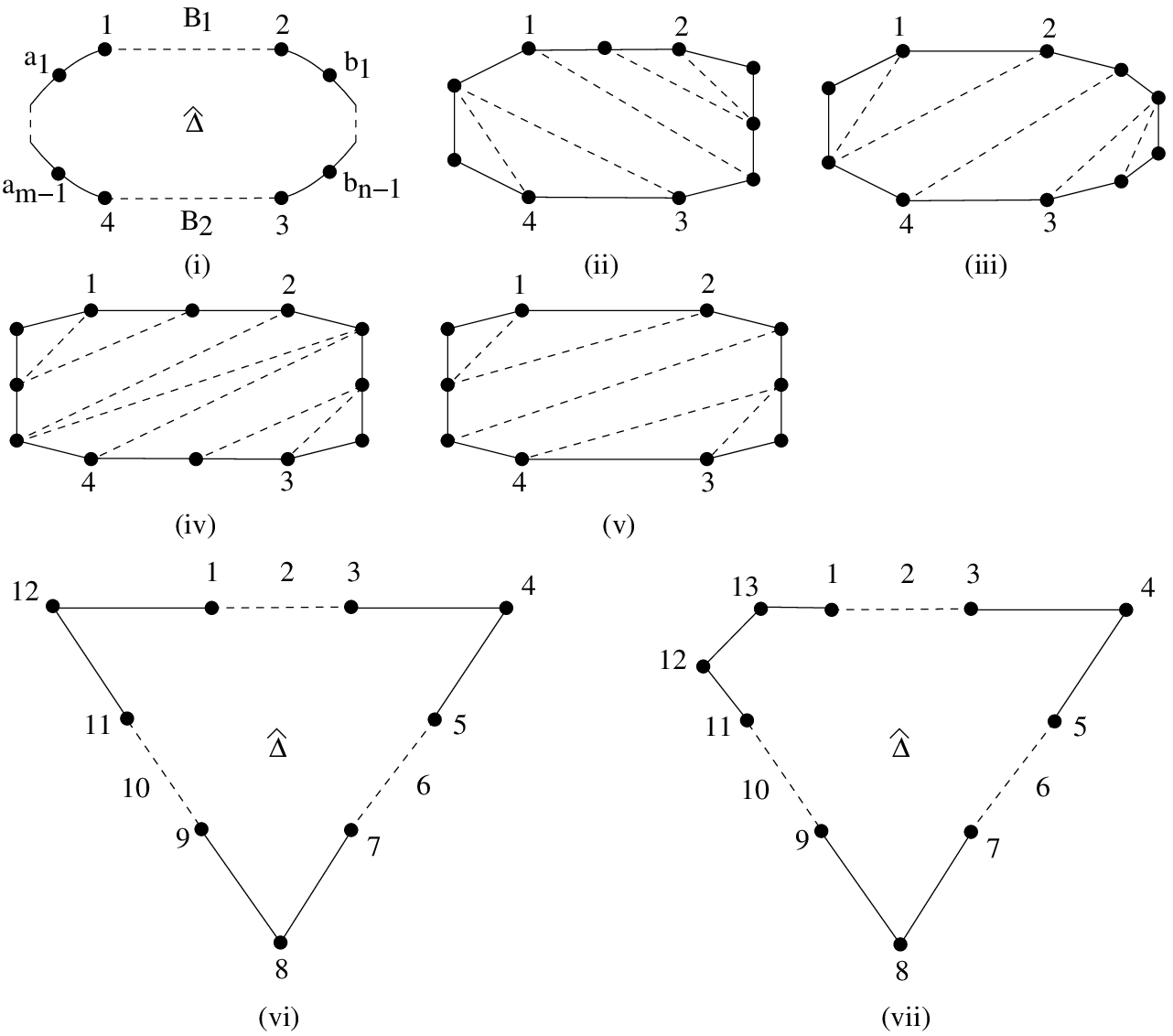}
\end{center}
\caption{}
\end{figure}

\newpage
\begin{figure}
\begin{center}
\psfig{file=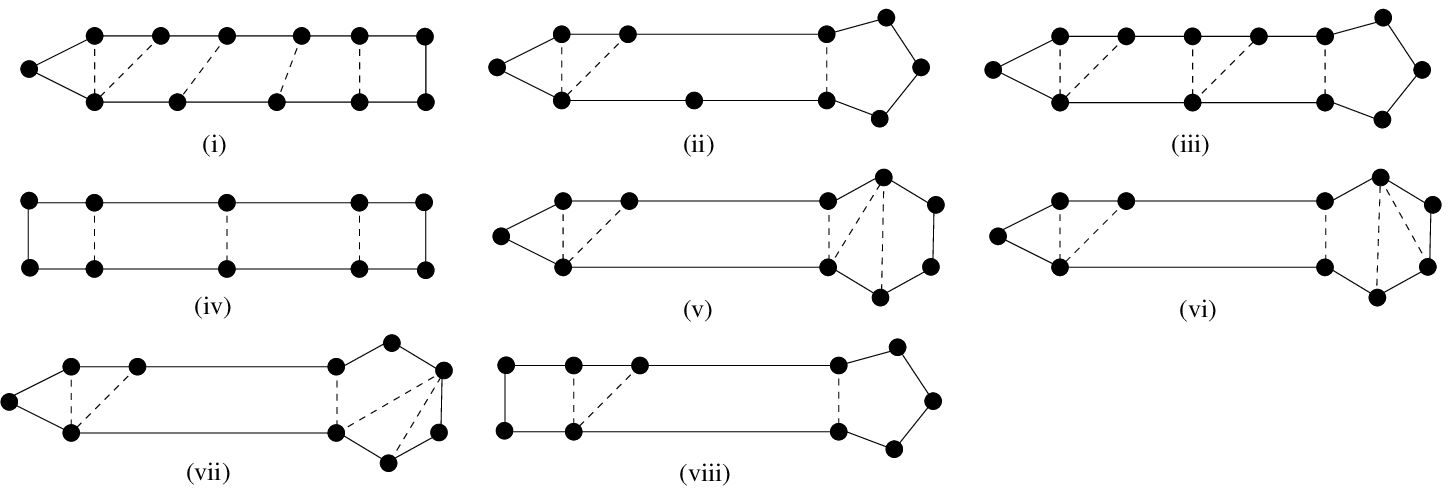}
\end{center}
\caption{}
\end{figure}

\begin{lemma}%\textbf{Lemma 11.2} \quad
\textit{Let $\hat{\Delta}$ be a type $\mathcal{B}$ region such that $d(\hat{\Delta}) \geq 10$.}
\begin{enumerate}
\item[(i)]
\textit{If $\hat{\Delta}$ has exactly three $b$-segments that contain a $b$-region then $n_2 \geq 8$.}
\end{enumerate}
\textit{Assume now that $\hat{\Delta}$ has exactly two $b$-segments $B_1$ and $B_2$ that contain a $b$-region as shown in Figure 47(i) and assume
that
$(m,n) \in \{ (2,j)~(2 \leq j \leq 6), (3,3), (3,4), (3,5), (4,4)\}$ where $m,n$ are given by Figure 47(i).}
\begin{enumerate}
\item[(ii)]
\textit{$\hat{\Delta}$ must contain a shadow edge with an endpoint in $B_1$ and the other endpoint in $B_2$ except when $\hat{\Delta}$ is given by
Figure 47(ii)-(v).}
\item[(iii)]
\textit{If $v \in \hat{\Delta}$ is a vertex of $B_1$ or $B_2$ and $(m,n) \neq (2,6)$ then $i \, \mathrm{deg} (v)=1$ where $i \deg (v)$ denotes the number of shadow edges in $\hat{\Delta}$ incident at $v$.}

\begin{enumerate}
\item[(iv)]
\textit{If $(m,n) \in \{ (3,3), (3,4), (3,5), (4,4) \}$ and $\hat{\Delta}$ is not given by Figure 47(ii)-(v) there must be a shadow edge in
$\hat{\Delta}$ either from 1 to $B_2$ or from 4 to $B_1$; and there must be a shadow edge in $\hat{\Delta}$ either from 2 to $B_2$ or from 3 to $B_1$.}
\end{enumerate}
\textit{Finally assume that $\hat{\Delta}$ has exactly one $b$-segment containing a $b$-region.}
\item[(v)]
\textit{If $n_2 \leq 8$ then $\hat{\Delta}$ is given by Figure 48.}
\item[(vi)]
\textit{If $n_2=9$ and $\hat{\Delta}$ is given by Figure 46 then $\hat{\Delta}$ is one of the regions of Figure 49.}
\end{enumerate}
\end{lemma}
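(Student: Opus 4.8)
The plan is to prove (i)--(vi) by a combinatorial analysis of how the $b$-segments containing a $b$-region lie on $\partial\hat\Delta$, using throughout four facts already at hand: (a) every region of $\bs D$ meeting $\hat\Delta$ has one and the same length, which is at least $8$ (so a shadow $2$-segment of $\hat\Delta$ may be long, but a visible arc of $\partial\hat\Delta$ is short); (b) the corner labels at the vertices bounding a $b$-segment, and at the vertices $u_0,u_{k+2}$ just beyond it, are forced by the star graph $\Gamma_0$ and Lemma 3.4, so that the labelling along an arc joining two $b$-segments is severely restricted; (c) the small-region classification of Lemma 3.5 together with the prohibition of the configurations of Figure 2(v)--(vi) (the basic labelling and length contradictions); and (d) the bound $(\dag)$, $c^{\ast}(\hat\Delta)\le\pi(2-n_2/5)$, and its refinement Lemma 11.1. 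Recall that each $b$-segment carries two end regions $\Delta_0,\Delta_{k+1}$ (Figure 45), that a $b$-region contributes at most $\frac{\pi}{3}$ to $\hat\Delta$ while any other neighbour contributes at most $\frac{\pi}{5}$, and that by maximality of a $b$-segment its end vertices cannot prolong the alternating $x^{-1},y^{-1}$ pattern.

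For (i) I would argue by contradiction. Suppose $\hat\Delta$ has exactly three $b$-segments $B_1,B_2,B_3$ each containing a $b$-region, but $n_2\le 7$. The three arcs of $\partial\hat\Delta$ separating consecutive $B_j$ then have total length at most $7$, so one of them, say $A$, has length at most $2$. Pinning down by (b) the corner labels at the two vertices where $A$ meets $B_j$ and $B_{j+1}$ and then listing via Lemma 3.4 the admissible labellings of so short an arc, one checks in each case that either $A$ prolongs one of the $b$-segments (against maximality) or the region of $\bs D$ inside $\hat\Delta$ running along $A$ cannot attain the common length $\ge 8$ without meeting a second $b$-segment or producing a configuration of Figure 2(v)--(vi). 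This contradiction gives $n_2\ge 8$.

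For (ii)--(iv) I would fix the configuration of Figure 47(i): two $b$-segments $B_1,B_2$ containing $b$-regions, joined along $\partial\hat\Delta$ by arcs of lengths $m$ and $n$ with $(m,n)$ as restricted in the statement. The engine is a length count: the shadow edges of $\hat\Delta$ subdivide it into regions of $\bs D$, each of length $\ge 8$, while the $m$- and $n$-arcs are short, so the $\bs D$-regions running alongside them must borrow most of their length from shadow $2$-segments. For (ii), if no shadow edge joins $B_1$ directly to $B_2$, then the $\bs D$-region bordering (say) the $m$-arc together with $B_1$ cannot reach length $\ge 8$ except by meeting $B_2$ (excluded) or by forming a configuration of Figure 2(v)--(vi); tracking the surviving alternatives through Lemma 3.4 and Lemma 3.5 leaves exactly the four diagrams Figure 47(ii)--(v), which are then exhibited directly. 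For (iii), at a vertex $v$ of $B_1$ or $B_2$ a second shadow edge would again pinch a neighbouring $\bs D$-region below length $8$ unless $(m,n)=(2,6)$, while a neighbouring $\bs D$-region forces at least one shadow edge at $v$; hence $i\deg(v)=1$ in the remaining cases. For (iv), running the same count near the arc-ends labelled $1,4$ and near those labelled $2,3$ in Figure 47(i), once the Figure 47(ii)--(v) cases are set aside, forces a shadow edge out of $1$ or out of $4$, and one out of $2$ or out of $3$.

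Finally, for (v) and (vi) assume $\hat\Delta$ has exactly one $b$-segment $B$ containing a $b$-region. By Lemma 11.1 and $(\dag)$ we may assume $n_2\le 8$ in (v) and $n_2=9$ with $\hat\Delta$ as in Figure 46 in (vi); in either case $d(\hat\Delta)=n_1+n_2$ is bounded, so it remains to enumerate the completions of $\partial\hat\Delta$ away from $B$. Here the corner labels are constrained by $\Gamma_0$ and Lemma 3.4, the regions of degree $\le 9$ that may appear around $\hat\Delta$ are restricted by Lemma 3.5, and the positions of the shadow edges are pinned by the common-length requirement together with the Figure 2(v)--(vi) prohibition. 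Carried out, this finite search produces exactly the diagrams of Figure 48 for (v) and of Figure 49 for (vi). I expect the main obstacle to be precisely this enumeration, together with the companion shadow-edge length count in (ii)--(iv): the difficulty is organisational rather than conceptual, since one must stratify the many labelling sub-cases so that no admissible configuration is overlooked --- which is exactly the role of Figures 46--49.
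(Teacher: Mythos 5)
Your overall strategy --- a finite case analysis of shadow-edge configurations constrained by the common region length $n+1\geq 8$ and by the labelling restrictions, ending in the catalogues of Figures 47--49 --- is in the same spirit as the paper's Appendix proof, but as written it has a genuine gap: the engine that actually drives that case analysis is missing. The paper's proof rests on two facts you never isolate: first, if the corner label of $\hat{\Delta}$ at a vertex $v$ is $x$ or $y$, then by equations (3.1) the number of shadow edges of $\hat{\Delta}$ incident at $v$ is \emph{odd} (in particular at least one); second, no shadow edge of $\hat{\Delta}$ joins two vertices of the same $b$-segment. This parity constraint is what forces shadow edges to emanate from every $b$-segment vertex and makes the exhaustive elimination (by length and labelling contradictions) of all admissible shadow-edge systems both necessary and finite; your substitute in (iii), that ``a neighbouring $\bs{D}$-region forces at least one shadow edge at $v$'', is not a proof of oddness and does not supply the counting needed in (i), (ii) and (iv).

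Moreover, the local mechanism you propose for (i) does not work. A shadow edge is a $2$-segment of $\bs{D}$ and can have length up to $n-1$, so a $\bs{D}$-region running alongside an arc of $\partial\hat{\Delta}$ of length $2$ can perfectly well attain length $n+1$ by using a single long shadow edge; there is no local length deficit to exploit. Also, ``meeting a second $b$-segment'' is not a contradiction: shadow edges joining distinct $b$-segments are exactly what part (ii) says must generically occur. The contradictions in the paper arise globally --- once parity forces a system of shadow edges, the forced lengths of particular shadow edges (for instance two edges of length $n-1$ inside one $\bs{D}$-region give $n+1=2n$) or the basic labelling configurations of Figure 2(v)--(vi) eliminate every candidate system case by case (Figures A.2--A.4). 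Similarly, for (v) and (vi) the paper does not merely ``search'': it decomposes $\hat{\Delta}$ along the shadow edges leaving the ends of the unique $b$-segment into three subregions (Figure A.5) and enumerates the possibilities for each piece separately (Figures A.6--A.9). Without the parity constraint and without such a structured decomposition, your proposed enumeration has no finite, checkable list of cases, so the proof as sketched cannot be completed.
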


\textit{Proof}.
The proof is elementary but lengthy so we have omitted it.
Full details can be found in the Appendix.
As an illustration we give part of the proof of (i).

Let $\hat{\Delta}$ have exactly three $b$-segments and supppose by way of contradiction that $n_2 \leq 7$. Since there are at least two edges between any two $b$-segments
it follows that $\hat{\Delta}$ is given by Figure 47(vi) ($n_2=6$) or 47(vii) ($n_2=7$) in which 2,6,10 refer to the (possibly empty) set 

\newpage
\begin{figure}  
\begin{center}
\psfig{file=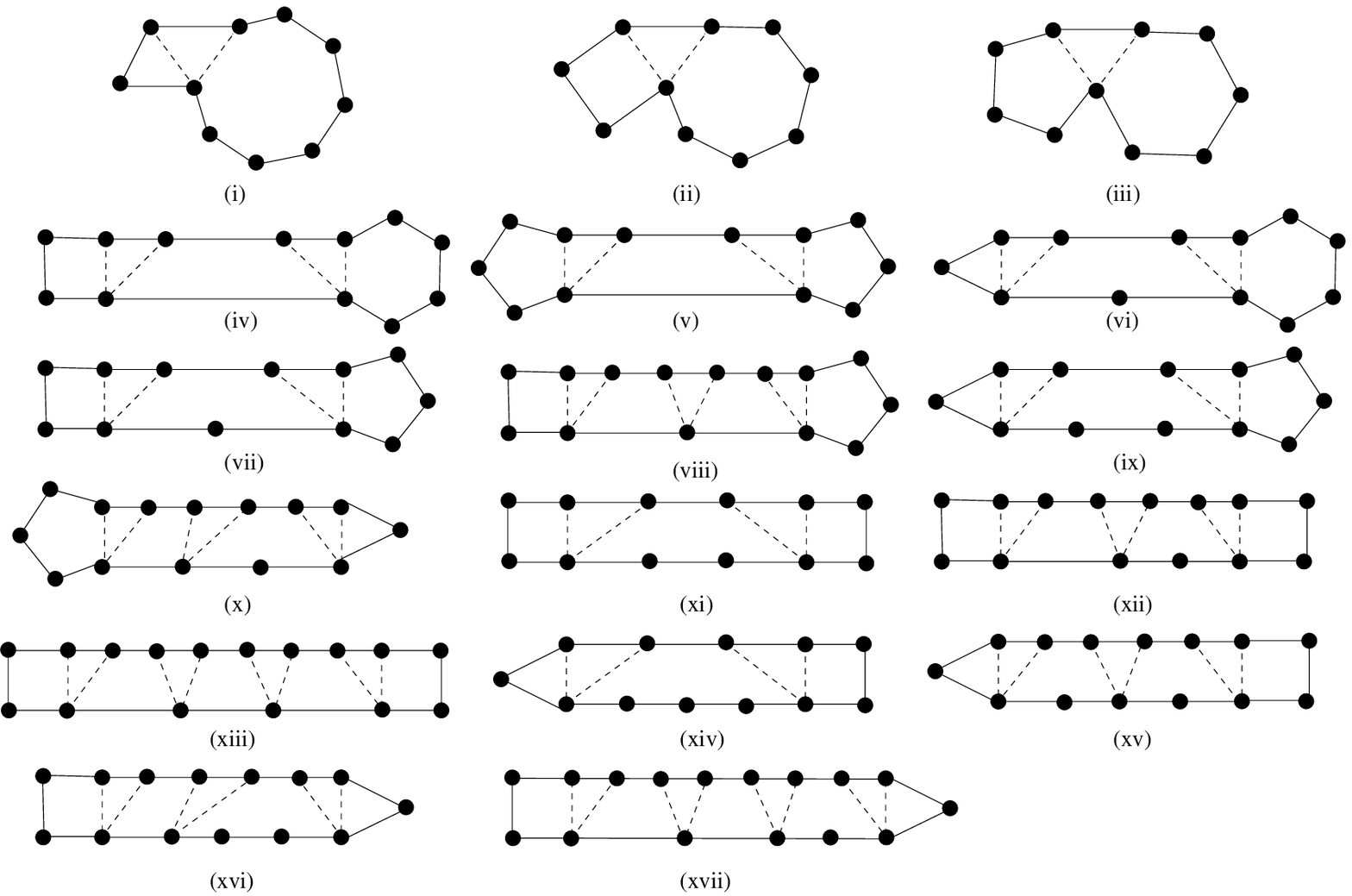}
\end{center}
\caption{}
\end{figure}

\noindent of vertices between vertices 1 and 3, 5 and 7, 9 and 11.
We will consider the case $n_2=6$ only.

We remark here that if the corner label at a vertex $v$ of $\hat{\Delta}$ is $x$ or $y$ then it follows from equations (3.1) in Section 3 that there must be an odd number of shadow edges in $\hat{\Delta}$
incident at $v$ and it is clear that there are no shadow edges in $\hat{\Delta}$ connecting two vertices in the same $b$-segment.   
We write $(ab)$ to indicate there is a shadow edge between vertices $a$ and $b$ with the understanding that if $a=2$, for example, we mean a vertex belonging to $a$.

Consider Figure 47(vi).
By the previous remark the number of $(ab)$ involving each of 1, 3, 5, 7, 9 and 11 must be odd. 
It also follows that if $\{ a,b \} \subseteq \{ 12,1,2,3,4 \}$ or $\{ 4,5,6,7,8 \}$ or $\{ 8,9,10,11,12 \}$ then $(ab)$ does not occur.
Moreover (18) forces (19), ($1\,11$) and this in turn forces a basic labelling contradiction (see Section 3), termed LAC.
It follows that the only pairs involving 4, 8 or 12 are ($4\,10$), (28) and ($6\,12$).
First assume that none of (35), (79) or ($1\,11$) occur. 
Then since (15), (16) and (17) each forces (35), and (19), ($1\,10$) each forces ($1\,11$), we get a contradiction.
Assume exactly one of (35), (79), ($1\,11$) occurs -- without any loss (79).
Then again (15), (16) and (17) each force (35), and (19) and ($1\,10$) each force ($1\,11$), a contradiction.
Assume exactly two of (35), (75), ($1\,11$) occur -- without any loss (35) and (79).
Then (19) and ($1\,10$) each forces ($1\,11$), a contradiction; and (16) and (17) each forces a basic length contradiction at (35) (a shadow edge of length $n-1$) or forces either the pair (52), (52) or (52), (51) or $(36),(36)$ or 
$(36),(37)$ yielding LAC.
This leaves (15).
Since the number of $(ab)$ involving 5 must be odd at least one of (59), ($5\,10$) or ($5\,11$) occurs.
But (59) forces ($11\,5$) and ($5\,10$); ($5\,10$) forces ($11\,5$) and another ($5\,10$); and ($5\,11$) forces either a length contradiction at (79) or forces $(95), (96)$ or $(96),(96)$ or $(7\,10),(7\,10)$ or $(7\,10),(7\,11)$ 
yielding LAC in all cases.
Finally assume that $(1\,11)$, (35) and (79) occur.
Since the length of each is $n-1$ we must have more pairs otherwise there is a length contradiction.
Assume without any loss that 1 is involved in further pairs.
Since (16) and (17) each forces either $(36),(36)$ or $(36)(37)$ or $(52),(52)$ or $(52)(51)$ yielding LAC it follows that at least two of (15), (19) and $(1\,10)$ occur. However
$(19),(1\,10)$ and 
$(1\,10),(1\,10)$ yield LAC and $(15),(19)$ forces (59) and LAC.
This leaves $(15),(1\,10)$ together with at least one of (25), (59), $(5\,10)$.
But (25) yields LAC; (59) forces (19) or (69) and LAC; and finally $(5\,10)$ forces either a length contradiction or one of $(7\,10)$, $(7\,10)$ or (59)(69) or (69)(69) and LAC,
our final contradiction.$\Box$

\medskip

\textbf{Notation} \quad Throughout the following proofs we will use non-negative integers\newline
$a_1,a_2,b_1,b_2,c_1,c_2,d_1,d_2,e_1,e_2,f_1,f_2$ where:
$a_1+a_2=7$; $b_1+b_2=8$; $c_1+c_2=9$; $d_1+d_2=10$; $e_1+e_2=11$; and $f_1 + f_2 = 12$.

\begin{proposition}%\textbf{Proposition 11.3} \quad
\textit{Let $\hat{\Delta}$ be a type $\mathcal{B}$ region.
If $d(\hat{\Delta}) < 10$ then $c^{\ast} (\hat{\Delta}) \leq 0$.}
\end{proposition}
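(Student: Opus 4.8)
The plan is to handle the finitely many shapes of a type $\mathcal{B}$ region of degree $8$ or $9$ by the same case analysis used for type $\mathcal{A}$ regions in Proposition 10.4, but now keeping track of the $b$-segment(s) containing a $b$-region, which may contribute up to $\frac{\pi}{3}$ (rather than $\frac{\pi}{5}$) across a $(b,a)$-edge. The first step is to invoke Lemma 11.2(v): if $n_2 \le 8$ (which is automatic here since $n = d(\hat\Delta) \le 9$), then $\hat\Delta$ is one of the finitely many configurations listed in Figure 48. Since $\hat\Delta$ has only one $b$-segment containing a $b$-region in those pictures, and $d(\hat\Delta)\in\{8,9\}$, this reduces the proposition to checking $c^{\ast}(\hat\Delta)\le 0$ for each region of Figure 48 with at most $9$ sides. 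The key tools are: the absorption rules of Section 10 (which Lemma 11.1 and the opening of Section 11 confirm apply verbatim to $\hat\Delta$), Lemma 9.1 and Lemma 9.2 for the possible values of $c(e_i)$ and the incompatibilities between $c(e_i)$ and $c(e_{i\pm1})$, the $(b,a)$-edge data of Figure 41, and the remark preceding Proposition 10.4 about $\hat\Delta_1$ of Figure 36 and $\hat\Delta_2$ of Figures 37--38 receiving no (or restricted) curvature from the adjacent $\Delta$.

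The second step is the case-by-case bookkeeping. For each admissible labelled region $\hat\Delta$ of degree $8$ or $9$ from Figure 48, I would write $c^{\ast}(\hat\Delta) = c(\hat\Delta) + \sum_i c_i$, where $c_i$ is the curvature received across $e_i$, locate the unique $b$-segment, bound the contributions of its $(b,a)$-edges using Figure 41 and Lemma 9.2(i)--(ix), bound the remaining edges using Figure 40 and Lemma 9.1, and then do the standard split according to how many vertices of $\hat\Delta$ have degree $>3$ (no such vertex; exactly one; exactly two; at least three). As in Cases 1--12 of Proposition 10.4, when $\hat\Delta$ has at least three vertices of degree $\ge 4$ we get $c(\hat\Delta) \le -\frac{7\pi}{6}$ (degree $8$) or $\le -\frac{38\pi}{30}$ (degree $9$) and the crude bound $\sum_i c_i \le \frac{4\pi}{3}$ already suffices; the delicate work is for $\hat\Delta$ with at most two vertices of degree $>3$, where one must use the vertex-labelling restrictions (e.g. $l(u)=bbx^{-1}y$ forcing $c$ to vanish across one incident $(b,a)$-edge) together with the $cv(\hat\Delta)$-notation to bound $\sum_i c_i$ tightly. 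Since a $b$-segment forces all its interior vertices to have degree $3$ and label $axy^{-1}$, the cases with few high-degree vertices are in fact quite constrained.

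A separate small step is to confirm that the degree-$8$ and degree-$9$ regions of Figure 48 really are the only type $\mathcal{B}$ regions with $d(\hat\Delta)<10$ apart from those already excluded. Here I would note that a type $\mathcal{B}$ region with a single $b$-region and $d(\hat\Delta)\le 9$ has $n_2 = d(\hat\Delta) - (\text{length of the }b\text{-segment}) \le 8$, and that multiple $b$-segments each containing a $b$-region would, by Lemma 11.2(i) applied to three such, force $n_2\ge 8$ hence $d(\hat\Delta)\ge 11$; the two-$b$-segment case with $d(\hat\Delta)\le 9$ is covered by the restricted list $(m,n)$ of Lemma 11.2 and the exceptional pictures 47(ii)--(v), which can be checked directly to have $c^{\ast}\le 0$ or to have degree $\ge 10$. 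So in practice the proposition is: apply Lemma 11.2(v), then run Proposition 10.4-style arithmetic on the short list in Figure 48.

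The main obstacle is the sheer volume of the subcase arithmetic: each picture in Figure 48 spawns a $cv(\hat\Delta)$ computation for every distribution of its $\le 2$ vertices of degree $>3$, and one must repeatedly invoke the precise exclusion rules (Lemma 9.1(iii), 9.2, Figures 40--42) to prevent two large $c_i$ from occurring at adjacent edges. The conceptual content is nil beyond what Proposition 10.4 already establishes; the risk is purely in mis-tabulating a $cv$ vector or overlooking a labelling constraint. I expect the hardest individual subcases to be those where the $b$-segment is short (length $1$ or $2$), so that $\hat\Delta$ has many non-$(b,a)$ edges carrying curvature up to $\frac{7\pi}{30}$, and where $\hat\Delta$ simultaneously sits as an exceptional region of Configuration A or B (Figure 31) — there the $\frac{\pi}{5}$ transfer must be reconciled with the $\frac{\pi}{15}$ absorbed by the degree-$4$ vertex labelled $b^{-1}b^{-1}y^{-1}x$, exactly as flagged in the opening paragraph of Section 11.
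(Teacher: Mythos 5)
Your reduction step is wrong, and it is the step that carries all the weight. Lemma 11.2 is stated (and proved in the Appendix) under the hypothesis $d(\hat{\Delta}) \geq 10$: throughout its proof the subcases with $d(\hat{\Delta}) \leq 9$ are discarded precisely because they fall outside that hypothesis, so Figure 48 is a list of type $\mathcal{B}$ regions of degree $\geq 10$ (in fact of degree 10 or 12, as the labelling in Figure 51 shows). Consequently there are no ``degree-8 and degree-9 regions of Figure 48'' for you to check; your plan would either apply Lemma 11.2(v) outside its range of validity or, in your ``separate small step,'' conclude vacuously that no type $\mathcal{B}$ region of degree $<10$ exists. That conclusion is false: type $\mathcal{B}$ regions have $d(\hat{\Delta}) \geq 8$, and the degree-8 and degree-9 possibilities are genuine (a region of Figure 44(viii) with $d(u_3)=d(u_4)=3$, for instance, can receive up to $\frac{\pi}{3}$ across its $(b,a)$-edge from a $b$-region as in Figure 13(i) or 14(i)). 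The same objection applies to your use of Lemma 11.2(i)--(iv) and the $(m,n)$ list to dispose of multiple $b$-segments in the degree $\leq 9$ range: those statements also assume $d(\hat{\Delta}) \geq 10$.

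The correct reduction is the one via Lemma 10.3: a region of degree 8 or 9 is one of the twelve labelled regions of Figure 44, and the type $\mathcal{B}$ condition (a $b$-segment with a $b$-region, i.e.\ consecutive degree-3 vertices labelled $axy^{-1}$ flanking a $(b,a)$-edge) singles out Figures 44(viii), (x) and (xi), with the extra vertex conditions recorded there (e.g.\ $d(u_3)=d(u_4)=3$ in 44(viii)). One then runs the $cv(\hat{\Delta})$ bookkeeping on these three shapes, split by the number of vertices of degree $>3$, taking into account that the $(b,a)$-edge may carry $\frac{\pi}{3}$, that Configurations E and F of Figure 32(iii),(v) force $\frac{\pi}{5}$ to be redistributed out of $\hat{\Delta}$ in certain subcases, and the restriction on what $\hat{\Delta}_2$ of Figures 37(iv)/38(iv) can receive. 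Your second step describes arithmetic of the right general flavour, but because you would be running it over the wrong (indeed empty) list of regions, the proof as proposed does not establish the proposition.
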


\textit{Proof}.
If $d(\hat{\Delta}) < 10$ then by Lemma 10.3 $\hat{\Delta}$ is given by Figure 44(viii), (x) or (xi).

\textbf{Case 1}\quad
Let $\hat{\Delta}$ be given by Figure 44(viii) in which it is now assumed that $d(u_3)=d(u_4)=3$.  
Observe from Figures 41 and 42 that $c(u_6,u_7) + c(u_7,u_8) \leq \frac{11 \pi}{30}$.
Therefore $cv(\hat{\Delta})=(4,4,10,2,2,e_1,e_2,2)$ so
$c^{\ast}(\hat{\Delta}) \leq c(\hat{\Delta}) + \frac{7 \pi}{6}$ and if $\hat{\Delta}$ has at least three vertices of degree $\geq 4$ then $c^{\ast}(\hat{\Delta}) \leq 0$.
If $\hat{\Delta}$ has no vertices of degree $>3$ then $cv(\hat{\Delta}) = (0,0,10,0,0,0,2,0)$ and $c^{\ast}(\hat{\Delta}) \leq - \frac{2 \pi}{3} + \frac{2 \pi}{5} < 0$.
Let $\hat{\Delta}$ have exactly one vertex $u_i$ of degree $>3$.
If $i=1$ then $cv(\hat{\Delta})=(4,0,10,0,0,0,2,2)$;
if $i=2$ then $cv(\hat{\Delta})=(4,4,10,0,0,0,2,0)$;
if $i=5$ then $cv(\hat{\Delta})=(0,0,10,2,2,0,2,0)$;
if $i=6$ then $cv(\hat{\Delta})=(0,0,10,0,2,4,2,0)$;
if $i=7$ then $cv(\hat{\Delta})=(0,0,10,0,0,e_1,e_2,0)$; and
if $i=8$ then $cv(\hat{\Delta})=(0,0,10,0,0,0,9,2)$.
It follows that $c^{\ast}(\hat{\Delta}) \leq -\frac{5 \pi}{6}+\frac{21 \pi}{30} < 0$.
Let $\hat{\Delta}$ have exactly two vertices $u_i,u_j$ of degree $>3$.
If $d(u_7)=3$ then $cv(\hat{\Delta})=(4,4,10,2,2,4,2,2)$ and $c^{\ast}(\hat{\Delta}) \leq -\pi + \pi = 0$ so assume $i=7$.
If $j=1$ then $cv(\hat{\Delta})=(4,0,10,0,0,e_1,e_2,2)$;
if $j=2$ then $cv(\hat{\Delta})=(4,4,10,0,0,e_1,e_2,0)$;
if $j=5$ then $cv(\hat{\Delta})=(0,0,10,2,2,e_1,e_2,0)$;
if $j=6$ then $cv(\hat{\Delta})=(0,0,10,0,2,e_1,e_2,0)$; and
if $j=8$ then $cv(\hat{\Delta})=(0,0,10,0,0,e_1,e_2,2)$.
It follows that $c^{\ast}(\hat{\Delta}) \leq - \pi + \frac{29 \pi}{30} < 0$.

\medskip

\textbf{Remark 1}\quad If $\hat{\Delta}$ is given by Figure 44(x) or (xi) then it is now assumed that $d(u_4)=3$, at least one of $d(u_3)$, $d(u_5)$ equals 3 and $d(u)>3$
Note that in both figures if $d(u)>4$ and $d(u_3)=3$ then $c(u_3,u_4)= \frac{7 \pi}{30}$;if $d(u)>4$ and $d(u_3)>3$ then $c(u_3,u_4)= \frac{ \pi}{5}$;if $d(u)>4$ and $d(u_5)=3$ then $c(u_4,u_5)= \frac{7 \pi}{30}$;
and if $d(u)>4$ and $d(u_5)>3$ then $c(u_4,u_5)= \frac{ \pi}{5}$.
Note also that if $d(u)=4$,$d(u_5)=d(u_6)=3$ in Figure 44(x) or $d(u_2)=d(u_3)=3$ in Figure 44(xi )and $\hat{\Delta}$ receives more than $\frac{2 \pi}{15}$ across the $(u_4,u_5)$-edge, $(u_3,u_4)$-edge (respectively) 
then according to 
Configuration E in Figure 32(iii), Configuration F in Figure 32(v) (respectively) the surplus
of at most  $\frac{ \pi}{5}$ is distributed out of $\hat{\Delta}$.

\textbf{Remark 2}\quad In Figure 44(x) if $d(u_1)=3$ then, by the remark immediately preceeding Proposition 10.4, $c(u_1,u_2)+c(u_2,u_3) \leq \frac{\pi}{6}$ and this bound can
only be attained when $c(u_1,u_2)=\frac{\pi}{30}$, $c(u_2,u_3)=\frac{2 \pi}{15}$, $\hat{\Delta}=\hat{\Delta}_2$ of Figure 37(iv) and checking shows that $\Delta_6$ of Figure 37(iv)
must then be  $\hat{\Delta}_4$ of Figure 18(ii); in particular, the vertices $u_2$, $u_3$ and $u_8$ of $\hat{\Delta}$ have degree $>3$. Similarly if $d(u_7)=3$ in Figure 44(xi) then 
$c(u_5,u_6)+c(u_6,u_7)=\frac{\pi}{6}$ forces the vertices $u_5$, $u_6$ and $u_8$ of $\hat{\Delta}$ to have degree $>3$ (see Figure 38(iv)).

\medskip

\textbf{Case 2}\quad
Let $\hat{\Delta}$ be given by Figure 44(x) in which case (see Proposition 10.4 Case 10)
$cv(\hat{\Delta})=(a_2,4,10,10,2,b_1,b_2,a_1)$, so
$c^{\ast}(\hat{\Delta}) \leq c(\hat{\Delta}) + \frac{41 \pi}{30}$ and
if $\hat{\Delta}$ has at least five vertices of degree $> 3$ then $c^{\ast}(\hat{\Delta}) \leq 0$.
If $\hat{\Delta}$ has no vertices of degree $>3$ then it follows by Remark 1 that either $d(u)=4$, $cv(\hat{\Delta})=(0,0,10,10,0,6,0,0)$ and
$c^{\ast}(\hat{\Delta}) \leq -\frac{2 \pi}{3} + \frac{13 \pi}{15} - \frac{\pi}{5}=0$ or $d(u) > 4$,
$cv(\hat{\Delta})=(0,0,7,7,0,6,0,0)$ and $c^{\ast}(\hat{\Delta}) \leq - \frac{2 \pi}{3} + \frac{2 \pi}{3} =0$.
Let $\hat{\Delta}$ have exactly one vertex $u_i$ of degree $>3$ and assume that $d(u)=4$.
If $i=1$ then $l(u_2)=b \mu z$ implies $cv(\hat{\Delta})=(0,0,10,10,0,6,0,3)$;
if $i=2$ then $cv(\hat{\Delta})=(x_1,y_1,10,10,0,6,0,0)$ where $x_1+y_1=4$ by Remark 2;
if $i=3$ then $cv(\hat{\Delta})=(0,0,6,10,0,6,0,0)$;
if $i=5$ then $cv(\hat{\Delta})=(0,0,10,6,2,6,0,0)$;
if $i=6$ and $d(u_6)=4$ then $cv(\hat{\Delta})=(0,0,10,10,2,0,0,0)$;
if $i=6$ and $d(u_6) > 4$ then $cv(\hat{\Delta})=(0,0,10,10,2,2,0,0)$;
if $i=7$ then $cv(\hat{\Delta})=(0,0,10,10,0,b_1,b_2,0)$; and
if $i=8$ then $l(u_7)= \lambda b^{-1}z^{-1}$ implies $cv(\hat{\Delta})=(0,0,10,10,0,6,0,3)$.
It follows that
if $d(u_5)=d(u_6)=3$ then $c^{\ast}(\hat{\Delta}) \leq - \frac{5 \pi}{6} + \pi - \frac{\pi}{5} < 0$;
otherwise $c^{\ast}(\hat{\Delta}) \leq -\frac{5 \pi}{6} + \frac{24 \pi}{30} < 0$.
If now $d(u) > 4$ then each $cv(\hat{\Delta})$ is altered by replacing each 10 by 7 and it follows that $c^{\ast}(\hat{\Delta}) \leq - \frac{5 \pi}{6} + \frac{24 \pi}{30} < 0$.
Let $\hat{\Delta}$ have exactly two vertices $u_i,u_j$ of degree $>3$ and assume that $d(u)=4$.
If $(i,j)=(1,2)$ then $cv(\hat{\Delta})=(a_2,4,10,10,0,6,0,a_1)$ and so
if $d(u_1)>4$ or $d(u_2)>4$ then
$c^{\ast}(\hat{\Delta}) \leq -\frac{11 \pi}{10} + \frac{37 \pi}{30} - \frac{\pi}{5} < 0$; and
if $d(u_1)=d(u_2)=4$ then $c(u_8,u_1)=0$ and, moreover, $c(u_1,u_2) > \frac{2 \pi}{15}$ and $l(u_3)=axy^{-1}$ together imply (see Figure 40(x)) that $c(u_2,u_3)=0$ so
$cv(\hat{\Delta})=(b_1,b_2,10,10,0,6,0,0)$ and $c^{\ast}(\hat{\Delta} \leq -\pi + \frac{17 \pi}{15} - \frac{\pi}{5} < 0$.
If $(i,j)=(1,3)$ then $cv(\hat{\Delta})=(0,0,6,10,0,6,0,3)$;
if $(i,j)=(1,5)$ then $cv(\hat{\Delta})=(0,0,10,6,2,6,0,3)$;
if $(i,j)=(1,6)$ and $d(u_6)=4$ then $cv(\hat{\Delta})=(0,0,10,10,2,0,0,3)$;
if $(i,j)=(1,6)$ and $d(u_6) > 4$ then $cv(\hat{\Delta})=(0,0,10,10,2,2,0,3)$;
if $(i,j)=(1,7)$ then (see Proposition 10.4 Case 10) $cv(\hat{\Delta})=(0,0,10,10,0,b_1,b_2,3)$;
if $(i,j)=(1,8)$ then $cv(\hat{\Delta})=(0,0,10,10,0,6,0,3)$;
if $(i,j)=(2,3)$ then $cv(\hat{\Delta})=(x_1,y_1,6,10,0,6,0,0)$;
if $(i,j)=(2,5)$ then $cv(\hat{\Delta})=(x_1,y_1,10,6,2,6,0,0)$;
if $(i,j)=(2,6)$ and $d(u_6)=4$ then $cv(\hat{\Delta})=(x_1,y_1,10,10,2,0,0,0)$;
if $(i,j)=(2,6)$ and $d(u_6)>4$ then $cv(\hat{\Delta})=(x_1,y_1,10,10,2,2,0,0)$;
if $(i,j)=(2,7)$ then $cv(\hat{\Delta})=(x_1,y_1,10,10,0,b_1,b_2,0)$;
if $(i,j)=(2,8)$ then $cv(\hat{\Delta})=(x_1,y_1,10,10,0,6,0,3)$;
if $(i,j)=(3,6)$ then $cv(\hat{\Delta})=(0,0,6,10,2,6,0,0)$;
if $(i,j)=(3,7)$ then $cv(\hat{\Delta})=(0,0,6,10,0,b_1,b_2,0)$;
if $(i,j)=(3,8)$ then $cv(\hat{\Delta})=(0,0,6,10,0,6,0,3)$;
if $(i,j)=(5,6)$ then $cv(\hat{\Delta})=(0,0,10,6,2,6,0,0)$;
if $(i,j)=(5,7)$ then $cv(\hat{\Delta})=(0,0,10,6,2,b_1,b_2,0)$;
if $(i,j)=(5,8)$ then $cv(\hat{\Delta})=(0,0,10,6,2,6,0,3)$;
if $(i,j)=(6,7)$ then $cv(\hat{\Delta})=(0,0,10,10,2,b_1,b_2,0)$;
if $(i,j)=(6,8)$ and $d(u_6)=4$ then $cv(\hat{\Delta})=(0,0,10,10,2,0,0,3)$;
if $(i,j)=(6,8)$ and $d(u_6)>4$ then $cv(\hat{\Delta})=(0,0,10,10,2,2,0,3)$; and
if $(i,j)=(7,8)$ then $cv(\hat{\Delta})=(0,0,10,10,0,b_1,b_2,3)$.
It follows that
if $(i,j) \neq (1,2)$ and
if $d(u_5)=d(u_6)=3$ then $c^{\ast}(\hat{\Delta}) \leq - \pi + \frac{11 \pi}{10} - \frac{\pi}{5} < 0$; or 
if $d(u_5) > 3$ or $d(u_6) > 3$ then $c^{\ast}(\hat{\Delta}) \leq - \pi + \pi = 0$.
If now $d(u) > 4$ then, as before, replacing each 10 by 7 in the above yields $c^{\ast}(\hat{\Delta}) \leq - \pi + \frac{28 \pi}{30} < 0$ except when $(i,j)=(1,2)$ and either $d(u_1) > 4$ or $d(u_2) > 4$ and
$c^{\ast} (\hat{\Delta}) \leq - \frac{11 \pi}{10} + \frac{31 \pi}{30} < 0$.
Let $\hat{\Delta}$ have exactly three vertices $u_i,u_j,u_k$ of degree $>3$.
If $d(u_2)=3$ then $cv(\hat{\Delta})=(0,0,10,10,2,b_1,b_2,3)$ and $c^{\ast}(\hat{\Delta}) \leq -\frac{7 \pi}{6}+\frac{11 \pi}{10} < 0$; or
if $d(u_5)=d(u_6)=3$ then $c^{\ast}(\hat{\Delta}) \leq -\frac{7 \pi}{6} + \frac{41 \pi}{30} - \frac{\pi}{5} = 0$, so assume otherwise.
If $d(u_3)=4$ then ($d(u_4)=3$ implies) $c(u_3,u_4)=0$ and if $d(u_3) \geq 5$ then $c(u_3,u_4)=\frac{\pi}{15}$, and in both cases $c^{\ast}(\hat{\Delta} \leq 0$.
Similarly
if $d(u_5) \neq 3$ then $c^{\ast}(\hat{\Delta}) \leq 0$, so it can be assumed that $d(u_3)=d(u_5)=3$.
If $(i,j,k)=(1,2,6)$ and $d(u_6)=4$ then $cv(\hat{\Delta})=(a_2,4,10,10,2,0,0,a_1)$ and $c^{\ast}(\hat{\Delta}) \leq -\frac{7 \pi}{6} + \frac{11 \pi}{10} < 0$; or
if $d(u_6)>4$ then $cv(\hat{\Delta})=(a_2,4,10,10,2,2,0,a_1)$ and $c^{\ast}(\hat{\Delta}) \leq -\frac{19 \pi}{15} + \frac{7 \pi}{6} < 0$.
If $(i,j,k)=(2,6,7)$ then $cv(\hat{\Delta})=(x_1,y_1,10,10,2,b_1,b_2,0)$ (by Remark 2); and
if $(i,j,k)=(2,6,8)$ then $cv(\hat{\Delta})=(x_1,y_1,10,10,2,6,0,3)$.  In both cases $c^{\ast}(\hat{\Delta})  \leq 0$.
Finally let $\hat{\Delta}$ have exactly four vertices of degree $>3$ and so $c(\hat{\Delta}) \leq -\frac{4 \pi}{3}$.
If any vertex has degree $>4$ or if any of $u_1$, $u_2$, $u_6$ or $u_7$ has degree 3 then clearly
$c^{\ast}(\hat{\Delta}) \leq 0$, so assume otherwise.  But then $d(u_1)=d(u_2)=4$ and $d(u_3)=3$ together imply either $c(u_1,u_2)=0$ or $c(u_2,u_3)=0$ and $c^{\ast}(\hat{\Delta})<0$.

\textbf{Case 3}\quad
Let $\hat{\Delta}$ be given by Figure 44(xi) in which case (see Proposition 10.4 Case 11) $cv(\hat{\Delta})=(b_2,2,10,10,4,a_1,a_2,b_1) = \frac{41 \pi}{30}$
so if $\hat{\Delta}$ has at least five vertices of degree at least 4 then $c^{\ast}(\hat{\Delta}) \leq 0$. 
If $\hat{\Delta}$ has no vertices of degree $>3$ then by Remark 1 preceeding Case 2 either $d(u)=4$, $cv(\hat{\Delta})=(6,0,10,10,0,0,0,0)$ and $c^{\ast}(\hat{\Delta})\leq -\frac{2 \pi}{3} + \frac{13 \pi}{15} - \frac{\pi}{5}=0$
or $d(u) > 4$, $cv(\hat{\Delta})=(6,0,7,7,0,0,0,0)$ and $c^{\ast}(\hat{\Delta}) \leq - \frac{2 \pi}{3} + \frac{2 \pi}{3}-0$.
Let $\hat{\Delta}$ have exactly one vertex $u_i$ of degree $>3$ and assume that $d(u)=4$.
If $i=1$ then $cv(\hat{\Delta})=(b_2,0,10,10,0,0,0,b_1)$;
if $i=2$ and $d(u_2)=4$ then $l(u_1)=z^{-1} \lambda b^{-1}$ forces $c(u_1,u_2)=0$ and $cv(\hat{\Delta})=(0,2,10,10,0,0,0,0)$;
if $i=2$ and $d(u_2)>4$ then $cv(\hat{\Delta})=(2,2,10,10,0,0,0,0)$;
if $i=3$ then $cv(\hat{\Delta})=(6,2,6,10,0,0,0,0)$;
if $i=5$ then ($l(u_6)= b \mu z$ and so) $cv(\hat{\Delta})=(6,0,10,6,0,0,0,0)$;
if $i=6$ then $cv(\hat{\Delta})=(6,0,10,10,x_1,y_1,0,0)$ (where $x_1+y_1=4$ by Remark 2 above);
if $i=7$ then $cv(\hat{\Delta})=(6,0,10,10,0,0,3,0)$;
if $i=8$ and $d(u_8)=4$ then $cv(\hat{\Delta})=(6,0,10,10,0,0,3,0)$; and
if $i=8$ and $d(u_8)>4$ then $cv(\hat{\Delta})=(6,0,10,10,0,0,2,2)$.
Therefore $c^{\ast}(\hat{\Delta}) \leq - \frac{5 \pi}{6} + \frac{24 \pi}{30}$ when $(d(u_2),d(u_3)) \neq (3,3)$; and
if $d(u_2)=d(u_3)=3$ then $c^{\ast}(\hat{\Delta}) \leq -\frac{5 \pi}{6} + \pi - \frac{\pi}{5} < 0$.
If now $d(u) > 4$ then replacing each 10 by 7 in the above yields $c^{\ast}(\hat{\Delta}) \leq - \frac{5 \pi}{6} + \frac{24 \pi}{30} < 0$.
Let $\hat{\Delta}$ have exactly two vertices $u_i,u_j$ of degree $>3$ and assume that $d(u)=4$.
If $(i,j)=(1,2)$ then $cv(\hat{\Delta})=(b_2,2,10,10,0,0,0,b_1)$;
if $(i,j)=(1,3)$ then $cv(\hat{\Delta})=(b_2,2,6,10,0,0,0,b_1)$;
if $(i,j)=(1,5)$ then $cv(\hat{\Delta})=(b_2,0,10,6,0,0,0,b_1)$;
if $(i,j)=(1,6)$ then $cv(\hat{\Delta})=(b_2,0,10,10,x_1,y_1,0,b_1)$;
if $(i,j)=(1,7)$ then $cv(\hat{\Delta})=(b_2,0,10,10,0,0,3,b_1)$;
if $(i,j)=(1,8)$ then $cv(\hat{\Delta})=(b_2,0,10,10,0,0,3,b_1)$;
if $(i,j)=(2,3)$ then $cv(\hat{\Delta})=(6,2,6,10,0,0,0,0)$;
if $(i,j)=(2,5)$ then $cv(\hat{\Delta})=(6,2,10,6,0,0,0,0)$;
if $(i,j)=(2,6)$ and $d(u_2)=4$ then $cv(\hat{\Delta})=(0,2,10,10,x_1,y_1,0,0)$;
if $(i,j)=(2,6)$ and $d(u_2)>4$ then $cv(\hat{\Delta})=(2,2,10,10,x_1,y_1,0,0)$;
if $(i,j)=(2,7)$ and $d(u_2)=4$ then $cv(\hat{\Delta})=c(0,2,10,10,0,0,3,0)$;
if $(i,j)=(2,7)$ and $d(u_2)>4$ then $cv(\hat{\Delta})=c(2,2,10,10,0,0,3,0)$;
if $(i,j)=(2,8)$ and $d(u_2)=4$ then $cv(\hat{\Delta})=(0,2,10,10,0,0,3,0)$;
if $(i,j)=(2,8)$ and $d(u_2)>4$ then $cv(\hat{\Delta})=(2,2,10,10,0,0,3,0)$;
if $(i,j)=(3,6)$ then $cv(\hat{\Delta})=(6,2,6,10,x_1,y_1,0,0)$;
if $(i,j)=(3,7)$ then $cv(\hat{\Delta})=(6,2,6,10,0,0,3,0)$;
if $(i,j)=(3,8)$ then $cv(\hat{\Delta})=(6,2,6,10,0,0,3,0)$;
if $(i,j)=(5,6)$ then $cv(\hat{\Delta})=(6,0,10,6,x_1,y_1,0,0)$;
if $(i,j)=(5,7)$ then $cv(\hat{\Delta})=(6,0,10,6,0,0,3,0)$;
if $(i,j)=(5,8)$ then $cv(\hat{\Delta})=(6,0,10,6,0,0,3,0)$;
if $(i,j)=(6,7)$ then $cv(\hat{\Delta})=(6,0,10,10,4,a_1,a_2,0)$ 
and so if $d(u_6)>4$ or $d(u_7)>4$ then $c^{\ast}(\hat{\Delta}) \leq -\frac{11 \pi}{10} + \frac{37 \pi}{30} - \frac{\pi}{5} < 0$, or
if $d(u_6)=d(u_7)=4$ then $c(u_7,u_8)=0$ and, moreover, $c(u_6,u_7) \geq \frac{2 \pi}{15}$ and $l(u_5)=axy^{-1}$ together imply (see Figure 40(ix)) that $c(u_5,u_6)=0$ so 
$cv(\hat{\Delta})=(6,0,10,10,b_1,b_2,0,$
and $c^{\ast}(\hat{\Delta} \leq -\pi + \frac{17 \pi}{15} - \frac{\pi}{5} < 0$;
if $(i,j)=(6,8)$ then $cv(\hat{\Delta})=(6,0,10,10,x_1,y_1,3,0)$; and
if $(i,j)=(7,8)$ then $cv(\hat{\Delta})=(6,0,10,10,0,0,3,0)$.
It follows that if $(i,j) \neq (6,7)$ and
if $d(u_2)=d(u_3)=3$ then $c^{\ast}(\hat{\Delta}) \leq - \pi + \frac{11 \pi}{10} - \frac{\pi}{5} < 0$;
or if $d(u_2) > 3$ or $d(u_3) > 3$ then $c^{\ast}(\hat{\Delta}) \leq - \pi + \pi = 0$.
If now $d(u) > 4$ then replacing each 10 by 7 in the above yields $c^{\ast}(\hat{\Delta}) \leq - \pi + \frac{14 \pi}{15} < 0$.
Let $\hat{\Delta}$ have exactly three vertices $u_i,u_j,u_k$ of degree $>3$.
If $d(u_6)=3$ then $cv(\hat{\Delta})=(b_2,2,10,10,0,0,3,b_1)$ and $c^{\ast}(\hat{\Delta}) \leq -\frac{7 \pi}{6}+\frac{11 \pi}{10} < 0$; or
if $d(u_2)=d(u_3)=3$ then $c^{\ast}(\hat{\Delta}) \leq -\frac{7 \pi}{6} + \frac{41 \pi}{30} - \frac{\pi}{5} = 0$, so assume otherwise.
If $d(u_3)=4$, $d(u_5)=4$ (respectively) then $d(u_4)=3$ implies $c(u_3,u_4)=0$, $c(u_4,u_5)=0$ (respectively) 
or if $d(u_3) \geq 5$, $d(u_5) \geq 5$ (respectively) then $c(u_3,u_4)=\frac{\pi}{15}$, $c(u_4,u_5)=\frac{\pi}{15}$ (respectively)and in each case $c^{\ast}(\hat{\Delta} \leq 0$.
So it can be assumed that $d(u_3)=d(u_5)=3$.
If $(i,j,k)=(6,2,7)$ and $d(u_2)=4$ then $cv(\hat{\Delta})=(0,2,10,10,4,a_1,a_2,0)$;
if $(i,j,k)=(6,2,7)$ and $d(u_2)>4$ then $cv(\hat{\Delta})=(2,2,10,10,4,a_1,a_2,0)$;
if $(i,j,k)=(6,2,8)$ then  
$cv(\hat{\Delta})=(2,2,10,10,x_1,y_1,3,0)$ (by Remark 2); or if $(i,j,k)=(6,2,1)$ then $cv(\hat{\Delta})=(b_2,2,10,10,x_1,y_1,0,b_1)$.
In each case $c^{\ast}(\hat{\Delta}) \leq 0$ so assume that $\hat{\Delta}$ has exactly four vertices of degree $>3$. Then $c(\hat{\Delta}) \leq - \frac{4 \pi}{3}$. 
If $d(u_1)=3$ then $cv(\hat{\Delta})=(6,2,10,10,4,a_1,a_2,0) = \frac{39 \pi}{30}$; or if 
$d(u_1)=4$ then $cv(\hat{\Delta})=(0,2,10,10,4,a_1,a_2,7) = \frac{40 \pi}{30}$ and in both cases $c^{\ast}(\hat{\Delta}) \leq 0$.
On the other hand if $d(u_1) \geq 5$ then $c^{\ast}(\hat{\Delta}) \leq c(\hat{\Delta}) + cv(\hat{\Delta}) \leq - \frac{43 \pi}{30} + \frac{41 \pi}{30} < 0$.$\Box$

\begin{proposition}%\textbf{Proposition 11.4} \quad
\textit{Let $\hat{\Delta}$ be a type $\mathcal{B}$ region.  If $\hat{\Delta}$ is given by Figure 47(ii)-(v), 48 or 49 then
$c^{\ast} (\hat{\Delta}) \leq 0$.}
\end{proposition}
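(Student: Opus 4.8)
The plan is to treat the finitely many explicitly drawn regions of Figures~47(ii)--(v), 48 and 49 one at a time, computing $c^{\ast}(\hat{\Delta})$ directly by means of the absorption bookkeeping already developed for Lemma~10.1 and the proof of Lemma~11.1. First recall what is in hand. By Proposition~11.3 we may assume $d(\hat{\Delta}) \geq 10$; by the bound $(\dag)$ preceding Lemma~11.1 we have $c^{\ast}(\hat{\Delta}) \leq \pi(2 - \tfrac{n_2}{5})$, so $c^{\ast}(\hat{\Delta}) \leq 0$ as soon as $n_2 \geq 10$; and by Lemma~11.1, $c^{\ast}(\hat{\Delta}) \leq 0$ whenever $n_2 \geq 9$ and $\hat{\Delta}$ is \emph{not} given by Figure~46. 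Hence the regions in the statement are exactly the type $\mathcal{B}$ regions that slip through all of these: by Lemma~11.2(v),(vi) the single-$b$-segment cases with $n_2 \leq 8$, resp.\ with $n_2 = 9$ and of the shape of Figure~46, are precisely the regions of Figures~48, resp.\ 49, while by Lemma~11.2(ii) the two-$b$-segment cases with $(m,n)$ in the stated range and with no shadow edge joining the two $b$-segments are precisely Figures~47(ii)--(v). In each of these $(\dag)$ is strictly positive, so a refined count is needed.

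For the single-$b$-segment regions of Figures~48 and 49, the $b$-segment containing the $b$-region is drawn in full, so every corner label of $\hat{\Delta}$ is fixed; the possible degrees of the remaining vertices are then determined from the star graph $\Gamma_0$ via Lemma~3.4, subject to the constraint that each region of $\bs{D}$ has $n+1$ edges with $n \geq 7$. One bounds the curvature $c_i$ received across each edge $e_i$ using Figures~35, 40 and 41 and the adjacency restrictions of Lemmas~9.1 and 9.2 (Figure~42), records this as a $cv(\hat{\Delta})$, and then assigns each positive surplus $s_i = c_i - \tfrac{2\pi}{15}$ to a distinct absorbing edge (through a neighbouring vertex of degree $3$) or to a distinct vertex of degree $\geq 4$ (which has deficit $\leq -\tfrac{\pi}{6}$), exactly as in the proof of Lemma~10.1. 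Summing gives $c^{\ast}(\hat{\Delta}) \leq (2-k)\pi + R + D$ with $k = d(\hat{\Delta})$, $R$ the total curvature received and $D$ the sum of the absorbed deficits, and one checks this is $\leq 0$ in every case. At the two ends of the $b$-segment one re-runs the endpoint sub-arguments from the proof of Lemma~11.1 to see which of the reductions ($\geq \tfrac{\pi}{15}$ across $e_0$, $\geq \tfrac{2\pi}{15}$ across $e_{k+1}$, and so on) actually applies; for the Figure~49 regions this is precisely where the generic $\tfrac{\pi}{5}$ saving of Lemma~11.1 breaks down, so one must recover the missing $\tfrac{\pi}{15}$ from the additional degree-$\geq 4$ vertices or length restrictions that the configuration of Figure~46 forces and that are visible in Figure~49.

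For the two-$b$-segment regions of Figures~47(ii)--(v) there are two $b$-regions, each contributing at most $\tfrac{\pi}{3}$ to $\hat{\Delta}$. The starting point is Lemma~11.2(ii): the absence of a shadow edge between $B_1$ and $B_2$, combined with the fact that any such shadow edge would have length $n-1 \geq 6$ and with the basic labelling and length contradictions of Section~3, pins down the relative positions of the two $b$-segments and forces exactly the four diagrams 47(ii)--(v); in each of these $d(\hat{\Delta})$ and the pattern of degree-$\geq 4$ vertices is visible. One then writes $cv(\hat{\Delta})$, adds $c(\hat{\Delta})$, applies the same absorption bookkeeping as above, and verifies $c^{\ast}(\hat{\Delta}) \leq 0$ in each of the four cases.

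The argument is routine in spirit, and I expect the main obstacle to be the bookkeeping itself: for every drawn region one must (a) list \emph{all} vertex-degree possibilities permitted by $\Gamma_0$ and the condition that each region of $\bs{D}$ has $n+1$ edges with $n \geq 7$; (b) use Figures~40--42 (Lemmas~9.1 and 9.2) carefully so as not to over-count curvature received across two consecutive edges; and (c) match each surplus to a \emph{distinct} absorbing edge or vertex, with no two absorbing vertices coinciding, as in the proof of Lemma~10.1. The single hardest point is Figure~49 (equivalently Figure~46), where the $b$-segment endpoint analysis yields only a $\tfrac{\pi}{15}$ rather than a $\tfrac{\pi}{5}$ improvement on $(\dag)$, so closing the last gap there genuinely requires the fine combinatorial structure of that configuration rather than the generic counting alone.
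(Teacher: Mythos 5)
Your plan correctly locates these regions (Figures 47(ii)--(v) via Lemma 11.2(ii), Figure 48 via 11.2(v), Figure 49 via 11.2(vi)) and correctly identifies the general method the paper itself uses: fix the corner labelling of each drawn region, bound the curvature received across each edge from Figures 35, 40, 41 and Lemma 9.2, and compare the total against $c(\hat{\Delta})$ as the vertex degrees vary. But what you have written is a plan, not a proof. The entire content of this proposition \emph{is} the verification you defer as ``routine bookkeeping'': one must first enumerate the admissible corner labellings of the drawn regions (the paper's Figures 50, 51 and 52 -- six labelled regions for Figure 47(ii)--(v), sixteen for Figure 48 after removing coincidences, eight for Figure 49), and then, for each, run through the subcases according to how many and which vertices have degree $>3$, recording a $cv(\hat{\Delta})$ vector each time (the paper's cases a1--a6, b1--b16, c1--c8 and Tables 4--5). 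None of this is done in your proposal, and asserting that ``one checks this is $\leq 0$ in every case'' is exactly the statement to be proved: these regions are singled out precisely because the generic estimates -- the bound $(\dag)$, the Lemma 10.1 absorption count, and the Lemma 11.1 reductions -- all fail to close them.

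Moreover, several of the checks are not generic bookkeeping and cannot be discharged by the Lemma 10.1 surplus/deficit scheme alone. Examples that the actual argument needs: ruling out that $\hat{\Delta}$ coincides with $\hat{\Delta}_2$ of Figure 37(iv) or 38(iv) in particular labellings (this is what forbids the larger $\frac{\pi}{6}$ and $\frac{7\pi}{30}$ transfers and yields, e.g., the bound $x_1+y_1+z_1=15$ behind Table 5); invoking Configurations E and F of Figure 32(iii),(v) to argue that in certain degree patterns a further $\frac{\pi}{5}$ is distributed \emph{out of} $\hat{\Delta}$ (used in cases b4, b9 and in Case 2/3 of Proposition 11.3's style of argument); and applying Lemma 9.2 to specific consecutive edge pairs to cap sums like $c(u_i,u_{i+1})+c(u_{i+1},u_{i+2})$ below what Figure 40 alone allows. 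Your closing remark that only Figure 49 ``genuinely requires the fine combinatorial structure'' understates the situation: the same fine, labelling-dependent analysis is what closes Figures 47(ii)--(v) and 48 as well. To turn your proposal into a proof you would have to produce the labelled enumeration and carry out every degree subcase explicitly, which is essentially reproducing the paper's argument.
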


\textit{Proof}.
Let $\hat{\Delta}$ be given by Figure 47(ii)-(v).  Then (up to cyclic permutation and inversion)
there are two ways to label each of (ii) and (iii); and one way to label each of (iv) and (v) and so $\hat{\Delta}$ is given by Figure 50.
There are six \textit{a-cases}. As usual we rely heavily on Figures 35-38 and 40-42.

\newpage
\begin{figure}
\begin{center}
\psfig{file=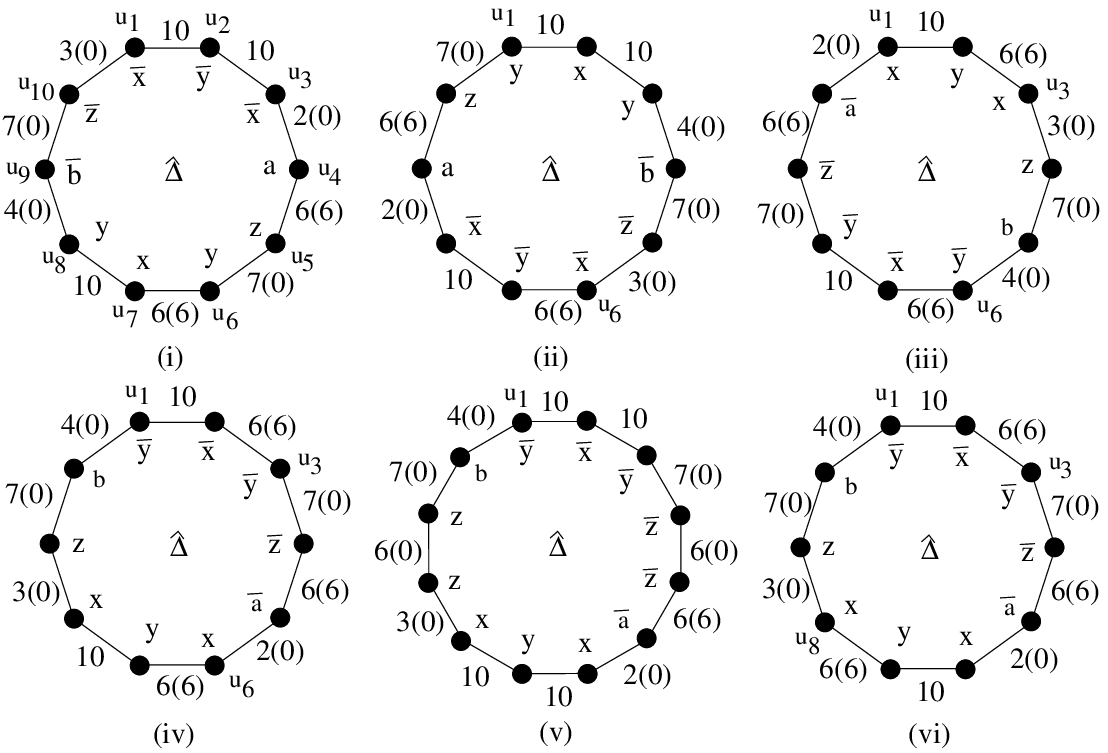}
\end{center}
\caption{}
\end{figure}

\textbf{Case a1}\quad
Let $\hat{\Delta}$ be given by Figure 50(i) in which (it can be seen from Figure 47(ii) that) $d(u_1)=d(u_2)=d(u_3)=d(u_7)=d(u_8)=3$ and $d(u_6) >
3$.  Then (see Figure 40) $cv(\hat{\Delta})=$\
$(10,10,2,d_1,d_2,6,10,4,a_1,a_2)$ so
$c^{\ast}(\hat{\Delta}) \leq c(\hat{\Delta}) + \frac{59 \pi}{30}$.
Let $\hat{\Delta}$ have exactly one vertex $u_6$ of degree $>3$.  Then $cv(\hat{\Delta})=(10,10,0,6,0,6,10,0,0,0)$ and
$c^{\ast}(\hat{\Delta}) \leq -\frac{45 \pi}{30} + \frac{42 \pi}{30} < 0$.
Let $\hat{\Delta}$ have exactly two vertices $u_6,u_i$ of degree $>3$.
If $i=4$ then $cv(\hat{\Delta})=(10,10,2,6,0,6,10,0,0,0)$;
if $i=5$ then $cv(\hat{\Delta})=(10,10,0,d_1,d_2,6,10,0,0,0)$;
if $i=9$ then (using $d(u_5)=d(u_{10})=3$) $cv(\hat{\Delta})=$\
$(10,10,0,6,0,6,10,4,2,0)$; and if $i=10$ then $cv(\hat{\Delta})=(10,10,0,6,0,6,10,0,a_1,a_2)$.
It follows that $c^{\ast}(\hat{\Delta}) \leq -\frac{50 \pi}{30} + \frac{49 \pi}{30} < 0$.
Let $\hat{\Delta}$ have exactly three vertices $u_6,u_i,u_j$ of degree $>3$.
If $d(u_5)=3$ then $cv(\hat{\Delta})=(10,10,2,6,0,6,10,4,a_1,a_2)$;
if $d(u_{10})=3$ then $cv(\hat{\Delta})=(10,10,2,d_1,d_2,6,10,4,2,0)$; and
if $(i,j)=(5,10)$ then $cv(\hat{\Delta})=(10,10,0,d_1,d_2,6,10,0,a_1,a_2)$.
It follows that $c^{\ast}(\hat{\Delta}) \leq -\frac{55 \pi}{30} + \frac{55 \pi}{30}=0$.
If $\hat{\Delta}$ has more than three vertices of degree $>3$ then
$c^{\ast}(\hat{\Delta}) \leq -\frac{60 \pi}{30} + \frac{59 \pi}{30} < 0$.

\textbf{Case a2}\quad 
Let $\hat{\Delta}$ be given by Figure 50(ii) in which
$d(u_1)=d(u_2)=d(u_3)=d(u_7)=d(u_8)=3$ and $d(u_6) > 3$.  Then $cv(\hat{\Delta})=$\
$(10,10,4,a_1,a_2,6,10,2,d_1,d_2)$ so
$c^{\ast}(\hat{\Delta}) \leq c(\hat{\Delta}) + \frac{59 \pi}{30}$.
Let $\hat{\Delta}$ have exactly one vertex $u_6$ of degree $>3$.  Then $cv(\hat{\Delta})=(10,10,0,0,3,6,10,0,6,0)$ and
$c^{\ast}(\hat{\Delta}) \leq -\frac{45 \pi}{30} + \frac{45 \pi}{30}=0$.
Let $\hat{\Delta}$ have exactly two vertices $u_6,u_i$ of degree $>3$.
If $i=4$ and $d(u_6)=4$ then ($l(u_6)$ together with $l(u_7)$ force) $cv(\hat{\Delta})=$\
$(10,10,4,2,3,0,10,0,6,0)$;
if $i=4$ and $d(u_6) > 4$ then (see Figure 35(ii)) $cv(\hat{\Delta}) =$\
$(10,10,4,2,2,2,10,0,6,0)$;
if $i=5$ then $cv(\hat{\Delta})=(10,10,0,a_1,a_2,6,10,0,6,0)$;
if $i=9$ then $cv(\hat{\Delta})=(10,10,0,0,3,6,10,2,6,0)$; and
if $i=10$ then $cv(\hat{\Delta})=$\
$(10,10,0,0,3,6,10,0,d_1,d_2)$.
It follows that either $c^{\ast}(\hat{\Delta}) \leq -\frac{50 \pi}{30} + \frac{49 \pi}{30} < 0$ or
$c^{\ast}(\hat{\Delta}) \leq -\frac{53 \pi}{30} + \frac{46 \pi}{30} < 0$.
Let $\hat{\Delta}$ have exactly three vertices $u_6,u_i,u_j$ of degree $>3$.
If $d(u_{10})=3$ then $cv(\hat{\Delta})=(10,10,4,a_1,a_2,6,10,2,6,0)$ and
$c^{\ast}(\hat{\Delta}) \leq -\frac{55 \pi}{30} + \frac{55 \pi}{30}=0$, so assume $i=10$ and $j \in \{ 4,5,9 \}$.
If $j=4$ then $cv(\hat{\Delta})=(10,10,4,2,3,6,10,0,d_1,d_2)$;
if $j=5$ then $cv(\hat{\Delta})=(10,10,0,a_1,a_2,6,10,0,d_1,d_2)$; and
if $j=9$ then $cv(\hat{\Delta})=(10,10,0,0,3,6,10,2,d_1,d_2)$.
It follows that $c^{\ast}(\hat{\Delta}) \leq -\frac{55 \pi}{30} + \frac{55 \pi}{30}=0$.
If $\hat{\Delta}$ has more than three vertices of degree $>3$ then $c^{\ast}(\hat{\Delta}) \leq -\frac{60 \pi}{30} + \frac{59 \pi}{30}=0$.

\textbf{Case a3}\quad
Let $\hat{\Delta}$ be given by Figure 50(iii) in which (see Figure
47(iii)) $d(u_1)=d(u_2)=d(u_7)=d(u_8)=3$, $d(u_3) > 3$ and $d(u_6) > 3$.
Then $cv(\hat{\Delta})=(10,6,a_1,a_2,4,6,10,d_1,d_2,2)$ and $c^{\ast}(\hat{\Delta}) \leq c(\hat{\Delta}) + \frac{55 \pi}{30}$.
If $\hat{\Delta}$ has at least three vertices of degree $>3$ then $c^{\ast}(\hat{\Delta}) \leq - \frac{55 \pi}{30} + \frac{55 \pi}{30} = 0$.
This leaves the case $d(u_3) > 3$ and $d(u_6) > 3$ only.  Then $cv(\hat{\Delta})=(10,6,3,0,4,6,10,0,6,0)$ and
$c^{\ast}(\hat{\Delta}) \leq - \frac{50 \pi}{30} + \frac{45 \pi}{30} < 0$.

\textbf{Case a4}\quad 
Let $\hat{\Delta}$ be given by Figure 50(iv) in which $d(u_1)=d(u_2)=d(u_7)=d(u_8)=3$, $d(u_3) > 3$ and $d(u_6) > 3$.
Then $cv(\hat{\Delta})=(10,6,d_1,d_2,2,6,10,a_1,a_2,4)$ and $c^{\ast}(\hat{\Delta}) \leq c(\hat{\Delta}) + \frac{55 \pi}{30}$.
If $\hat{\Delta}$ has at least three vertices of degree $>3$ then $c^{\ast}(\hat{\Delta}) \leq -\frac{55 \pi}{30} + \frac{55 \pi}{30}=0$.
This leaves the case $d(u_3) >3$ and $d(u_6) >3$ only.
Then $cv(\hat{\Delta})=(10,6,7,6,2,6,10,0,0,0)$ and $c^{\ast}(\hat{\Delta}) \leq -\frac{50 \pi}{30}+\frac{47 \pi}{30} <0$.

\textbf{Case a5}\quad 
Let $\hat{\Delta}$ be given by Figure 50(v) in which (see Figure 47(iv)) $d(u_1)=d(u_2)=d(u_3)=d(u_7)=d(u_8)=d(u_9)=3$.  Then
$cv(\hat{\Delta})=(10,10,d_1,d_2,6,2,10,10,3,d_1,d_2,4)$ so
$c^{\ast}(\hat{\Delta}) \leq c(\hat{\Delta}) + \frac{75 \pi}{30}$.
If $\hat{\Delta}$ has no vertices of degree $>3$ then $cv(\hat{\Delta})=
(10,10,0,0,6,0,10,10,0,0,0,0)$ and
$c^{\ast}(\hat{\Delta}) \leq -\frac{60 \pi}{30} + \frac{46 \pi}{30} < 0$.
Let $\hat{\Delta}$ have exactly one vertex $u_i$ of degree $>3$.
If $i=4$ then $cv(\hat{\Delta})=(10,10,d_1,d_2,6,0,10,10,0,0,0,0)$;
if $i=5$ then $cv(\hat{\Delta})=(10,10,0,6,6,0,10,10,0,0,0,0)$;
if $i=6$ then $cv(\hat{\Delta})=(10,10,0,0,6,2,10,10,0,0,0,0)$;
if $i=10$ then $cv(\hat{\Delta})=(10,10,0,0,6,0,10,10,3,6,0,0)$;
if $i=11$ then\
$cv(\hat{\Delta})=(10,10,0,0,6,0,10,10,0,d_1,d_2,0)$; and
if $i=12$ then\
$cv(\hat{\Delta})=(10,10,0,0,6,0,10,10,0,0,7,4)$.
It follows that $c^{\ast}(\hat{\Delta}) \leq -\frac{65 \pi}{30} + \frac{57 \pi}{30} < 0$.
Let $\hat{\Delta}$ have exactly two vertices $u_i,u_j$ of degree $>3$.
If $d(u_4)=d(u_5)=3$ then $cv(\hat{\Delta})=(10,10,0,0,6,2,10,10,3,d_1,d_2,4)$;
if $d(u_{10})=d(u_{11})=3$ then\
$cv(\hat{\Delta})=(10,10,d_1,d_2,6,2,10,10,3,0,2,4)$; and
if $d(u_{12})=3$ then $cv(\hat{\Delta})=$\
$(10,10,d_1,d_2,6,2,10,10,3,6,0,0)$.
It follows that $c^{\ast}(\hat{\Delta}) \leq -\frac{70 \pi}{30} + \frac{67 \pi}{30}$.
If $\hat{\Delta}$ has at least three vertices of degree $>3$ then $c^{\ast}(\hat{\Delta}) \leq -\frac{75 \pi}{30} + \frac{75 \pi}{30}=0$.

\textbf{Case a6}\quad 
Let $\hat{\Delta}$ be given by Figure 50(vi) in which (see Figure 47(v)) $d(u_1)=d(u_2)=d(u_6)=d(u_7)=3$, $d(u_3) > 3$ and $d(u_8) > 3$.  Then
$cv(\hat{\Delta})=(10,6,d_1,d_2,2,10,6,a_1,a_2,4)$ and
$c^{\ast}(\Delta) \leq c(\hat{\Delta}) + \frac{55 \pi}{30}$.
If $\hat{\Delta}$ has at least three vertices of degree $>3$ then
$c^{\ast}(\hat{\Delta}) \leq -\frac{55 \pi}{30} + \frac{55 \pi}{30} = 0$.  This leaves the case $d(u_3)>3$ and $d(u_8) > 3$ only.
Then $cv(\hat{\Delta}) = (10,6,0,6,0,10,6,3,0,0)$ and $c^{\ast}(\hat{\Delta}) \leq -\frac{50 \pi}{30} + \frac{41 \pi}{30} < 0$.

Now let $\hat{\Delta}$ be one of the regions of Figure 48.
It turns out that (up to cyclic permutation and inversion) there are two ways to label each of Figure 48(i), (ii), (iii) and (iv);
four ways to label (v); two ways to label each of (vi) and (vii); and four ways to label (viii).  However the labelled regions produced by (vii) already appear in those produced by (vi); 
and two of the labelled regions produced by (viii) already appear in those produced by 

\newpage
\begin{figure}
\begin{center}
\psfig{file=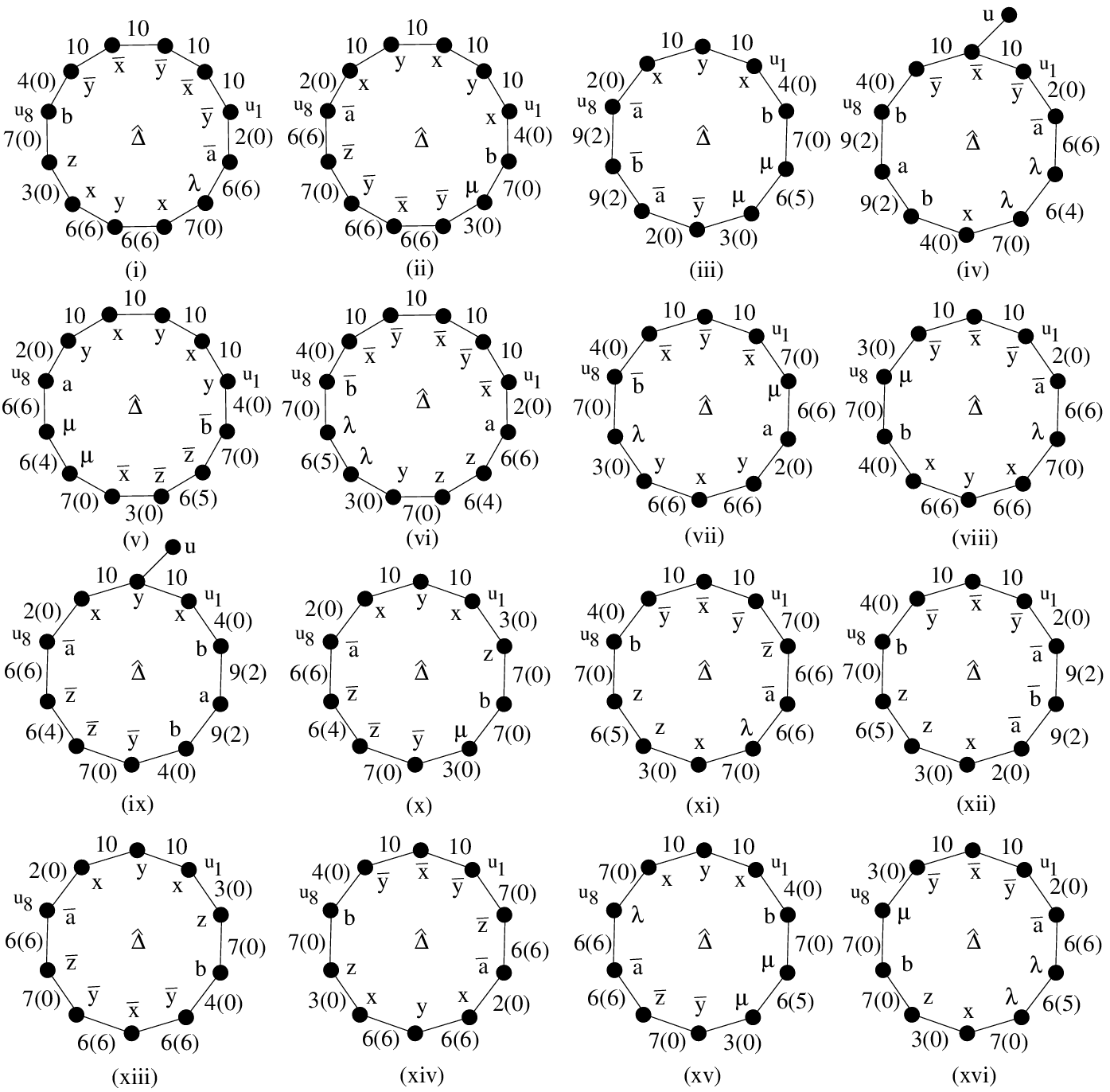}
\end{center}
\caption{}
\end{figure}

\noindent (ii), leaving a total of 16 regions and $\hat{\Delta}$ is given by Figure 51.
Table 4 gives $c(u_i,u_{i+1})$
($1 \leq i \leq 8$) in multiples of $\frac{\pi}{30}$ for each of the sixteen regions of Figure 51 with the total plus the contribution made via the
$b$-segment in the final column. We note here that Lemma 9.2 is used for the bounds $e_1, e_2$ and $f_1, f_2$ in rows (iii), (iv), (ix) and (xii); 
and that Figure 40 (iv), (x), (xiii) annd (xviii) is used to obtain the other bounds $a_1, a_2$, $b_1, b_2$ and $d_1, d_2$ in the table.

\begin{table}[h]
\[
\renewcommand{\arraystretch}{1.5}
\begin{array}{rlllllllll}
\textrm{(i)}&2&d_1&d_2&6&6&a_1&a_2&4&35 + 40 = 75\\
\textrm{(ii)}&4&a_1&a_2&6&6&d_1&d_2&2&35 + 40 = 75\\
\textrm{(iii)}&
4&d_1&d_2&3&2&f_1&f_2&2&33 + 20 = 53\\
\textrm{(iv)}&
2&6&d_1&d_2&e_1&e_2&e_1&e_2&40 + 20 = 60\\
\textrm{(v)}&
4&b_1&b_2&3&d_1&d_2&6&2&33 + 40 = 73\\
\textrm{(vi)}&
2&6&d_1&d_2&3&d_1&d_2&4&35 + 40 = 75\\
\textrm{(vii)}&
d_1&d_2&2&6&6&a_1&a_2&4&35 + 20 = 55\\
\textrm{(viii)}&
2&d_1&d_2&6&6&4&a_1&a_2&35 + 20 = 55\\
\textrm{(ix)}&
e_1&e_2&e_1&e_2&b_1&b_2&6&2&38 + 20 = 58\\
\textrm{(x)}&
a_1&a_2&a_1&a_2&b_1&b_2&6&2&30 + 20 = 50\\
\textrm{(xi)}&
d_1&d_2&d_1&d_2&3&b_1&b_2&4&35 + 20 = 55\\
\textrm{(xii)}&
2&f_1&f_2&2&3&b_1&b_2&4&31 + 20 = 51\\
\textrm{(xiii)}&
a_1&a_2&4&6&6&d_1&d_2&2&35 + 20 = 55\\
\textrm{(xiv)}&
d_1&d_2&2&6&6&a_1&a_2&4&35 + 20 = 55\\
\textrm{(xv)}&
4&d_1&d_2&3&d_1&d_2&d_1&d_2&37 + 20 = 57\\
\textrm{(xvi)}&
2&6&d_1&d_2&a_1&a_2&a_1&a_2&32 + 20 = 52
\end{array}
\renewcommand{\arraystretch}{1}
\]
\caption{}
\end{table}

The regions in Figure 51(i), (ii), (v) and (vi) each have degree 12 and so
$c(\hat{\Delta}) \leq (2-12) \pi + \frac{24 \pi}{3} = -2 \pi$; whereas the rest have degree 10 and in these cases
$c(\hat{\Delta}) \leq - \frac{4 \pi}{3}$.  It follows from Table 4  that
if $\hat{\Delta}$ has at least two vertices of degree $>3$ then $c^{\ast}(\hat{\Delta}) \leq 0$ for (x); if at least three then
$c^{\ast} (\hat{\Delta}) \leq 0$ for (i), (ii), (iii), (v), (vi), (vii), (viii), (xi), (xii), (xiii), (xiv) and (xvi); and if at least four then
$c^{\ast} (\hat{\Delta}) \leq - \frac{40 \pi}{30} + \frac{40 \pi}{30} = 0$.

If $\hat{\Delta}$ has no vertices of degree $>3$ then we see from Figure 51 that
$c^{\ast} (\hat{\Delta}) \leq - \frac{20 \pi}{30} + \frac{18 \pi}{30} < 0$.

We consider each of the sixteen \textit{b-cases} in turn.

\textbf{Case b1}\quad 
Let $\hat{\Delta}$ be given by Figure 51(i).  Suppose that $\hat{\Delta}$ has exactly one vertex $u_i$ of degree $>3$.
If $d(u_3)=d(u_8)=3$ then $cv(\hat{\Delta}) = (10,10,10,10,2,6,0,6,6,3,0,0)$;
if $i=3$ then $cv(\hat{\Delta})=(10,10,10,10,0,d_1,d_2,6,6,0,0,0)$; and
if $i=8$ then %\newline
$cv(\hat{\Delta})=(10,10,10,10,0,6,0,6,6,0,2,4)$.
It follows that $c^{\ast} (\hat{\Delta}) \leq -\frac{65 \pi}{30} + \frac{64 \pi}{30} < 0$.
Let $\hat{\Delta}$ have exactly two vertices $u_i,u_j$ of degree $>3$.
If $d(u_3)=d(u_7)=3$ then $cv(\hat{\Delta})=(10,10,10,10,2,6,0,6,6,3,2,4)$;
if $d(u_8)=3$ then %\newline
$cv(\hat{\Delta})=(10,10,10,10,2,d_1,d_2,6,6,3,0,0)$;
if $(i,j)=(3,8)$ then %\newline
$c^{\ast}(\hat{\Delta})=(10,10,10,10,0,d_1,d_2,6,6,0,2,4)$; and
if $(i,j)=(7,8)$ then %\newline
$cv(\hat{\Delta})=(10,10,10,10,0,6,0,6,6,a_1,a_2,4)$.
It follows that $c^{\ast}(\hat{\Delta}) \leq - \frac{70 \pi}{30} + \frac{69 \pi}{30} < 0$.

\textbf{Case b2}\quad 
Let $\hat{\Delta}$ be given by Figure 51(ii).  Suppose that $\hat{\Delta}$ has exactly one vertex $u_i$ of degree $>3$.
If $d(u_2)=d(u_7)=3$ then $cv(\hat{\Delta})=(10,10,10,10,0,0,3,6,6,0,6,2)$;
if $i=2$ then $cv(\hat{\Delta})=(10,10,10,10,4,2,0,6,6,0,6,0)$; and
if $i=7$ then %\newline
$cv(\hat{\Delta})=(10,10,10,10,0,0,0,6,6,d_1,d_2,2)$.
It follows that $c^{\ast}(\hat{\Delta}) \leq - \frac{65 \pi}{30} + \frac{64 \pi}{30} < 0$.
Let $\hat{\Delta}$ have exactly two vertices $u_i,u_j$ of degree $>3$.
If $d(u_2)=3$ then %\newline
$cv(\hat{\Delta})=(10,10,10,10,0,0,3,6,6,d_1,d_2,2)$;
if $d(u_3)=d(u_7)=3$ then %\newline
$cv(\hat{\Delta})=(10,10,10,10,4,2,3,6,6,0,6,2)$;
if $(i,j)=(2,3)$ then %\newline
$cv(\hat{\Delta})=(10,10,10,10,4,a_1,a_2,6,6,0,6,0)$; and
if $(i,j)=(2,7)$ then %\newline
$cv(\hat{\Delta})=(10,10,10,10,4,2,0,6,6,d_1,d_2,0)$.
It follows that $c^{\ast}(\hat{\Delta}) \leq -\frac{70 \pi}{30} + \frac{69 \pi}{30} < 0$.

\textbf{Case b3}\quad 
Let $\hat{\Delta}$ be given by Figure 51(iii).
Suppose that $\hat{\Delta}$ has exactly one vertex $u_i$ of degree $> 3$.
If $d(u_7)=3$ then $cv(\hat{\Delta})=(10,10,4,d_1,d_2,3,2,2,2,2)$;
if $i=7$ then (see Lemma 9.2) $cv(\hat{\Delta})=(10,10,0,0,5,0,0,f_1,f_2,0)$.
It follows that $c^{\ast}(\hat{\Delta}) \leq -\frac{45 \pi}{30} + \frac{45 \pi}{30}$.
Let $\hat{\Delta}$ have exactly two vertices $u_i,u_j$ of degree $>3$.
If $d(u_7)=3$ then $c^{\ast}(\hat{\Delta}) < 0$;
if $d(u_2)=d(u_8)=3$ then $cv(\hat{\Delta})=(10,10,0,0,6,3,2,f_1,f_2,0)$;
if $(i,j)=(7,2)$ then %\newline
$cv(\hat{\Delta})=(10,10,4,2,5,0,0,f_1,f_2,0)$; and
if $(i,j)=(7,8)$ then %\newline
$cv(\hat{\Delta})=(10,10,0,0,5,0,0,f_1,f_2,2)$.
It follows that $c^{\ast}(\hat{\Delta}) \leq -\frac{50 \pi}{30} + \frac{43 \pi}{30} < 0$.

\textbf{Case b4}\quad 
Let $\hat{\Delta}$ be given by Figure 51(iv).  Suppose that $\hat{\Delta}$ has exactly one vertex $u_i$ of degree $>3$.
If $d(u_4)=d(u_6)=d(u_8)=3$ then $cv(\hat{\Delta})=c(10,10,2,6,6,0,0,2,2,0)$;
if $i=4$ then (see Figure 36) $cv(\hat{\Delta})=(10,10,0,6,0,7,0,2,2,0)$;
if $i=6$ then $cv(\hat{\Delta})=(10,10,0,6,4,0,e_1,e_2,2,0)$; and
if $i=8$ then $cv(\hat{\Delta})=(10,10,0,6,4,0,0,2,e_1,e_2)$.
Therefore $c^{\ast}(\hat{\Delta}) \leq -\frac{45 \pi}{30} + \frac{43 \pi}{30}$.
Let $\hat{\Delta}$ have exactly two vertices $u_i,u_j$ of degree $>3$.
If $d(u_2)=d(u_6)=3$ then $cv(\hat{\Delta})=(10,10,0,6,d_1,d_2,0,2,e_1,e_2)$;
if $d(u_4)=d(u_8)=3$ then $cv(\hat{\Delta})=(10,10,2,d_1,d_2,0,e_1,e_2,2,0)$;
if $(i,j)=(2,4)$ then $cv(\hat{\Delta})=(10,10,2,6,0,7,0,2,2,0)$;
if $(i,j)=(2,8)$ then $cv(\hat{\Delta})=(10,10,2,6,4,0,0,2,e_1,e_2)$;
if $(i,j)=(6,4)$ then $cv(\hat{\Delta})=(10,10,0,6,0,7,e_1,e_2,2,0)$; and
if $(i,j)=(6,8)$ then
$cv(\hat{\Delta})=(10,10,0,6,4,0,e_1,e_2,e_1,e_2)$.
It follows that either $c^{\ast}(\hat{\Delta}) \leq - \frac{50 \pi}{30} + \frac{49 \pi}{30} < 0$ or $(i,j)=(6,8)$, but here either $d(u)=4$ and $\frac{\pi}{5}$ is
distributed from $\hat{\Delta}$ across the $(u_1,u_2)$ edge according to Configuration E of Figure 32(iii) and so $c^{\ast}(\hat{\Delta}) \leq
-\frac{50 \pi}{30} + \frac{53 \pi}{30} - \frac{\pi}{5} <0$ or $d(u) > 4$,
$cv(\hat{\Delta}) = (7,7,0,6,4,0,e_1,e_2,e_1,e_2)$ and so $c^{\ast}(\hat{\Delta}) \leq - \frac{50 \pi}{30} + \frac{46 \pi}{30} < 0$.
Let $\hat{\Delta}$ have exactly three vertices $u_i,u_j,u_k$ of degree $>3$.
If $d(u_4)=3$ then (see Figure 40(xiv),(xvii)) $cv(\hat{\Delta})=(10,10,2,d_1,d_2,0,e_1,e_2,e_1,e_2)$ and $c^{\ast}(\hat{\Delta}) \leq - \frac{55 \pi}{30} + \frac{54 \pi}{30}<0$;
or if $d(u_6)=3$ or $d(u_8)=3$ then $c^{\ast}(\hat{\Delta}) 
\leq - \frac{55 \pi}{30} + \frac{51 \pi}{30} < 0$; and
if $(i,j,k)=(4,6,8)$ then $cv(\hat{\Delta})=(10,10,0,6,0,7,e_1,e_2,e_1,e_2)$ and $c^{\ast}(\hat{\Delta}) \leq - \frac{55 \pi}{30} + \frac{55 \pi}{30}= 0$.

\textbf{Case b5}\quad 
Let $\hat{\Delta}$ be given by Figure 51(v).  Suppose that $\hat{\Delta}$ has exactly one vertex $u_i$ of degree $>3$.
If $d(u_2)=d(u_6)=3$ then $cv(\hat{\Delta})=(10,10,10,10,0,0,6,3,0,d_1,d_2,2)$;
if $i=2$ then $cv(\hat{\Delta})=(10,10,10,10,4,2,5,0,0,4,6,0)$; and
if $i=6$ then $cv(\hat{\Delta})=(10,10,10,10,0,0,5,0,7,0,6,0)$.
It follows that $c^{\ast}(\hat{\Delta}) \leq -\frac{65 \pi}{30} + \frac{61 \pi}{30} < 0$.
Let $\hat{\Delta}$ have exactly two vertices $u_i,u_j$ of degree $>3$.
If $d(u_2)=3$ or $d(u_6)=3$ then in each case $c^{\ast}(\hat{\Delta}) \leq - \frac{70 \pi}{30} + \frac{67 \pi}{30} < 0$; and
if $(i,j)=(2,6)$ then $cv(\hat{\Delta})=(10,10,10,10,4,2,5,0,7,0,6,0)$ and $c^{\ast}(\hat{\Delta}) < 0$.

\textbf{Case b6}\quad 
Let $\hat{\Delta}$ be given by Figure 51(vi).  Suppose that $\hat{\Delta}$ has exactly one vertex $u_i$ of degree $>3$.
If $d(u_4)=d(u_8)=3$ then $cv(\hat{\Delta})=(10,10,10,10,2,6,6,0,3,6,0,0)$;
if $i=4$ then $cv(\hat{\Delta})=(10,10,10,10,0,6,d_1,d_2,0,5,0,0)$; and
if $i=8$ then $cv(\hat{\Delta})=(10,10,10,10,0,6,4,0,0,5,2,4)$.
It follows that $c^{\ast}(\hat{\Delta}) \leq - \frac{65 \pi}{30} + \frac{63 \pi}{30} < 0$.
Let $\hat{\Delta}$ have exactly two vertices $u_i,u_j$ of degree $>3$.
If $d(u_8)=3$ then %\newline
$cv(\hat{\Delta})=(10,10,10,10,2,6,d_1,d_2,3,6,0,0)$;
if $d(u_2)=d(u_4)=3$ then %\newline
$cv(\hat{\Delta})=(10,10,10,10,0,6,6,0,3,d_1,d_2,4)$;
if $(i,j)=(8,2)$ then %\newline
$cv(\hat{\Delta})=(10,10,10,10,2,6,4,0,0,5,2,4)$; and
if $(i,j)=(8,4)$ then %\newline
$cv(\hat{\Delta})=(10,10,10,10,0,6,d_1,d_2,0,5,2,4)$.
It follows that $c^{\ast}(\hat{\Delta}) \leq - \frac{70 \pi}{30} + \frac{69 \pi}{30} < 0$.

\textbf{Case b7}\quad 
Let $\hat{\Delta}$ be given by Figure 51(vii).  Suppose that $\hat{\Delta}$ has exactly one vertex $u_i$ of degree $>3$.
If $d(u_2)=d(u_8)=3$ then $cv(\hat{\Delta})=(10,10,0,6,2,6,6,3,0,0)$;
if $i=2$ then $cv(\hat{\Delta})=(10,10,d_1,d_2,0,6,6,0,0,0)$; and
if $i=8$ then $cv(\hat{\Delta})=(10,10,0,6,0,6,6,0,2,4)$.
It follows that $c^{\ast}(\hat{\Delta}) \leq -\frac{45 \pi}{30} + \frac{44 \pi}{30} < 0$.
Let $\hat{\Delta}$ have exactly two vertices $u_i,u_j$ of degree $>3$.
If $d(u_8)=3$ then $cv(\hat{\Delta})=(10,10,d_1,d_2,2,6,6,3,0,0)$;
if $d(u_2)=d(u_7)=3$ then $cv(\hat{\Delta})=(10,10,0,6,2,6,6,3,2,4)$;
if $(i,j)=(8,2)$ then $cv(\hat{\Delta})=(10,10,d_1,d_2,0,6,6,0,2,4)$; and
if $(i,j)=(8,7)$ then $cv(\hat{\Delta})=(10,10,0,6,0,6,6,a_1,a_2,4)$.
It follows that $c^{\ast}(\hat{\Delta}) \leq -\frac{50 \pi}{30} + \frac{49 \pi}{30} < 0$.

\textbf{Case b8}\quad 
Let $\hat{\Delta}$ be given by Figure 51(viii).
Suppose that $\hat{\Delta}$ has exactly one vertex $u_i$ of degree $>3$.
If $d(u_3)=d(u_7)=3$ then $cv(\hat{\Delta})=(10,10,2,6,0,6,6,0,0,3)$;
if $i=3$ then $cv(\hat{\Delta})=(10,10,0,d_1,d_2,6,6,0,0,0)$; and
if $i=7$ then $cv(\hat{\Delta})=(10,10,0,6,0,6,6,4,2,0)$.
It follows that $c^{\ast}(\hat{\Delta}) \leq - \frac{45 \pi}{30} + \frac{44 \pi}{30} < 0$.
Let $\hat{\Delta}$ have exactly two vertices $u_i,u_j$ of degree $>3$.
If $d(u_7)=3$ then $cv(\hat{\Delta})=(10,10,2,d_1,d_2,6,6,0,0,3)$;
if $d(u_2)=d(u_3)=3$ then $cv(\hat{\Delta})=(10,10,0,6,0,6,6,4,a_1,a_2)$;
if $(i,j)=(7,2)$ then $cv(\hat{\Delta})=(10,10,2,6,0,6,6,4,2,0)$; and
if $(i,j)=(7,3)$ then $cv(\hat{\Delta})=(10,10,0,d_1,d_2,6,6,4,2,0)$.
It follows that $c^{\ast}(\hat{\Delta}) \leq - \frac{50 \pi}{30} + \frac{49 \pi}{30} < 0$.

\textbf{Case b9}\quad 
Let $\hat{\Delta}$ be given by Figure 51(ix).
Suppose that $\hat{\Delta}$ has exactly one vertex $u_i$ of degree $>3$.
If $d(u_2)=d(u_4)=3$ then $cv(\hat{\Delta})=(10,10,0,2,2,0,b_1,b_2,6,2)$;
if $i=2$ then $cv(\hat{\Delta})=(10,10,e_1,e_2,2,0,0,4,6,0)$; and
if $i=4$ then $cv(\hat{\Delta})=(10,10,0,2,e_1,e_2,0,4,6,0)$.
It follows that $c^{\ast}(\hat{\Delta}) \leq - \frac{45 \pi}{30} + \frac{43 \pi}{30} < 0$.

Let $\hat{\Delta}$ have exactly two vertices $u_i,u_j$ of degree $>3$.
If $d(u_2)=d(u_4)=3$ then $c^{\ast}(\hat{\Delta}) < 0$;
if $d(u_2)=d(u_8)=3$ then $cv(\hat{\Delta})=(10,10,0,2,e_1,e_2,b_1,b_2,6,0)$;
if $d(u_4)=d(u_8)=3$ then $cv(\hat{\Delta})=(10,10,e_1,e_2,2,0,b_1,b_2,6,0)$;
if $(i,j)=(2,4)$ then $cv(\hat{\Delta})=(10,10,e_1,e_2,e_1,e_2,0,4,6,0)$;
if $(i,j)=(2,8)$ then $cv(\hat{\Delta})=(10,10,e_1,e_2,2,0,0,4,6,2)$; and
if $(i,j)=(4,8)$ then $cv(\hat{\Delta})=(10,10,0,2,e_1,e_2,0,4,6,2)$.
It follows that $c^{\ast}(\hat{\Delta}) \leq - \frac{50 \pi}{30} + \frac{47 \pi}{30} < 0$ except for $(i,j)=(2,4)$, in which case either $d(u)=4$ and
$\frac{\pi}{5}$ is distributed from $\hat{\Delta}$ across the $(u_8,u_9)$ edge according to Configuration F of Figure 32(v) and
$c^{\ast}(\hat{\Delta}) \leq - \frac{50 \pi}{30} + \frac{52 \pi}{30} - \frac{\pi}{5} < 0$ or $d(u) > 4$,
$cv(\hat{\Delta})=(7,7,e_1,e_2,e_1,e_2,0,4,6,0)$ and $c^{\ast}(\hat{\Delta}) \leq - \frac{50 \pi}{30} + \frac{46 \pi}{30} < 0$.
Let $\hat{\Delta}$ have exactly three vertices $u_i,u_j,u_k$ of degree $>3$.
If $d(u_2)=3$ or $d(u_4)=3$ then $c^{\ast}(\hat{\Delta}) <0$;
if $d(u_6)=d(u_8)=3$ then $cv(\hat{\Delta})=(10,10,e_1,e_2,e_1,e_2,0,6,6,0)$;
if $(i,j,k)=(2,4,6)$ then $cv(\hat{\Delta})=(10,10,e_1,e_2,e_1,e_2,7,0,6,0)$; and
if $(i,j,k)=(2,4,8)$ then $cv(\hat{\Delta})=(10,10,e_1,e_2,e_1,e_2,0,4,6,2)$.
It follows that $c^{\ast}(\hat{\Delta}) \leq - \frac{55 \pi}{30} + \frac{55 \pi}{30} = 0$.

\textbf{Case b10}\quad 
Let $\hat{\Delta}$ be given by Figure 51(x).
Let $\hat{\Delta}$ have exactly one vertex $u_i$ of degree $>3$.
If $d(u_3)=d(u_6)=3$ then $cv(\hat{\Delta})=c(10,10,3,0,0,3,0,6,6,2)$;
if $i=3$ then $cv(\hat{\Delta})=(10,10,0,2,2,0,0,4,6,0)$; and
if $i=6$ then $cv(\hat{\Delta})=(10,10,0,0,0,0,7,0,6,0)$.
It follows that $c^{\ast}(\hat{\Delta}) \leq - \frac{45 \pi}{30} + \frac{40 \pi}{30} < 0$.

\textbf{Case b11}\quad
Let $\hat{\Delta}$ be given by Figure 51(xi).
Let $\hat{\Delta}$ have exactly one vertex $u_i$ of degree $>3$.
If $d(u_2)=d(u_4)=d(u_8)=3$ then $cv(\hat{\Delta})=(10,10,0,6,6,0,3,6,2,0)$;
if $i=2$ then $cv(\hat{\Delta})=(10,10,d_1,d_2,6,0,0,5,0,0)$;
if $i=4$ then $cv(\hat{\Delta})=(10,10,0,6,d_1,d_2,0,5,0,0)$; and
if $i=8$ then $cv(\hat{\Delta})=(10,10,0,6,6,0,0,5,2,4)$.
It follows that $c^{\ast}(\hat{\Delta}) \leq - \frac{45 \pi}{30} + \frac{43 \pi}{30} < 0$.
Let $\hat{\Delta}$ have exactly two vertices $u_i,u_j$ of degree $>3$.
If $d(u_2)=d(u_4)=3$ then $cv(\hat{\Delta})=(10,10,0,6,6,0,3,b_1,b_2,4)$;
if $d(u_2)=d(u_8)=3$ then $cv(\hat{\Delta})=(10,10,0,6,d_1,d_2,3,6,0,0)$;
if $d(u_4)=d(u_8)=3$ then $cv(\hat{\Delta})=(10,10,d_1,d_2,6,0,3,6,0,0)$;
if $(i,j)=(2,4)$ then $cv(\hat{\Delta})=(10,10,d_1,d_2,d_1,d_2,0,5,0,0)$;
if $(i,j)=(2,8)$ then $cv(\hat{\Delta})=(10,10,d_1,d_2,6,0,0,5,2,4)$; and
if $(i,j)=(4,8)$ then $cv(\hat{\Delta})=(10,10,0,6,d_1,d_2,0,5,2,4)$.
It follows that $c^{\ast}(\hat{\Delta}) \leq - \frac{50 \pi}{30} + \frac{47 \pi}{30} < 0$.

\textbf{Case b12}\quad 
Let $\hat{\Delta}$ be given by Figure 51(xii).  Suppose that $\hat{\Delta}$ has exactly one vertex $u_i$ of degree $>3$.
If $d(u_3)=3$ then $cv(\hat{\Delta})=(10,10,2,2,2,2,3,b_1,b_2,4)$; and
if $i=3$ then $cv(\hat{\Delta})=(10,10,0,f_1,f_2,0,0,5,0,0)$.
It follows that $c^{\ast}(\hat{\Delta}) \leq - \frac{45 \pi}{30} + \frac{43 \pi}{30} < 0$.
Let $\hat{\Delta}$ have exactly two vertices $u_3,u_j$ of degree $>3$.
If $d(u_7)=d(u_8)=3$ then $cv(\hat{\Delta})=(10,10,2,f_1,f_2,2,3,6,0,0)$;
if $j=7$ then $cv(\hat{\Delta})=(10,10,0,f_1,f_2,0,0,6,0,0)$; and
if $j=8$ then $cv(\hat{\Delta})=(10,10,0,f_1,f_2,0,0,5,2,4)$.
It follows that $c^{\ast}(\hat{\Delta}) \leq - \frac{50 \pi}{30} + \frac{45 \pi}{30} < 0$.

\textbf{Case b13}\quad 
Let $\hat{\Delta}$ be given by Figure 51(xiii).  Suppose that $\hat{\Delta}$ has exactly one vertex $u_i$ of degree $>3$.
If $d(u_2)=d(u_3)=d(u_8)=3$ then $cv(\hat{\Delta})=(10,10,0,0,0,6,6,d_1,d_2,0)$;
if $i=2$ then $cv(\hat{\Delta})=(10,10,3,0,0,6,6,0,6,0)$;
if $i=3$ then $cv(\hat{\Delta})=(10,10,0,2,4,6,6,0,6,0)$; and
if $i=8$ then $cv(\hat{\Delta})=(10,10,0,0,0,6,6,0,6,2)$.
It follows that $c^{\ast}(\hat{\Delta}) \leq - \frac{45 \pi}{30} + \frac{44 \pi}{30} < 0$.
Let $\hat{\Delta}$ have exactly two vertices $u_i,u_j$ of degree $>3$.
If $d(u_2)=3$ then $cv(\hat{\Delta})=(10,10,0,2,4,6,6,d_1,d_2,2)$;
if $d(u_3)=3$ then $cv(\hat{\Delta})=(10,10,3,0,0,6,6,d_1,d_2,2)$; and
if $(i,j)=(2,3)$ then $cv(\hat{\Delta})=(10,10,a_1,a_2,4,6,6,0,6,0)$.
It follows that $c^{\ast}(\hat{\Delta}) \leq - \frac{50 \pi}{30} + \frac{50 \pi}{30}=0$.

%\newpage
\begin{figure}
\begin{center}
\psfig{file=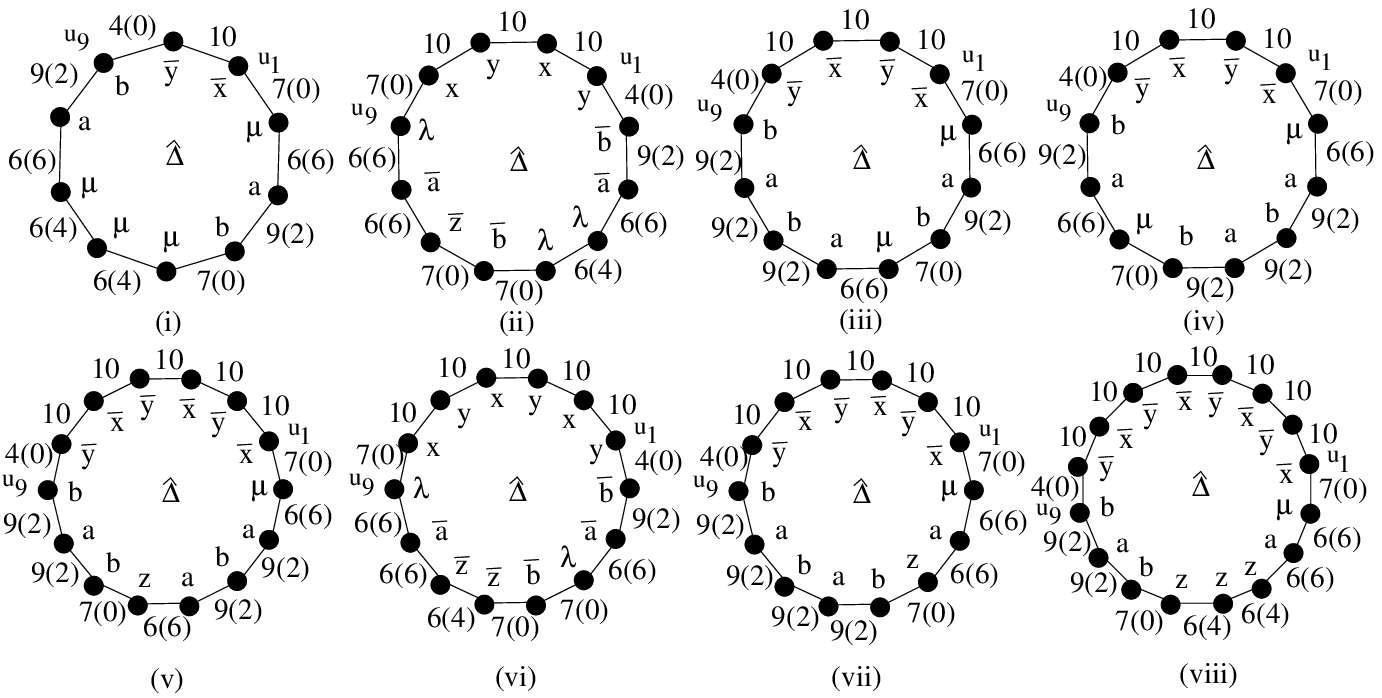}
\end{center}
\caption{}
\end{figure}

\textbf{Case b14}\quad
Let $\hat{\Delta}$ be given by Figure 51(xiv).  Suppose that $\hat{\Delta}$ has exactly one vertex $u_i$ of degree $>3$.
If $d(u_2)=d(u_8)=3$ then $cv(\hat{\Delta})=(10,10,0,6,2,6,6,3,0,0)$;
if $i=2$ then $cv(\hat{\Delta})=(10,10,d_1,d_2,0,6,6,0,0,0)$; and
if $i=8$ then $cv(\hat{\Delta})=(10,10,0,6,0,6,6,0,2,4)$.
It follows that $c^{\ast}(\hat{\Delta}) \leq - \frac{45 \pi}{30} + \frac{44 \pi}{30} < 0$.
Let $\hat{\Delta}$ have exactly two vertices $u_i,u_j$ of degree $>3$.
If $d(u_8)=3$ then $cv(\hat{\Delta})=(10,10,d_1,d_2,2,6,6,3,0,0)$;
if $d(u_6)=d(u_7)=3$ then $cv(\hat{\Delta})=(10,10,d_1,d_2,2,6,6,0,2,4)$;
if $(i,j)=(8,6)$ then $cv(\hat{\Delta})=(10,10,0,6,0,6,6,3,2,4)$; and if 
$(i,j)=(8,7)$ then
$cv(\hat{\Delta})=(10,10,0,6,0,6,6,a_1,a_2,4)$.
It follows that $cv(\hat{\Delta}) \leq - \frac{50 \pi}{30} + \frac{50 \pi}{30} = 0$.

\textbf{Case b15}\quad 
Let $\hat{\Delta}$ be given by Figure 51(xv).  Suppose that $\hat{\Delta}$ has exactly one vertex $u_i$ of degree $>3$.
If $d(u_2)=d(u_6)=3$ then $cv(\hat{\Delta})=(10,10,0,0,6,3,0,6,d_1,d_2)$;
if $i=2$ then $cv(\hat{\Delta})=(10,10,4,2,5,0,0,6,6,0)$; and
if $i=6$ then $cv(\hat{\Delta})=(10,10,0,0,5,0,d_1,d_2,6,0)$.
It follows that $c^{\ast}(\hat{\Delta}) \leq - \frac{45 \pi}{30} + \frac{45 \pi}{30} = 0$.
Let $\hat{\Delta}$ have exactly two vertices $u_i,u_j$ of degree $>3$.
If $d(u_2)=3$ then $cv(\hat{\Delta})=(10,10,0,0,6,3,d_1,d_2,d_1,d_2)$;
if $d(u_6)=d(u_8)=3$ then $cv(\hat{\Delta})=(10,10,4,d_1,d_2,3,0,6,6,0)$,
if $(i,j)=(2,6)$ then $cv(\hat{\Delta})=(10,10,4,2,5,0,d_1,d_2,6,0)$; and
if $(i,j)=(2,8)$ then $cv(\hat{\Delta})=(10,10,4,2,5,0,0,6,d_1,d_2)$.
It follows that $c^{\ast}(\hat{\Delta}) \leq -\frac{50 \pi}{30} + \frac{49 \pi}{30} < 0$.
Let $\hat{\Delta}$ have exactly three vertices of degree $>3$.
If $d(u_2)=3$ or $d(u_6)=3$ or $d(u_8)=3$ then $c^{\ast}(\hat{\Delta}) \leq - \frac{55 \pi}{30} + \frac{53 \pi}{30} < 0$; and
if $(i,j,k)=(2,6,8)$ then $cv(\hat{\Delta})=(10,10,4,2,5,0,d_1,d_2,d_1,d_2)$ and
$c^{\ast}(\hat{\Delta}) \leq - \frac{55 \pi}{30} + \frac{51 \pi}{30} < 0$.

\textbf{Case b16}\quad 
Let $\hat{\Delta}$ be given by Figure 51(xvi).  Suppose $\hat{\Delta}$ has exactly one vertex $u_i$ of degree $>3$.
If $d(u_4)=d(u_8)=3$ then $c^{\ast}(\hat{\Delta})=(10,10,2,6,6,0,a_1,a_2,2,0)$;
if $i=4$ then $cv(\hat{\Delta})=(10,10,0,6,d_1,d_2,0,0,0,0)$; and
if $i=8$ then $cv(\hat{\Delta})=(10,10,0,6,5,0,0,0,2,3)$.
It follows that $c^{\ast}(\hat{\Delta}) \leq - \frac{45 \pi}{30} + \frac{43 \pi}{30} < 0$.
Let $\hat{\Delta}$ have exactly two vertices $u_i,u_j$ of degree $>3$.
If $d(u_4)=3$ or $d(u_8)=3$ then $c^{\ast}(\hat{\Delta}) < 0$; and if 
$(i,j)=(4,8)$ then $cv(\hat{\Delta})=(10,10,0,6,d_1,d_2,a_1,a_2,a_1,a_2)$.
It follows that $c^{\ast}(\hat{\Delta}) \leq -\frac{50 \pi}{30} + \frac{50 \pi}{30}$.

Finally let $\hat{\Delta}$ be given by one of the regions of Figure 49.
It turns out that (up to cyclic permutations and inversion) there is one way to label each of Figure 49(i), (ii), (iii), (v) and (vi);
two ways to label (iv); five ways to label (vii) or (viii); three ways to label (ix) or (x); seven ways to label (xi) or (xii) or (xiii);
and seven ways to label (xiv) or (xv) or (xvi) or (xvii).  This yields a total of twenty-nine regions.  There are however several coincidences
amongst these regions resulting in $\hat{\Delta}$ being one of the eight regions given by Figure 52.
Table 5 gives $c(u_i,u_{i+1})$ ($1 \leq i \leq 9$) in multiples of $\pi / 30$ for each of the eight regions of Figure 52 with the total plus
the contribution via the $b$-segment in the final column.

We claim that $x_1+y_1+z_1=15$ in Table 5.  To see this let $\hat{\Delta}$ be given by Figure 52(i).
If $c(u_5,u_6)=0$ then $x_1+y_1+z_1=14$, so assume otherwise, in which case $c(u_4,u_5)=\frac{2 \pi}{15}$ (Figure 40(ix)).
If now $c(u_5,u_6)=\frac{2 \pi}{15}$ then $x_1+y_1+z_1=15$ by Lemma 9.2.  On the other hand
if $c(u_5,u_6) > \frac{2 \pi}{15}$ then $d(u_5)=3$ (see Figure 40) forcing $c(u_4,u_5)=\frac{\pi}{30}$ and $x_1+y_1+z_1=15$.
Note that we use here and below the fact that labelling prevents $\hat{\Delta}=\hat{\Delta}_2$ of Figure 38.
The arguments for $\hat{\Delta}$ of Figures 52(iii), (iv), (v), (vii) and (viii) are similar although for (v), (vii) and (viii) we use the fact, again
both here and below, that $\hat{\Delta} \neq \hat{\Delta}_2$ of Figure 37.

\begin{table}[h]
\[
\renewcommand{\arraystretch}{1.5}
\begin{array}{rllllllllll}
\textrm{(i)}&d_1&d_2&x_1&y_1&z_1&6&6&e_1&e_2&48 + 10 = 58\\
\textrm{(ii)}&e_1&e_2&6&d_1&d_2&d_1&d_2&d_1&d_2&47 + 30 = 77\\
\textrm{(iii)}&d_1&d_2&x_1&y_1&z_1&f_1&f_2&e_1&e_2&48 + 30 = 78\\
\textrm{(iv)}&d_1&d_2&f_1&f_2&x_1&y_1&z_1&e_1&e_2&48 + 30 = 78\\
\textrm{(v)}&d_1&d_2&f_1&f_2&x_1&y_1&z_1&e_1&e_2&48 + 50 = 98\\
\textrm{(vi)}&e_1&e_2&d_1&d_2&d_1&d_2&6&d_1&d_2&47 + 50 = 97\\
\textrm{(vii)}&d_1&d_2&x_1&y_1&z_1&f_1&f_2&e_1&e_2&48 + 50 = 98\\
\textrm{(viii)}&d_1&d_2&6&6&x_1&y_1&z_1&e_1&e_2&48 + 70 = 118
\end{array}
\renewcommand{\arraystretch}{1}
\]
\caption{}
\end{table}

Observe that $d(\hat{\Delta})=10$ in (i); $d(\hat{\Delta})=12$ in (ii)-(iv); $d(\hat{\Delta})=14$ in (v)-(vii); and $d(\hat{\Delta})=16$ in (viii).
It follows that if $\hat{\Delta}$ has at least four vertices of degree $>3$ then $c^{\ast}(\hat{\Delta}) \leq 0$.
If $\hat{\Delta}$ has no vertices of degree $>3$ then we see from Figure 52 that
$c^{\ast}(\hat{\Delta}) \leq -  7 \pi + \frac{18 \pi}{3} + \frac{24 \pi}{30} < 0$.

We deal with each of the eight \textit{c-cases}  in turn.

\textbf{Case c1}\quad 
Let $\hat{\Delta}$ be given by Figure 52(i).  Suppose $\hat{\Delta}$ has exactly one vertex $u_i$ of degree $>3$.
If $d(u_4)=d(u_9)=3$ then $cv(\hat{\Delta})=(10,d_1,d_2,2,0,6,6,6,2,0)$;
if $i=4$ then $cv(\hat{\Delta})=(10,0,6,c_1,c_2,4,4,6,2,0)$ (the $c_1,c_2$ follows from $\hat{\Delta} \neq \hat{\Delta}_2$ of Figure 38(iv))
; and if $i=9$ then $cv(\hat{\Delta})=(10,0,6,2,0,4,4,6,e_1,e_2)$.
It follows that $c^{\ast}(\hat{\Delta}) \leq -\frac{45 \pi}{30} + \frac{43 \pi}{30} < 0$.
Let $\hat{\Delta}$ have exactly two vertices $u_i,u_j$ of degree $>3$.
If $d(u_2)=d(u_4)=3$ then $cv(\hat{\Delta})=(10,0,6,2,0,6,6,6,e_1,e_2)$;
if $d(u_2)=d(u_9)=3$ then $cv(\hat{\Delta})=(10,0,6,x_1,y_1,z_1,6,6,2,0)$;
if $d(u_4)=d(u_9)=3$ then $cv(\hat{\Delta})=(10,d_1,d_2,2,0,6,6,6,2,0)$;
if $(i,j)=(2,4)$ then $cv(\hat{\Delta})=(10,d_1,d_2,c_1,c_2,4,4,6,2,0)$;
if $(i,j)=(2,9)$ then $cv(\hat{\Delta})=(10,d_1,d_2,2,0,4,4,6,e_1,e_2)$;
if $(i,j)=(4,9)$ then $cv(\hat{\Delta})=(10,0,6,c_1,c_2,4,4,6,e_1,e_2)$.
It follows that $c^{\ast}(\hat{\Delta}) \leq - \frac{50 \pi}{30} + \frac{50 \pi}{30} = 0$.
Let $\hat{\Delta}$ have exactly three vertices $u_i,u_j,u_k$ of degree $>3$.
If $d(u_4)=3$ or $d(u_9)=3$ then $c^{\ast}(\hat{\Delta}) < 0$;
if $d(u_2)=d(u_7)=3$ then (see Figure 40(xiv)) $cv(\hat{\Delta})=(10,0,6,x_1,y_1,z_1,4,6,e_1,e_2)$;
if $(i,j,k)=(4,9,2)$ then $cv(\hat{\Delta})=(10,d_1,d_2,c_1,c_2,4,4,6,e_1,e_2)$; and
if $(i,j,k)=(4,9,7)$ then $cv(\hat{\Delta})=(10,0,6,c_1,c_2,4,6,6,e_1,e_2)$.
It follows that $c^{\ast}(\hat{\Delta}) \leq - \frac{55 \pi}{30} + \frac{54 \pi}{30}<0$.

\textbf{Case c2}\quad 
Let $\hat{\Delta}$ be given by Figure 52(ii).
Suppose that $\hat{\Delta}$ has exactly one vertex $u_i$ of degree $>3$.
If $d(u_2)=d(u_9)=3$ then $cv(\hat{\Delta})=(10,10,10,0,2,6,d_1,d_2,d_1,d_2,6,0)$;
if $i=2$ then $cv(\hat{\Delta})=(10,10,10,e_1,e_2,6,4,0,0,6,6,0)$; and
if $i=9$ then $cv(\hat{\Delta})=(10,10,10,0,2,6,4,0,0,6,d_1,d_2)$.
It follows that $c^{\ast}(\hat{\Delta}) \leq - \frac{65 \pi}{30} + \frac{64 \pi}{30} < 0$.
Let $\hat{\Delta}$ have exactly two vertices $u_i,u_j$ of degree $>3$.
If $d(u_2)=3$ then
$cv(\hat{\Delta})=(10,10,10,0,2,6,d_1,d_2,d_1,d_2,d_1,d_2)$;
if $d(u_6)=3$ then
$cv(\hat{\Delta})=(10,10,10,e_1,e_2,6,6,0,0,6,d_1,d_2)$; and
if $(i,j)=(2,6)$ then
$cv(\hat{\Delta})=(10,10,10,e_1,e_2,6,4,2,2,6,6,0)$.
It follows that $c^{\ast}(\hat{\Delta}) \leq - \frac{70 \pi}{30} + \frac{69 \pi}{30}$.
Let $\hat{\Delta}$ have exactly three vertices $u_i,u_j,u_k$ of degree $>3$.
If $d(u_2)=3$ or $d(u_6)=3$ or $d(u_9)=3$ then $c^{\ast}(\hat{\Delta}) \leq - \frac{75 \pi}{30} + \frac{73 \pi}{30} < 0$; and
if $(i,j,k)=(2,6,9)$ then $cv(\hat{\Delta})=$
$(10,10,10,e_1,e_2,6,4,2,2,6,d_1,d_2)$ and
$c^{\ast}(\hat{\Delta}) \leq - \frac{75 \pi}{30} + \frac{71 \pi}{30} < 0$.

\textbf{Case c3}\quad 
Let $\hat{\Delta}$ be given by Figure 52(iii).  Suppose that $\hat{\Delta}$ has exactly one vertex $u_i$ of degree $>3$.
If $d(u_7)=d(u_9)=3$ then $cv(\hat{\Delta})=(10,10,10,d_1,d_2,x_1,y_1,z_1,2,2,2,0)$;
if $i=7$ then $cv(\hat{\Delta})=(10,10,10,0,6,2,0,6,f_1,f_2,2,0)$; and
if $i=9$ then $cv(\hat{\Delta})=(10,10,10,0,6,2,0,6,2,2,e_1,e_2)$.
It follows that $c^{\ast}(\hat{\Delta}) \leq - \frac{65 \pi}{30} + \frac{61 \pi}{30} < 0$.
Let $\hat{\Delta}$ have exactly two vertices $u_i,u_j$ of degree $>3$.
If $d(u_2)=d(u_7)=3$ then
$cv(\hat{\Delta})=(10,10,10,0,6,x_1,y_1,z_1,2,2,e_1,e_2)$;
if $d(u_9)=3$ then
$cv(\hat{\Delta})=(10,10,10,d_1,d_2,x_1,y_1,z_1,f_1,f_2,2,0)$;
if $(i,j)=(9,2)$ then $cv(\hat{\Delta})=(10,10,10,d_1,d_2,2,0,6,2,2,e_1,e_2)$; and
if $(i,j)=(9,7)$ then $cv(\hat{\Delta})=(10,10,10,0,6,2,0,6,f_1,f_2,e_1,e_2)$.
It follows that $c^{\ast}(\hat{\Delta}) \leq - \frac{70 \pi}{30} + \frac{69 \pi}{30} = 0$.
Let $\hat{\Delta}$ have exactly three vertices $u_i,u_j,u_k$ of degree $>3$.
If $d(u_2)=3$ or $d(u_7)=3$ or $d(u_9)=3$ then 
$c^{\ast}(\hat{\Delta}) \leq - \frac{75 \pi}{30} + \frac{74 \pi}{30} < 0$; and
if $(i,j,k)=(2,7,9)$ then $cv(\hat{\Delta})=(10,10,10,d_1,d_2,2,0,6,f_1,f_2,e_1,e_2)$ and
$c^{\ast}(\hat{\Delta}) \leq - \frac{75 \pi}{30} + \frac{71 \pi}{30} < 0$.

\textbf{Case c4}\quad 
Let $\hat{\Delta}$ be given by Figure 52(iv).  Suppose that $\hat{\Delta}$ has exactly one vertex $u_i$ of degree $>3$.
If $d(u_4)=d(u_9)=3$ then $c^{\ast} (\hat{\Delta})=(10,10,10,d_1,d_2,2,2,x_1,y_1,z_1,2,0)$;
if $i=4$ then $cv(\hat{\Delta})=(10,10,10,0,6,f_1,f_2,2,0,6,2,0)$; and
if $i=9$ then $c^{\ast}(\hat{\Delta})=(10,10,10,0,6,2,2,2,0,6,e_1,e_2)$.
It follows that $c^{\ast}(\hat{\Delta}) \leq - \frac{65 \pi}{30}+\frac{61 \pi}{30} < 0$.
Let $\hat{\Delta}$ have exactly two vertices $u_i,u_j$ of degree $>3$.
If $d(u_9)=3$ then $cv(\hat{\Delta})=(10,10,10,d_1,d_2,f_1,f_2,x_1,y_1,z_1,2,0)$;
if $d(u_2)=d(u_4)=3$ then $cv(\hat{\Delta})=(10,10,10,0,6,2,2,x_1,y_1,z_1,e_1,e_2)$;
if $(i,j)=(9,2)$
then $cv(\hat{\Delta})=(10,10,10,d_1,d_2,2,2,2,0,6,e_1,e_2)$; and if $(i,j)=(9,4)$ then %\newline
$cv(\hat{\Delta})=(10,10,10,0,6,f_1,f_2,2,0,6,e_1,e_2)$.
It follows that $c^{\ast}(\hat{\Delta}) \leq - \frac{70 \pi}{30} + \frac{69 \pi}{30} = 0$.
Let $\hat{\Delta}$ have exactly three vertices $u_i,u_j,u_k$ of degree $>3$.
If $d(u_2)=3$ or $d(u_4)=3$ or $d(u_9)=3$ then $c^{\ast}(\hat{\Delta}) \leq - \frac{75 \pi}{30} + \frac{74 \pi}{30} < 0$; and
if $(i,j,k)=(2,4,9)$ then $cv(\hat{\Delta})=(10,10,10,d_1,d_2,f_1,f_2,2,0,6,e_1,e_2)$ and
$c^{\ast}(\hat{\Delta}) \leq - \frac{75 \pi}{30} + \frac{71 \pi}{30} < 0$.

\textbf{Case c5}\quad 
Let $\hat{\Delta}$ be given by Figure 52(v).  Suppose that $\hat{\Delta}$ has exactly one vertex $u_i$ of degree $>3$.
If $d(u_4)=d(u_9)=3$ then $cv(\hat{\Delta})=(10,10,10,10,10,d_1,d_2,2,2,x_1,y_1,z_1,2,0)$;
if $i=4$ then $cv(\hat{\Delta})=(10,10,10,10,10,0,6,f_1,f_2,6,0,2,2,0)$; and
if $i=9$ then $cv(\hat{\Delta})=(10,10,10,10,10,0,6,2,2,6,0,2,e_1,e_2)$.
It follows that $c^{\ast}(\hat{\Delta}) \leq - \frac{85 \pi}{30} + \frac{81 \pi}{30} < 0$.
Let $\hat{\Delta}$ have exactly two vertices $u_i,u_j$ of degree $>3$.
If $d(u_9)=3$ then %\newline
$cv(\hat{\Delta})=(10,10,10,10,10,d_1,d_2,f_1,f_2,x_1,y_1,z_1,2,0)$;
if $d(u_2)=d(u_4)=3$ then
$cv(\hat{\Delta})=(10,10,10,10,10,0,6,2,2,x_1,y_1,z_1,e_1,e_2)$;
if $(i,j)=(9,2)$ then %\newline
$cv(\hat{\Delta})=(10,10,10,10,10,d_1,d_2,2,2,6,0,2,e_1,e_2)$; and
if $(i,j)=(9,4)$ then %\newline
$cv(\hat{\Delta})=(10,10,10,10,10,0,6,f_1,f_2,6,0,2,e_1,e_2)$.
It follows that $c^{\ast}(\hat{\Delta}) \leq - \frac{90 \pi}{30} + \frac{89 \pi}{30}=0$.
Let $\hat{\Delta}$ have exactly three vertices $u_i,u_j,u_k$ of degree $>3$.
If $d(u_2)=3$ or $d(u_4)=3$ or $d(u_9)=3$ then $c^{\ast}(\hat{\Delta}) \leq - \frac{95 \pi}{30} + \frac{94 \pi}{30} < 0$; and
if $(i,j,k)=(2,4,9)$ then $cv(\hat{\Delta}) = (10,10,10,10,10,d_1,d_2,f_1,f_2,6,0,2,e_1,e_2)$ and
$c^{\ast}(\hat{\Delta}) \leq - \frac{95 \pi}{30} + \frac{91 \pi}{30} < 0$.

\textbf{Case c6}\quad 
Let $\hat{\Delta}$ be given by Figure 52(vi).  Suppose that $\hat{\Delta}$ has exactly one vertex $u_i$ of degree $>3$.
If $d(u_2)=d(u_9)=3$ then $cv(\hat{\Delta})=(10,10,10,10,10,0,2,d_1,d_2,d_1,d_2,6,6,0)$;
if $i=2$ then $cv(\hat{\Delta})=(10,10,10,10,10,e_1,e_2,6,0,0,4,6,6,0)$; and
if $i=9$ then $cv(\hat{\Delta})=(10,10,10,10,10,0,2,6,0,0,4,6,d_1,d_2)$.
It follows that $c^{\ast}(\hat{\Delta}) \leq - \frac{85 \pi}{30} + \frac{84 \pi}{30} < 0$.
Let $\hat{\Delta}$ have exactly two vertices $u_i,u_j$ of degree $>3$.
If $d(u_2)=3$ then %\newline
$cv(\hat{\Delta})=(10,10,10,10,10,0,2,d_1,d_2,d_1,d_2,6,d_1,d_2)$;
if $d(u_5)=3$ then %\newline
$cv(\hat{\Delta})=(10,10,10,10,10,e_1,e_2,6,0,0,6,6,d_1,d_2)$;
and if $(i,j)=(2,5)$ then %\newline
$cv(\hat{\Delta})=(10,10,10,10,10,e_1,e_2,6,2,2,4,6,6,0)$.
It follows that $c^{\ast}(\hat{\Delta}) \leq - \frac{90 \pi}{30} + \frac{89 \pi}{30}$.
Let $\hat{\Delta}$ have exactly three vertices $u_i,u_j,u_k$ of degree $>3$.
If $d(u_2)=3$ or $d(u_5)=3$ or $d(u_9)=3$ then $c^{\ast}(\hat{\Delta}) \leq - \frac{95 \pi}{30} + \frac{93 \pi}{30} < 0$; and
if $(i,j,k)=(2,5,9)$ then $cv(\hat{\Delta})=$
$(10,10,10,10,10,e_1,e_2,6,2,2,4,6,d_1,d_2)$ and
$c^{\ast}(\hat{\Delta}) \leq - \frac{95 \pi}{30} + \frac{91 \pi}{30} < 0$.

\textbf{Case c7}\quad 
Let $\hat{\Delta}$ be given by Figure 52(vii).  Suppose that $\hat{\Delta}$ has exactly one vertex $u_i$ of degree $>3$.
If $d(u_7)=d(u_9)=3$ then $cv(\hat{\Delta})=(10,10,10,10,10,d_1,d_2,x_1,y_1,z_1,2,2,2,0)$;
if $i=7$ then $cv(\hat{\Delta})=(10,10,10,10,10,0,6,6,0,2,f_1,f_2,2,0)$; and
if $i=9$ then $cv(\hat{\Delta})= (10,10,10,10,10,0,6,6,0,2,2,2,e_1,e_2)$.
It follows that $c^{\ast}(\hat{\Delta}) \leq - \frac{85 \pi}{30} + \frac{81 \pi}{30}<0$.
Let $\hat{\Delta}$ have exactly two vertices $u_i,u_j$ of degree $>3$.
If $d(u_9)=3$ then %\newline
$cv(\hat{\Delta})=(10,10,10,10,10,d_1,d_2,x_1,y_1,z_1,f_1,f_2,2,0)$;
if $d(u_2)=d(u_7)=3$ then $cv(\hat{\Delta})=(10,10,10,10,10,0,6,x_1,y_1,z_1,2,2,e_1,e_2)$;
if $(i,j)=(9,2)$ then %\newline
$cv(\hat{\Delta})=(10,10,10,10,10,d_1,d_2,6,0,2,2,2,e_1,e_2)$; and
if $(i,j)=(9,7)$ then %\newline
$cv(\hat{\Delta})=(10,10,10,10,10,0,6,6,0,2,f_1,f_2,e_1,e_2)$.
It follows that $c^{\ast}(\hat{\Delta}) \leq - \frac{90 \pi}{30}+\frac{89 \pi}{30}=0$.
Let $\hat{\Delta}$ have exactly three vertices $u_i,u_j,u_k$ of degree $>3$.
If $d(u_2)=3$ or $d(u_7)=3$ or $d(u_9)=3$ then $c^{\ast}(\hat{\Delta}) \leq - \frac{95 \pi}{30} + \frac{94 \pi}{30} < 0$;
if $(i,j,k)=(2,7,9)$ then $cv(\hat{\Delta})=(10,10,10,10,10,d_1,d_2,6,0,2,f_1,f_2,e_1,e_2)$ and
$c^{\ast}(\hat{\Delta}) \leq - \frac{95 \pi}{30} + \frac{91 \pi}{30} < 0$.

\textbf{Case c8}\quad 
Let $\hat{\Delta}$ be given by Figure 52(viii).  Suppose that $\hat{\Delta}$ has exactly one vertex $u_i$ of degree $>3$.
If $d(u_7)=d(u_9)=3$ then $cv(\hat{\Delta})=(10,10,10,10,10,10,10,d_1,d_2,6,6,6,0,2,2,0)$; if $i=7$ then
$cv(\hat{\Delta})=(10,10,10,10,10,10,10,0,6,6,4,4,c_1,c_2,2,0)$ (the $c_1,c_2$ follows from $\hat{\Delta} \neq \hat{\Delta}_2$ of Figure 37(iv)); and
if $i=9$ then $cv(\hat{\Delta})=(10,10,10,10,10,10,10,0,6,6,4,4,0,2,e_1,e_2)$.
It follows that $c^{\ast}(\hat{\Delta}) \leq - \frac{105 \pi}{30} + \frac{103 \pi}{30} = 0$.
Let $\hat{\Delta}$ have exactly two vertices, $u_i,u_j$ of degree $>3$.
If $d(u_2)=d(u_7)=3$ then $cv(\hat{\Delta})=(10,10,10,10,10,10,10,0,6,6,6,6,0,2,e_1,e_2)$;
if $d(u_2)=d(u_9)=3$ then $cv(\hat{\Delta})=(10,10,10,10,10,10,10,0,6,6,6,x_1,y_1,z_1,2,0)$;
if $d(u_7)=d(u_9)=3$ then $c^{\ast}(\hat{\Delta}) \leq - \frac{110 \pi}{30} + \frac{102 \pi}{30}$;
if $(i,j)=(2,7)$ then %\newline
$cv(\hat{\Delta}) = (10,10,10,10,10,10,10,d_1,d_2,6,4,4,c_1,c_2,2,0)$;
if $(i,j)=(2,9)$ then %\newline
$cv(\hat{\Delta})=(10,10,10,10,10,10,10,d_1,d_2,6,4,4,0,2,e_1,e_2)$; and
if $(i,j)=(7,9)$ then
$cv(\hat{\Delta})=(10,10,10,10,10,10,10,0,6,6,4,4,c_1,c_2,e_1,e_2)$.
It follows that $c^{\ast}(\hat{\Delta}) \leq - \frac{110 \pi}{30} + \frac{110 \pi}{30} = 0$.
Let $\hat{\Delta}$ have exactly three vertices $u_i,u_j,u_k$ of degree $>3$.
If $d(u_7)=3$ or $d(u_9)=3$ then 
$c^{\ast}(\hat{\Delta}) \leq - \frac{115 \pi}{30} + \frac{111 \pi}{30} < 0$;
if $d(u_2)=d(u_4)=3$ then %\newline
$cv(\hat{\Delta})=(10,10,10,10,10,10,10,0,6,6,4,x_1,y_1,z_1,e_1,e_2)$;
if $(i,j,k)=(7,9,2)$ then $cv(\hat{\Delta})=(10,10,10,10,10,10,10,d_1,d_2,6,4,4,c_1,c_2,e_1,e_2)$; and
if $(i,j,k)=(7,9,4)$ then $cv(\hat{\Delta})=(10,10,10,10,10,10,10,0,6,6,6,4,c_1,c_2,e_1,e_2)$.
It follows that $cv(\hat{\Delta}) \leq - \frac{115 \pi}{30} + \frac{114 \pi}{30} < 0$. $\Box$

\medskip

\newpage
\begin{figure}
\begin{center}
\psfig{file=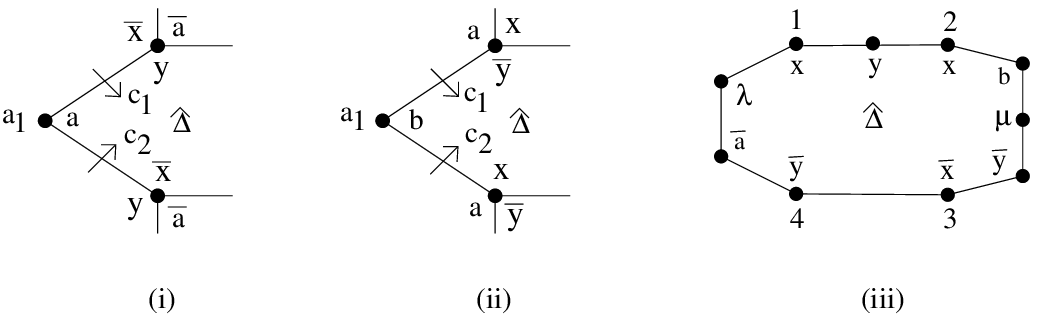}
\end{center}
\caption{}
\end{figure}

%\newpage
\begin{figure}
\begin{center}   
\psfig{file=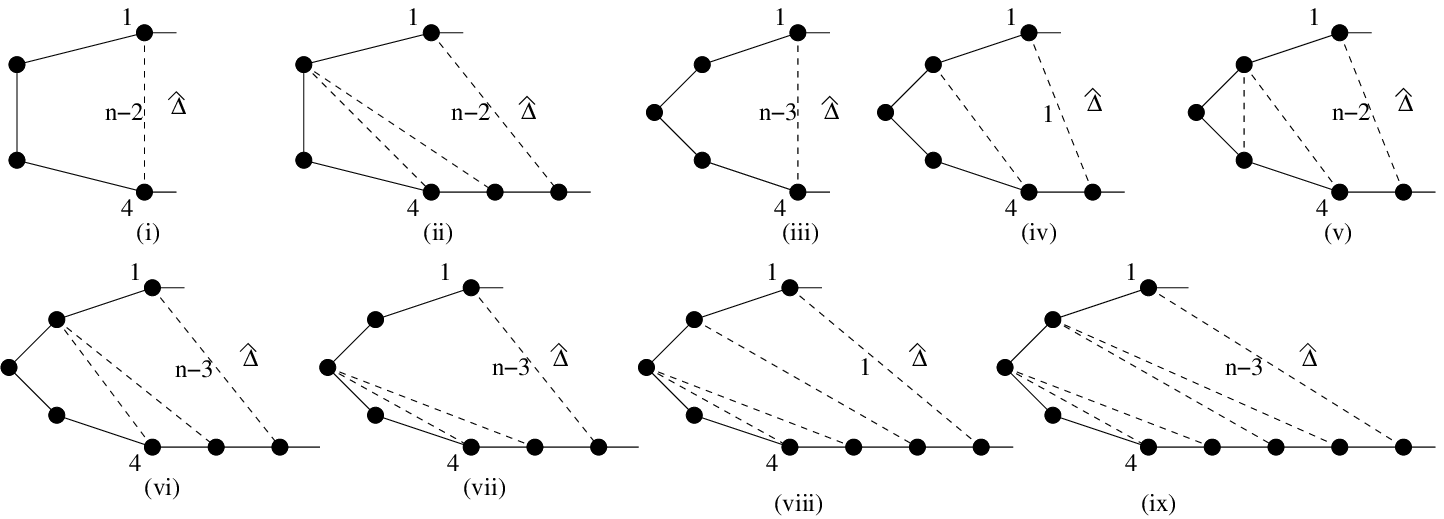}
\end{center}
\caption{}
\end{figure}

The next result completes the proof of Proposition 4.3.

\begin{proposition}%\textbf{Proposition 11.5}\quad
\textit{If $\hat{\Delta}$ is a type $\mathcal{B}$ region and $d(\hat{\Delta}) \geq 10$ then $c^{\ast}(\hat{\Delta}) \leq 0$.}
\end{proposition}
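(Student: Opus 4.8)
The plan is to leverage the bound $(\dag)$ together with the structural lemmas of Section~11 to reduce, by stages, to a short list of genuinely exceptional regions, and then dispose of that list by explicit curvature counts. Recall that $(\dag)$ already gives $c^{\ast}(\hat{\Delta})\le \pi(2-\tfrac{n_2}{5})$, so the statement holds immediately whenever $n_2\ge 10$. By Lemma~11.1, if $n_2\ge 9$ then $c^{\ast}(\hat{\Delta})\le 0$ unless $\hat{\Delta}$ is one of the regions of Figure~46; and Proposition~11.5 has already handled those (they appear in Figure~49, via Lemma~11.2(vi)). So the first step is to record that it remains only to treat $d(\hat{\Delta})\ge 10$ with $n_2\le 8$.

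The second step is to count $b$-segments containing a $b$-region. If there are at least three such $b$-segments, Lemma~11.2(i) gives $n_2\ge 8$, and then $(\dag)$ yields $c^{\ast}(\hat{\Delta})\le \pi(2-\tfrac{8}{5})=\tfrac{2\pi}{5}$; a small extra saving analogous to the proof of Lemma~11.1 — each of the three end-vertices of the outer $b$-segments, or an adjacent edge, absorbs surplus — pushes this below $0$, or else forces $n_2\ge 10$ outright. If there are exactly two such $b$-segments, I would invoke Lemma~11.2(ii)--(iv): unless $\hat{\Delta}$ is one of the four regions of Figure~47(ii)--(v) (covered by Proposition~11.5 via Figure~50), there must be shadow edges joining $B_1$ to $B_2$ in the pattern prescribed by (iii),(iv). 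These shadow edges, read off against equations~(3.1), pin down the corner labels on $B_1\cup B_2$ tightly enough that the two $b$-segments together contribute at most $\tfrac{2\pi}{3}$ while forcing enough additional degree-$>3$ vertices (or length constraints) that $n_2$ is large; a direct tally using Table~3-style bookkeeping and the absorption rules of Section~10 then gives $c^{\ast}(\hat{\Delta})\le 0$. If there is exactly one $b$-segment containing a $b$-region, Lemma~11.2(v) says that $n_2\le 8$ forces $\hat{\Delta}$ to be one of the regions of Figure~48, all of which are dispatched by Proposition~11.5.

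The third step is just the assembly: combining Step~1 (the cases $n_2\ge 9$, done by Lemma~11.1 and Proposition~11.5) with Step~2 (the cases $n_2\le 8$, classified by the number of $b$-segments and reduced in each instance either to an explicit bound or to Figures~47--52, all handled by Proposition~11.5) exhausts all type~$\mathcal{B}$ regions with $d(\hat{\Delta})\ge 10$, giving $c^{\ast}(\hat{\Delta})\le 0$ in every case. Together with Proposition~11.3 (the case $d(\hat{\Delta})<10$), this proves Proposition~4.3 for type~$\mathcal{B}$ regions, and combined with Propositions~10.2 and~10.4 for type~$\mathcal{A}$ regions it completes Proposition~4.3.

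The main obstacle is the two-$b$-segment case: there the bound $(\dag)$ alone is too weak (it only gives $2\pi$ when both segments are short), so one genuinely needs the shadow-edge analysis of Lemma~11.2(ii)--(iv) to extract the extra structure, and then a somewhat delicate edge- and vertex-absorption count — of exactly the flavour carried out in Lemma~10.1 and Lemma~11.1 — to convert that structure into a negative curvature bound while correctly accounting for the special configurations (Figure~47(ii)--(v)) where a shadow edge between $B_1$ and $B_2$ need not exist. I expect this to be where essentially all the work lies; the one- and three-segment cases are comparatively mechanical once the relevant parts of Lemma~11.2 are in hand.
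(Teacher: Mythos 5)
Your reduction scheme is exactly the paper's: dispose of $n_2\ge 10$ by $(\dag)$, of $n_2=9$ by Lemma 11.1 together with Lemma 11.2(vi) and the proposition covering the listed exceptional regions (that is Proposition 11.4, not 11.5 as you cite), then split on the number of $b$-segments containing a $b$-region, using Lemma 11.2(i) plus a per-gap saving of $\tfrac{2\pi}{15}$ for three or more segments, Lemma 11.2(v),(vi) plus Proposition 11.4 for exactly one, and Lemma 11.2(ii)--(iv) plus Figure 47(ii)--(v) for exactly two. Up to that point your outline is sound.

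The genuine gap is in the exactly-two-segment case with $n_2\le 8$, which you correctly flag as the hard part but then do not actually prove. Your quantitative claim there (``the two $b$-segments together contribute at most $\tfrac{2\pi}{3}$ while forcing $n_2$ large'') is neither what is needed nor how the estimate can close: for instance with $(m,n)=(3,3)$ one has $n_2=6$, $(\dag)$ gives $c^{\ast}(\hat{\Delta})\le \tfrac{4\pi}{5}$, and the generic saving of $2\cdot\tfrac{2\pi}{15}$ from Lemma 11.1 still leaves $\tfrac{8\pi}{15}>0$. What is actually required is a much larger deficit in each gap between the two segments --- $\tfrac{2\pi}{5}$ for a gap of three edges, $\tfrac{\pi}{5}$ for a gap of four, $\tfrac{4\pi}{15}$ in the $(2,6)$ situation, and a length contradiction via the shadow edge $(14)$ and Lemma 11.2(iii) when $m=2$, $n\ne 6$ --- and these bounds are not ``Table 3-style bookkeeping'': in the paper they are established only by classifying the gap labellings (Figures 53--55), tabulating the maximal transfers $\kappa_i$ against the vertex deficits (Tables 6--12), and, in several borderline rows (e.g.\ Figure 55(x), the $d(v_1)=d(v_2)=d(v_3)=3$ row of (xv), and the $d(v_1)=d(v_3)=4$, $d(v_2)=3$ row of (xiv)), invoking the exceptional Configurations A--F of Section 5 to either redirect $\tfrac{\pi}{5}$ out of $\hat{\Delta}$ or rule out the offending combination. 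None of this machinery is supplied or replaced by your sketch, so as written the proposal establishes the proposition only in the $n_2\ge 9$, one-segment, and ($\ge 3$)-segment cases and leaves the two-segment case, which carries essentially all the content, unproven.
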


\textit{Proof}.
It can be assumed that $d(\hat{\Delta}) \geq 10$ and that $\hat{\Delta}$ is not one of the regions of Figures 47(ii)-(v), 48 or 49, otherwise
Proposition 11.4 applies.
Moreover if $n_2 \geq 10$ then $c^{\ast} (\hat{\Delta}) \leq 0$ so assume that $n_2 \leq 9$.  It follows from the proof of Lemma 11.1 that the upper bound ($\dag$) immediately preceeding Lemma 11.1 is reduced by at least $\frac{2 
\pi}{15}$ for each gap between two 
$b$-segments that contain $b$-regions so if there are at least three such $b$-segments then $c^{\ast}(\hat{\Delta}) \leq \pi (2 - \frac{n_2}{5}) - 3 \left( \frac{2 \pi}{15}\right)$ implying $c^{\ast}(\hat{\Delta}) \leq 0$ for
$n_2 \geq 8$.  Since there are at least two edges between $b$-segments it follows that if $\hat{\Delta}$ contains more than three such $b$-segments
then $c^{\ast}(\hat{\Delta}) \leq 0$ or if exactly three then $n_2 \geq 8$ by Lemma 11.2(i) and again $c^{\ast}(\hat{\Delta}) \leq 0$.  If
$\hat{\Delta}$ has exactly one $b$-segment that contains 
a $b$-region then $c^{\ast}(\hat{\Delta}) \leq 0$ by Proposition 11.4 together with Lemma
11.2(v), (vi) so suppose from
now on that $\hat{\Delta}$ contains exactly two such segments.  Then $c^{\ast}(\hat{\Delta}) \leq \pi 
\left( 2 -
\frac{n_2}{5} \right)-2 \left( \frac{2 \pi}{15} \right)$ which implies $c^{\ast} (\hat{\Delta}) \leq 0$ for $n_2 \geq 9$, so assume $n_2 \leq 8$ in
which case $\hat{\Delta}$ is given by Figure 47(i) where
$(m,n) \in \{ (2,2),(2,3),(2,4),(2,5),(2,6),(3,3),(3,4),(3,5),(4,4)\}$. Applying Proposition 11.4 and Lemma 11.2(ii) shows it can be assumed
that there is at least one shadow edge in $\hat{\Delta}$ between the two $b$-segments.

\newpage
\begin{figure}
\begin{center}   
\psfig{file=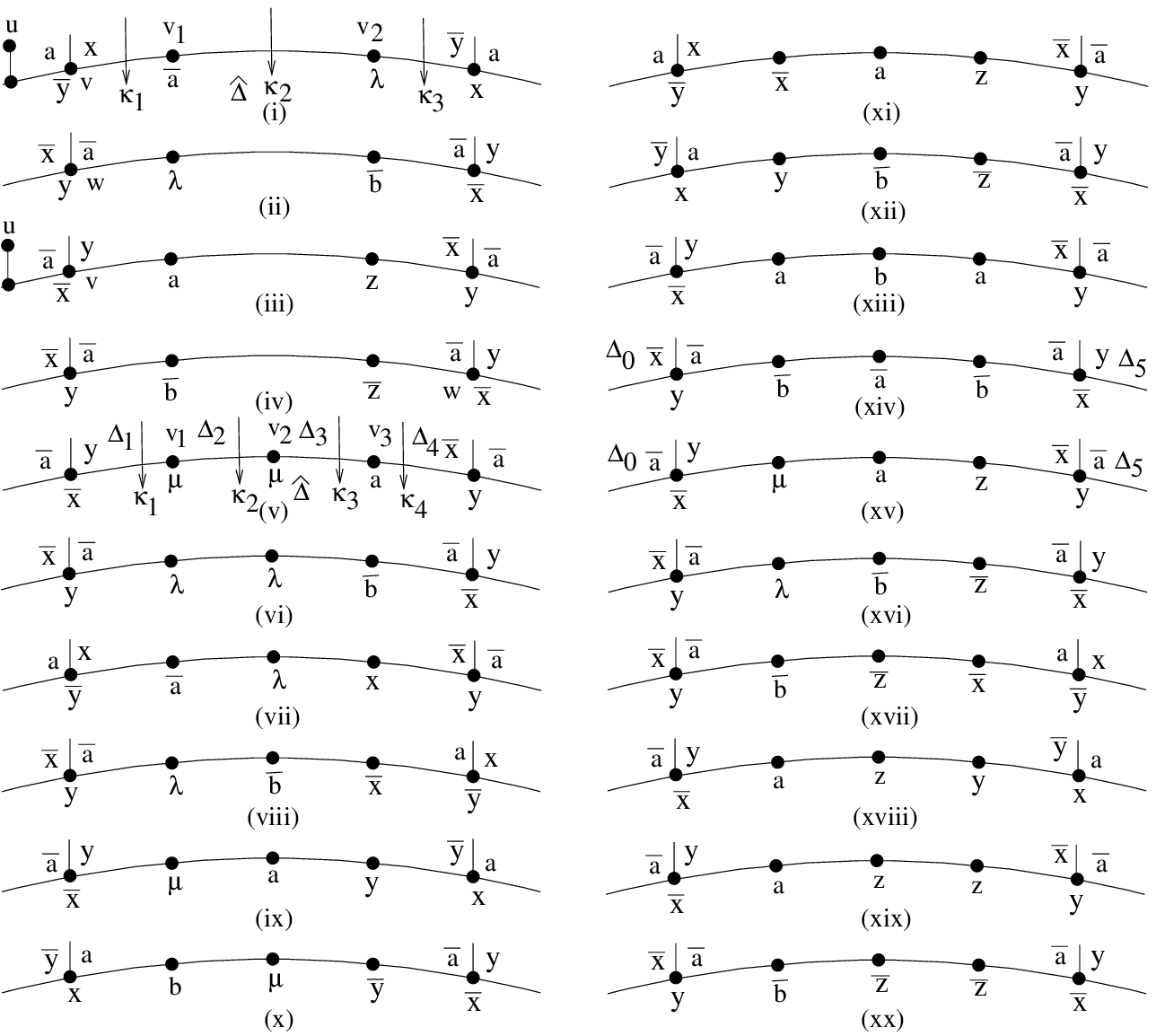}
\end{center}
\caption{}
\end{figure}

Let $m=2$.  It follows from the statement at the end of the above paragraph that $\hat{\Delta}$ contains the shadow edge (14) (of length $n-1$) and $\hat{\Delta}$ is
given by Figure 53(i)-(ii).  If $(m,n) \neq (2,6)$ 
then $i \, \textrm{deg} (1) = i \, \textrm{deg}(4)=1$ by Lemma 11.2(iii) and this leads to a length
contradiction so let $(m,n)=(2,6)$.  We claim that there is a reduction to ($\dag$) of $\frac{4 \pi}{15}$ between vertices 1 and 4.
Given this and the fact that there is a reduction of $\frac{2 \pi}{15}$ between 2 and 3 we obtain $c^{\ast}(\hat{\Delta}) \leq \pi \left( 2 -
\frac{n_2}{5} \right) - \frac{6 \pi}{15}$ and $c^{\ast} (\hat{\Delta}) \leq 0$ for $n_2 \geq 8$, in particular when $(m,n)=(2,6)$.  To prove the
claim observe that if $d(a_1)=3$ in Figure 53(i) or (ii) then $c_1=c_2=0$; and if $d(a_1) \geq 4$ then $c_1 + c_2 \leq \frac{2 \pi}{15}$ (see Figure
35).  In the first instance there is a deficit of at least $\left( \frac{2 \pi}{3} + 2 \left( \frac{2 \pi}{15} \right)\right) - \frac{2 \pi}{3} =
\frac{4 \pi}{15}$; and in the second case the deficit is at least $\left( \frac{2 \pi}{3} + 2 \left( \frac{2 \pi}{15} \right) \right) - \left( \frac{2 \pi}{4} + \frac{2 \pi}{15} \right) = \frac{3 \pi}{10}$.

Let $m=3$ or 4.  Applying Lemma 11.2(ii)-(iv) and Proposition 11.4 it can be assumed that $\hat{\Delta}$ is given by Figure 54 with the understanding
that the segment of $\hat{\Delta}$ between vertices 2 and 3 is also one of these nine possibilities.
(Note that in Figure 54 the length of the shadow edge incident at vertex 1 is shown.)
We claim that if $m=3$ then the edges between 1 and 4 produce a deficit of at least $\frac{2 \pi}{5}$; and if $m=4$ then the reduction is at least $\frac{\pi}{5}$.  Given this, if $(m,n)=(3,3)$ then
$c^{\ast} (\hat{\Delta}) \leq \pi \left( 2 - \frac{6}{5} \right) - \frac{4 \pi}{5} = 0$;
if $(m,n)=(3,4)$ then $c^{\ast} (\hat{\Delta}) \leq \pi \left( 2 - \frac{7}{5} \right) - \frac{3 \pi}{5} = 0$; if $(m,n)=(3,5)$ then
$c^{\ast}(\hat{\Delta}) \leq \pi \left( 2 - \frac{8}{5} \right) - \left( \frac{2 \pi}{5} + \frac{2 \pi}{15} \right) < 0$; and if
$(m,n)=(4,4)$ then $c^{\ast} (\hat{\Delta}) \leq \pi \left( 2 - \frac{8}{5} \right) - 2 \left( \frac{\pi}{5} \right) = 0$, so it remains to prove the
claim for the possible labellings
of the regions of Figure 54 and these are shown in Figure 55(i)-(xx).
Indeed there are four ways to label each of Figure 54(iv) and (vi); and two ways to label each of the others.
However the labelling obtained from Figure 54(vii) already appears in Figure 54(vi).

Let $m=3$.  Then Tables 6-9 give maximum values for  $\kappa_1,\kappa_2$ and $\kappa_3$ of Figure 55(i)-(iii) as multiples of $\frac{\pi}{30}$. Also indicated
in each case as a multiple of $\frac{\pi}{30}$ is the deficit 
$= \pi \left( \frac{2}{3} - \frac{2}{d(v_1)} \right) + \pi \left( \frac{2}{3} - \frac{2}{d(v_2)} \right) + \left( \frac{\pi}{30}\right)(12 - (\kappa_1 + \kappa_2 + \kappa_3))$.
The entries in each final column show that the deficit in each case is $\frac{2 \pi}{5}$, as
required, except for $d(v_1)=d(v_2)=3$ and $d(u) > 4$ in Figure 55(i) and we consider this below.
Note that in Table 6,8  when $d(u)=4$ in Figure 55(i),(iii) and either $d(v_1)=d(v_2)=3$ or $d(v_1)=3$, $d(v_2)=4$, $\frac{\pi}{5}$ is distributed from $\hat{\Delta}$
according to Configuration E, F of Figure 32(iii), (v) resulting in deficits of 12 and 16 as shown.
Note that in Table 7 when $d(v_1)=3$ and $d(v_2)=4$ the region $\hat{\Delta}$ cannot be $\hat{\Delta}_2$ of Figure 38(iv) because $d(w)=3$ in Figure 55(ii)
but the corresponding vertex in Figure 38(ii) has degree 4;and so $\kappa_2 = 1$ by Figure 36(x).  Similarly in Table 9 when $d(v_1)=4$ and $d(v_2)=(3)$ the region $\hat{\Delta}$ cannot be
$\hat{\Delta}_2$ of Figure 37(iv);and so $\kappa_2 = 1$ by Figure 36(i).

%\newpage  
\begin{table}[h]%table6
\[
\renewcommand{\arraystretch}{1.0}
\begin{array}{rllllllllll}
&d(v_1)&d(v_2)&\kappa_1&\kappa_2&\kappa_3&deficit&\\
\textrm{(i)}&3&3&0&6&0&12 (9)&(Note)\\
\textrm{(i)}&4&3&2&0&0&15&\\
\textrm{(i)}&3&4&0&0&7&16 (13)&(Note)\\
\textrm{(i)}&5+&3&2&2&0&16&\\
\textrm{(i)}&3&5+&0&2&2&16&\\
\textrm{(i)}&4&4&2&0&7&13&\\
\textrm{(i)}&4&5+&2&4&2&17&\\
\textrm{(i)}&5+&4&2&4&7&12&\\
\textrm{(i)}&5+&5+&2&2&2&22&
\end{array}
\renewcommand{\arraystretch}{1}
\]
\caption{}
\end{table}   

\begin{table}[h]%table7
\[
\renewcommand{\arraystretch}{1.0}
\begin{array}{rllllllllll}
&d(v_1)&d(v_2)&\kappa_1&\kappa_2&\kappa_3&deficit&\\
\textrm{(ii)}&3&3&0&0&0&12&\\
\textrm{(ii)}&4&3&0&0&0&17&\\
\textrm{(ii)}&3&4&0&1&4&12&(Note)\\
\textrm{(ii)}&5+&3&2&2&0&16&\\
\textrm{(ii)}&3&5+&0&2&2&16&\\
\textrm{(ii)}&4&4&0&7&0&15&\\
\textrm{(ii)}&4&4&0&0&4&18&\\
\textrm{(ii)}&4&5+&0&4&2&19&\\
\textrm{(ii)}&5+&4&2&4&4&15&\\ 
\textrm{(ii)}&5+&5+&2&2&2&22&
\end{array}
\renewcommand{\arraystretch}{1}
\]
\caption{}
\end{table}

\begin{table}[h]%table8
\[
\renewcommand{\arraystretch}{1.0}
\begin{array}{rllllllllll}
&d(v_1)&d(v_2)&\kappa_1&\kappa_2&\kappa_3&deficit&\\
\textrm{(iii)}&3&3&0&6&0&12&(Note)\\
\textrm{(iii)}&4&3&2&0&0&15&\\
\textrm{(iii)}&3&4&0&0&7&16&(Note)\\
\textrm{(iii)}&5+&3&2&2&0&16&\\
\textrm{(iii)}&3&5+&0&2&2&16&\\
\textrm{(iii)}&4&4&2&0&7&13&\\ 
\textrm{(iii)}&4&5+&2&4&2&17&\\
\textrm{(iii)}&5+&4&2&4&7&12&\\
\textrm{(iii)}&5+&5+&2&2&2&22& 
\end{array}
\renewcommand{\arraystretch}{1}
\]
\caption{}
\end{table}

\begin{table}[h]%table9
\[
\renewcommand{\arraystretch}{1.0}
\begin{array}{rllllllllll}
&d(v_1)&d(v_2)&\kappa_1&\kappa_2&\kappa_3&deficit&\\
\textrm{(iv)}&3&3&0&0&0&12&\\
\textrm{(iv)}&4&3&4&1&0&12&(Note)\\
\textrm{(iv)}&3&4&0&0&0&17&\\
\textrm{(iv)}&5+&3&2&2&0&16&\\
\textrm{(iv)}&3&5+&0&2&2&16&\\
\textrm{(iv)}&4&4&4&0&0&18&\\ 
\textrm{(iv)}&4&4&0&7&0&15&\\ 
\textrm{(iv)}&4&5+&4&4&2&15&\\
\textrm{(iv)}&5+&4&2&4&0&19&\\
\textrm{(iv)}&5+&5+&2&4&2&20& 
\end{array}
\renewcommand{\arraystretch}{1}
\]
\caption{}
\end{table}

\begin{table}[h]%table10
\[
\renewcommand{\arraystretch}{1.0}
\begin{array}{rlllllrlllll}
&\kappa_1&\kappa_2&\kappa_3&\kappa_4&&&\kappa_1&\kappa_2&\kappa_3&\kappa_4\\
\textrm{(v)}&b_1&b_2&6&2&16&\textrm{(xiii)}&x_1&x_2&x_3&x_4&18\\
\textrm{(vi)}&3&d_1&d_2&4&17&\textrm{(xiv)}&e_1&e_2&e_1&e_2&22\\
\textrm{(vii)}&2&6&7&2&17&\textrm{(xv)}&y_1&y_2&y_3&y_4&17\\
\textrm{(viii)}&3&7&4&2&16&\textrm{(xvi)}&a_1&a_2&a_1&a_2&14\\
\textrm{(ix)}&7&6&2&2&17&\textrm{(xvii)}&4&7&3&2&16\\
\textrm{(x)}&4&7&3&2&16&\textrm{(xviii)}&2&6&7&2&17\\
\textrm{(xi)}&2&2&6&7&17&\textrm{(xix)}&2&6&d_1&d_2&18\\
\textrm{(xii)}&2&4&7&3&16&\textrm{(xx)}&4&d_1&d_2&3&17
\end{array}
\renewcommand{\arraystretch}{1} 
\]
\caption{}
\end{table}

Suppose that $d(u) > 4$ in Figure 55(i) in which the vertex $v$ corresponds to the vertex 4 of Figure 54(i).
If there are at least two regions in the $b$-segment between vertices 4 and 3 then
$2 \left( \frac{10 \pi}{30} - \frac{7 \pi}{30} \right) = \frac{\pi}{5}$ is contributed to the deficit and so we obtain the totals $\frac{12 \pi}{30}$ when $d(v_1)=d(v_2)=3$
and $\frac{16 \pi}{30}$ when $d(v_1)=3$, $d(v_2)=4$ as shown in Table 6.
If however there is exactly one region in the $b$-segment then only $\frac{10 \pi}{30} - \frac{7 \pi}{30} = \frac{\pi}{10}$ is contributed to the deficit and so the total is 
$\frac{9 \pi}{30}$ when $d(v_1)=d(v_2)=3$ and
$\frac{13 \pi}{30}$ when $d(v_1)=3$, $d(v_2)=4$ as shown in brackets in Table 6.
If $(m,n)=(3,5)$ then $c^{\ast} (\hat{\Delta}) \leq \pi (2 - \frac{8}{5}) - \left( \frac{9 \pi}{30} + \frac{2 \pi}{15} \right) < 0$ so it can be
assumed that $n \in \{ 3,4 \}$.  But given that there are no vertices between 4 and 3, it follows immediately from length considerations that (i) of
Figure 54 can only be combined with (iv) or (viii), and so, in particular, $n=4$.  Any attempt at labelling shows that (i) with (viii) is impossible
and the unique region $\hat{\Delta}$ obtained from (i) with (iv) is given by Figure 53(iii) in which the segment of vertices from 2 to 3 corresponds to
Figure 55(x).  We show below that for Figure 55(x), the deficit is at least $\frac{\pi}{3}$ and so $c^{\ast} (\hat{\Delta}) \leq \pi \left( 2 -
\frac{7}{5} \right) -  \frac{19 \pi}{30} < 0$.
If $d(u) > 4$ in Figure 55(iii) in which the vertex $v$ corresponds to the vertex 4 of Figure 54(ii), then since there are at least two regions in
the $b$-segment between 4 and 3 it follows that, as in the above case, the total deficit is 12 and 16 as shown in Table 8.

Now let $m=4$ and consider Figure 55. (Recall that it remains to show that there is a deficit of at least $\frac{\pi}{5}$ in all cases except for Figure 55(x)
where we must show that there is a deficit of at least $\frac{\pi}{3}$.) 
Checking Figures 35-38, 40 and 41 and Lemma 9.1 shows that $\kappa_1+ \kappa_2+ \kappa_3+ \kappa_4 \leq \frac{11 \pi}{15}$ for (xiv);
and $\kappa_1 + \kappa_2 + \kappa_3 + \kappa_4 \leq \frac{9 \pi}{15}$ in all other cases.  Indeed the upper bounds are shown in Table 10.
Note that $\kappa_4 \leq 2$ in (vii)--(x), (xvii) and (xviii) follows from the fact that $d(v_3) \geq 4$ and if $d(v_3)= 4$ then $\kappa_4 = 0$;
$\kappa_1 \leq 2$ in (xi) and (xii) follows from the fact that $d(v_1) \geq 4$ and if $d(v_1)= 4$ then $\kappa_1 = 0$.
Note further that in (v) $\kappa_1 > 4$ implies $\kappa_2 = 0$ and $\kappa_2 > 4$ implies $\kappa_1 = 0$; 
in (vi) that $\kappa_3 > 4$ implies $\kappa_2 = 0$; 
that $x_1 + x_2 + x_3 + x_4 \leq 18$ in (xiii) follows from the fact that $d(v_1)=3$ implies $\kappa_1=0$, $d(v_1) > 3$ implies $\kappa_2=5$, $d(v_3)=3$ implies $\kappa_4=0$ and $d(v_3) > 3$ implies $\kappa_3=5$;
that in (xiv) $\kappa_2 = 9$ or $8$ forces $\kappa_1 = 0$ or $2$, that $\kappa_2 \leq 4$ and that similar statements hold for $\kappa_3$ and $\kappa_4$;  
in (xv) the fact that $\kappa_1 > 4$ implies $\kappa_2=0$, $\kappa_2 > 4$ implies $\kappa_1=0$ or $\kappa_3=0$, $\kappa_3 >4$ implies $\kappa_2=0$ or $\kappa_4=0$ and $\kappa_4 >4$ implies $\kappa_3=0$ forces 
$y_1 + y_2 + y_3 + y_4 \leq 17$; in (xvi) that $\kappa_2 > 4$ implies $\kappa_1 = 0$ and $\kappa_3 > 4$ implies $\kappa_4 = 0$;
in (xix) $\kappa_4 >4$ implies $\kappa_3=0$; and in (xx) $\kappa_2 >4$ implies $\kappa_3=0$. All other numerical entries for the upper bounds in Table 10 can be read directly from Figure 35.

\newpage
\begin{table}[h]%table11
\[
\renewcommand{\arraystretch}{1.0}
\begin{array}{rllllllllll}
&d(v_1)&d(v_2)&d(v_3)&\kappa_1&\kappa_2&\kappa_3&\kappa_4&deficit\\
\textrm{(v)}&3&3&3&0&4&6&0&6\\   
\textrm{(v)}&4&3&3&7&0&6&0&8\\   
\textrm{(v)}&3&4&3&0&0&0&0&21\\ 
\textrm{(v)}&3&3&4&0&4&0&2&15\\ 
\textrm{(vi)}&3&3&3&0&5&0&0&11\\
\textrm{(vi)}&4&3&3&0&0&0&0&21\\
\textrm{(vi)}&3&4&3&0&0&0&0&21\\
\textrm{(vi)}&3&3&4&0&5&2&4&10\\
\textrm{(xiii)}&3&3&3&0&2&2&0&12\\
\textrm{(xiii)}&4&3&3&2&0&2&0&17\\
\textrm{(xiii)}&3&4&3&0&9&0&0&12\\
\textrm{(xiii)}&3&4&3&0&0&9&0&12\\
\textrm{(xiii)}&3&3&4&0&2&0&2&17\\
\textrm{(xv)}&3&3&3&0&6&6&0&6&(Note)\\
\textrm{(xv)}&4&3&3&7&0&6&0&8&\\
\textrm{(xv)}&3&4&3&0&0&0&0&21&\\
\textrm{(xv)}&3&3&4&0&6&0&7&8&\\
\textrm{(xvi)}&3&3&3&0&0&0&0&16\\
\textrm{(xvi)}&4&3&3&0&0&0&0&21\\
\textrm{(xvi)}&3&4&3&0&2&2&0&17\\
\textrm{(xvi)}&3&3&4&0&0&0&0&21\\
\textrm{(xix)}&3&3&3&0&6&4&0&6\\
\textrm{(xix)}&4&3&3&2&0&4&0&15\\
\textrm{(xix)}&3&4&3&0&0&0&0&21\\
\textrm{(xix)}&3&3&4&0&6&0&7&8\\
\textrm{(xx)}&3&3&3&0&0&5&0&11\\
\textrm{(xx)}&4&3&3&4&2&5&0&10\\
\textrm{(xx)}&3&4&3&0&0&0&0&21\\
\textrm{(xx)}&3&3&4&0&0&0&0&21
\end{array}
\renewcommand{\arraystretch}{1}  
\]
\caption{}
\end{table}

In the cases  where $\kappa_1 + \kappa_2 + \kappa_3 + \kappa_4 \leq \frac{9 \pi}{15}$ if at least one of $v_1,v_2,v_3$ has degree $\geq 5$ then there is a deficit of at least
$\left( \frac{2 \pi}{3} + \frac{8 \pi}{15} \right) - \left( \frac{2 \pi}{5} + \frac{9 \pi}{15} \right) = \frac{\pi}{5}$; and if at least two have degree $\geq 4$ then the deficit is at 
least $\left( \frac{4 \pi}{3} + \frac{8 \pi}{15} \right) - \left( \pi + \frac{9 \pi}{15} \right) = \frac{4 \pi}{15}$, so it can be assumed that $(d(v_1),d(v_2),d(v_3)) \in \{ (3,3,3),(4,3,3),(3,4,3),(3,3,4) \}$.  
For cases (vii)--(x) and (xvii)--(xviii), $d(v_3)=4$ which forces $\kappa_4 = 0$ and so  $\kappa_1+\kappa_2+\kappa_3+\kappa_4 \leq \frac{\pi}{2}$ which gives a deficit of at least $\frac{\pi}{5}$.
For (xi) and (xii), $d(v_1)=4$ which forces $\kappa_1 = 0$ and so $\kappa_1+\kappa_2+\kappa_3+\kappa_4 \leq \frac{\pi}{2}$ which gives a deficit of at least $\frac{\pi}{5}$.
In fact for case (x), $d(v_3) \geq 4$ and so if at least one of $v_1$ or $v_2$ has degree $\geq 4$ then the deficit is at least $\frac{\pi}{3}$;
or if $d(v_1)=d(v_2)=3$ then $\kappa_1 = \kappa_2 = 0$ and we see from Table 10 that the deficit is at least $\frac{8 \pi}{15}$, as required.

Table 11 shows the deficit for (v), (vi), (xiii), (xv), (xvi), (xix) and (xx).

As can be seen from Table 11 for these cases it remains to explain the first row for (xv).
Consider (xv) with $d(v_1)=d(v_2)=d(v_3)=3$.
Then $\kappa_1=\kappa_4=0$, $\kappa_2 \leq \frac{\pi}{5}$ and $\kappa_3 \leq \frac{\pi}{5}$.
If $\kappa_1+\kappa_2 \leq \frac{\pi}{3}$ then the deficit is at least $\frac{\pi}{5}$ so assume otherwise.
If $\hat{\Delta}$ receives less than $\frac{\pi}{5}$ from each of $\Delta_2$ and $\Delta_3$ then
deficit $\geq \frac{\pi}{5}$, so assume otherwise.  If $\hat{\Delta}$ receives $\frac{\pi}{5}$ from $\Delta_2$ then $\Delta_2$ is given by $\Delta$
of Figure 7(iii) or Figure 8(iv).  But if $\Delta_2$ is $\Delta$ of Figure 8(iv) then $\hat{\Delta}$ does not receive any curvature from $\Delta_3$
and we are done; and if $\Delta_2$ is $\Delta$ of Figure 7(iii) then according to Configuration D of Figure 32(ii), $\hat{\Delta}$ receives $\frac{3 \pi}{10}$ from
$\Delta_0$ and the deficit is increased by $\frac{\pi}{30}$.
If $\hat{\Delta}$ receives $\frac{\pi}{5}$ from $\Delta_3$ then $\Delta_3$ is given by Figure 7(iii) or Figure 10(i), (ii).  But if $\Delta_3$ is
$\Delta$ of Figure 10(i), (ii) then $\hat{\Delta}$ does not receive any curvature from $\Delta_2$ and we are done; and if $\Delta_3$ is $\Delta$
of Figure 7(iii) then according to Configuration C of Figure 32(i), $\hat{\Delta}$ receives $\frac{3 \pi}{10}$ from $\Delta_5$ and the deficit is increased by $\frac{\pi}{30}$.  It follows that 
the deficit is at least $\frac{2\pi}{15} + \frac{2\pi}{30} = \frac{\pi}{5}$.

\begin{table}[h]%table12
\[
\renewcommand{\arraystretch}{1.0}
\begin{array}{rllllllllll}
&d(v_1)&d(v_2)&d(v_3)&\kappa_1&\kappa_2&\kappa_3&\kappa_4&deficit\\
\textrm{(xiv)}&3&3&3&0&2&2&0&12&\\
\textrm{(xiv)}&4&3&3&e_1&e_2&2&0&8&\\  
\textrm{(xiv)}&3&4&3&0&0&0&0&21&\\
\textrm{(xiv)}&3&3&4&0&2&e_1&e_2&8&\\  
\textrm{(xiv)}&5+&3&3&2&2&2&0&18&\\
\textrm{(xiv)}&3&5+&3&0&2&2&0&20&\\
\textrm{(xiv)}&3&3&5+&0&2&2&2&18&\\
\textrm{(xiv)}&4&4&3&x_1&x_2&0&0&17&\\ 
\textrm{(xiv)}&4&3&4&e_1&e_2&e_1&e_2&4(6)&(Note)\\
\textrm{(xiv)}&3&4&4&0&0&x_1&x_2&17
\end{array}
\renewcommand{\arraystretch}{1}
\]  
\caption{}
\end{table}

Finally the case Figure 55(xiv) is given by Table 12. Note that $d(v_1)=d(v_2)=4$ implies $\kappa_1 = 0$ or $\kappa_2 = 0$ and that $d(v_3)=d(v_4)=4$ implies $\kappa_3 = 0$ or $\kappa_4 = 0$ which implies
$x_1+x_2=5$ in Table 12;and the $e_1$,$e_2$ entries are explained by Figure 42 and Lemma 9.2(iv). 
Since (see Table 10) $\kappa_1 + \kappa_2 + \kappa_3 + \kappa_4 \leq \frac{11 \pi}{15}$, if there is a vertex of degree $\geq 4$ and one of degree $\geq 5$ it follows that the deficit is at least $\frac{7 
\pi}{30}$; 
and if there are at least three vertices of degree $\geq 4$ then the deficit $\geq \frac{3 \pi}{10}$ and so we see from Table 12 that to complete the proof the penultimate row for (xiv), that is, case 
(xiv) 
with $d(v_1)=d(v_3)=4$ and $d(v_2)=3$ must be considered. 
Note that for this subcase the deficit is at least $\frac{2 \pi}{15}$ and so it remains to show that the deficit is in fact at least $\frac{\pi}{5}$.
If $\kappa_1+\kappa_2 \leq \frac{\pi}{3}$ and $\kappa_3+\kappa_4 \leq \frac{\pi}{3}$ then the deficit $\geq \frac{\pi}{5}$, so assume otherwise.
If $\kappa_1 + \kappa_2 > \frac{\pi}{3}$ then the only way this can occur (see Figure 42) is if
$\kappa_1 = \frac{2 \pi}{15}$ and $\kappa_2 = \frac{7 \pi}{30}$ forcing $\Delta_1$ to be given by $\Delta$ of Figure 20(v) and $\Delta_2$ to be given
by $\Delta$ of Figure 16(iii).  But either this gives Configuration B of Figure 31(v), a contradiction since then $\kappa_2 = \frac{ \pi}{5}$ only, or $d(u) > 4$ in Figure 31(v) and the deficit
is at least $\frac{2 \pi}{15} + \left(\frac{\pi}{3}-\frac{7 \pi}{30}\right)=\frac{7 \pi}{30}$.
If $\kappa_3 + \kappa_4 > \frac{\pi}{3}$ then the only way this can occur is if
$\kappa_3 = \frac{7 \pi}{30}$ and $\kappa_4 = \frac{2 \pi}{15}$ forcing $\Delta_3$ to be given by $\Delta$ of Figure 20(vi) and $\Delta_4$ to be given
by $\Delta$ of Figure 16(ii).  But either this gives Configuration A of Figure 31(i), a contradiction since then $\kappa_3 = \frac{ \pi}{5}$ only, or $d(u) > 4$ in Figure 31(i) and again the deficit is at least $\frac{7 
\pi}{30}$.$\Box$

\medskip

The proof of Proposition 4.3 follows from Propositions 10.2, 10.4, 11.3 and 11.5.

%\medskip

%%%%%%%%%%%%%%%%%%%%%%%%%%%%%%%%%%%%%%%%%%%%%%%%%%%%%%%%%%%%%%%%%%%%%%%%%%%%%%%%%%%%%%%%%%%%%%%%%%%%%%%%%%%%%%%%%%%%%%%

%\end{document}

{\Large\textbf{Appendix}}

In what follows we use LEC to denote a length contradiction as defined in Section 3; and we use LAC to denote labelling contradiction, which will often be a basic labelling contradiction
corresponding to Figure 2(v),(vi).

\textit{Proof of Lemma 3.5} \quad The proof is immediate for $n=3,4$ and has been given for $n=6$ in Section 3 so let $n=5$ and $\hat{\Delta}$ be given by Figure A.1(i).
If $\hat{\Delta}$ contains no shadow edges then LEC.
If $\hat{\Delta}$ contains exactly one shadow edge up to symmetry it is (13) and this yields LEC.
Up to symmetry this leaves the case $(13),(14)$ forcing LEC.

Let $n=7$ and $\hat{\Delta}$ be given by Figure A.1(ii).
If only (13) or only (14) or only $(14),(15)$ occurs this forces LEC so it can be assumed without any loss that $(13),(14)$ occurs.
If there are no more shadow edges then LEC;
if exactly one more then it is one of $(15),(16),(46),(47),(57)$ and each forces LEC; and
if two more then they are one of the pairs $(15),(16)$; $(15),(57)$; $(16),(46)$; $(46),(47)$; or $(47),(57)$.
But (15) yields LAC; $(16),(46)$ yields LAC; $(46),(47)$ yields LAC; and $(47),(57)$ yields LEC.

Let $n=8$ and $\hat{\Delta}$ be given by Figure A.1(iii).
If there are no shadow edges then $\hat{\Delta}$ is given by Figure 4(iv).
If (15) occurs only then $\hat{\Delta}$ is given by Figure 4(v).
If there are now no shadow edges $(i~i+2)$ (subscripts modulo 8) then up to symmetry one of (14); $(14),(15)$; $(14),(16)$; $(14),(58)$; $(14),(15),(16)$;
or $(14),(15),(58)$ occurs.  But $(14),(15),(16)$ yields LAC; $(14),(15),(58)$ is given by Figure 4(vi); and the other cases each yield LEC.
It can be assumed without loss that $(13),(14)$ occurs.  The remaining possible shadow edges are: (15), (16), (17), (46), (47), (48), (57), (58) and (68).
If (15) then LAC so assume otherwise.
No more shadow edges yields LEC.  Exactly one more shadow edge yields LEC except for (47) given by Figure 4(vii) or (16) given by Figure 4(viii) or (58) given by Figure 4(x).
If there are exactly two more shadow edges then it is one of the pairs
$(16),(17)$; $(16),(46)$; $(16),(68)$; $(17),(46)$; $(17),(47)$; $(17),(57)$;
$(46),(47)$; $(46),(48)$; $(46),(68)$; $(47),(48)$; $(47),(57)$; $(48),(58)$;
$(48),(68)$; $(57),(58)$; or $(58),(68)$.  But each case forces LEC.

If there are exactly three more shadow edges then it is one of the triples $(16),(17),(46)$; $(16),(46),(68)$; $(17),(46),(47)$; $(17),(47),(57)$;
$(46),(47),(48)$; $(46),(48),(68)$; $(47),(48),(57)$; $(48),(57),(58)$; or $(48),(58),(68)$.
Each of these forces LAC except for (13), (14), (47), (48), (57) which is given by Figure 4(ix).

Finally let $n=9$ and $\hat{\Delta}$ be given by Figure A.1(iv).  If there are no shadow edges then $\hat{\Delta}$ is given by Figure 4(xi).
If there is exactly one shadow edge then (up to symmetry) it is one of (13), (14) or (15) and each forces LEC.
If the girth of $\hat{\Delta}$ together with shadow edges is five then either (15) only or $(15),(16)$ only occurs and each forces LEC.
Suppose (14) 

\newpage
\begin{figure}[t]
\begin{center}
\psfig{file=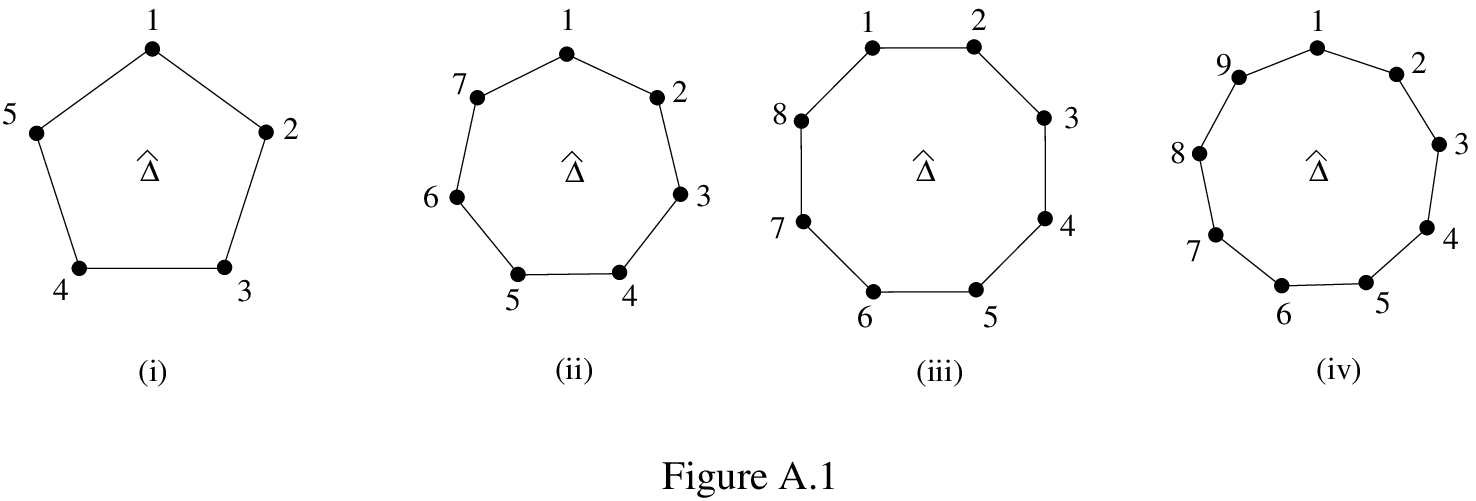}
\end{center}
%\caption{}
\end{figure}

\noindent occurs (the girth four case).
If at most one more shadow edge occurs then it is one of (15), (16), (17), (58), (59), (69) and each case forces LEC.
If there are exactly two more shadow edges then LAC or one of $(15),(17)$; $(15),(58)$; $(15),(59)$, $(15),(69)$; $(16),(17)$; $(16),(69)$;
$(17),(47)$; $(47),(48)$; $(47),(49)$; $(48),(49)$; $(48),(58)$; or $(59),(69)$ occur forcing LEC.
If there are three more then this forces LAC or $(14),(15),(59),(69)$ which yields LEC.
It can now be assumed without any loss that $(13),(14)$ occurs.
If there are no more shadow edges then LEC.
If there is exactly one more then it is (15) and LAC or one of (16), (17), (18), (46), (47), (48), (49), (57), (58), (59), (68), (69) or (79) and LEC.
If exactly two more then LAC or one of $(16),(17)$; $(16),(18)$; $(16),(46)$; $(16),(68)$; $(16),(69)$; $(16,(79)$; $(17),(18)$; $(17),(46)$; $(17),(47)$;
$(17),(57)$; $(17),(79)$; $(18),(46)$; $(18),(47)$; $(18),(48)$; $(18),(57)$; $(18),(58)$; $(18),(68)$; $(46),(47)$; $(46),(48)$; $(46),(49)$; $(46),(68)$;
$(46),(69)$; $(46),(79)$; $(47),(48)$; $(47),(49)$; $(47),(57)$; $(47),(79)$; $(48),(57)$; $(48),(58)$; $(48),(68)$; $(49),(57)$; $(49),(58)$; $(49),(59)$;
$(49),(68)$; $(49),(69)$; $(49),(79)$; $(57),(58)$; $(57),(59)$; $(57),(79)$; $(58),(59)$; $(58),(68)$; $(59),(68)$; $(59),(69)$; $(59),(79)$; $(68),(69)$;
or $(69),(79)$ occur and forcing LEC.

If there are three or four more shadow edges this forces either LAC or one of
$(16),(17),(79)$; $(16),(69),(79)$; $(17),(18),(46)$; $(17),(18),(57)$; $(17),(46),(79)$; $(17),(57),(79)$; $(18),(46),(47)$; $(18),(46),(68)$;
$(18),(47),(57)$; $(18),(57),(58)$; $(18),(58),(68)$; $(46),(47),(49)$; $(46),(47),(79)$; $(46),(49),(68)$; $(46),(49),(79)$; $(46),(68),(69)$;
$(46),(69),(79)$; $(47),(48),(57)$; $(47),(49),(57)$; $(47),(57),(79)$; $(48),(57),(58)$; $(49),(78),(58)$; $(49),(78),(59)$; $(49),(57),(79)$;
$(49),(58),(59)$; $(49),(58),(68)$; $(49),(45),(68)$; $(49),(59),(79)$; $(49),(68),(69)$; $(49),(69),(79)$; $(57),(58),(59)$; $(58),(59),(68)$;
$(59),(68),(69)$; or $(49),(58),(59),(68)$ occurs each forcing LEC. $\Box$

\textbf{Remark 1}\quad If the corner label at the vertex $v$ of the region $\hat{\Delta}$ is $x$ or $y$ then it follows from equations (3.1) in Section 3 that there must be an
odd number of shadow edges in $\hat{\Delta}$ incident at $v$.

\textbf{Remark 2}\quad Let $v_1,v_2$ be vertices of the same $b$-segment of the region $\hat{\Delta}$.
It follows from Remark 1 that there are no shadow edges in $\hat{\Delta}$ from $v_1$ to $v_2$.

\newpage
\begin{figure}[t]
\begin{center}
\psfig{file=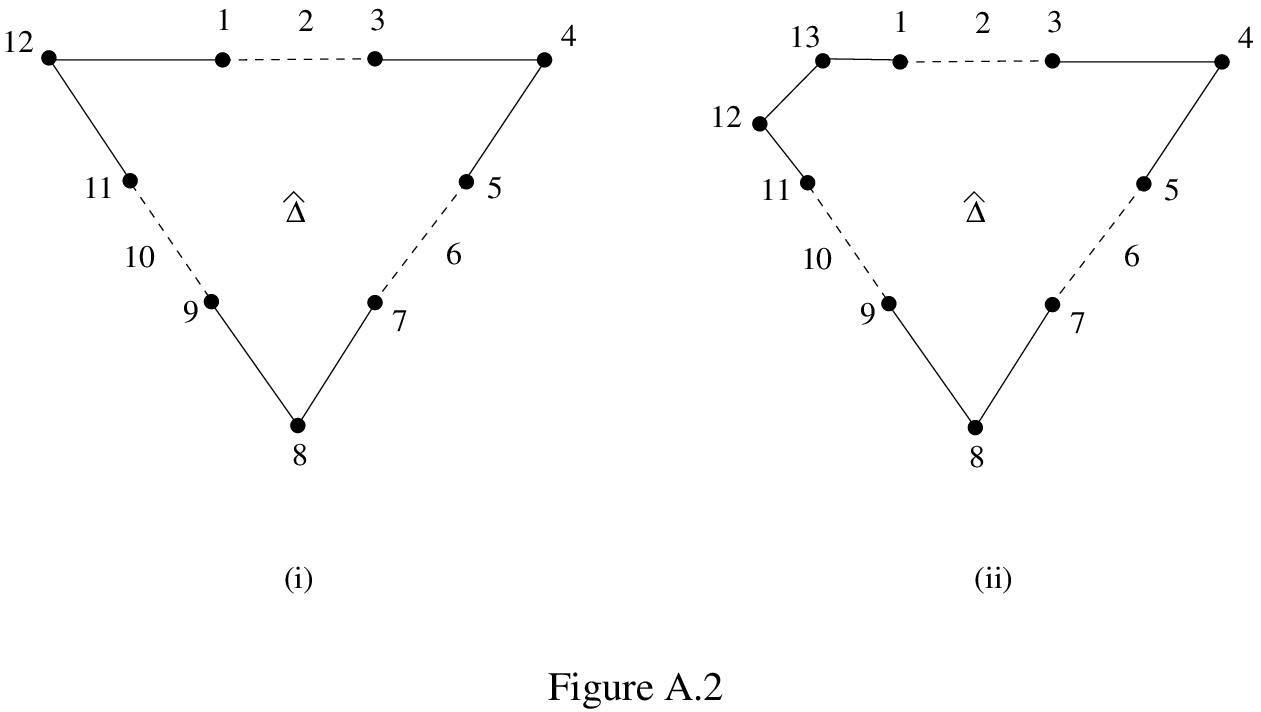}
\end{center}
%\caption{} 
\end{figure}

\textit{Proof of Lemma 11.2} \quad
\begin{enumerate}
\item[(i)]
%(i)
Consider the regions $\hat{\Delta}$ of Figure A.2(i)-(ii) in which 2, 6, 10
refer to the (possibly empty) set of vertices between vertices 1 and 3, 5 and 7, 9 and 11.
We write $(ab)$ to indicate that there was a 2-segment between vertices $a$ and $b$ in $D$ with the understanding that if $a=2$, for example, we mean a vertex belonging to $a$.
By remark 1 above the number of $(ab)$ involving each of 1, 3, 5, 7, 9 and 11 must be odd and at least one.
First consider Figure A.2(i).
It follows from remark 2 above that if $\{ a,b \} \subseteq \{ 12,1,2,3,4 \}$ or $\{ 4,5,6,7,8 \}$ or $\{ 8,9,10,11,12 \}$ then $(ab)$ does not occur.
Moreover (18) forces (19), ($1\,11$) and this yields LAC.
It follows that the only pairs involving 4, 8 or 12 are ($4\,10$), (28) and ($6\,12$).
First assume that none of (35), (79) or ($1\,11$) occur.
Then since (15), (16) and (17) each forces (35), and (19), ($1\,10$) each forces ($1\,11$), we get a contradiction.
Assume exactly one of (35), (79), ($1\,11$) occurs -- without any loss (79).
Then again (15), (16) and (17) each force (35), and (19) and ($1\,10$) each force ($1\,11$), a contradiction.
Assume exactly two of (35), (75), ($1\,11$) occur -- without any loss (35) and (79).
Then (19) and ($1\,10$) each forces ($1\,11$), a contradiction; and (16) and (17) each forces LEC at (35) or forces either (52), (52) or (52), (51) or $(36),(36)$ or $(36),(37)$ yielding LAC.
This leaves (15).
Since the number of $(ab)$ involving 5 must be odd at least one of (59), ($5\,10$) or ($5\,11$) occurs.
But (59) forces ($11\,5$) and ($5\,10$); ($5\,10$) forces ($11\,5$) and another ($5\,10$); and ($5\,11$)
forces either a length contradiction at (79) or forces $(95), (96)$ or $(96),(96)$ or $(7\,10),(7\,10)$ or $(7\,10),(7\,11)$ yielding LAC in all cases.
Finally assume that $(1\,11)$, (35) and (79) occur.
Since the length of each is $n-1$ we must have more pairs otherwise there is a length contradiction.
Assume without any loss that 1 is involved in further pairs.
Since (16) and (19) each forces either $(36),(36)$ or $(36)(37)$ or $(52),(52)$ or $(52)(51)$ yielding LAC it follows that at least two of (15), (19) and $(1\,10)$ occur.
However $(19),(1\,10)$ and $(1\,10),(1\,10)$ yield LAC and $(15),(19)$ forces (59) and LAC.
This leaves $(15),(1\,10)$ together with at least one of (25), (59), $(5\,10)$.
But (25) yields LAC; (59) forces (19) or (69) and LAC; and finally $(5\,10)$ forces either a length contradiction or one of $(7\,10)$, $(7\,10)$ or (59)(69) or (69)(69) and LAC.

Now consider Figure A.2(ii).
Here if $\{ a,b \} \subseteq \{ 3,4,5,6,7 \}$ or $\{ 7,8,9,10,11 \}$ or $\{ 11,12,13,1,2 \}$ then $(ab)$ does not occur.
First assume that (68) and $(10\,12)$ do not occur.
Then (69), $(6\,10)$ and $(6\,11)$ force (68); $(6\,12)$ forces (68) or $(10\,12)$; $(6\,13)$ and (61) each force (68) or $(10\,12)$; and (62) forces one of
$(12\,6)$, $(12\,7)$, $(12\,8)$ or $(12\,9)$ and each forces (68) or $(10\,12)$ -- in all cases a contradiction.

Now assume that exactly one of (68) and $(10\,12)$ occurs -- without any loss (68).  Since this segment has length $n-1$ this forces at least one of (69), $(6\,10)$, (58), (48) to occur.
If (68) and (69) occur then at least one (69), $(6\,10)$, $(6\,11)$, $(6\,12)$, $(6\,13)$, (61) or (62) occurs.
But (69) and $(6\,10)$ yields LAC; and each of $(6\,11)$, $(6\,12)$, $(6\,13)$, (61) and (62) forces $(6\,10)$ or $(10\,12)$ a contradiction.
If (68) and $(6\,10)$ occurs then at least one of $(6\,11)$, $(6\,12)$, $(6\,13)$, (61) and (62) occurs.  But $(6\,11)$ yields LAC and the rest force $(10\,12)$ or $(6\,12)$ and LAC.
If (68) and (58) occur then at least one of (85), (84), (83), (82), (81), $(8\,13)$ and $(8\,12)$ occurs.
But (85) and (84) yield LAC; (83) forces (84); and each of the rest forces $(10\,12)$.
If (68) and (48) occur then at least one of (83), (82), (81), $(8\,13)$ or $(8\,12)$ occurs.
But (83) yields LAC and the rest force $(10\,12)$.

Finally assume that (68) and $(10\,12)$ occur.
Then length implies that at least one of (69), $(6\,10)$, (85), (84) occurs and at least one of
$(10\,13)$, $(10\,1)$, $(12\,9)$, $(12\,8)$ occurs.  Let (69) and $(10\,13)$ occur.
Then at least one of $(10\,13)$, $(10\,1)$, $(10\,2)$, $(10\,3)$, $(10\,4)$, $(10\,5)$ and $(10\,6)$ also occurs.
But $(10\,13)$ and $(10\,1)$ yield LAC; $(10\,2)$ forces $(10\,1)$; and each of $(10\,3)$, $(10\,4)$ and $(10\,5)$ forces another (69) or $(6\,10)$ and LAC. Let (69) and $(10\,1)$ occur.
Then at least one of $(10\,2)$, $(10\,3)$, $(10\,4)$ and $(10\,5)$ occurs.
But $(10\,2)$ yields LAC; and each of $(10\,3)$, $(10\,4)$ and $(10\,5)$ forces (69) and LAC.
Let (69) and $(12\,9)$ occur.
Then at least one of $(6\,12)$, $(6\,13)$, (61) and (62) occurs.
But $(6\,12)$ yields LAC; and $(6\,13)$, (61) and (62) each forces either $(6\,12)$ or $(12\,9)$ and LAC.
The segments (69) and $(12\,8)$ cannot both occur.
Let $(6\,10)$ and $(10\,13)$ occur.
Then at least one of $(10\,13)$, $(10\,1)$, $(10\,2)$, $(10\,3)$, $(10\,4)$ and $(10\,5)$ must also occur.
But $(10\,13)$ and $(10\,1)$ each force LAC; $(10\,2)$ forces $(10\,1)$; and each of $(10\,3)$, $(10\,4)$ and $(10\,5)$ forces (69), and LAC.
Let $(6\,10)$ and $(10\,1)$ occur.
Then at least one of $(10\,2)$, $(10\,3)$, $(10\,4)$ and $(10\,5)$ occurs and there is a contradiction as in the subcase above.
The segment $(6\,10)$ cannot occur with $(12\,9)$ or $(12\,8)$.
The subcases (84), $(12\,9)$; (84), $(12\,8)$; (85), $(12\,9)$; and (85), $(12\,8)$ follow by symmetry.
Let (84) and $(10\,13)$ occur.
Then at least one of (85), (83), (82), (81) and $(8\,13)$ occurs.
But (85) and (83) yield LAC; and the others force either $(10\,13)$ or $(10\,1)$ and LAC.
Let (84) and $(10\,1)$ occur.
Then at least one of (85), (83), (82) and (81) occurs and similarly there is a labelling contradiction.
Let (85) and $(10\,13)$ occur.
Then at least one of (85), (84), (83), (82), (81) and $(8\,13)$ also occurs.

\newpage
\begin{figure}[t]
\begin{center}
\psfig{file=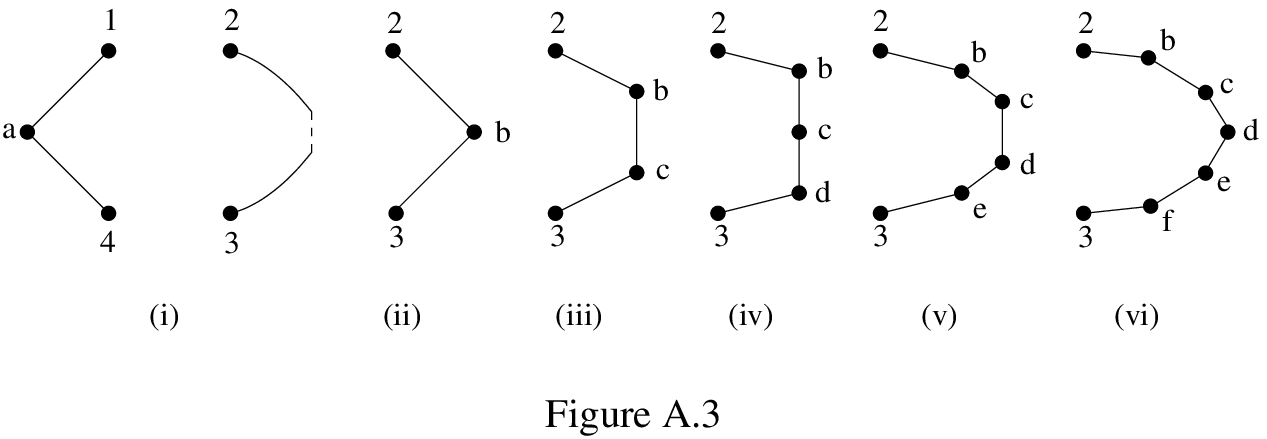}
\end{center}
%\caption{} 
\end{figure}

\noindent But (85), (84) and (83) yield LAC; and the rest forces either $(10\,13)$ or $(10\,1)$ and LAC. 
Finally if (85) and $(10\,1)$ occur then at least one of (85), (84), (83), (82) and (81) occurs and similarly there is a labelling contradiction.

\item[(ii)]
%(ii)
Let $m=2$.  Then $\hat{\Delta}$ is given by Figure A.3(i).
If $n=2$ as in Figure A.3(ii) then $(1b)$ is forced yielding $(2?)$; and
if $n=3$ as in Figure A.3(iii) then $(1b)$ forces $(2?)$ and $(1c)$ forces $(3a)$ and $(4?)$.  Let $n=4$ as in Figure A.3(iv).
Then $(1b)$ forces $(2?)$ and $(1d)$ forces $(3a), (4?)$ so $(1c)$ must occur.  This forces $(2c)$ and no vertices between 1 and 2 otherwise LAC.
If $(3a)$ then $(4?)$ so must have $(3c)$ and $(4c)$.  Thus there are no vertices between 3 and 4, otherwise LAC, and so $d(\hat{\Delta})=8$, a contradiction.
Let $n=5$ as in Figure A.3(v).
If $(1b)$ then $(2?)$ and if $(1e)$ then $(3a)$ and $(4?)$ so must have $(1c)$ or $(1d)$.  Suppose $(1c)$ occurs.
This forces $(2c)$ and no vertices between 1 and 2.
If $(4d)$ then $(3d)$ must occur and no vertices between 3 and 4.  But then $d(\hat{\Delta})=9$, a contradiction, so must have $(4c)$ since $(4e)$ forces $(3?)$.
If now $(3c)$ then $d(\hat{\Delta})=9$ as before so $(3d)$ must occur.
If $(ac)$ occurs or there are at least three vertices between 3 and 4 then LAC, so assume otherwise.
If 5 is the only vertex between 3 and 4 then $(5c)$ yields LEC and $(5d)$ yields LAC; and
if 5 and 6 are between 3 and 4 then must have $(5d)$ and $(6c)$ otherwise LAC.  There are no more shadow edges and the resulting region yields LEC.
Now suppose $(1d)$ occurs.  This forces $(3d),(4d)$ and no vertices between 3 and 4.
If $(2d)$ then $d(\hat{\Delta})=9$ so $(2c)$ occurs.  By symmetry we can now argue as in the above to obtain a contradiction.

Finally let $n=6$ as in Figure A.3(vi). Note that if $v \notin \{ a,b,c,d,e,f \}$ then $i \deg (v) = 1$ otherwise LAC.
If $(1b)$ then $(2?)$;
if $(1f)$ then $(3a)$ and $(4?)$;
if $(4f)$ then $(3?)$; and
if $(4b)$ then $(2a)$ and $(1?)$.  Thus there are six cases: $(1c),(4c)$; $(1c),(4d)$; $(1c),(4e)$; $(1d),(4d)$; $(1d),(4e)$; and $(1e),(4e)$.   

Consider $(1c),(4c)$.  This forces $(2c)$ and no vertices between 1 and 2.
If $(ac)$ then LAC.
If $(3c)$ then there are no vertices between 3 and 4 and LAC so must have $(3d)$ or $(3e)$.  Suppose $(3d)$.
If there are at least three vertices between 3 and 4 then LAC; and
if there are less than three this forces LEC in each case.  Suppose $(3e)$.  Then $(5e)$ must occur for some vertex 5 between 3 and 4 otherwise LEC.
If there are no other vertices between 3 and 4 then LEC; and
if there is at least one more vertex between 3 and 4 then LAC.

Consider $(1c),(4d)$.  This forces $(2c)$ and no vertices between 1 and 2.  Observe that $(ac)$ forces LAC.  There must be $(3d)$ or $(3e)$.
Suppose $(3d)$ occurs.  Then there are no vertices between 3 and 4.  If $(da)$ or $(df)$ then LAC so $\hat{\Delta}$ has no more vertices and this yields LEC.
Suppose $(3e)$ occurs.  Then must have $(5e)$ where 5 is a vertex between 3 and 4, otherwise LAC.
If there are no other vertices between 3 and 4 then either $(da)$ occurs or does not occur, but in both cases there is LEC;
if there is exactly one more, 6 say, then $(6e)$ yields LAC and if $(6d)$ then $(da)$ yields LAC and if not $(da)$ then LEC.

Consider $(1c),(4e)$.  This forces $(2c),(3e)$, no vertices between 1 and 2 and no vertices between 3 and 4.
If there are no other shadow edges then LEC so at least one of $(ac)$, $(ad)$, $(ae)$ or $(ce)$ occurs.
But $(ac)$, $(ad)$ or $(ae)$ forces LAC; and $(ce)$ yields LEC.

Consider $(1d),(4d)$.  If $(ad)$ then LAC.  By symmetry there are three cases: $(2c),(3e)$; $(2d),(3d)$; and $(2c),(3d)$.
If $(2c),(3e)$ then length forces at least one vertex between $1,2$ and $3,4$ and labelling implies at most two.
If there is exactly one vertex 5, say, between 1 and 2 and one vertex 6, say, between 3 and 4 then must have $(5c),(6e)$ yielding LEC; and
if exactly two vertices, between 1 and 2 and either one or two vertices between 3 and 4 then LAC.
If $(2d),(3d)$ then there are no vertices between 1 and 2 or 3 and 4 yielding LAC.
If $(2c),(3d)$ then there are no vertices between 3 and 4; $(df)$ yields LAC; and there is at least one vertex 5 say between 1 and 2 with $(5c)$, otherwise LEC.
If only 5 occurs between 1 and 2 then this forces LEC; and
if there are any more vertices between 1 and 2 this forces LAC.

Case $(1d),(4e)$ is the same as $(1c),(4d)$ by symmetry; and case $(1e),(4e)$ is the same as $(1c),(4c)$ by symmetry.

%\newpage
\begin{figure}[t]
\begin{center}
\psfig{file=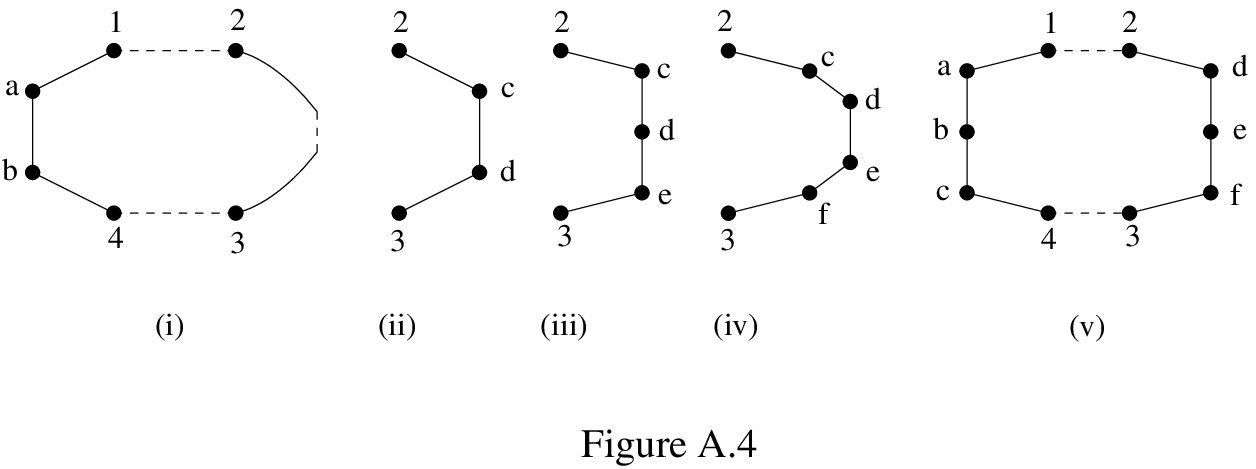}
\end{center}
%\caption{}  
\end{figure}

%\begin{center}
%\psfig{file=FibA.4.eps,height=5.5cm}
%\end{center}

Let $m=3$.  Then $\hat{\Delta}$ is given by Figure A.4(i).  
If $n=3$ as in Figure A.4(ii) then $(1c)$ forces $(2?)$ and $(3b)$ forces $(4?)$.  So must have either $(1d)$ or $(1b)$.
Suppose $(1d)$ occurs.  This forces $(2d)$, $(3a)$, $(4a)$ and no vertices between 1 and 2 or between 3 and 4 otherwise LAC.
But then $d(\hat{\Delta})=8$, a contradiction.
Suppose $(1b)$ occurs.  This forces $(3c), (4c), (2b)$, no vertices between 1 and 2 or between 3 and 4 and again $d(\hat{\Delta})=8$.

Let $n=4$ as in Figure A.4(iii).  If $(1c)$ then $(2?)$ and if $(4e)$ then $(3?)$.
The possible cases up to symmetry are $(1b),(4c)$; $(1b),(4d)$; and $(1d),(4d)$.

Consider $(1b),(4c)$.  This forces $(2b),(3d)$ and no vertices between 1 and 2.
If $(bc)$ then LAC.
If there are no vertices between 3 and 4 then $d(\hat{\Delta})=9$, a contradiction;
if there is exactly one vertex, 5 say, between 3 and 4 then must have $(5d)$ otherwise LEC, there are no more shadow edges and $\hat{\Delta}$ is given (up tp symmetry) by Figure 47(ii); and
if there are at least two vertices between 3 and 4 then LAC.

Consider $(1b),(4d)$.  This forces $(3d)$ and no vertices between 3 and 4.
If now $(2b)$ then there are no vertices between 1 and 2 so $d(\hat{\Delta})=9$, a contradiction.
This leaves $(2d)$ and either LEC or there are two vertices, 5 and 6 say, between 1 and 2 with $(5b)$ and $(6d)$.
Any other shadow edges yields LAC and no more shadow edges yields LAC.

Consider $(1d),(4d)$.  This forces $(2d),(3d)$, no vertices between 1 and 2 or between 3 and 4 and so $d(\hat{\Delta})=9$, a contradiction.

Let $n=5$ as in Figure A.4(iv).  As before $(1c)$, $(2a)$, $(3b)$ and $(4f)$ yield contradictions.
Up to symmetry the cases are $(2b),(3c)$; $(2b),(3d)$; $(2b),(3e)$; $(2d),(3d)$; and $(2d),(3e)$.

Consider $(2b),(3c)$.  This forces $(1b),(4c)$ and no vertices between 1 and 2 or between 3 and 4.
If $(bc)$ occurs then LAC otherwise there is still a labelling contradiction.

Consider $(2b),(3d)$. This forces $(1b)$ and no vertices between 1 and 2.
The subcases are $(4c)$ and $(4d)$.  Consider first $(4d)$.  Then there are no vertices between 3 and 4, $(bc)$ yields LAC, $(bd)$ yields LAC and $(df)$ yields LAC.
Therefore there are no more shadow edges and this yields LEC.  Now suppose $(4c)$ occurs.
If $(bc)$ then LAC and any other shadow edge incident at vertex $c$ forces LAC. 
Suppose that there are no vertices between 3 and 4.
If $(df)$ does not occur then LEC and if $(df)$ occurs then $\hat{\Delta}$ is given by Figure 47(iii).
If there is more than one vertex between 3 and 4 then LAC and if there is exactly one, 5 say, then must have $(5d)$ yielding LEC with or without $(df)$.

Consider $(2b),(3e)$.  This forces $(1b)$ and no vertices between 1 and 2.
The subcases are $(4c)$, $(4d)$ and $(4e)$.  Consider first $(4c)$.
If $(bc)$ then LAC; or if there any further shadow edges at $c$ then LAC.
If $i \deg (3) > 1$ then LAC so there must be a vertex, 5 say, between 3 and 4 and $(5e)$ otherwise LEC.
If there are no more shadow edges then LEC; if $(ce)$ occurs then LEC; and
if there is either one more vertex, 6 say between 4 and 5 with $(6d)$ or two more, 6 and 7 say, between 4 and 5 with $(6d),(7d)$ then this forces LEC
in both cases.  Now suppose $(4d)$ occurs.  If $(bc)$ then LAC or if $(4c)$ and $(4e)$ then LAC.
If there is exactly one vertex, 5 say, between 3 and 4 then $(5e)$ occurs, otherwise LEC.
If there are no more shadow edges then LEC and the only other possibility is $(db)$ and again LEC; and
if there are exactly two vertices, 5 and 6, say between 3 and 4 then must have $(5e)$ and $(6d)$.
Any further shadow edges yields LAC and no more yield LEC.  Now suppose $(4e)$ occurs.
This forces $(3e)$ and no vertices between 3 and 4.
Now $(bc)$ and $(be)$ each yield LAC and if $(ce)$ then LEC.
If there are no more shadow edges then LEC or if $(bd)$ then LAC.

Consider $(2d),(3d)$.  The subcases are $(1d),(4d)$; $(1d),(4a)$; and $(1b),(4d)$.
Suppose $(1d),(4d)$ occurs.  Then there are no vertices between 1 and 2 or between 3 and 4, and each of $(df)$, $(da)$ and $(db)$ forces LAC.
Thus there are no more shadow edges and this forces LEC.  Suppose $(1d),(4a)$ occurs.
Then there are no vertices between 1 and 2.
If there are no vertices between 3 and 4 then LEC or if there are at least three vertices between 3 and 4 then LAC.
If there is exactly one vertex, 5 say, between 3 and 4 then must have $(5a)$ otherwise LEC, and if $(da)$ then LAC or if not $(da)$ then LAC.
If there are two vertices, 5 and 6 say, between then must have $(5a)$ and $(6d)$.
Again if $(da)$ then LAC and if not $(da)$ then LAC.
Suppose $(1b),(4d)$ occurs.  There are no vertices between 3 and 4 and either LEC or there are vertices 5 and 6 between 1 and 2 with $(5b)$ and $(6d)$.
If now $(bd)$ then LAC or if not $(bd)$ then LAC.

Consider $(2d),(3e)$.  Up to symmetry the subcases are $(1b),(4d)$; $(1b),(4e)$; $(1d),(4d)$; and $(1d),(4e)$.
Suppose $(1b),(4d)$ occurs.  Then either LEC or there are vertices 5 and 6 between 1 and 2 with $(5b),(6d)$.
If now $(bd)$ then LAC.  There must be a vertex 7 between 3 and 4 with $(7e)$ otherwise LEC.
If there are no more shadow edges then LEC; otherwise $(d8)$ occurs where 8 is between 7 and 4, and this yields LEC.
Suppose $(1b),(4e)$ occurs.  Then there are no vertices between 3 and 4, and either LEC or there are variances 5 and 6 between 1 and 2 with $(5b)$ and $(6d)$.
If now $(bd)$ then LAC.
If there are no more shadow edges then LEC;
if there is one more shadow edge $(e7)$ where 7 lies between 5 and 6 then LAC; and
if there is a further shadow edge $(e8)$ where 8 lies between 7 and 6 then again LAC. 
Suppose $(1d),(4d)$ occurs.  Then there are no vertices between 1 and 2.
If $(ad)$ then LAC.  There must be $(5e)$ where vertex 5 is between 3 and 4 otherwise LEC.
If $(bd)$ then LAC; if no other shadow edges then LEC; and
if $(d6)$ where 6 lies between 5 and 4 then LEC.

Finally let $m=n=4$ as shown in Figure A.4(v).  Up to symmetry the subcases are
$(1b),(4b),(2e),(3e)$; $(1b),(4b),(2b),(3d)$; $(1b),(4b),(2b),(3c)$; $(1b),(4b),(2b),(3b)$;\newline
$(1b),(3e),(4e),(2b)$; $(1b),(3e),(4d),(2c)$; $(1b),(3e),(4e),(2c)$; $(1c),(3d),(2c),(4d)$;\newline
$(1b),(3d),(4d),(2b)$; $(1b),(3d),(4d),(2c)$.
Note that $(1d)$, $(2a)$, $(3c)$ and $(4f)$ each yield a contradiction.

Suppose that $i \deg (1) > 1$.  Then $i \deg (1) = 3$ and the three cases are $(1b),(1c),(1e)$; $(1b),(1e),(1f)$; and $(1c),(1e),(1f)$.
Let $(1b),(1c),(1e)$ occur.  This forces $(2e),(3e),(4e)$ and no vertices between 1 and 2 or between 3 and 4.
If $(ce)$ then LAC and if not $(ce)$ then LEC.  Let $(1b),(1e),(1f)$ occur.  This forces $(3b),(4b)$ and LEC.
If $(1c),(1e),(1f)$ occurs this forces $(3c)$ and $(4?)$.
Now suppose that $i \deg (5) > 1$ where 5 is a vertex between 1 and 2.
Up to symmetry this forces $(5b),(5e),(5f)$ and then $(2e),(1b),(3b),(4b)$ with no more vertices between 1 and 2 and no vertices between 3 and 4.
If $(bf)$ then LAC and not $(bf)$ yields LEC.

By symmetry it can be assumed from now on that $i \deg (P) = 1$ where $P \in \{ 1,2,3,4 \}$ or $P$ is a vertex between 1 and 2 or between 3 and 4.

Consider $(1b),(4b),(2e),(3e)$.  This forces $(5e),(6b),(7e),(8b)$ where $5,6$ lie between 1 and 2 and $7,8$ lie between 3 and 4.
If now $(be)$ then LAC and if not $(be)$ then again LAC.

Consider $(1b),(4b),(2b),(3d)$.  This forces $(5b)$ where 5 lies between 3 and 4 and there are no vertices between 1 and 2.
If $(bd)$ then LAC so assume not $(bd)$.
If there are no more shadow edges then LEC.  The remaining possibilities are: $(d6)$ only where 6 lies between 5 and 3, yielding LAC;
$(df)$ only, yielding LAC; and $(d6),(df)$ yielding LAC.

Consider $(1b),(4b),(2b),(3e)$.  This forces $(5e),(6b)$ where $5,6$ lie between 3 and 4; and there are no vertices between 1 and 2.
If $(bd)$ then LAC; if $(be)$ then LAC; if $(7d)$ then LAC or if $(7d),(8d)$ then LAC, where $7,8$ lie between 5 and 6.

Consider $(1b),(4b),(2b),(3b)$.
If there are no more shadow edges then LEC; if $(bd)$ or $(bf)$ then LAC; if $(df)$ then LEC; and if $(be)$ then LAC.

Consider $(1b),(3e),(4e),(2b)$.
If there are no more shadow edges then LEC;
if $(bd)$ or $(ce)$ then LAC;
if $(be)$ then LAC; and
if $(cd)$ only then this is $\hat{\Delta}$ of Figure 47(v).

Consider $(1b),(3e),(4d),(2c)$.
If there are no more shadow edges then LAC;
if $(c7)$ or $(d8)$ only then LEC;
if $(c7),(d8)$ only then LAC;
if $(c7),(cd)$ or $(d8),(cd)$ only then LAC;
if $(c7),(d8),(cd)$ then LAC; and
if $(cd)$ only then we obtain $\hat{\Delta}$ of Figure 47(iv).

Consider $(1b),(3e),(4e),(2c)$.  This forces $(5b)$ where 5 lies between 1 and 2 and there are no vertices between 3 and 4.
If $(ec)$ then LAC;
if $(c6)$ only then LEC, where 6 lies between 5 and 2;  
if $(cd)$ only then LEC; and
if $(c6),(cd)$ then LAC.

Consider $(1c),(3d),(2c),(4d)$.  Then there are no vertices between 1 and 2 or between 3 and 4.
If $(df)$ or $(dc)$ or $(ca)$ then LAC.
If there are no more shadow edges then LAC.

Consider $(1b),(3d),(4d),(2b)$.  Then there are no vertices between 1 and 2 or between 3 and 4.
If $(df)$ or $(dc)$ or $(bd)$ then LAC.
If there are no more shadow edges then LEC.

Finally consider $(1b),(3d),(4d),(2c)$.  Then there are no vertices between 3 and 4.
There must be $(5b)$ where 5 lies between 1 and 2.
If $(df)$ or $(cd)$ then LAC; and
if there are no more shadow edges then LEC.

\item[(iii)]
Let $m=2$ and so $\hat{\Delta}$ is given by Figure A.3(i).  It follows from (ii) that (14) must occur.
It can be assumed therefore that $i \deg (1) > 1$ otherwise LEC.
If $n=2$ or 3 then this forces LAC.  Let $n=4$ and so $\hat{\Delta}$ is given by Figure A.3(i), (iv).
If $(1c)$ does not occur then this forces $(1d)$, (13) and LAC.  Let $(1c)$ occur.
If $(1d)$ occurs then $(13)$ and LAC is forced so assume $(1d)$ does not occur.
Since $(2c)$ is forced there are no vertices between 1 and 2 and this forces LAC.
Let $n=5$ and so $\hat{\Delta}$ is given by Figure A.3(i), (v).  Suppose that (13) does not occur.
Then must have $(1c), (1d), (3d), (2c)$, no vertices between 1 and 2 and either LEC or $(5d)$ where 5 lies between 3 and 4 which yields LAC.
Let (13) occur so that there are no vertices between 3 and 4.
If $(1c)$ then $d(\hat{\Delta})=9$, a contradiction, if $(1e)$ then LAC, so assume $(1d)$ occurs.
Then $(2d)$ forces $d(\hat{\Delta})=9$ so assume $(2c)$ occurs.  This forces LEC or $(5c)$ where 5 lies between 1 and 2.
If there are no more shadow edges then LAC; and
if $(6d)$ occurs where 6 lies between 1 and 5 then LEC.

Let $m=n=3$ and so $\hat{\Delta}$ is given by Figure A.4(i), (ii).  Up to symmetry there are two cases, namely $i \deg (3) > 1$ and $i \deg (5) > 1$ where 5 lies between 3 and 4.
But any triple from $(5a), (51), (56), (57), (52), (5c)$ or from $(3a), (31), (36), (37), (32), (3c)$ yields LAC.

Let $m=3$ and $n=4$ and so $\hat{\Delta}$ is given by Figure A.4(i), (iii).  It can be assumed without any loss that $i \deg (5) > 1$ or
$i \deg (4) > 1$ or $i \deg (3) > 1$ where 5 lies between 3 and 4.  Suppose $i \deg (5) > 1$.  Let 6 lie between 1 and 2.  Then each of the pairs
$(5a),(56)$; $(5a),(52)$; $(5a),(5c)$; $(51),(5c)$; $(56),(5c)$; and $(52),(5c)$ forces LAC.
If $i \deg (5) > 3$ then LAC.  This leaves the cases $(5a),(51),(5d)$; $(51),(56),(52)$; $(51),(56),(5d)$; $(51),(52),(5d)$; $(56),(52),(5d)$; and
$(56),(57)$ where 7 lies between 6 and 2.  
If $(5a),(51),(5d)$ occurs then LEC or $(2d)$ and (26), and LAC.
If $(51),(56),(52)$ occurs then LAC.
If $(51),(56),(5d)$ occurs this forces $(2d),(27)$ and LAC.

If $(51),(52),(5d)$ occurs this forces $(3d)$.
If now $i \deg (1) > 3$ then LAC so $(4a),(a8)$ must occur where 8 lies between 4 and 5.  Any further shadow edges yields LAC, and no more yields LEC.
If $(56),(52),(5d)$ occurs this forces $(3d)$.  But now $i \deg (6) > 1$ yields LAC and $i \deg (6) = 1$ yields LAC.
If $(56),(57)$ occurs then this immediately forces LAC except for $(5d)$.
But this forces $(2d),(8d)$ and LAC where 8 lies between 7 and 2.

Now suppose $i \deg (4) > 1$.  Similarly to the above the cases are $(4a),(41),(4d)$; $(41),(46),(42)$ yielding LAC; $(41),(46),(4d)$;
$(41),(42),(4d)$; $(46),(42),(4d)$; and $(46),(47)$ where 6 lies between 1 and 2 and 7 lies between 2 and 6.
If $(4a),(41),(4d)$ occurs this forces $(2d),(6d)$ and LAC.
If $(41),(46),(4d)$ occurs this forces $(2d),(7d)$ and LAC.
If $(41),(42),(4d)$ occurs this forces $(3d)$ and no vertices between 1 and 2 or between 3 and 4.
Any other shadow edges yields LAC and no more yields LEC.
If $(46),(44),(4d)$ occurs this forces $(3d)$, no vertices between 3 and 4 and LAC.
If $(46),(47)$ occurs this forces LAC except possibly for $(4d)$.
But $(2d)$ is then forced yielding LEC or $(8d)$, where 8 lies between 7 and 2, yielding LAC.

Suppose $i \deg (3) > 1$.  Similarly to the above the cases are $(3a),(31),(3d)$; $(31),(36),(3c)$ yielding LAC;
$(31),(36),(3d)$; $(31),(32),(3d)$; $(36),(32),(3d)$; and $(36),(37)$.
If $(3a),(31),(3d)$ occurs this forces $(4a)$ and LAC.
If $(31),(36),(3d)$ occurs this forces $(2d)$ and LAC.
If any of the remaining cases occur they immediately force LEC or LAC.

Let $m=3$, $n=5$ and so $\hat{\Delta}$ is given by Figure A.4(i), (iv).  Again up to symmetry the cases are $i \deg (5) > 1$ or $i \deg (4) > 1$ or
$i \deg (3) > 1$ where 5 has between 3 and 4.

Suppose $i \deg (5) > 1$.  The pairs $(5a),(56)$; $(5a),(52)$; $(5a),(5c)$; $(51),(5c)$; $(56),(5c)$ where 6 has between 1 and 2 each force LAC.
If $(51),(52)$ or $(51),(56)$ occurs this forces $(4a)$ then LEC or $(4a),(a8)$, where 8 has between 4 and 5, then LAC.
If $(5a),(5e)$ occurs this forces $(3d),(4a)$ and LAC.  This leaves the triples $(56),(57),(5d)$; $(56),(57),(5e)$; $(5a),(51),(5d)$;
$(51),(5d),(5e)$; $(56),(52),(5d)$; $(56),(52),(5e)$; $(56),(5d),(5e)$; $(52),(5c),(5d)$ and LAC; $(52),(5c),(5e)$; $(52),(5d),(53)$;
$(5c),(5d),(5e)$ and LAC, where 6 lies between 1 and 2 and 7 lies between 1 and 6.
If $(56),(57),(5d)$ or $(5a),(51),(5d)$ occurs this forces $(2d)$ and LEC or $(2d),(d8)$, where 8 lies between 6 and 2, and LAC.
If $(56),(57),(5e)$ or $(56),(52),(5e)$ or $(52),(5c),(5e)$ occurs this forces $(3e)$ and LAC.
If $(51),(5d),(5e)$ occurs this forces $(4a)$ and LEC or $(4a),(a8)$, where 8 lies between 4 and 5, and LAC.
If $(56),(5d),(5e)$ occurs this forces $(2d),(d8)$, where 8 lies between 6 and 2, and LAC.
If $(52),(5d),(5e)$ occurs this forces $(3e)$ and LEC.
Finally suppose that $(56),(52),(5d)$ occurs.  This forces $(3d)$ or $(3e)$.
If $(3e)$ then LEC or $(e8)$, where 8 lies between 5 and 3.
If there are no further shadow edges at $d$ then LAC;
if $(2d)$ then LAC; or
if $(d8)$ where 8 lies between 5 and 3, then LAC.
If $(3d)$ occurs then there are no vertices between 3 and 5 and LAC.

Now suppose $i \deg (4) > 1$.  The pairs $(4a),(46)$; $(4a),(42)$; $(4a),(4c)$; $(41),(4c)$ and $(46),(4c)$ where 6 lies between 1 and 2 each yield LAC, and $(4a),(4e)$
forces $(3e)$ and either LEC or (41), yielding LAC.   
This leaves the triples $(4a),(41),(4d)$; $(41),(42),(4d)$; $(41),(42),(4e)$; $(41),(4d),(4e)$; $(46),(47),(4d)$
where 6 lies between 1 and 2, and 7 between 6 and 2; $(46),(47),(43)$; $(46),(4d),(4e)$; $(42),(4c),(4d)$ and LAC; $(42),(4c),(4e)$; $(42),(4d),(4e)$; and $(4c),(4d),(4e)$ yielding LAC.
If $(4a),(41),(4d)$ or $(41),(4d),(4e)$ occurs this forces $(2d)$ and either LEC or $(6d)$, where 6 lies between 1 and 2, and LAC.
If $(41),(42),(4d)$ occurs this forces $(3d)$ or $(3e)$.
If $(3d)$ then there are no vertices between 1 and 2 or between 3 and 4, any further shadow edges yields LAC and no more yields LEC.
If $(3e)$ then either LEC or $(5e)$ where 5 lies between 3 and 4 and if there are no more shadow edges at $d$ then LEC otherwise
$(6d)$ occurs where 6 lies between 5 and 4, and again LEC or there are further shadow edges and LAC.
If $(41),(42),(4e)$ occurs this forces $(3e)$ and there are no vertices between $1,2$ or $3,4$.
If now $(2e)$ then LAC or if either $(2d)$ or $(ec)$ then either LEC or $(2e)$.
So there are no more shadow edges and LEC.
If $(46),(47),(4d)$ occurs this forces $(2d)$ and either LEC or $(8d)$ where 8 lies between 7 and 2 yielding LAC.
If $(46),(47),(4e)$ or $(46),(4d),(4e)$ or $(42),(4c),(4e)$ or $(42),(4d),(4e)$ occurs this forces no more vertices between 3 and 4, $(1b)$ and either LEC or $(68)$ where 8 lies between 1 and 6, yielding 
LAC.

Suppose that $i \deg (3) > 1$.  The pairs $(3a),(36)$; $(3a),(32)$; $(3a),(3c)$; $(31),(3c)$; $(36),(3c)$ where 6 lies between 1 and 2 each yield LAC;
and the pair $(3a),(3e)$ forces $(4a)$, no vertices between 3 and 4, and either LEC or $(3d)$ which yields LAC.  This leaves the triples
$(3a),(31),(3d)$; $(31),(36),(32)$ and LAC; $(31),(36),(3d)$; $(31),(36),(3e)$; $(31),(32),(3d)$; $(31),(32),(3e)$ and LEC; $(31),(3d),(3e)$;
$(36),(37),(3d)$; $(36),(37),(3e)$ and LEC; $(36),(32),(3d)$; $(36),(32),(3e)$ and LEC; $(36),(3d),(3e)$; $(32),(3c),(3d)$ and LAC;
$(32),(3c),(3e)$ and LEC; $(32),(3d),(3e)$ and LAC; $(3c),(3d),(3e)$ and LAC, where 6 lies between 1 and 2 and 7 between 6 and 2.
If $(3a),(31),(3d)$ or $(31),(36),(3d)$ or $(31),(3d),(3e)$ or $(36),(37),(3d)$ or $(36),(3d),(3e)$ occurs this forces $(2d)$ and either LEC or $(d8)$ where 8 lies between 6 and 2 and LAC.
If $(31),(36),(3e)$ occurs this forces LEC or $(3d)$ which forces LEC or LAC as in the previous cases.
If $(31),(32),(3d)$ occurs this yields LEC, if $(df)$ also occurs then again LEC or $(2d)$ which yields LAC.
If $(36),(32),(3d)$ occurs then either $(df)$ occurs and LAC, otherwise LEC.

Finally let $m=n=4$ and so $\hat{\Delta}$ is given by Figure A.4(v).  Up to symmetry there are the two cases $i \deg (5) > 1$ and $i \deg (4) > 1$.
Let $i \deg (5) > 1$.  The pairs $(5a),(56)$; $(5a),(52)$; $(5a),(5d)$; $(51),(5d)$; $(56),(5d)$, where 6 lies between 1 and 2 each yield LAC.
Up to symmetry this leaves the triples $(5b),(5a),(51)$ and LAC; $(5b),(5a),(5e)$; $(5b),(51),(56)$; $(5b),(51),(52)$; $(5b),(51),(5e)$; $(5b),(56),(52)$;
$(5b),(56),(5e)$; $(5b),(52),(5d)$; $(56),(57),(5b)$; $(51),(56),(52)$ and LAC; where 6 lies between 1 and 2 and 7 between 6 and 2.
If $(5b),(5a),(5e)$ occurs this forces $(2e),(1e)$ and LAC.
If $(5b),(51),(56)$ occurs this forces $(4b)$ and LAC when $i \deg (6)=1$ or $i \deg (6)=3$.
If $(5b),(51),(52)$ occurs this forces $(32)$ or $(3d)$ or $(3e)$:
if (32) then LAC;
if $(3d)$ then LAC; and
if $(3e)$ then LEC.
If $(5b),(51),(5e)$ occurs this forces $(2e)$ and LEC or $(2e),(6e)$ and LAC, where 6 lies between 1 and 2.
If $(5b),(56),(52)$ or $(5b),(56),(5e)$ or $(5b),(52),(5d)$ occurs this forces $(4b),(1b)$ and LEC or $(4b),(1b),(7b)$ and LAC, where 7 lies between 1 and 6.

Let $i \deg (4) > 1$.  The pairs $(4c),(46)$; $(4a),(42),(4a),(4d)$; $(41),(4d)$; $(46),(4d)$ where 6 lies between 1 and 2 each forces LAC.
The pair $(4b),(41)$ force LEC or LAC and the pairs $(4b),(46)$; $(4b),(42)$; $(4b),(4d)$; and $(4b),(4e)$ each force LEC.
This leaves the triples $(4b),(4a),(41)$ and LAC; $(4b),(4a),(4e)$; $(4a),(41),(4e)$; $(41),(46),(42)$ and LAC; $(41),(46),(4e)$; $(41),(42),(4e)$;
$(46),(42),(4e)$; and $(42),(4d),(4e)$ which yields LAC.
If $(4b),(4a),(4e)$ or $(4a),(41),(4e)$ or $(41),(46),(4e)$ occurs this forces $(1e)$ and LAC.
If $(41),(42),(4e)$ occurs with no more shadows then LEC; and any further shadow edge forces LAC or LEC.
If $(46),(42),(4e)$ occurs this forces $(3e)$ and no vertices between 3 and 4; and either $(1b)$ or $(1c)$.
If $(1b)$ occurs then LEC or $(b7)$ where 7 lies between 1 and 6 again forcing LEC.
If $(1c)$ or $(1c),(c7)$ occurs then LAC.

\medskip

\item[(iv)]
By (iii) it can be assumed that $i \deg (v)=1$ for $v \in B_1 \cup B_2$; and by (iii) the statement clearly holds when $(m,n)=(3,3)$.
Let $(m,n)=(3,4)$ and so $\hat{\Delta}$ is given by Figure A.4(i), (iii).
Clearly the statement holds for 1 and 4.
Up to symmetry there are two cases: $(14),(2d),(3d)$; and $(1b),(2d),(3d)$.  Suppose $(14),(2d),(3d)$ occurs.
This forces LEC or $(5d),(6d)$ where 5 lies between 3 and 4 and 6 lies between 1 and 2 and this forces LAC.
Suppose $(1b),(2d),(3d)$ occurs.  This forces LEC or $(5d),(6d)$ as before and $(47),(b8)$ where $7,8$ lie between 1 and 6, and then LAC.

Let $(m,n)=(3,5)$ and so $\hat{\Delta}$ is given by Figure A.4(i), (iv).  Up to symmetry the cases are $(2d),(3d),(1b)$; $(2d),(3d),(14)$;
$(2d),(3d),(4a)$; $(2d),(3e),(1b)$; and $(2d),(3e),(14)$.  Suppose $(2d),(3d),(1b)$ occurs.
This forces $(b8),(47),(6d)$ where $6,7,8$ lie between 1 and 2, otherwise LEC.
Other possible shadow edges are $(df)$ and those between additional vertices between $6,7$ and $3,4$; but in all cases this forces LEC.
A similar argument forces LEC when $(2d),(3d),(14)$ or $(2d),(3d),(4a)$ occurs.
Suppose $(2d),(3e),(1b)$ occurs.  This forces $(b8),(47),(6d),(5e)$ where $6,7,8$ lie between 1 and 2 and 5 lies between 3 and 4, otherwise LEC.

\newpage
\begin{figure}[t]
\begin{center}   
\psfig{file=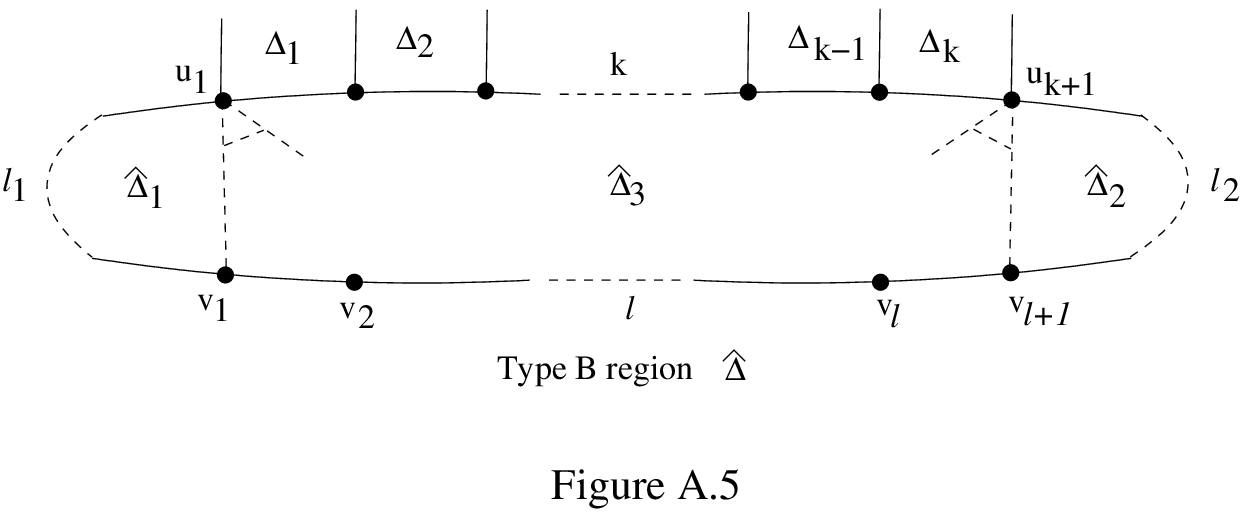}
\end{center}
%\caption{}
\end{figure}   

If there are no more shadow edges from $d$ this forces LEC;
if $(d9)$ occurs only, where 9 lies between 4 and 5, this forces LAC;
if $(d9),(d10)$ occurs where 10 lies between 4 and 9 this forces LEC; and
if there are more than two further shadow edges at $d$ this forces LAC.
The argument is the same when $(2d),(3e),(14)$ occurs.

Let $(m,n)=(4,4)$ and so $\hat{\Delta}$ is given by Figure A.4(v).  Up to symmetry it can be assumed that $(1b),(4b)$ occurs.
This forces $(6b),(5b)$ where 6 lies between 1 and 2, and 5 lies between 3 and 4, otherwise LEC.
If any of (23), (27) or (38) now occurs where 7 lies between 5 and 3, and 8 lies between 6 and 2 this forces LAC.
This leaves $(2e),(3e)$ forcing LEC or $(7e),(8e)$ which again forces LAC.

\medskip

\item[(v)]
The region $\hat{\Delta}$ is given by Figure A.5 in which $\hat{\Delta}$ has been partitioned into three regions $\hat{\Delta}_i$ ($1 \leq i \leq 3$);
$l_1 \geq 2$ denotes the number of edges between $u_1$ and $v_1$; $l_2 \geq 2$ denotes the number between $u_{k+1}$ and $v_{l+1}$; $l$ the number between $v_1$ and $v_{l+1}$;
and $k$ the number between $u_1$ and $u_{k+1}$.  Moreover, as shown, there is a shadow edge in $\hat{\Delta}$ between $u_1$ and $v_1$ and between $u_{k+1}$ and $v_{l+1}$;
and, further to this, all shadow edges from $u_i$ ($1 \leq i \leq k+1$) within $\hat{\Delta}$ are connected to some $v_j$ where $1 \leq j \leq l+1$.
Thus $n_2 = l_1 + l + l_2 \geq l + 4$ and so $4 \leq n_2 \leq 9$.

Using LAC and LEC we list in Figure A.6 all the possibilities for $\hat{\Delta}_j$ ($1 \leq j \leq 2$) for $2 \leq l_j \leq 5$;
listed in Figure A.7 are the possibilities for $\hat{\Delta}_3$ when $0 \leq l \leq 3$; and when $l=4$ $\hat{\Delta}_3$ is given by Figure A.8
under the assumption that at least one $\hat{\Delta}_1, \hat{\Delta}_2$ has degree 2.
In what follows we use the labelling of Figures A.6-8 and work up to the symmetry of $\hat{\Delta}_3$.

Let $n_2 \leq 7$.  If $l=0$ then, using the fact that $i \deg (u_i) \geq 1$, $k > 1$ forces LAC and $k=1$ forces $d(\hat{\Delta})=n_2+1 \leq 8$;
if $l=1$ then $k > 3$ forces LAC and if $k \leq 3$ then $n_2 \leq 6$ implies $d(\hat{\Delta}) \leq 9$;
if $l=2$ then $k \leq 5$ and $n_2 \geq 6$.  Thus we are left with the cases $l=1$, $n_2=7$; $l=2$, $n_2=6$; $l=2$, $n_2=7$; and $l=3$, $n_2=7$.

Let $l=1$ and $n_2=7$.  Then $k \leq 3$ and since $k \leq 2$ implies $d(\hat{\Delta}) < 10$ it can be assumed that $\hat{\Delta}_3=C13$.
Since each of $R3B,R4B,R4C$ yields LAC it follows that $(\hat{\Delta}_1,\hat{\Delta}_2) \in \{(R3A,R3A),(R2,R4A)\}$ and LEC.

\newpage
\begin{figure}[t]
\begin{center}
\psfig{file=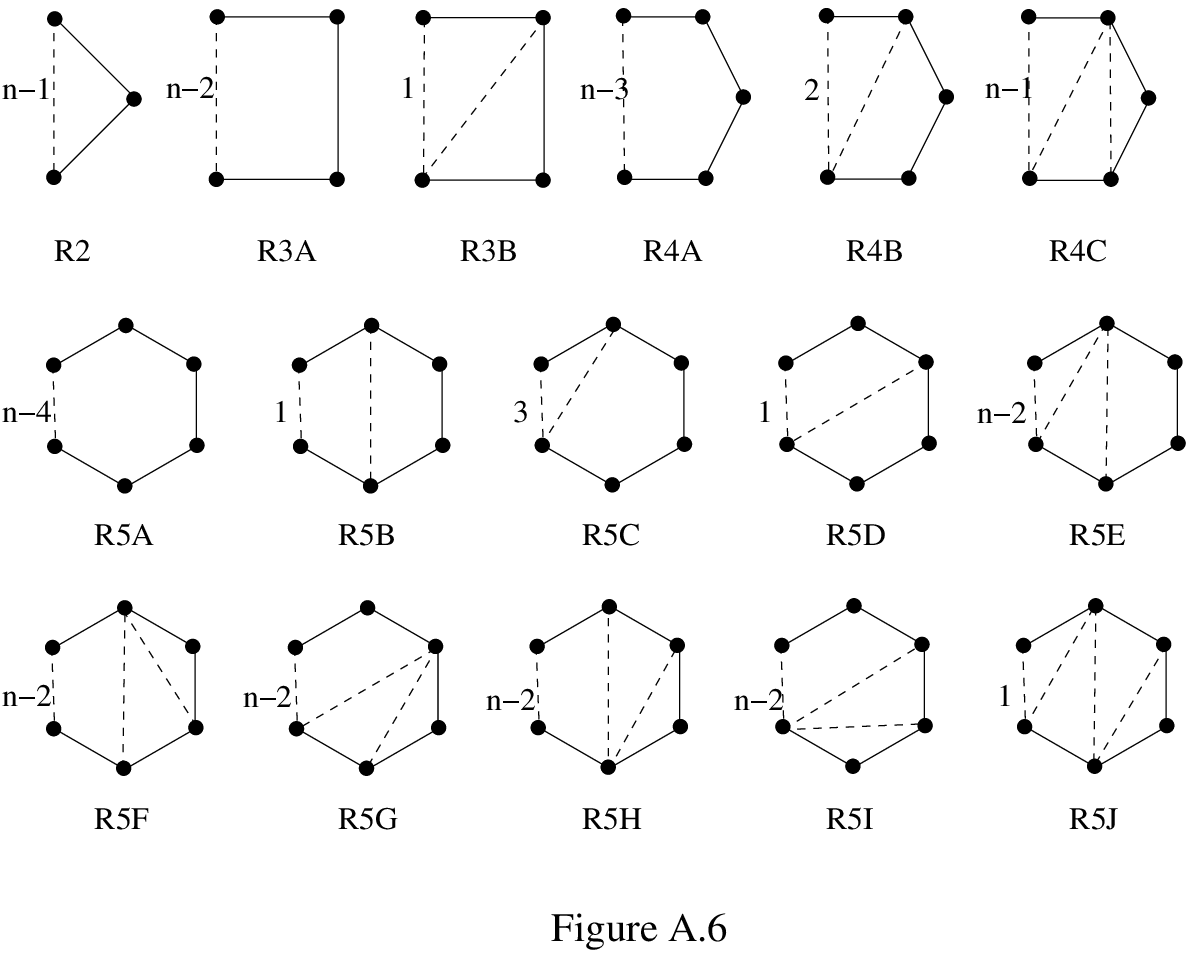}
\end{center}
%\caption{}
\end{figure}

Let $l=2$ and $n_2=6$.  Then $k \leq 5$ and since $k \leq 3$ implies $d(\hat{\Delta}) < 10$ it can be assumed that
$\hat{\Delta}_3 \in \{ C24,C25 \}$.  But $l_1=l_2=2$ implies $\hat{\Delta}_1=\hat{\Delta}_2=R2$ and this forces LEC.

Let $l=2$ and $n_2=7$.  This forces $k \leq 5$, and since $k \leq 2$ implies $d(\hat{\Delta}) < 10$ it can be assumed that $3 \leq k \leq 5$.
Therefore $\hat{\Delta}_3 \in \{ C23A,C23B,C23C,C24,C25 \}$; and $(\hat{\Delta}_1,\hat{\Delta}_2) \in \{ (R2,R3A),(R2,R3B),(R3A,R2),(R3B,R2)\}$.
Each case either yields LAC or yields LEC.

Let $l=3$ and $n_2=7$.  Then $k \leq 7$ and since $k \leq 2$ implies $d(\hat{\Delta}) < 10$ it can be assumed that $3 \leq k \leq 7$.
Therefore $\hat{\Delta}_1=\hat{\Delta}_2=R2$ and this yields LAC except when $\hat{\Delta}_3 \in \{ C33C,C35A,C35B,C37 \}$ and each of these four cases yields LEC.

Now let $n_2=8$.  Then $d(\hat{\Delta}) \geq 10$ forces $k \geq 2$.  Since $l=0$ implies $d(\hat{\Delta}) < 10$ it follows that
$1 \leq l \leq 4$.

Let $l=4$.  Then $\hat{\Delta}_1=\hat{\Delta}_2=R2$ and so $\hat{\Delta}_3$ is one of $C43B,C,D,E$ or $C45A,B,C,D$ or $C47B,C,D$ or $C49$ and each case forces LEC.

Let $l=3$.  Then $(l_1,l_2) \in \{(2,3),(3,2)\}$.  Checking each $C3$ for $\hat{\Delta}_3$ yields LEC except for $\hat{\Delta}_3=C34D$ and the region $\hat{\Delta}$ is given by Figure 48(i).

Let $l=2$.  Then $(l_1,l_2) \in \{ (2,4),(3,3),(4,2) \}$.  Checking each C2 case for $\hat{\Delta}_3$ yields LEC except when %\newline

\newpage
\begin{figure}[t]
\begin{center}
\psfig{file=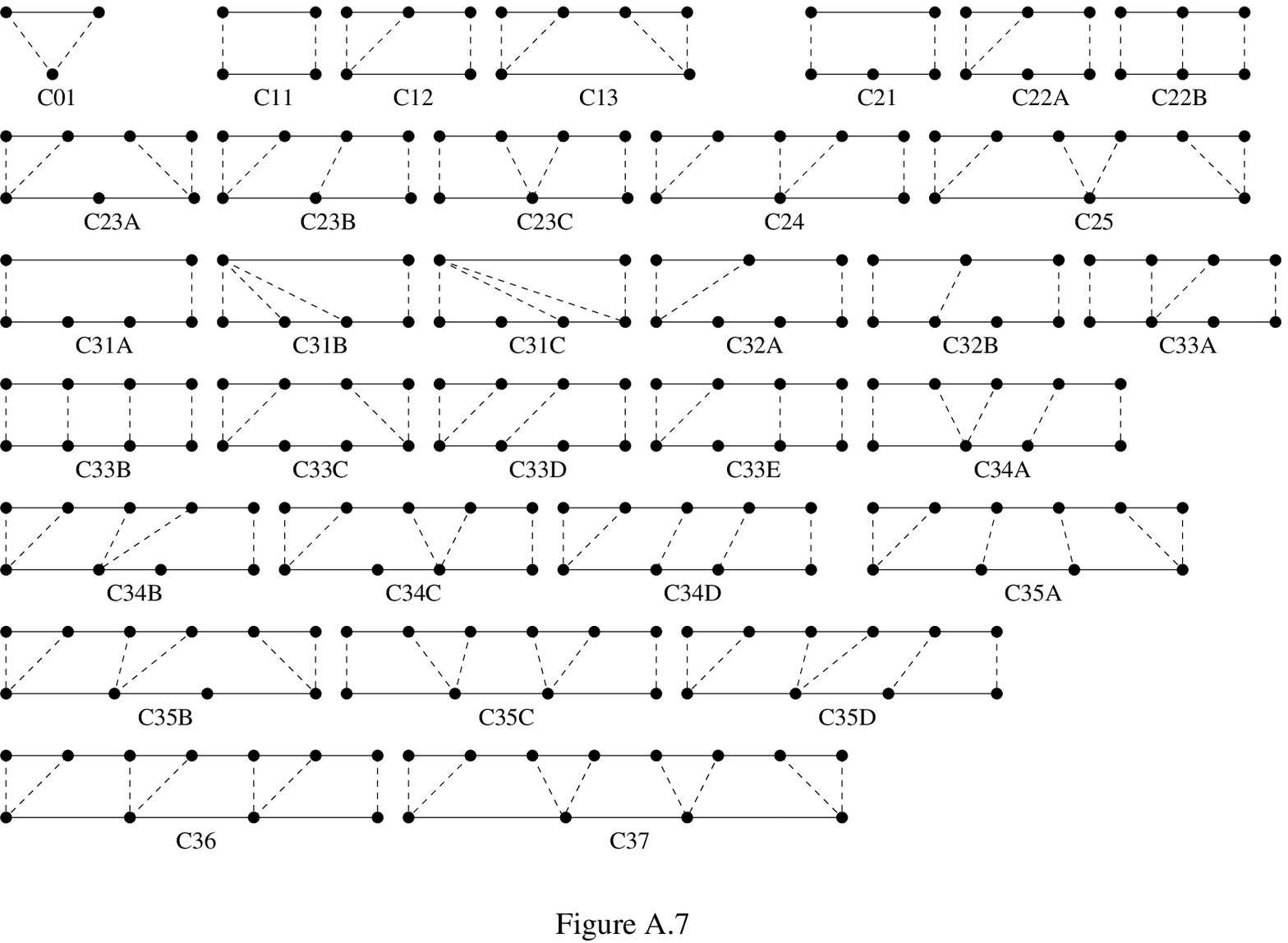}
\end{center}
%\caption{}
\end{figure}

\noindent $(\hat{\Delta}_1,\hat{\Delta}_3,\hat{\Delta}_2) \in \{(R2,C22A,R4A),(R2,C24,R4A),(R3A,C22B,R3A),(R3B,C22B,R3B)\}$.
Now $(R2,C22A,R4A),(R2,C24,R4A)$ yields the region $\hat{\Delta}$ of Figure 48(ii)--(iii); $(R3A,C22B,R3A)$ yields the region $\hat{\Delta}$ of Figure 48(iv); and $(R3B,C22B,R3B)$ forces LAC.

Let $l=1$.  Then $(l_1,l_2) \in \{(2,5),(3,4),(4,3),(5,2)\}$ and $\hat{\Delta}_3 \in \{ C12,C13 \}$.  Each case forces LAC or LEC except for\newline
$(\hat{\Delta}_1,\hat{\Delta}_3,\hat{\Delta}_2) \in \{ (R2,C12,R5E),(R2,C12,R5F),(R2,C12,R59),(R3A,C12,R4)\}$ and the four resulting regions $\hat{\Delta}$ are given by Figure 48(v)-(viii).

\medskip

\item[(vi)]
As in (v) the region $\hat{\Delta}$ is given by Figure A.5.
In this case however Lemma 11.1 and Figure 46 can be applied.
Therefore if $l_j \leq 5$ ($j=1,2$) then\newline
$\hat{\Delta}_j \in \{ R2,R3A,R4A,R4C,R5A,R5E,R5F,R5G \}$.
A further simple check now shows that if $l_j=6$ then $\hat{\Delta}_j$ is given by Figure A.9; and if $l_1=7$, say, then $l_2=2$, $\hat{\Delta}_3=CO1$ and this forces $\hat{\Delta}_1=R7$ of Figure A.9.

Let $l=0$ so that $\hat{\Delta}_3=C01$.  Then $(l_1,l_2) \in \{ (2,7),(3,6),(4,5)\}$.
If $(l_1,l_2)=(2,7)$ 

\newpage
\begin{figure}[ht]
\begin{center}
\psfig{file=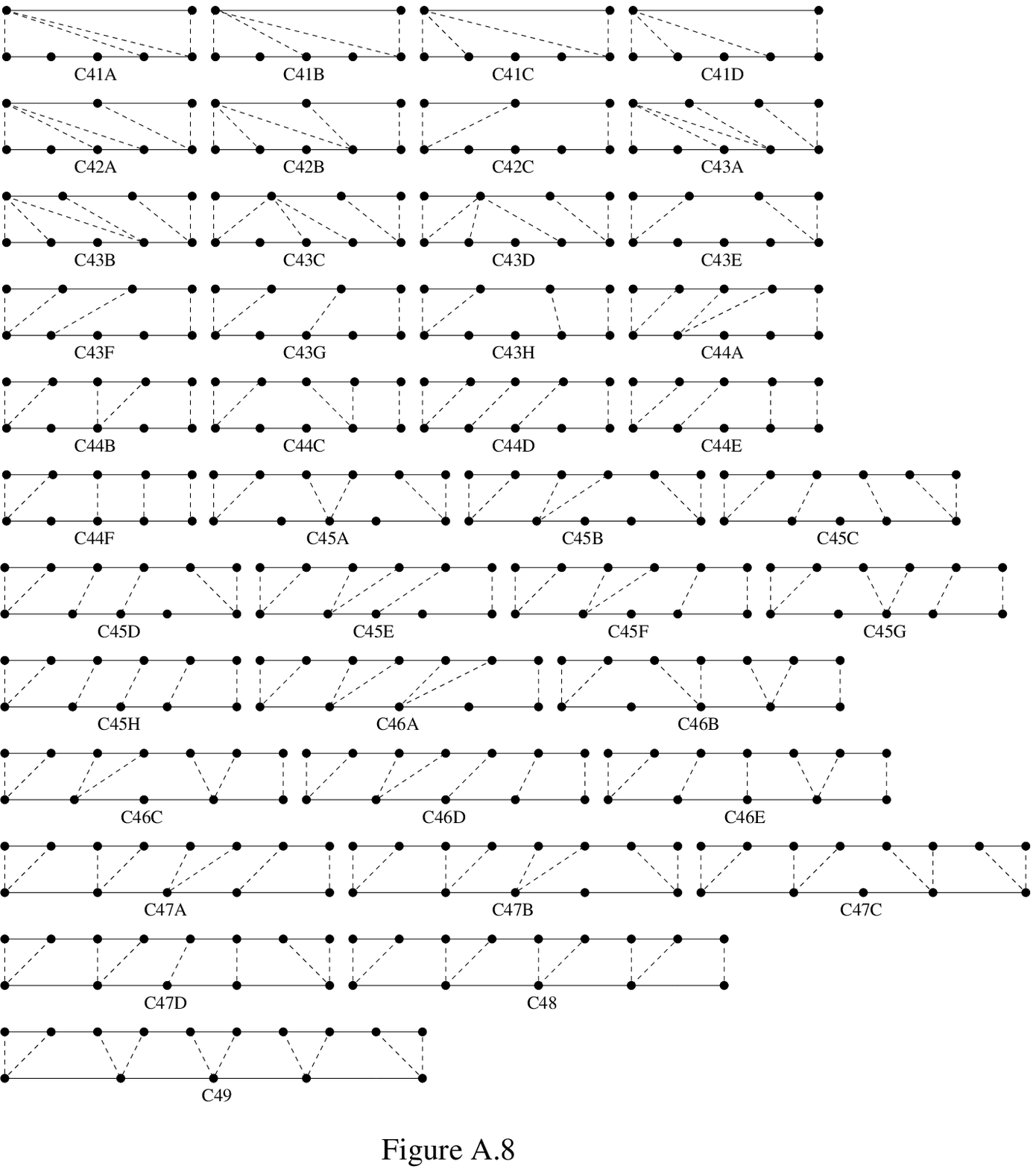}
\end{center}
%\caption{}
\end{figure}

\noindent we see from the previous paragraph that $\hat{\Delta}$ is given by Figure 49(i).
If $(l_1,l_2)=(3,6)$ then $\hat{\Delta}_1=R3A$ and this forces $\hat{\Delta}_3=R6J$ and $\hat{\Delta}$ to be given by Figure 49(ii).
If $(l_1,l_2)=(4,5)$ then there is no choice for $\hat{\Delta}_3$ when $\hat{\Delta}_1=R4C$ but if $\hat{\Delta}_1=R4A$ then
$\hat{\Delta}_3=R5A$ and $\hat{\Delta}$ is given by Figure 49(iii).

Let $l=1$ and so $(l_1,l_2) \in \{ (2,6),(3,5),(5,3),(6,2),(4,4)\}$.
If $(l_1,l_2) \in \{ (2,6),(6,2) \}$ then $\hat{\Delta}_3=C12$ or $C13$ and up to symmetry $\hat{\Delta}_1=R2$.
But now checking the $C6$ case yields LEC for each choice of $\hat{\Delta}_2$.
If $(l_1,l_2)=(3,5)$ then $\hat{\Delta}_1=R3A$ and now any pairing of $\hat{\Delta}_3,\hat{\Delta}_2$ forces LEC except for
$(\hat{\Delta}_3,\hat{\Delta}_2)=(C13,R5A)$ and $\hat{\Delta}$ is given by Figure 49(iv).
If $(l_1,l_2)=(5,3)$ then it can be assumed that $\hat{\Delta}_3=C12$ and 

\newpage
\begin{figure}[t]
\begin{center}
\psfig{file=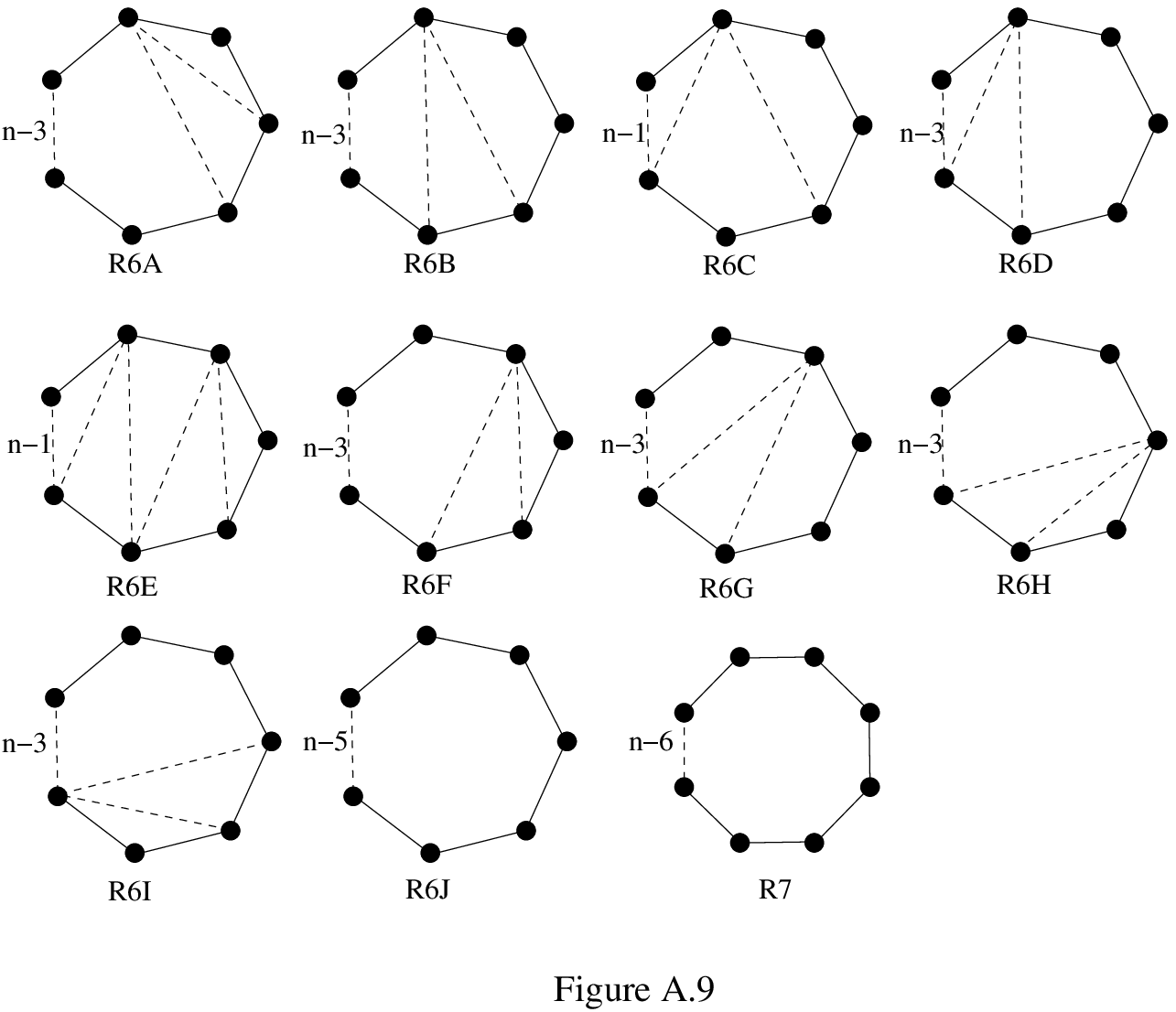}
\end{center}
%\caption{}
\end{figure}

\noindent each choice of $\hat{\Delta}_1$ forces LEC.
If $(l_1,l_2)=(4,4)$ this forces LEC except for $(\hat{\Delta}_1,\hat{\Delta}_3,\hat{\Delta}_2)=(R4A,C13,R4A)$ and $\hat{\Delta}$ is given by Figure 49(v).

Let $l=2$ and so $(l_1,l_2) \in \{(2,5),(3,4),(4,3),(5,2)\}$.
If $(l_1,l_2) \in \{ (2,5),(5,2) \}$ then it can be assumed that $\hat{\Delta}_1=R2$ and now each choice of $\hat{\Delta}_2$ forces LEC except for when
$\hat{\Delta}_3=C23A$, $\hat{\Delta}_2=R5A$ and $\hat{\Delta}$ is given by Figure 49(vi).
If $(l_1,l_2) \in \{ (3,4),(4,3)\}$ then each case forces LEC except when $\hat{\Delta}_3=C23A$ or $C25$ and up to symmetry $\hat{\Delta}$ is given by Figure 49(vii), (viii).

Let $l=3$ and so $(l_1,l_2) \in \{(2,4),(3,3),(4,2)\}$.
If $(l_1,l_2) \in \{ (2,4),(4,2) \}$ it can be assumed that $\hat{\Delta}_1=R2$ except when $\hat{\Delta}_3=C35B$.
When $\hat{\Delta}_1=R2$ each case yields LEC except for $(\hat{\Delta}_3,\hat{\Delta}_2)=C33C,R4A)$ and $\hat{\Delta}$ is given by Figure 49(ix);
and $(\hat{\Delta}_3,\hat{\Delta}_2)=(C35B,R4A)$ or $(C37,R4A)$ each forcing LAC.
When $\hat{\Delta}_2=R2$ then LEC except when $\hat{\Delta}_1=R4A$ and $\hat{\Delta}$ is given by Figure 49(x).
If $(l_1,l_2)=(3,3)$ then $\hat{\Delta}_1=\hat{\Delta}_2=R3A$ forcing $\hat{\Delta}_3 \in \{ C33C,C35B,C37 \}$ and $\hat{\Delta}$ is given by Figure 49(xi)-(xiii).

Let $l=4$ so that $(l_1,l_2)=(2,3)$ or $(3,2)$.  Then length forces\newline
$\hat{\Delta}_3 \in \{C43E,C45A,C45B,C47B,C47C,C49\}$.
But $C47C$ and $C49$ each forces LAC; $C45B$ forces LAC when $(l_1,l_2)=(3,3)$; and $C47B$ forces LAC when $(l_1,l_2)=(2,3)$.
It follows that up to symmetry $\hat{\Delta}$ is given by Figure 49(xiv)-(xvii).

Finally let $l=5$.  This forces $(l_1,l_2)=(2,2)$ and Figure 46 immediately yields LAC. $\Box$
\end{enumerate}


\begin{thebibliography}{99}
\bibitem{1}
Y G Baik, W A Bogley and S J Pride, On the asphericity of length four relative group presentations, \textit{Internat J Algebra Comput}
\textbf{7} (1997) 227--313.
\bibitem{2}
W A Bogley and S J Pride, Aspherical relative presentations, \textit{Proc Edinburgh Math Soc} \textbf{35} (1992) 1--39.
\bibitem{3}
C P Chalk and D L Johnson, The Fibonacci groups II, \textit{Proc Royal Soc Edinburgh} \textbf{77A} (1977) 79--86.
\bibitem{4}
C P Chalk, Fibonacci groups with aspherical presentations, \textit{Comm Algebra} \textbf{26} (1998), no.~5, 1511--1546.
\bibitem{5}
J H Conway, Advanced problem 5327, \textit{Amer Math Monthly} \textbf{72} (1965), 915.
\bibitem{6}
P J Davidson, On the asphericity of a family of relative group presentations, \textit{Internat J Algebra Comput} \textbf{19} (2009), no.~2, 159--189.
\bibitem{7}
M Edjvet, On the asphericity of one-relator relative presentations, \textit{Proc Roy Soc Edinburgh Sect A} \textbf{124} (1994) 713--728.
\bibitem{8}
M Edjvet, Equations over groups and a theorem of Higman, Neumann and Neumann, \textit{Proc London Math Soc} \textbf{62} (1991) 563--589.
\bibitem{9}
J Howie and V Metaftsis, On the asphericity of length five relative group presentations, \textit{Proc London Math Soc (3)}, \textbf{82} (2001), no.~1, 173--194.
\bibitem{10}
D L Johnson, Presentations of groups, second edition, \textit{London Mathematical Society Student Texts} \textbf{15} CUP (1997).
\bibitem{11}
D L Johnson, J M Wamsley and D Wright, The Fibonacci groups, \textit{Proc London Math Soc} \textbf{29} (1974), 577--592.
\bibitem{12}
R C Lyndon and P E Schupp, \textit{Combinatorial Group Theory}, Egerbnisse der Mathematik und ihrer Grenzgebiete \textbf{89} Springer-Verlag (1977).
\bibitem{13}
V Metaftsis, On the asphericity of relative group presentations of arbitrary length, \textit{Internat J Algebra Comput} \textbf{13} (2003), no.~3, 323--339.
\bibitem{14}
M I Prishchepov, Asphericity, atorisity and symmetrically presented groups, \textit{Comm Algebra} \textbf{23} (1995), no.~13, 5095--5117.
\bibitem{15}
R M Thomas, The Fibonacci groups revisited, \textit{Groups--St Andrews 1989, Vol.~2, 445--454}, \textit{London Math Soc Lecture Note Ser} \textbf{160}, CUP (1991).
\end{thebibliography}
\end{document}